\keywords{$3j$-symbols, quantum groups, Jones polynomial, complete intersections, Euler characteristic, Harish-Chandra bimodules, category $\cO$, Kazhdan-Lusztig polynomials, 3-manifold invariants, categorification}
 \newlength{\baseunit}               % the basic unit length
\newtheorem{theorem}{Theorem}
\newtheorem{thm}[theorem]{Theorem}
\newtheorem{lemma}[theorem]{Lemma}
\newtheorem{remark}[theorem]{Remark}
\newtheorem{prop}[theorem]{Proposition}
\newtheorem{corollary}[theorem]{Corollary}
\newtheorem{ex}[theorem]{Example}
\newtheorem{definition}[theorem]{Definition}
\newtheorem{define}[theorem]{Definition}
\newcommand{\cA}{{\mathcal A}}
\newcommand{\cB}{{\mathcal B}}
\newcommand{\cC}{{\mathcal C}}
\newcommand{\cE}{{\mathcal E}}
\newcommand{\cF}{{\mathcal F}}
\newcommand{\cH}{{\mathcal H}}
\newcommand{\cL}{{\mathcal L}}
\newcommand{\cO}{{\mathcal O}}
\newcommand{\cS}{{\mathcal S}}
\newcommand{\cU}{{\mathcal U}}
\newcommand{\Ddf}{D^\triangledown}
\def\down{\vee}
\def\up{\wedge}
\newcommand{\mg}{\mathfrak{g}}
\newcommand{\mh}{\mathfrak{h}}
\newcommand{\mb}{\mathfrak{b}}
\newcommand{\mN}{\mathbb{N}}
\newcommand{\mC}{\mathbb{C}}
\newcommand{\mZ}{\mathbb{Z}}
\newcommand{\sgn}{\operatorname{sgn}}
\newcommand{\la}{\lambda}
\newcommand{\HOM}{\operatorname{Hom}}
\newcommand{\MOD}{\operatorname{mod}}
\newcommand{\gMOD}{\operatorname{gmod}}
\newcommand{\op}{\operatorname}
\newcommand{\DIM}{\operatorname{dim}}
\newcommand{\GK}{\operatorname{GKdim}}
\newcommand{\surj}{\mbox{$\rightarrow\!\!\!\!\!\rightarrow$}}
\DeclareMathOperator{\Hom}{Hom}   
\DeclareMathOperator{\End}{End}   \DeclareMathOperator{\Id}{Id}
   \DeclareMathOperator{\Mod}{mod}
 \DeclareMathOperator{\Ext}{Ext}  
\DeclareMathOperator{\gmod}{gmod}
\begin{document}
\title[Jones-Wenzl projector and $3j$-symbols]{Categorifying fractional Euler characteristics, Jones-Wenzl projector and $3j$-symbols}
\author{Igor Frenkel}
\author{Catharina Stroppel}
\author{Joshua Sussan}
\date{\today}

\begin{abstract}
We study the representation theory of the smallest quantum group and its categorification. The first part of the paper contains an easy visualization of the $3j$-symbols in terms of weighted signed line arrangements in a fixed triangle and new binomial expressions for the $3j$-symbols. All these formulas are realized as graded Euler characteristics. The $3j$-symbols appear as new generalizations of  Kazhdan-Lusztig polynomials.

A crucial result of the paper is that complete intersection rings can be employed to obtain rational Euler characteristics, hence to categorify {\it rational} quantum numbers. This is the main tool for our categorification of the Jones-Wenzl projector, $\Theta$-networks and tetrahedron networks. Networks and their evaluations play an important role in the Turaev-Viro construction of $3$-manifold invariants. We categorify these evaluations by Ext-algebras of certain simple Harish-Chandra bimodules. The relevance of this construction to categorified colored Jones invariants and invariants of 3-manifolds will be studied in detail in subsequent papers.
\end{abstract}

\maketitle \tableofcontents

\section*{Introduction}
In the present paper we develop further the categorification program of the representation theory of the simplest quantum group $\mathcal{U}_q(\mathfrak{sl}_2)$ initiated in \cite{BFK} and continued in \cite{StDuke} and \cite{FKS}. In the first two papers the authors obtained a categorification of the tensor power $V_1^{\otimes n}$ of the natural two-dimensional representation $V_1$ of $\mathcal{U}_q(\mathfrak{sl}_2)$ using the category $\cO$ for the Lie algebra $\mathfrak{gl}_n$. In the third paper, this categorification has been extended to arbitrary finite dimensional representations of $\mathcal{U}_q(\mathfrak{sl}_2)$ of the form
\begin{eqnarray*}
V_{\bf d}=V_{d_1}\otimes V_{d_2}\otimes\cdots \otimes V_{d_r},
\end{eqnarray*}
where $V_{d_i}$ denotes the unique (type I) irreducible representation of dimension $d_i+1$. In this case the construction was based on (a graded version) of a category of Harish-Chandra bimodules for the Lie algebra $\mathfrak{gl}_n$, where $n=\sum_{i=1}^r d_i$, or equivalently by a certain subcategory of $\mathcal{O}$. The passage to a graded version of these categories is needed to be able to incorporate the quantum $q$ as a grading shift into the categorification. The existence of such a graded version is non-trivial and
requires geometric tools \cite{Sperv}, \cite{BGS}. Algebraically this grading can best be defined using a version of Soergel (bi)modules (\cite{Sperv}, \cite{Strgrad}). In \cite{Lauda1}, \cite{Lauda2} Lusztig's version of the quantum group itself was categorified. In the following we focus on the categorification of representations and intertwiners of  $\mathcal{U}_q(\mathfrak{sl}_2)$-modules.

There are two intertwining operators that relate the tensor power $V_1^{\otimes n}$ with the irreducible representation $V_n$, namely the projection operator
$$\pi_n:\quad V_1^{\otimes n}\longrightarrow V_n$$
and the inclusion operator
$$\iota_n:\quad V_n\longrightarrow V_1^{\otimes n}.$$
Their composition is known as the {\it Jones Wenzl-projector} which can be characterized by being an idempotent and a simple condition involving the cup and cap intertwiners (Theorem \ref{charJW}). The Jones-Wenzl projector plays an important role in the graphical calculus of the representation theory of $\mathcal{U}_q(\mathfrak{sl}_2)$, see \cite{FK}. Even more importantly, it is one of the basic ingredients in the categorification of the colored Jones polynomial and, in case of $q$ a root of unity, the Turaev-Viro and Reshetikhin-Turaev invariants of $3$-manifolds.

One of the basic results of the present paper is the categorification of the Jones-Wenzl projector including a characterization theorem.
This provides a crucial tool for the categorification of the complete colored Jones invariant for quantum $\mathfrak{sl}_2$, \cite{SS}.
The fundamental difficulty here is the problem of categorifying rational numbers that are intrinsically present in the definition of the Jones-Wenzl projector. We show that the rational numbers that appear in our setting admit a natural realization as the graded Euler characteristic of the Ext-algebra $\Ext^*_H(\mC,\mC)$ of the trivial module over a certain complete intersection ring $H$. The standard examples appearing here are cohomology rings of Grassmannian $\op{Gr}(k,n)$ of $k$-planes in $\mC^n$. The graded Euler characteristic of $\Ext^*_H(\mC,\mC)$ computed via an infinite minimal projective resolution of $\mC$ yields an infinite series in $q$ which converges (or equals in the ring of rational functions) to quantum rational numbers, e.g. the inverse binomial coefficient  $\frac{1}{\left[n\brack k\right]}$ for the Grassmannian.

In order to apply our categorification of the rational numbers
appearing in the Jones-Wenzl projector we further develop the previous
results of \cite{BFK}, \cite{StDuke} and \cite{FKS} on the categorification of $V_1^{\otimes n}$ and $V_{\bf d}$ and the relations between them.

Among other things, we give an interpretation of different bases in $V_{\bf d}$, namely Lusztig's (twisted) canonical, standard, dual standard and (shifted) dual canonical basis  in terms of indecomposable projectives, standard, proper standard and simple objects. This is a non-trivial refinement of \cite{FKS} since we work here not with highest weight modules but with the so-called {\it properly stratified} structures. This allows categories of modules over algebras which are still finite dimensional, but
of infinite global dimension which produces precisely the fractional Euler characteristics. In particular we prove that the endomorphism rings of our standard objects that categorify the standard basis in $V_{\bf d}$ are tensor products of cohomology rings of Grassmannians. As a consequence we determine first standard resolutions and then projective resolutions of proper standard objects.

Using the categorification of the Jones-Wenzl projector and inclusion operators we proceed then to a categorification of various networks from the graphical calculus of  $\cU_q(\mathfrak{sl}_2)$. The fundamental example of the triangle diagram leads to the {\it Clebsch-Gordan coefficients} or {\it $3j$-symbols} for $V_k$ in  $V_i\otimes V_j.$ Remarkably its evaluation in the standard basis of $V_i, V_j, V_k$ yields an integer (possibly negative) which we show can be obtained by counting signed isotopy classes of non-intersecting arc or line arrangements in a triangle with $i, j, k$ marked points on the edges. We introduce the notion of {\it weighted signed line arrangements}
to be able to keep track of the formal parameter $q$ in a handy combinatorial way.

We then present various categorifications of the $3j$-symbols which decategorify to give various identities for the $3j$-symbols. Our first categorification involves a double sum of Exts of certain modules. The second realizes the $3j$-symbol in terms of a complex of graded vector spaces with a distinguished basis. The basis elements are in canonical bijection with weighted signed line arrangements. The weight is the internal grading, whereas the homological degree taken modulo $2$ is the sign of the arrangement.
So the Euler characteristic of the complex is the $3j$-symbol and categorifies precisely the terms in our triangle counting formula. Thus the quite elementary combinatorics computes rather non-trivial Ext groups in a category of Harish-Chandra bimodules.

As an alternative we consider the $3j$-symbols evaluated in the (twisted) canonical basis and realize it as a graded dimension of a certain vector space of homomorphisms. In particular, in this basis the  $3j$-symbols are (up to a possible overall minus sign) in $\mathbb{N}[q]$. This finally leads to another binomial expression of the ordinary $3j$-symbols which seems to be new. Besides it interesting categorical meaning, this might also have an interpretation in terms of hypergeometric series. Our new positivity formulas resemble formulas obtained by Gelfand and Zelevinsky, \cite{GZ}, in the context of Gelfand-Tsetlin bases.

 Apart from the $3j$-symbol we also give a categorical interpretation of the colored unknot and the theta-networks as the Euler characteristic of $\Ext^*(L,L)$ for some irreducible Harish-Chandra bimodule $L$. These examples can be viewed as special cases of a complete categorification of the colored Jones polynomial. Additionally, we explain the construction of a categorification of a tetrahedron network, namely as an (infinite) dimensional graded vector space which in fact is a module over the four theta networks for the four faces. This approach will be explored in detail in two subsequent papers.

Lie theoretically, the categorification results are interesting, since it is very hard to compute the above mentioned Ext-algebras of simple modules in practice. For instance, the well-known evaluations of theta-networks from
\cite{KaLi} can be used to obtain the Euler characteristics of certain Ext
algebras, allowing one to gain some insight into their structure. In this context, $3j$-symbols can be viewed as generalizations of Kazhdan-Lusztig polynomials.

Finally we want to remark that the idea of categorifying the Jones-Wenzl projectors is not new. The first categorification was obtained in \cite{BFK}, but not studied in detail there. Independently, to the present work, such categorifications were defined and studied in \cite{CK}, \cite{Roszansky}. Based on two theorems characterizing the categorifications of the Jones-Wenzl projector, Theorem \ref{charJWcat} and \cite[Section 3]{CK}, we expect that these categorifications agree (up to Koszul duality). A detailed proof in the case $n=2$ is given in \cite{SScomparision}. On the other hand, our approach gives rise to a different (and actually much more subtle) colored Jones invariant than the original construction of Khovanov \cite{Khovcolor} and the one in \cite{BW}. The difference should become clear from our computations for the colored unknot. Although the value of the $V_n$-colored unknot is a quantum number the categorification is not given by a finite complex, but rather by an infinite complex with cohomology in all degrees.

We also expect that our invariants agree (up to some Koszul duality) with the more general invariants constructed in \cite{W}, but the details are not worked out yet. Our explicit description of the endomorphism rings of standard objects and the description of the value for the colored unknot will be necessary to connect the two theories.

\subsection*{Organization of the paper}
The paper is divided into three parts. The first part starts with recalling basics from the representation theory of $\cU_q(\mathfrak{sl}_2)$ and defines the $3j$-symbols. We develop the combinatorics of weighted signed line arrangements. Part II starts by the construction of rational Euler characteristics which we believe are interesting on its own. We explain then in detail where these categorifications occur naturally. This leads to a categorification of the Jones-Wenzl projector. This part also reviews a few previously constructed categorifications. The third part consists of applications and several new results.
It contains the categorification of the $3j$-symbols and explains categorically all the formulas obtained in the first part. We realize $3j$-symbols as a sort of generalized Kazhdan-Lusztig polynomials. Finally we explain a categorification of  $\theta$-networks and $6j$-symbols with some outlook to work in progress.

\subsection*{Acknowledgements}
We thank Christian Blanchet, Ragnar Buchweitz, Mikhail Khovanov and Nicolai Reshetikhin for interesting discussions and Geordie Williamson for very helpful comments on an earlier version of the paper.
The research of the first and third author was supported by the NSF grants DMS-0457444 and DMS-0354321. The second author deeply acknowledges the support in form of a Simons Visitor Professorship to attend the Knot homology Program at the MSRI in Berkeley which was crucial for writing up this paper.

\section*{Part I}
\label{reptheory}
In this part we recall basic structures on the representation theory of $\mathcal{U}_q(\mathfrak{sl}_2)$ from \cite{Kassel} and \cite{KaLi} which later will be categorified.

\section{Representation theory of $ \mathcal{U}_q(\mathfrak{sl}_2) $}
\label{U2basics}
Let $\mC(q)$ be the field of rational functions in an indeterminate $q$.
\begin{define}
Let $\mathcal{U}_q=\mathcal{U}_q(\mathfrak{sl}_2) $ be the associative algebra over $\mathbb{C}(q)$ generated by $ E, F, K, K^{-1} $ satisfying the
relations:
\begin{enumerate}[(i)]
\item $ KK^{-1} = K^{-1}K=1$ \item $ KE = q^2 EK $ \item $ KF = q^{-2} FK $ \item $ EF-FE = \frac{K-K^{-1}}{q-q^{-1}} $
\end{enumerate}
\end{define}

Let $ [k]=\sum_{j=0}^{k-1} q^{k-2j-1}$ and $ {n \brack {k}} = \frac{[n]!}{[k]![n-k]!}. $ Let $ \bar{V}_n $ be the unique (up to isomorphism) irreducible module for $\mathfrak{sl}_2$ of dimension $ n+1$. Denote by $V_n $ its quantum analogue (of type I), that is
the irreducible $ \mathcal{U}_q(\mathfrak{sl}_2)$-module with basis $ \lbrace v_0, v_1, \ldots, v_{n} \rbrace $ such that
\begin{equation}
\label{irreddef}
K^{\pm 1} v_i= q^{\pm (2i-n)} v_i\quad\quad Ev_i =[i+1] v_{i+1}\quad\quad F v_i = [n-i+1] v_{i-1}.
\end{equation}

There is a unique bilinear form $\langle\quad,\quad\rangle' \;\colon\; V_n \times V_n \rightarrow \mathbb{C}(q)$
which satisfies
\begin{equation}
\label{scalarprod} \langle v_k, v_l \rangle' = \delta_{k,l}q^{k(n-k)}{n \brack k}.
\end{equation}
The vectors $\lbrace v^0, \ldots, v^n \rbrace $ where  $v^i =\frac{1}{{n \brack i}} v_i$ form {\it the dual standard basis} characterized by $ \langle v_i, v^i \rangle' = q^{i(n-i)}$. Recall that $\mathcal{U}_q$ is a Hopf algebra with comultiplication
\begin{equation}
\label{comult}
\begin{split}
\triangle(E)=1\otimes E + E\otimes K^{-1},\quad&\quad\quad\triangle(F)=K\otimes F+F\otimes 1,\\
\triangle(K^{\mp 1})&=K^{\mp1}\otimes K^{\mp1},
\end{split}
\end{equation}
and antipode $S$ defined as $S(K)=K^{-1}$, $S(E)=-EK$ and $S(F)=-K^{-1}F$. Therefore, the tensor product $V_{\bf d}:=V_{d_1} \otimes \cdots\otimes V_{d_r} $ has the structure of a $\mathcal{U}_q$-module with {\it standard basis} $\lbrace v_{\bf a}=v_{a_1} \otimes \cdots \otimes v_{a_r} \rbrace $ where $ 0 \leq a_j \leq d_j $ for $ 1 \leq j \leq r$. Denote by $v^{\bf a}=v^{a_1} \otimes \cdots \otimes v^{a_r}$ the corresponding tensor products of dual standard basis elements.

There is also a unique {\it semi-linear} form (i.e. anti-linear in the first and linear in the second  variable) $\langle_-,_-\rangle$  on $V_{\bf d}$ such that

\begin{eqnarray}
\label{scalar}
\langle v_{\bf a}, v^{\bf b}\rangle&=&
\begin{cases}
\prod_{i=1}^rq^{a_i(d_i-a_i)}&\text{if ${\bf a}={\bf b}$}\\
0&\text{otherwise}.
\end{cases}
\end{eqnarray}
We want to call this form {\it evaluation form} since it will later be used to evaluate networks.

We finally have the {\it pairing} $(_-,_-)$, anti-linear in both variables, on $V_{\bf d}$ such that
\begin{eqnarray}
\label{pairing}
(v_{\bf a}, v^{\bf b})&=&
\begin{cases}
q^{a_i(d_i-a_i)}&\text{if ${\bf a}={\bf b}$}\\
0&\text{otherwise}.
\end{cases}
\end{eqnarray}

\subsection{Jones-Wenzl projector and intertwiners}
Next we will define morphisms between various tensor powers of $V_1$ which intertwine the action of the quantum group, namely $\cup\colon \mathbb{C}(q) \rightarrow V_1^{\otimes 2}$ and $ \cap \colon V_1^{\otimes 2} \rightarrow \mathbb{C}(q)$ which are given on the standard basis by
\begin{eqnarray}
\label{defcupcap}
\cup(1)=v_1\otimes v_0 -qv_0 \otimes v_1
&&
 \cap(v_i \otimes v_j)=
 \begin{cases}
 0 &\text{if $i=j$}\\
 1 &\text{if $i=0, j=1$}\\
 -q^{-1}&\text{if $i=1, j=0$}
 \end{cases}
\end{eqnarray}

We define $\cap_{i,n}=\Id^{\otimes (i-1)} \otimes \cap \otimes \Id^{\otimes
    (n-i-1)}$ and $\cup_{i,n}=\Id^{\otimes (i-1)} \otimes \cup \otimes \Id^{\otimes (n-i+1)}$  as $\cU_q$-morphisms from $V_1^{\otimes n}$ to $V_1^{\otimes (n-2)}$, respectively  $V_1^{\otimes (n+2)}$.
Let $C:=\cup\circ\cap$ be their composition and $C_i:=C_{i,n}:= \cup_{i,n-2}\circ\cap_{i,n}$.
We depict the cap and cup intertwiners graphically in Figure \ref{fig:capcup} (reading the diagram from bottom to top), so that $\cap\circ\cup$ is just a circle. In fact, finite compositions of these elementary morphisms generate the $\mathbb{C}(q)$-vector space of all intertwiners, see e.g. \cite[Section 2]{FK}).

If we encode a basis vector $v_{\bf d}$ of $V_1^{\otimes n}$ as a sequence of $\up$'s and $\down$'s according to
the entries of ${\bf d}$, where $0$ is turned into $\down$ and $1$ is turned into $\up$ then the formulas in \eqref{defcupcap} can be symbolized by\\
\vspace{2mm}\\
\begin{picture}(45,28)
\put(9.2,0){$\scriptstyle\up$}
\put(32.2,0){$\scriptstyle\up$}
\put(35,14){\line(0,-1){11.5}}
\put(12,14){\line(0,-1){11.5}}
\put(23.5,14){\oval(23,23)[t]}
\put(41,12){$=0=$}\put(101,12){$,$}
\put(69.2,2){$\scriptstyle\down$}
\put(92.2,2){$\scriptstyle\down$}
\put(95,14){\line(0,-1){11.5}}
\put(72,14){\line(0,-1){11.5}}
\put(83.5,14){\oval(23,23)[t]}
\put(109.2,2){$\scriptstyle\down$}
\put(132.2,0){$\scriptstyle\up$}
\put(135,14){\line(0,-1){11.5}}
\put(112,14){\line(0,-1){11.5}}
\put(123.5,14){\oval(23,23)[t]}
\put(141,12){$=1$,\quad}

\put(169.2,0){$\scriptstyle\up$}
\put(192.2,2){$\scriptstyle\down$}
\put(195,14){\line(0,-1){11.5}}
\put(172,14){\line(0,-1){11.5}}
\put(183.5,14){\oval(23,23)[t]}
\put(201,12){$=-q^{-1}$,\quad}

\put(243.5,14){$\bigcup(1)=$}
\put(279.2,22){$\scriptstyle\up$}
\put(292.2,25){$\scriptstyle\down$}
\put(295,6){\line(0,1){19.5}}
\put(282,6){\line(0,1){19.5}}
\put(301,12){$-\;q$}
\put(319.2,25){$\scriptstyle\down$}
\put(332.2,22){$\scriptstyle\up$}
\put(335,6){\line(0,1){19.5}}
\put(322,6){\line(0,1){19.5}}
\end{picture}
\vspace{4mm}

\begin{figure}
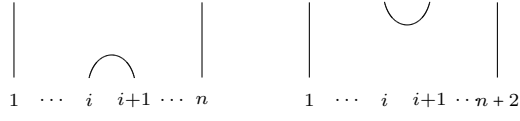

\begin{equation*}
 \xy (0,0);(0,10)*{}**\dir{-};(0,10); (10,0)*{};(16,0)*{}**\crv{(11,4)&(15,4)}; (25,0)*{};(25,10)*{}**\dir{-};(25,10)*{};
(0,-3)*{\mbox{\tiny 1}};(5,-3)*{\mbox{\tiny $\ldots$}};(10,-3)*{\mbox{\tiny $i$}};(16,-3)*{\mbox{\tiny
$i$+$1$}};(21,-3)*{\mbox{\tiny$\ldots$}};(25,-3)*{\mbox{\tiny$n$}};
\endxy
\hspace{0.5in} \xy (0,0)*{};(0,10)*{}**\dir{-};(0,10)*{}; (10,10)*{};(16,10)*{}**\crv{(11,6)&(15,6)};(10,10)*{};(16,10)*{};
(25,0)*{};(25,10)*{}**\dir{-};(25,10)*{}; (0,-3)*{\mbox{\tiny 1}};(5,-3)*{\mbox{\tiny $\ldots$}};(10,-3)*{\mbox{\tiny
$i$}};(16,-3)*{\mbox{\tiny $i$+$1$}};(21,-3)*{\mbox{\tiny$\ldots$}};(25,-3)*{\mbox{\tiny$n+2$}};
\endxy
\end{equation*}
\caption{The intertwiners $\cap_{i,n}$ and $\cup_{i,n}$}
\label{fig:capcup}
\end{figure}

The symmetric group $\mathbb{S}_n$ acts transitively on the set of $n$-tuples with $i$ ones and $n-i$ zeroes. The stabilizer of ${\bf d}_{\op{dom}}:=(\underbrace{1, \ldots, 1}_{i},
\underbrace{0, \ldots, 0}_{n-i})$ is $\mathbb{S}_{i}\times \mathbb{S}_{n-i}$. By sending the identity element $e$ to ${\bf d}_{\op{dom}}$ we fix for the rest of the paper a bijection between shortest coset representatives in $\mathbb{S}_n/\mathbb{S}_{i}\times \mathbb{S}_{n-i}$ and these tuples ${\bf d}$. We denote by $w_0$ the longest element of $\mathbb{S}_n$ and by $w_0^i$ the longest element in $\mathbb{S}_{i}\times \mathbb{S}_{n-i}$. By abuse of language we denote by $l(\bf{d})$ the (Coxeter) {\it length} of $\bf{d}$ meaning the Coxeter length of the corresponding element in $\mathbb{S}_n$. We denote by $|{\bf d}|$ the numbers of ones in ${\bf d}$.

\begin{define}{\rm
For $ {\bf a}=(a_1, \ldots, a_n)\in\{0,1\}^n$ let $ v_{\bf a}=v_{a_1} \otimes \cdots \otimes v_{a_n}\in V_1^{\otimes n}$ be the
corresponding basis vector.
\begin{itemize}
\item Let $ \pi_n \colon
V_1^{\otimes n} \rightarrow V_n $ be given by the formula
\begin{eqnarray}
\label{pn} \pi_n(v_{\bf a}) = q^{-l({\bf a})}\frac{1}{{n \brack |{\bf a}|}} v_{|{\bf a}|}=q^{-l({\bf a})}v^{|{\bf a}|}
\end{eqnarray}
where $l({\bf a})$ is equal to the number of pairs $(i,j)$ with $i<j$ and $a_i<a_j.$ This gives the {\it projection} $ \pi_{i_1} \otimes
\cdots \otimes \pi_{i_r} \colon V_1^{\otimes(i_1+\cdots+i_r)} \rightarrow V_{i_1} \otimes \cdots \otimes V_{i_r}. $
\item We denote by $ \iota_n \colon V_n \rightarrow V_1^{\otimes n} $ the intertwining map
\begin{eqnarray}
\label{in}
 v_k \mapsto \sum_{|{\bf a}|=k} q^{b({\bf a})} v_{\bf a}
\end{eqnarray}
where $b({\bf a})=|{\bf a}|(n-|{\bf a}|)-l({\bf a})$, i.e. the number of pairs $ (i,j) $ with $ i < j $ and $ a_i > a_j$. Define the {\it inclusion} $ \iota_1 \otimes \cdots \otimes
\iota_r \colon V_{i_1} \otimes \cdots \otimes V_{i_r} \rightarrow V_1^{\otimes (i_1+\cdots+i_r)}.
$
\end{itemize}
}
\end{define}
The composite $p_n=\iota_n\circ\pi_n$ is the {\it Jones-Wenzl projector}. We symbolize the projection and inclusion by
\begin{picture}(50,10)
\put(0,0){\line(1,1){10}}
\put(0,0){\line(1,0){50}}
\put(50,0){\line(-1,1){10}}
\put(10,10){\line(1,0){30}}
\end{picture}
and
\begin{picture}(50,10)
\put(10,0){\line(-1,1){10}}
\put(10,0){\line(1,0){30}}
\put(40,0){\line(1,1){10}}
\put(0,10){\line(1,0){50}}
\end{picture}
respectively, and the idempotent $p_n$ by
\scalebox{0.5}{
\begin{picture}(50,20)
\put(0,0){\line(1,1){10}}
\put(0,0){\line(1,0){50}}
\put(50,0){\line(-1,1){10}}
\put(10,10){\line(1,0){30}}
\put(10,10){\line(-1,1){10}}
\put(40,10){\line(1,1){10}}
\put(0,20){\line(1,0){50}}
\end{picture}}
or just
\begin{picture}(40,10)
\put(0,0){\line(0,1){10}}
\put(0,0){\line(1,0){40}}
\put(40,0){\line(0,1){10}}
\put(0,10){\line(1,0){40}}
\end{picture}.
\begin{ex}
\label{JWex}
{\rm For $n=2$ we have $\iota_2(v_0)=v_0\otimes v_0$,
$\iota_2(v_1)=qv_1\otimes v_0+v_0\otimes v_1$, $\iota_2(v_2)=v_1\otimes v_1$,
and $\pi_2(v_0\otimes v_0)=v_0$, $\pi_2(v_1\otimes v_0)=v^1=[2]^{-1}v_1$,
$\pi_2(v_0\otimes v_1)=q^{-1}v^1=q^{-1}[2]^{-1}v_1$,  $\pi_2(v_1\otimes
v_1)=v_2$.
}
\end{ex}

\begin{remark}
\label{identifications}
{\rm
Note that our projector $\pi_n$  differs from the one, call it $\pi_n'$, in \cite{FK}. This is due to the fact that Frenkel and Khovanov work in the dual space, but also use a different comultiplication. The precise connection is explained in Remark \ref{annoying}.}
\end{remark}

\subsection{Direct Summands and weight spaces}
\label{decs}
Any finite dimensional $ \mathfrak{sl}_2-$, respectively $\mathcal{U}_q$-module decomposes into weight spaces. For the irreducible modules we have seen this already in \eqref{irreddef}. Most important for us will be the decomposition of ${V}_1^{\otimes n}=\bigoplus_{i=0}^n ({{V}_1^{\otimes n}})_{i}$, where the index $i$ labels the $q^{(2i-n)}$-weight space spanned by all vectors $v_{\bf{d}}$ with $|{\bf d}|=i$. Arbitrary tensor products have an analogous weight space decomposition, for instance inherited when viewed as a submodule via the inclusion \eqref{in}. For any positive integer $n$ there is a decomposition of $ \mathfrak{sl}_2$-modules, respectively $\mathcal{U}_q(\mathfrak{sl}_2)$-modules

\begin{eqnarray}
\label{isotypic}
\bar{V}_1^{\otimes n}\cong\bigoplus_{r=0}^{\lfloor n/2 \rfloor} \bar{V}_{n-2r}^{\oplus b_{n-2r}}&\text{respectively}&{V}_1^{\otimes n}\cong\bigoplus_{r=0}^{\lfloor n/2 \rfloor} {V}_{n-2r}^{\oplus b_{n-2r}}
\end{eqnarray}
where $b_{n-2r} = \dim\Hom_{\mathfrak{sl}_2}(\bar{V}_1^{\otimes n}, \bar{V}_{n-2r})$.
This decomposition is not unique, but the decomposition into isotypic components (that means the direct sum of all isomorphic simple modules) is unique. In particular, the summand $V_n$ is unique. The Jones-Wenzl projector $p_n=\iota_n \circ \pi_n $ is precisely the projection onto this summand. It has the following alternate definition (see e.g. \cite[Theorem
3.4]{FK}).

\begin{prop}
\label{charJW}
The endomorphism $p_n$ of $V_1^{\otimes n}$ is the unique $\mathcal{U}_q(\mathfrak{sl}_2)$-morphism which satisfies {\rm (for $1 \leq i \leq n-1 $):}
\begin{enumerate}[(i)]
\item $ p_n \circ p_n = p_n $ \item $C_{i,n} \circ p_n = 0 $ \item $ p_n \circ C_{i,n} = 0$.
\end{enumerate}
\end{prop}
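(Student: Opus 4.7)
The plan is to verify that $p_n = \iota_n\circ\pi_n$ satisfies (i)--(iii), then establish uniqueness via Schur's lemma and the isotypic decomposition \eqref{isotypic}.

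For (i), a direct calculation from \eqref{pn} and \eqref{in} shows $\pi_n\circ\iota_n = \op{id}_{V_n}$; on $v_k$ one rewrites $b({\bf a})-l({\bf a})=k(n-k)-2l({\bf a})$ and invokes the binomial identity $\sum_{|{\bf a}|=k}q^{-2l({\bf a})}=q^{-k(n-k)}{n\brack k}$. Idempotence then follows from $p_n\circ p_n=\iota_n\circ(\pi_n\circ\iota_n)\circ\pi_n=\iota_n\circ\pi_n=p_n$. For (ii) and (iii), the decomposition \eqref{isotypic} shows that $V_n$ does not appear as a summand of $V_1^{\otimes(n-2)}$; hence by Schur's lemma every $\mathcal{U}_q$-morphism $V_n\to V_1^{\otimes(n-2)}$ and $V_1^{\otimes(n-2)}\to V_n$ vanishes. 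Since $\iota_n(V_n)$ is the unique $V_n$-submodule of $V_1^{\otimes n}$ and $\pi_n$ kills everything outside it, this yields $\cap_{i,n}\circ\iota_n=0$ and $\pi_n\circ\cup_{i,n-2}=0$, which immediately give (ii) and (iii).

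For uniqueness, let $p$ be any $\mathcal{U}_q$-morphism satisfying (i)--(iii). Writing $V_1^{\otimes n}=V_n\oplus W$, with $W$ the sum of the $V_{n-2r}$-isotypic components for $r\geq 1$, the crucial assertions are:
\begin{enumerate}[(a)]
\item $\bigcap_{i=1}^{n-1}\ker(C_{i,n})=V_n$,
\item $\sum_{i=1}^{n-1}\op{im}(C_{i,n})=W$.
\end{enumerate}
Granting these, (ii) forces $\op{im}(p)\subseteq V_n$ and (iii) forces $p|_W=0$; then (i) combined with Schur's lemma (applied to the multiplicity-one summand $V_n$) shows $p|_{V_n}$ is either $0$ or $\op{id}_{V_n}$, so $p\in\{0,p_n\}$. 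Thus $p_n$ is uniquely characterized among the nonzero solutions (the case $p=0$ being the trivial one that the statement implicitly excludes).

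For (a), the inclusion $V_n\subseteq\ker(C_{i,n})$ is immediate from the argument above; conversely, since $\cup_{i,n-2}$ is injective, $\bigcap_i\ker(C_{i,n})=\bigcap_i\ker(\cap_{i,n})$, which equals $V_n$ by the standard highest-weight characterization (the joint kernel of all cap contractions is the top isotypic component). Claim (b) is dual, proved either by using adjointness of $\cap$ and $\cup$ under the pairing \eqref{pairing} to identify $\sum_i\op{im}(C_{i,n})$ with the complement of $\bigcap_i\ker(C_{i,n})=V_n$, or equivalently via the Temperley--Lieb structure on $\op{End}_{\mathcal{U}_q}(V_1^{\otimes n})$. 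The main obstacle is precisely (a)/(b), encoding the fact that $V_n$ embeds in $V_1^{\otimes n}$ as the joint kernel of all cap contractions; this is a standard fact, but the cleanest proof proceeds by induction on $n$ via weight-space considerations and is where the real work lies.
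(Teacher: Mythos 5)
The paper offers no proof of this proposition at all: it is quoted as a known result with a pointer to \cite[Theorem 3.4]{FK}, so there is nothing internal to compare your argument against. Taken on its own terms, your argument is sound and is the standard one. The existence half is complete: the computation $\pi_n\circ\iota_n=\Id_{V_n}$ via $b({\bf a})-l({\bf a})=k(n-k)-2l({\bf a})$ and the palindromic identity $\sum_{|{\bf a}|=k}q^{-2l({\bf a})}=q^{-k(n-k)}{n\brack k}$ is correct, and (ii), (iii) do follow from Schur's lemma once one notes that $V_1^{\otimes(n-2)}$ contains no copy of $V_n$. Your observation that $p=0$ also satisfies (i)--(iii), so that uniqueness holds only among nonzero solutions (or after adding the normalization $p_n(v_1^{\otimes n})=v_1^{\otimes n}$), is a fair and correct reading of a standard imprecision in how this characterization is usually stated. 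The reduction of uniqueness to the two claims (a) and (b), and the derivation of (b) from (a) by adjointness of $\cap$ and $\cup$ under a nondegenerate pairing, are also correct.

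The one load-bearing step you do not actually carry out is claim (a), that $\bigcap_i\ker(\cap_{i,n})$ is exactly the top isotypic component; you name it as the crux and defer it to "standard fact" status. It is indeed standard (it follows, for instance, from the fact that the cup diagrams give a basis of $\Hom_{\mathcal{U}_q}(V_{n-2r},V_1^{\otimes n})$, so every intertwiner from $V_{n-2r}$ with $r\geq 1$ is detected by some cap; this is exactly the content of \cite[Section 2--3]{FK}, which the paper cites for the whole proposition), but as written your proof is a correct reduction to that lemma rather than a self-contained argument. If you want the proof to stand alone, supply the induction on $n$ you allude to, or invoke the diagrammatic basis explicitly.
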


\subsection{Networks and their evaluations}
In Figure \ref{networks} we display networks and their evaluations. The first one represents an intertwiner $f: V_2\otimes V_3\rightarrow V_3$ (read from bottom to top). The second one the evaluation $f(v_0\otimes v_2)$. The third the evaluation $\langle  v_1,f(v_0\otimes v_2)\rangle$, that is the coefficient of $v^1$ when $f(v_0\otimes v_2)$ is expressed in the dual standard basis. The last one represents the {\it value of the unknot colored by $n$} (where the strand should be colored by $n$ or be cabled (which means be replaced by $n$ single strands), viewed either as an intertwiner $g: V_0\rightarrow V_0$ evaluated at $v_0$ or as evaluation $\langle v_0, g(1)\rangle$. It is easy to see that this value is $(-1)^n[n+1]$.

\begin{figure}
\label{networks}
\includegraphics{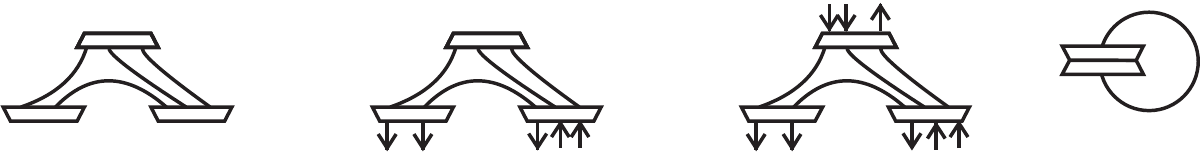}
\caption{Networks and their evaluations}
\end{figure}

\section{$3j$-symbols and weighted signed line arrangements in triangles}
\label{comb}
In this section we determine (quantum) $3j$-symbols by counting line arrangements of triangles. This provides a graphical interpretation of the q-analogue of the Van-der-Waerden formula for quantum $3j$-symbols, \cite{Kirillov}, \cite{KR}. Later, in Theorem \ref{trianglecat}, this combinatorics will be used to describe the terms of a certain resolution naturally appearing in a Lie theoretic categorification of the $3j$-symbols.

\subsection{Triangle arrangements}
 We start with the combinatorial data which provides a reformulation of the original definition of $3j$-symbols (following \cite{CFS}, \cite[Section 3.6]{FK}, see also \cite[VII.7]{Kassel}).
Let $i,j,k$ be non-negative integers. We say {\it  $i,j,k$ satisfy the triangle identities} or {\it are admissible} if $k\equiv i+j\mod 2$ and
\begin{equation}
\label{triangle} i+k\geq j,\quad j+k\geq i\quad i+j\geq k.
\end{equation}
To explain the above notion we consider a triangle with $ i, j, $ and $ k $ marked points on each side and call it an {\it
$i,j,k$-triangle}. (In the following the precise location of the marked points will play no role). We refer to the three sides as the $i$, $j$ and $k$-side respectively. A {\it line arrangement} for this triangle is an isotopy class of collections of non-intersecting arcs
inside the triangle, with boundary points the marked points such that every marked point is the boundary point of precisely one arc, and
the end points of an arc lie on different sides; see Figure \ref{fig:arcs} for an example. Given such a line arrangement $L$ we denote by
$z(L)$ (resp. $x(L)$) the number of arcs connecting points from the $i$-side with points from the $j$-side (resp. $k$-side), and by $y(L)$
the number of arcs connecting points from the $j$-side with points from the $k$-side.

\begin{figure}[htb]
\includegraphics{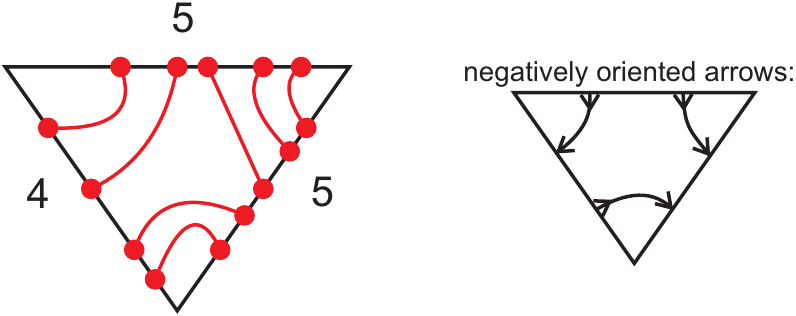}
\caption{A $(4,5,5)$-triangle with a line arrangement $L$ such that $x(L)=2=z(L)$, $y(L)=3$.}
\label{fig:arcs}
\end{figure}

\begin{lemma}
Suppose we are given an $i,j,k$-triangle $\triangle$. Then there is a line arrangement for $\triangle$ if and only if the triangle
equalities hold. Moreover, in this case the line arrangement is unique up to isotopy.\end{lemma}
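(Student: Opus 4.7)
\medskip

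The plan is to reduce the existence/uniqueness question to a linear system determining the number of arcs of each type, and then appeal to the standard fact that non-crossing arcs in a disk are determined up to isotopy by the pairing of their boundary points.

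First I would set up the bookkeeping. Any line arrangement $L$ of a triangle $\triangle$ has exactly three types of arcs; each marked point is an endpoint of exactly one arc, so counting endpoints on each side gives the linear system
\begin{equation*}
z(L)+x(L)=i,\qquad z(L)+y(L)=j,\qquad x(L)+y(L)=k.
\end{equation*}
This system has the unique rational solution
\begin{equation*}
x(L)=\tfrac{1}{2}(i+k-j),\qquad y(L)=\tfrac{1}{2}(j+k-i),\qquad z(L)=\tfrac{1}{2}(i+j-k).
\end{equation*}
For these numbers to be non-negative integers we need exactly the parity condition $k\equiv i+j\pmod 2$ together with the three triangle inequalities \eqref{triangle}. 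This gives the ``only if'' direction of existence and also shows that the numbers $x(L),y(L),z(L)$ depend only on $i,j,k$ (not on $L$).

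Next I would address the ``if'' direction by explicit construction. Given non-negative integers $x,y,z$ with the correct sums, I would draw the arrangement near the three vertices: place $z$ nested arcs joining the $z$ marked points of the $i$-side closest to the $ij$-vertex with the $z$ marked points of the $j$-side closest to the same vertex; similarly place $x$ nested arcs near the $ik$-vertex and $y$ nested arcs near the $jk$-vertex. By construction these three families are pairwise disjoint (they live in disjoint neighborhoods of the three vertices), every marked point is used exactly once, and arcs only connect distinct sides.

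Finally, for uniqueness up to isotopy, I would view the triangle as a closed disk with $i+j+k$ marked boundary points. It is a standard fact that a collection of pairwise disjoint properly embedded arcs in a disk is determined up to ambient isotopy (rel boundary) by the induced matching of the boundary points. Hence it suffices to show that the matching itself is forced. The constraint that arcs connect distinct sides, combined with the non-crossing condition, forces the matching to be the nested one described above: reading the boundary points in cyclic order around $\triangle$, an arc from the $i$-side to the $k$-side cannot be separated by an arc from the $i$-side to the $j$-side without creating a crossing, so the $z$ arcs incident to the $ij$-vertex must use the $z$ endpoints closest to that vertex on both sides, and similarly for the other two vertices. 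I expect this last step to be the main (though still elementary) obstacle: the matching has to be identified rigorously from the non-crossing condition, which is most cleanly done by induction on $i+j+k$, peeling off an ``outermost'' arc adjacent to a vertex and applying the inductive hypothesis to the remaining smaller triangle.
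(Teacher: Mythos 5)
Your proposal is correct and follows essentially the same route as the paper: counting endpoints side by side yields the linear system whose unique solution forces $x,y,z$ as in \eqref{xyz}, existence comes from the explicit nested construction, and uniqueness reduces to the fact that the (forced) boundary matching determines the isotopy class. You are in fact somewhat more careful than the paper, which simply asserts that the values of $x,y,z$ determine the arrangement up to isotopy, whereas you justify why the non-crossing condition pins down the matching itself.
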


\begin{proof}
Assume there is a line arrangement $L$. Then $i+j=x(L)+z(L)+z(L)+y(L)\geq x(L)+y(L)=k$, and $i,j,k$ satisfy the triangle identities.
Conversely, if the triangle identities hold, then there is obviously an arrangement $L$ with $z=z(L)$, $x=x(L)$, $y=y(L)$ as follows
\begin{equation}
\label{xyz}
z=\frac{i+j-k}{2},\quad x=\frac{i+k-j}{2},\quad y=\frac{j+k-i}{2}.
\end{equation}
Assume there is another line arrangement $L'$. So $x(L')=x(L)+a$ for some integer $a$, and
then $y(L')=y(L)-a$ and $z(L')=z(L)-a$. On the other hand $2x(L')+2y(L')+2z(L')=i+j+k=2x(L)+2y(L)+2z(L)$, hence $a=0$. The uniqueness
follows, since the values of $x,y,z$ uniquely determine a line arrangement up to isotopy.
\end{proof}

An {\it oriented line arrangement} is a line arrangement with all arcs oriented. We
call an arc {\it negatively oriented} if it points either from the $i$ to the $j$ side, or from the $k$ to either the $i$ or $j$ side, and {\it positively oriented} otherwise; see Figure \ref{fig:example3j} for examples and Figure \ref{fig:arcs} for an illustration. To each oriented
line arrangement we assign the sign $$\sgn{L}=(-1)^\text{ number of negatively oriented arcs}.$$ Given an oriented line arrangement $L$ of
an $i,j,k$-triangle, we denote by $r=r(L)$ resp. $s=s(L)$ the number of arcs with end point at the $i$-respectively $j$-side and the orientation points into the interior of the triangle. Denote by
$t=t(L)$ the number of arcs with endpoint at the $k$-side and oriented with the orientation pointing outside the triangle. We call $L$ an {\it $(i,j,k,r,s,t)$-arrangement}. The sum of the signs of
all the $(i,j,k,r,s,t)$-arrangements for a fixed $i,j,k$-triangle is denoted $\op{Arr}^{i,j,k}_{r,s,t}$, hence  $\op{Arr}^{i,j,k}_{r,s,t}=\sum_{L}\sgn{L}$.

\begin{figure}[htb]
\includegraphics{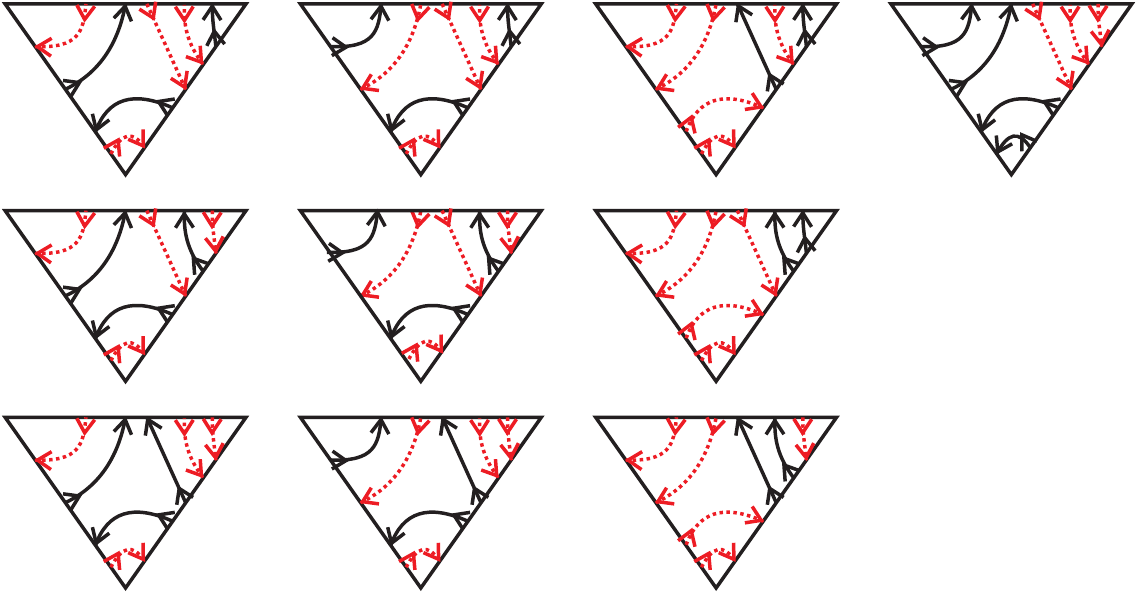}
\caption{Ten (out of $16$ possible) $(4,5,5,2,2,2)$-line arrangements; the negatively oriented arcs are in dotted light red, $\sgn(L)=1$
for the first two columns, $\sgn(L)=-1$ for the second two columns.} \label{fig:example3j}
\end{figure}

\nopagebreak
\subsection{$3j$-symbols}
Assume we are given $i,j,k$ satisfying the triangle identities. Denote by $ \Phi_{i,j}^k \colon\; V_1^{\otimes i+j} \rightarrow V_1^{\otimes k} $ the intertwiner given by the diagram
\begin{equation}
\label{intertwiner} \xy {(10,0)*{};(10,10)*{}**\dir{-}}; {(16,0)*{};(22,0)*{}**\crv{(17,4)&(21,4)}}; {(28,0)*{};(28,10)*{}**\dir{-}};
{(5,7)*{\frac{i+k-j}{2}}}; {(19,7)*{\frac{i+j-k}{2}}}; {(33,7)*{\frac{j+k-i}{2}}}; {(10,-4)*{}}
\endxy
\end{equation}
The number at a strand indicates that there are in fact that many copies of the strand, for instance $\frac{i+j-k}{2}\in\mZ_{\geq0}$
nested caps.
\begin{define}
\label{3j}
The $ \mathcal{U}_q$-intertwiner $ A_{i,j}^{k} \colon\; V_i \otimes V_j \rightarrow V_k $ is defined as
    $ \pi_k \circ \Phi_{i,j}^k \circ \iota_i \otimes \iota_j $.
The {\bf $3j$-symbol}
    $C_{i,j}^k(r,s,t) $ is defined to be $ \langle v_t,A_{i,j}^k (v_r \otimes v_s)\rangle,$  where $0\leq r\leq i$, $0\leq
    s\leq j$, $0\leq t\leq k$.
\end{define}

\begin{figure}[htb]
\includegraphics{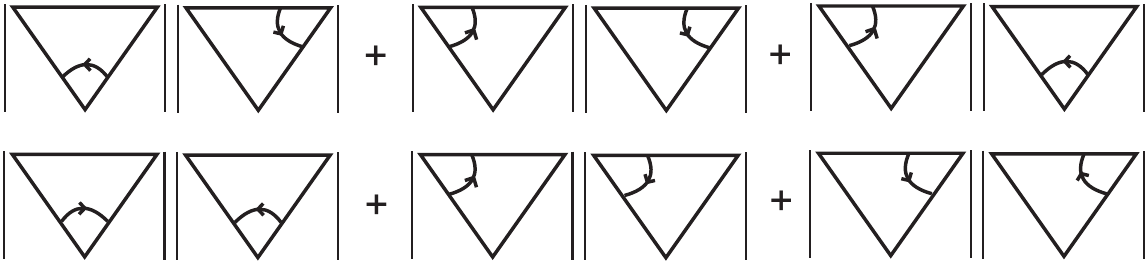}
\caption{The combinatorial formula for $\gamma_a$} for a fixed oriented triangle arrangement. For instance the first summand is the number of arcs oriented from the $j$- to the $i$-side times the number of arcs oriented from the $k$- to the $j$-side.
\label{fig:qcount}
\end{figure}

\begin{thm}
\label{ClebschGordon} Assume $i,j,k$ satisfy the triangle identity. With the notation from \eqref{xyz} we have
\begin{enumerate}[(i)]
\item The classical $3j$-symbol counts signed arrangements:
\begin{eqnarray*}
C_{i,j}^k(r,s,t)_{q=1}&=&\sum_{a=0}^z {(-1)}^{a}  {z \choose a} {x \choose r-a} {y \choose j-s-a}\\
&=&(-1)^{\frac{i+j+k}{2}+r+s}\op{Arr}^{i, j, k}_{r, s, t}.
\end{eqnarray*}
\item The quantum $3j$-symbol counts weighted signed arrangements: $$C_{i,j}^k(r,s,t)=q^{-t(k-t)}\sum_{a=0}^z (-1)^aq^{-a}q^{\gamma_a} {z \brack {a}} {x \brack {r-a}}{y \brack {j-s-a}},$$
where
$\gamma_a=\left((z-a)+(r-a)\right)(y-s+z-a)+(r-a)\left((x-r+a)+(z-a)\right)+a(z-a)+(j-s-a)(y-j+s+a)$. The latter number can be expressed in terms of $(i,j,k,r,s,t)$-triangle arrangements as displayed in Figure~\ref{fig:qcount}.
\end{enumerate}
\end{thm}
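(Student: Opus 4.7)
The plan is to unfold the definition $C_{i,j}^k(r,s,t)=\langle v_t,\pi_k\circ\Phi_{i,j}^k\circ(\iota_i\otimes\iota_j)(v_r\otimes v_s)\rangle$, expand every operator in the standard basis of $V_1^{\otimes(i+j)}$, and reorganize the resulting sum to match the claimed combinatorial expression. Using \eqref{in} we write $(\iota_i\otimes\iota_j)(v_r\otimes v_s)=\sum_{{\bf b}_1,{\bf b}_2}q^{b({\bf b}_1)+b({\bf b}_2)}v_{{\bf b}_1}\otimes v_{{\bf b}_2}$, summed over sequences with $|{\bf b}_1|=r$, $|{\bf b}_2|=s$. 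Splitting each basis sequence according to the geometry of $\Phi_{i,j}^k$ as ${\bf b}_1=({\bf c}_1,{\bf d}_1)$ and ${\bf b}_2=({\bf d}_2,{\bf c}_2)$ with ${\bf c}_1\in\{0,1\}^x$, ${\bf d}_1,{\bf d}_2\in\{0,1\}^z$ and ${\bf c}_2\in\{0,1\}^y$, the nested caps in $\Phi_{i,j}^k$ annihilate every configuration unless ${\bf d}_2$ is the reverse complement of ${\bf d}_1$; in that case each $(0,1)$-cap contributes $1$ and each $(1,0)$-cap contributes $-q^{-1}$ by \eqref{defcupcap}, so the total cap factor is $(-q^{-1})^a$ with $a:=|{\bf d}_1|$.

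Second, applying $\pi_k$ via \eqref{pn} and pairing against $v_t$ via \eqref{scalar} forces $t=r+s-z$ (i.e.\ weight preservation) and yields an expression of the form
\[
C_{i,j}^k(r,s,t)=q^{t(k-t)}\sum_{a=0}^{z}(-1)^a q^{-a}\sum_{{\bf c}_1,{\bf c}_2,{\bf d}_1}q^{b({\bf b}_1)+b({\bf b}_2)-l(({\bf c}_1,{\bf c}_2))}.
\]
To evaluate the inner sum I would split each of the inversion-type statistics $b({\bf b}_1)$, $b({\bf b}_2)$, $l(({\bf c}_1,{\bf c}_2))$ into contributions from pairs lying inside a single block versus pairs with endpoints in distinct blocks. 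A crucial simplification is that the reverse-complement involution ${\bf d}_2=\overline{{\bf d}_1^{\,\op{rev}}}$ satisfies $b({\bf d}_2)=l({\bf d}_1)$, whence $b({\bf d}_1)+b({\bf d}_2)=a(z-a)$ is independent of ${\bf d}_1$ and the sum over ${\bf d}_1$ collapses to a binomial counting. The remaining intra-block sums over ${\bf c}_1$ and ${\bf c}_2$, combined with the constant cross-block contributions, reassemble via the standard $q$-binomial identities into the Gaussian binomials ${z\brack a}$, ${x\brack r-a}$, ${y\brack j-s-a}$; the cross-block exponents together with the $q^{t(k-t)}$ prefactor produce the overall $q^{-t(k-t)}q^{\gamma_a}$, establishing part (ii).

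Part (i) follows by specializing at $q=1$: Gaussian binomials reduce to ordinary binomials, giving the first equality. The identification $\sum_a(-1)^a{z\choose a}{x\choose r-a}{y\choose j-s-a}=(-1)^{(i+j+k)/2+r+s}\op{Arr}^{i,j,k}_{r,s,t}$ is a direct bijection: an oriented $(i,j,k,r,s,t)$-arrangement is uniquely described by a triple $(a,p,q)=(a,r-a,s-z+a)$ (numbers of $i$-$j$, $i$-$k$, and $j$-$k$ arcs inwardly oriented at the $i$-, $i$-, and $j$-sides respectively), together with a choice of which of the $z$, $x$, $y$ arcs of each type realize these orientations. By the negativity rule of Section \ref{comb}, the sign of such an arrangement equals $(-1)^{a+(x-p)+(y-q)}=(-1)^{a+(i+j+k)/2-r-s}$, and the binomial counts ${z\choose a}{x\choose p}{y\choose y-q}={z\choose a}{x\choose r-a}{y\choose j-s-a}$ match the corresponding summand. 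The pictorial formula for $\gamma_a$ in Figure \ref{fig:qcount} is read off the cross-block contributions: each pair of arcs with prescribed orientations at a common triangle side contributes a single power of $q$.

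The main obstacle is the $q$-power bookkeeping in the second step. The statistics $b$ and $l$ are inversion and non-inversion counts with opposite signs, so their naive block decomposition produces mixed terms of the form $b({\bf c}_\alpha)-l({\bf c}_\alpha)$ that do not directly give symmetric Gaussian binomials; the key identities $b({\bf d}_2)=l({\bf d}_1)$ and $b+l=p(n-p)$ are needed to repackage these into the Gaussian binomials appearing in the theorem. Once this dictionary is settled, verifying that the surviving cross-block exponent matches the pictorial description of $\gamma_a$ in Figure \ref{fig:qcount} is the longest remaining task, but is a termwise translation of each cross-block pair into a pair of arcs sharing a common triangle side.
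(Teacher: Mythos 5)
Your overall strategy coincides with the paper's: expand $(\iota_i\otimes\iota_j)(v_r\otimes v_s)$ in the standard basis of $V_1^{\otimes(i+j)}$, split each sequence into the $x$-, $z$- and $y$-blocks dictated by $\Phi_{i,j}^k$, evaluate the nested caps by \eqref{defcupcap}, then apply $\pi_k$ and the evaluation form; your treatment of part (i), including the sign count $(-1)^{a+(x-p)+(y-q)}=(-1)^{a+\frac{i+j+k}{2}-r-s}$, is correct and matches the paper.

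The gap is in your ``crucial simplification'' for part (ii). If ${\bf d}_2$ is the reverse complement of ${\bf d}_1$, i.e.\ $({\bf d}_2)_i=1-({\bf d}_1)_{z+1-i}$, then $b({\bf d}_2)=b({\bf d}_1)$, \emph{not} $l({\bf d}_1)$: a pair $i<j$ with $({\bf d}_2)_i=1>({\bf d}_2)_j=0$ corresponds under $i\mapsto z+1-i$ to a pair $i'<j'$ with $({\bf d}_1)_{i'}=1>({\bf d}_1)_{j'}=0$. Reversal alone or complementation alone exchanges $b$ and $l$; doing both returns $b$. (Check $z=2$, ${\bf d}_1=(1,0)$: then ${\bf d}_2=(1,0)$ and $b({\bf d}_2)=1\neq 0=l({\bf d}_1)$.) Hence $b({\bf d}_1)+b({\bf d}_2)=2b({\bf d}_1)$ is \emph{not} constant in ${\bf d}_1$, and the $z$-block sum must not collapse to an ordinary count: it is exactly $\sum_{{\bf d}_1}q^{2b({\bf d}_1)}=q^{a(z-a)}{z\brack a}$ (the identity $\sum_w q^{2b(w)-mn}={m+n\brack m}$ over longest coset representatives, which is the one the paper invokes) that produces the Gaussian binomial ${z\brack a}$ in the statement. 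As written, your bookkeeping would output ${z\choose a}$ times a fixed power of $q$ in place of ${z\brack a}$; this already gives the wrong answer whenever $z\geq 2$ and a term with $0<a<z$ contributes, e.g.\ the $a=1$ term ${2\brack 1}=q+q^{-1}\neq 2$ in the $C_{4,5}^{5}$ computation following the theorem. Relatedly, your assertion that ${z\brack a}$ is reassembled from the intra-block sums over ${\bf c}_1,{\bf c}_2$ is misplaced: those give ${x\brack r-a}$ and ${y\brack j-s-a}$, and the $z$-binomial can only come from the ${\bf d}_1$-sum. Once the false identity is replaced by the doubling $b({\bf d}_1)+b({\bf d}_2)=2b({\bf d}_1)$ together with the quoted $q$-identity, the rest of your outline goes through and is essentially the paper's argument.
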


\begin{proof}
Consider first the case $q=1$. Let $v_r\in V_i$ and $v_s\in V_j$.
Then $\iota_r\otimes \iota_s(v_r\otimes v_s)$ is the sum over all basis vectors $v_{\bf d}\in V_1^{\otimes i+j}$, where ${\bf d}$ contains $r+s$ ones, $r$ of them amongst the first $i$ entries. To compute $v:=\Phi_{i,j}^k(v_{\bf d})$ consider the corresponding $\{\up, \down\}$-sequence for ${\bf d}$ and place it below the diagram \eqref{intertwiner}. If the caps are not getting oriented consistently then $v=0$, otherwise all caps and vertical strands inherit an orientation and $v=(-1)^{a}v_{{\bf d}'}$, where ${\bf d}'$ is the sequence obtained at the top of the strands, and $a$ denotes the number of clockwise oriented arcs arising from the $z$ caps. Applying the projection $\pi_n$ and evaluating at $v_t$ via the formula \eqref{scalar} gives zero if the number of $1$'s in ${\bf d}'$ is different from $t$ and evaluates to $(-1)^{a}$ otherwise. Hence we only have to count the number of possible orientations for \eqref{intertwiner} obtained by putting $\up$'s and $\down$'s at the end of the strands- with $r$ $\up$'s amongst the
first $i$ endpoints and $s$ $\up$'s amongst the last $j$ endpoints of strands at the bottom of the diagram, and $t$ $\up$'s amongst the
endpoints at the top. There are ${z \choose a}$ ways of arranging the $a$ clockwise arrows in the $z$-group.  Hence there must be $y-j+s+a$
lines pointing upwards in the $y$-group and $r-a$ pointing upwards in the $x$-group. There are $ {y \choose j-s-a} $, resp. $ {x \choose r-a} $ ways of arranging these arrows. Therefore,
$$C_{i,j}^k(r,s,t) =\sum_{a\geq 0} {(-1)}^{a} {z \choose a} {x \choose r-a} {y \choose j-s-a}.$$
Note that the same rules determine the number of signed line arrangements if we can make sure that the signs match. So, given a line
arrangement we compute its sign. There are $a$ arcs going from the $i$-side to the $j$-side, $x-r+a$ arcs from
the $k$-side to the $i$-side and $j-s-a$ from the $k$-side to the $j$-side. Using \eqref{xyz}, the sign is
$(-1)^{a+x+r+a+j+s+a}=(-1)^{\frac{i+j+k}{2}+r+s}(-1)^a$. Up to the constant factor  $(-1)^{\frac{i+j+k}{2}+r+s}$ this agrees with the sign appearing in the formula for the $3j$-symbols and the $q=1$ case follows.

Consider now generic $q$. Let $v_r$, $v_s$ be as above.  Note first that when splitting a tuple ${\bf d}$ apart into say ${\bf d}_1=(d_1,d_2,\ldots, d_l)$ and ${\bf d}_2=(d_{l+1},\ldots,d_r)$ then we have $b({\bf d})=b({\bf d}_1)+b({\bf d}_2)+n_0({\bf d}_2)n_1({\bf
d}_1)$, where $n_0$ resp. $n_1$ counts the number of $0$'s respectively $1$'s. Now let $v_{\bf d}\in V_1^{\otimes i+j}$ appear in $\iota_r\otimes \iota_s(v_r\otimes v_s)$ and $v:=\Phi_{i,j}^k(v_{\bf d})\not=0$. We split into four parts,  $ {\bf d}=v_{w({\bf d}_1)} \otimes v_{g({\bf d}_2)} \otimes v_{g'({\bf d}_3)} \otimes v_{w'({\bf d}_4)} $, according to the $x$-, $z$- and $y$- group. More precisely
\begin{eqnarray*}
{\bf d}_1=(\underbrace{0, \ldots, 0}_{i-r+a-z}, \underbrace{1, \ldots, 1}_{r-a}),&&
{\bf d}_2=(\underbrace{0, \ldots, 0}_{z-a},
\underbrace{1, \ldots, 1}_{a})\\
{\bf d}_3 =(\underbrace{0, \ldots, 0}_{a}, \underbrace{1, \ldots, 1}_{z-a})&& {\bf d}_4
=(\underbrace{0, \ldots, 0}_{j-s-a}, \underbrace{1, \ldots, 1}_{s-z+a})
\end{eqnarray*}
with $w\in \mathbb{S}_x$, $w'\in \mathbb{S}_y$, $g,g'\in \mathbb{S}_z$ the appropriate minimal coset representatives.
Then this term gets weighted with
\begin{eqnarray}
\label{c}
c&:=&q^{b(w)+b(g)+(z-a)(r-a)+b(g')+b(w')+(y-s+z-a)(z-a)}\nonumber\\
&=&q^{(z-a)(r-a+y-s+z-a)}q^{b(w)+b(w')+2b(g)}.
\end{eqnarray}
Applying $ \Phi_{i,j}^k $ then gives $(-q^{-1})^{a}c v_{w({\bf d}_1)} \otimes v_{w'({\bf d}_4)}.$ Applying the projector $ \pi_k $ from \eqref{pn} to this element gives
$$q^{-t(k-t)}q^{l(w)+l(w')+(r-a)(y-s+z-a)}(-1)^aq^{-a} c\frac{v_t}{{k \brack t}}.$$
Evaluating with $v_t$ using \eqref{scalarprod} gets rid of the binomial coefficient.
Now summing over all the ${\bf d}$ using the identity
$$ \sum_{w \in \mathbb{S}_{m+n} / \mathbb{S}_m \times \mathbb{S}_n} q^{2b(w)-mn} = {m+n \brack m}$$
(where $w$ runs over all longest coset representatives) gives the desired formula
$$C_{i,j}^k(r,s,t)=q^{-t(k-t)}\sum_{a=0}^z (-1)^aq^{-a} q^{\gamma_a} {z \brack {a}} {x \brack {r-a}}{y \brack {j-s-a}}.$$
It is obvious that $\gamma_a$ has the interpretation as displayed in Figure \ref{fig:qcount}.
\end{proof}

\begin{ex}
\label{3jex}
{\rm
Consider the case $i=j=k=2$ and $r=s=t=1$. Then the $3j$-symbol evaluates to $C_{2,2}^{2}(1,1,1)=-q^{-2}+q^2$. There are only two line arrangement which differ in sign. We have $\gamma_1=0$ and $\gamma_0=3$, hence our formula says $C_{2,2}^{2}(1,1,1)=q^{-1}((-1)q^{-1}+q^3)$. The other values for $i=j=k=2$ are zero except in the following cases:
$C_{2,2}^2(1,0,0)=-q^{-1}$, $C_{2,2}^2(2,0,1)=-q^{-1}$,
$C_{2,2}^2(0,1,0) = q$,
$C_{2,2}^2(2,1,2) = -q^{-1}$,
$C_{2,2}^2(0,2,1) = q^{-1}$,
$C_{2,2}^2(1,2,2) = q$.
}
\end{ex}

\begin{ex}{\rm
Counting all the $(4,5,5,2,2,2)$-triangle arrangements (see Figure \ref{fig:example3j}) with signs we obtain $C_{4,5}^{5}(2,2,2)_{q=1}=1-12+3=-8$. The contributions for
the $q$-version are:
\begin{eqnarray*}
&\gamma_0=2\cdot 3+2\cdot 3+2\cdot 2+0+0+0=16,&\\
&\gamma_1=1\cdot 2+1\cdot2+1\cdot1+1\cdot1+1\cdot1+1\cdot2=9,&\\
&\gamma_2=0+0+0+0+0+1\cdot2=2,&
\end{eqnarray*}
hence for $a=0$ we get a contribution of $q^{16}$, for $a=1$ a contribution of $-q^8 {2\brack 1}{2 \brack 1}{3 \brack 2}=-(q^{12}+3q^{10}+4q^8+3q^6+q^{4})$ and for $a=2$ a contribution of ${3\brack 1}=q^{2}+1+q^{-2}$. Altogether,  $$C_{4,5}^{5}(2,3,3)=q^{-3}(q^{16}-q^{12}-3q^{10}-4q^8-3q^6-q^{4}+q^{2}+1+q^{-2}).$$
}
\end{ex}

\section{(Twisted) canonical basis and an alternate $3j$-formula}
In this section we will deduce new integral and positive $3j$-formulas by working in a special basis. They will later be categorified using cohomology rings of Grassmannians.\\

We already introduced the standard basis and dual standard basis of $V_{\bf d}$ given by the set of vectors $v_{\bf a}$ and $v^{\bf a}$ respectively. There is also Lusztig's {\it canonical basis} $\lbrace v_{a_1} \diamondsuit \cdots \diamondsuit v_{a_r} \rbrace $ and Lusztig's {\it dual canonical basis} $\lbrace v^{a_1} \heartsuit \cdots \heartsuit v^{a_r}\rbrace $, with $a_j$ as above.
These two bases are dual with respect to the bilinear form $\langle_-,_-\rangle'$ satisfying $\langle v_i\otimes v_j,v^k\otimes v^l\rangle'=\delta_{i,l}\delta_{j,k}$. It pairs a tensor product of two irreducible representations with the tensor product where the tensor factors are swapped. We call this form therefore the {\it twisted form}. For definitions and explicit formulas relating these bases we refer to \cite[Theorem 1.6 and Proposition 1.7]{FK}. The translation from our setup to theirs is given as follows:

\begin{remark}
\label{annoying}
{\rm
Let $\triangle_{FK}$ be the comultiplication and $\overline\triangle_{FK}$ the dual comultiplication from \cite[(1.2) resp. (1.5)]{FK}. Then there is an isomorphism of $\mathcal{U}_q$-modules
\begin{eqnarray}
\label{Omega}
\Omega:\quad (V_1^{\otimes n},\triangle)&\longrightarrow&(V_1^{\otimes n},\triangle_{FK}),
\end{eqnarray}
where $\Omega=q^{l(w_0^i)}{\Pi}_{w_0}=q^{i(n-i)}{\Pi}_{w_0}$ where ${\Pi}_{w_0}$ is the full positive twist. For $n=2$ the map $\Omega$ is given as follows:
\begin{eqnarray}
v_i\otimes v_i&\mapsto& v_i\otimes v_i\nonumber\\
v_1\otimes v_0&\mapsto&v_0\otimes v_1\nonumber\\
v_0\otimes v_1&\mapsto&v_1\otimes v_0+(q^{-1}-q)v_0\otimes v_1\label{nequalstwo}
\end{eqnarray}
where $i=0,1$. For arbitrary $n$ we pick a reduced expression of the longest element $w_0$ of $S_n$ and replace each simple transposition $s_i$ by the corresponding map \eqref{nequalstwo} acting on the $i$th and $i+1$-st tensor factor.
There is also an isomorphism of  $\mathcal{U}_q$-modules
\begin{eqnarray}
\label{D}
D:\quad(V_1^{\otimes n},\psi\otimes\psi\circ\triangle\circ\psi)&\longrightarrow&(V_1^{\otimes n},\overline{\triangle}_{FK}),\\
v^{\bf a}&\longmapsto&q^{{|\bf a|}(|n-{\bf a}|)}v^{{\bf a}},\nonumber
\end{eqnarray}
where $\psi$ is the anti-linear anti-automorphism of $\mathcal{U}_q$ satisfying $\psi(E) = E$, $\tau(F) = F$, $\tau(K)=K^{-1}$.
%\footnote{We show the calculations for $n=2$, in general it follows from \cite[Proposition 5.2, Theorem 5.3]{FKS}. For the first formula $\Omega(E00)=\Omega(01+q10)=10+(q^{-1}-q)01+q01$ and $E\Omega(00)=E00=10+q^{-1}01$, $\Omega(E10)=\Omega(11)=11$ and $E\Omega(10)=E(01)=11$ and $\Omega(E01)=\Omega(q^{-1}11)=q^{-1}11$ and $E\Omega(01)=E(10+(q^{-1}-q)01)=q11+(q^{-1}-q)11=q^{-1}11$. Similarly for $F$. For
%$K$ it's clear, since the map preserves weight spaces. For the second formula $D(E00)=D(01+q^{-1}10)=q(01+(q^{-1})10)$ and $ED(00)=E00=10+q01$, $D(E10)=D(11)=11$ and $ED(10)=E(q10)=11$ and $D(E01)=D(q^{-1}11)=q11$ and $ED(01)=E(q01)=q11$. Similarly for $F$, and for $K$ we have $D(K00)=D(q^200)=q^{-2}00$ and on the other hand $KD(00)=K(00)=q^{-2}00$.}
The isomorphisms for arbitrary tensor products are completely analogous, namely $v_{\bf a}\mapsto v_{w_0({\bf a})}$ and $v^{\bf a}\mapsto\prod_{i=0}^rq^{a_i(d_i-a_i)}v^{{\bf a}}$.
}
\end{remark}

Under these identifications the bilinear form $\langle_-, _-\rangle'$ turns into the anti-bilinear form $(_-,_-)$ on $V_{\bf d}$, and we obtain two pairs of distinguished bases of $V_{\bf d}$. First, the image of the canonical basis under ${\Omega}^{-1}$ paired with the image under $D^{-1}$ of the dual canonical basis (the former will turn out to be the twisted canonical basis defined below) and secondly, the preimage under $\Omega^{-1}$ of the standard basis paired with $D^{-1}$ applied to the dual standard basis. We will call the image of the dual canonical basis under $D$ the {\it shifted dual canonical basis}, since its expression in terms of the standard basis just differs by replacing $q^{-1}$ by $q$ in the explicit formulas \cite[Proposition 1.7]{FK} and additionally multiplication of the $q$-power involved in the definition of $D$. The special role of the dual canonical basis from Section \ref{U2basics} becomes transparent in the following result \cite[Theorem 1.11]{FK}:

\begin{theorem}
\label{FKsimples}
Let $v$ be an element of the dual canonical basis in $V_1^{\otimes n}$. Let $n=d_1+d_2+\cdots+ d_r$ with $d_j\in\mZ_{\geq 0}$. Then $\pi_{d_1}\otimes \pi_{d_2}\otimes\cdots\otimes \pi_{d_r}(v)$  is either zero or an element of the shifted dual canonical basis of
$V_{\bf d}=V_{d_1}\otimes V_{d_2}\otimes\cdots\otimes V_{d_r}$ and every element of the shifted dual canonical basis of $V_{\bf d}$ has this form with $v$ defined uniquely.
\end{theorem}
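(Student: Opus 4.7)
The plan is to obtain Theorem \ref{FKsimples} from \cite[Theorem 1.11]{FK} by transporting it through the $\mathcal{U}_q$-module isomorphism $D$ of Remark \ref{annoying}. In the Frenkel--Khovanov conventions (i.e.\ with $\overline{\triangle}_{FK}$), their \cite[Theorem 1.11]{FK} asserts precisely the parallel statement: the analogous projector sends each FK dual canonical basis vector of $V_1^{\otimes n}$ either to zero or, uniquely, to an FK dual canonical basis vector of $V_{\bf d}$. Since by definition the shifted dual canonical basis of $V_{\bf d}$ is the image under $D$ of the FK dual canonical basis, the theorem we want is literally the transport of \cite[Theorem 1.11]{FK} along $D$.

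To execute this plan I would proceed in two steps. First, verify that the isomorphism $D$ on $V_1^{\otimes n}$ identifies our dual canonical basis with the FK dual canonical basis: the rescaling factor $q^{|{\bf a}|(n-|{\bf a}|)}$ built into \eqref{D} is exactly the one making $D$ intertwine the two bar involutions, which together with the standard integrality characterization of the dual canonical basis pins down this identification. Second, verify the intertwining of projectors $D_{V_{\bf d}}\circ \pi = \pi^{FK}\circ D_{V_1^{\otimes n}}$, where $\pi:=\pi_{d_1}\otimes\cdots\otimes\pi_{d_r}$ and $\pi^{FK}$ is the analogous intertwiner for $\overline{\triangle}_{FK}$. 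This reduces to a direct computation on the dual standard basis using formula \eqref{pn}, the two explicit descriptions of $D$ given in Remark \ref{annoying}, and the block-wise decomposition of the statistics $b({\bf a})$ and $l({\bf a})$ already employed in the proof of Theorem \ref{ClebschGordon}. Once both steps are in place, the conclusion is automatic: for $v$ in our dual canonical basis, \cite[Theorem 1.11]{FK} applied to $D(v)$ shows that $\pi^{FK}(D(v))$ is either zero or a unique FK dual canonical basis vector in $V_{\bf d}$; pulling back by $D_{V_{\bf d}}^{-1}$ and using the intertwining yields $\pi(v)$, which is therefore either zero or a unique shifted dual canonical basis vector.

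The main obstacle is the $q$-power bookkeeping in the second step. The two instances of $D$ rescale by different exponents --- $q^{|{\bf a}|(n-|{\bf a}|)}$ globally on $V_1^{\otimes n}$ versus $\prod_i q^{a_i(d_i-a_i)}$ block-wise on $V_{\bf d}$ --- and $\pi^{FK}$ differs from $\pi$ by $q$-shifts coming from the different comultiplications. The key identity that makes everything cancel is
\begin{equation*}
|{\bf a}|(n-|{\bf a}|) - \sum_{i=1}^r a_i(d_i-a_i) = \sum_{i<j}\bigl(a_i(d_j-a_j) + (d_i-a_i)a_j\bigr),
\end{equation*}
combined with the analogous block decomposition of $b({\bf a})$. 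Given the delicate nature of this match, the cleanest presentation is probably to check the commutation on one fixed dual standard basis vector $v^{\bf a}$ and then invoke linearity; the substantive mathematical content --- existence, uniqueness, and the $0$/basis-vector dichotomy --- is all imported from \cite[Theorem 1.11]{FK}.
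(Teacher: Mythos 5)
Your proposal is correct and matches the paper's intended justification: the paper does not prove this statement at all but cites it directly as \cite[Theorem 1.11]{FK}, with the translation between conventions supplied by the isomorphisms $\Omega$ and $D$ of Remark \ref{annoying} --- exactly the transport you carry out. Your explicit verification of the $q$-power bookkeeping (including the identity $|{\bf a}|(n-|{\bf a}|)-\sum_i a_i(d_i-a_i)=\sum_{i<j}(a_i(d_j-a_j)+(d_i-a_i)a_j)$) is a correct elaboration of what the paper leaves implicit.
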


In the following we will describe, for two tensor factors, a twisted basis which will turn out to be the image of the canonical basis under $\Omega^{-1}$.
\begin{define}
\label{twistedcanbasis}
{\rm The {\it twisted canonical basis} $\{v_r\;\spadesuit\; v_s\mid 1\leq
r\leq i, 1\leq s\leq j\}$ of $V_i\otimes V_j$ is defined as follows.
\begin{eqnarray*}
v_r\;\spadesuit\;v_s&:=&
\begin{cases}
E^{(s)} F^{(i-r)} v_i \otimes v_0,&\text{if $ r+s \leq j$}\\
F^{(i-r)} E^{(s)} v_i \otimes v_0,&\text{if $r+s \geq j$}
\end{cases}\\
&=&
\begin{cases}
\sum_{p'=0}^s q^{p'(p'-s+j)} {p'+r \brack p'} v_{r+p'} \otimes
v_{s-p'},&\text{if $ r+s \leq j$}\\
\sum_{p''=0}^{i-r} q^{p''(p''+r)} {j-s+p'' \brack p''} v_{r+p'} \otimes
v_{s-p''},&\text{if $r+s \geq j$}
\end{cases}
\end{eqnarray*}
}
\end{define}

The following formulas, analogous to the formulas \cite[Section
3.1.5]{L} can be proved by an easy induction. We explicitly mention them here, since we chose a slightly different
comultiplication.
\begin{prop} The comultiplication in the divided powers is given by
\label{divcomult}
\begin{eqnarray*}
\triangle(E^{(r)})&=&\mathop{\sum_{r', r''}}_{r'+r''=r} q^{-r'r''}
E^{(r')} \otimes E^{(r'')} K^{-r'},\\
\triangle(F^{(r)})&=&\mathop{\sum_{r', r''}}_{r'+r''=r} q^{-r'r''}
F^{(r')} K^{r''} \otimes F^{(r'')}.
\end{eqnarray*}
\end{prop}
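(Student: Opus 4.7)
The plan is an induction on $r$, using that $\triangle$ is an algebra map together with a single $q$-commutation inside $\cU_q\otimes\cU_q$. I will treat the $E$-formula in detail; the $F$-formula follows by a mirror argument.

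Write $A=1\otimes E$ and $B=E\otimes K^{-1}$, so that $\triangle(E)=A+B$. From $K^{-1}E = q^{-2}EK^{-1}$ one immediately computes
\[
BA \;=\; q^{-2}AB, \qquad\text{hence}\qquad A^j B \;=\; q^{2j} B A^j \;\text{ for all } j\ge 0.
\]
A straightforward induction on $r$ using this relation yields the $q$-binomial identity
\[
(A+B)^r \;=\; \sum_{m=0}^{r} q^{m(r-m)}\,{r \brack m}\, B^m A^{r-m}.
\]
The inductive step reduces to the $q$-Pascal identity ${r+1\brack m} = q^{-m}{r\brack m} + q^{r-m+1}{r\brack m-1}$, which is standard.

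Next I convert $B^m A^{r-m}$ into the shape demanded by the proposition. Since $B^m = E^m \otimes K^{-m}$ and $A^{r-m} = 1\otimes E^{r-m}$, applying $K^{-m}E^{r-m} = q^{-2m(r-m)}E^{r-m}K^{-m}$ in the second tensor factor gives
\[
B^m A^{r-m} \;=\; q^{-2m(r-m)}\, E^m \otimes E^{r-m} K^{-m}.
\]
Plugging this into the binomial expansion and dividing by $[r]!$ produces, after the relabeling $r'=m$, $r''=r-m$,
\[
\triangle(E^{(r)}) \;=\; \sum_{r'+r''=r} q^{-r'r''}\, E^{(r')} \otimes E^{(r'')} K^{-r'},
\]
as stated.

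The formula for $\triangle(F^{(r)})$ is proved in the same way with $A' = K\otimes F$ and $B' = F\otimes 1$. One checks $B'A' = q^{2}A'B'$, runs the symmetric induction to obtain $(A'+B')^r = \sum_m q^{m(r-m)}{r\brack m} A'^m B'^{r-m}$, and then uses $K^m F^{r-m} = q^{-2m(r-m)} F^{r-m}K^m$ to move the $K$-factor into the correct slot, yielding $F^{(r')}K^{r''}\otimes F^{(r'')}$. The only delicate point is tracking $q$-exponents: the $q^{m(r-m)}$ produced by the $q$-binomial theorem combines with the $q^{-2m(r-m)}$ coming from the $K$-through-$E$ (resp.\ $K$-through-$F$) commutations to yield the symmetric exponent $-r'r''$ in the statement. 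This cancellation, and the relative sign of the two contributions, is the one place where the specific convention $\triangle(E) = 1\otimes E + E\otimes K^{-1}$ (rather than the opposite coproduct) is used essentially.
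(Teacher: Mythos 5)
Your proof is correct and is exactly the ``easy induction'' that the paper invokes (with a pointer to Lusztig, Section 3.1.5) without writing out: the commutation $BA=q^{-2}AB$ for $A=1\otimes E$, $B=E\otimes K^{-1}$, the resulting $q$-binomial expansion of $(A+B)^r$, and the cancellation of $q^{m(r-m)}$ against the $q^{-2m(r-m)}$ produced by sliding $K^{-m}$ past $E^{r-m}$ all check out, as does the mirror argument for $F$. No gap; the only cosmetic remark is that the $q$-Pascal identity you quote is the one arising when you multiply the inductive expression by $(A+B)$ on the right (multiplying on the left uses the companion identity), and both hold for the balanced Gaussian binomials used here.
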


The standard basis can be expressed in terms of the twisted canonical
basis as follows:
\begin{prop} For $1\leq r\leq i, 1\leq s\leq j$ the following holds
\label{inversetwistedcanbasis}
\begin{eqnarray*}
v_r\otimes v_s&=&
\begin{cases}
\sum_{\gamma} (-1)^{\gamma} q^{\gamma(j-s+1)} {r +\gamma \brack \gamma}
v_{r+\gamma}\;\spadesuit\; v_{s-\gamma},&\text{if $r+s \leq j$}\\
\sum_{\gamma} (-1)^{\gamma} q^{\gamma(r+1)} {j-s+ \gamma \brack \gamma}
v_{r+\gamma}\;\spadesuit\;v_{s-\gamma}, &\text{if $r+s \geq j$}.
\end{cases}
\end{eqnarray*}
\end{prop}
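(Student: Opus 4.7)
The plan is to verify the identity by substituting the proposed expression for $v_r \otimes v_s$ into Definition~\ref{twistedcanbasis} and showing, coefficient by coefficient in the standard basis, that the right-hand side collapses to $v_r \otimes v_s$. Since the two cases $r+s\leq j$ and $r+s\geq j$ are strictly parallel, with the two quantum binomials and the exponents $(j-s+1)$ vs.\ $(r+1)$ interchanged in the obvious way, I will treat only the first case; the second follows by an entirely analogous argument with $r$ replaced by $j-s$ in the relevant positions.

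First observe that because $(r+\gamma)+(s-\gamma)=r+s\leq j$ for every $\gamma$ occurring in the sum, each term $v_{r+\gamma}\;\spadesuit\;v_{s-\gamma}$ is expanded via the first formula of Definition~\ref{twistedcanbasis}. Substituting and reindexing by $\mu = \gamma + p'$, so that $\mu$ records the total displacement from $(r,s)$ in the standard-basis tensor product, the coefficient of $v_{r+\mu}\otimes v_{s-\mu}$ on the right-hand side becomes
\begin{align*}
\sum_{\gamma=0}^\mu (-1)^\gamma\, q^{\gamma(j-s+1)+(\mu-\gamma)(\mu-s+j)}\,{r+\gamma \brack \gamma}{r+\mu \brack \mu-\gamma}.
\end{align*}

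Second, the $q$-exponent telescopes to $\mu(\mu+j-s)+\gamma(1-\mu)$, and the trinomial-style identity ${r+\gamma \brack \gamma}{r+\mu \brack \mu-\gamma} = {r+\mu \brack \mu}{\mu \brack \gamma}$, immediate from the factorial definition of quantum binomials, pulls the $\gamma$-independent constant $q^{\mu(\mu+j-s)}{r+\mu \brack \mu}$ out of the sum. What remains is
\begin{align*}
\sum_{\gamma=0}^\mu (-1)^\gamma\, q^{\gamma(1-\mu)}\, {\mu \brack \gamma}.
\end{align*}

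Third, invoke the $q$-binomial theorem in balanced form, $\prod_{i=0}^{\mu-1}(1 - q^{\mu-1-2i}t) = \sum_{\gamma=0}^\mu (-1)^\gamma {\mu \brack \gamma} t^\gamma$, and evaluate at $t = q^{1-\mu}$: the $i=0$ factor of the product becomes $1-q^0=0$, so the sum vanishes for every $\mu\geq 1$, while for $\mu=0$ it trivially equals $1$. Hence the only surviving contribution is $\mu=0$, which produces exactly $v_r \otimes v_s$. The main obstacle is purely bookkeeping of $q$-exponents in the telescoping step; the cancellation hinges on the precise exponent $q^{p'(p'-s+j)}$ chosen in Definition~\ref{twistedcanbasis}, and any convention mismatch would be exposed immediately at the application of the $q$-binomial theorem.
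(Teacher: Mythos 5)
Your proof is correct, but it takes a different route from the paper's. The paper disposes of this proposition in one line ("induction on $s$"), presumably unwinding the recursive definition of $v_r\spadesuit v_s$ via divided powers one step at a time; you instead verify directly that the two triangular change-of-basis matrices are mutually inverse, by composing the claimed inversion with Definition~\ref{twistedcanbasis} and checking that the coefficient of $v_{r+\mu}\otimes v_{s-\mu}$ is $\delta_{\mu,0}$. Your key identities all check out in the paper's balanced conventions: the reindexed exponent does collapse to $\mu(\mu+j-s)+\gamma(1-\mu)$, the factorization ${r+\gamma \brack \gamma}{r+\mu \brack \mu-\gamma}={r+\mu \brack \mu}{\mu \brack \gamma}$ is immediate, and the surviving alternating sum $\sum_{\gamma=0}^{\mu}(-1)^{\gamma}q^{\gamma(1-\mu)}{\mu\brack\gamma}$ is exactly the balanced $q$-binomial product $\prod_{i=0}^{\mu-1}(1-q^{\mu-1-2i}t)$ at $t=q^{1-\mu}$, whose $i=0$ factor vanishes for $\mu\geq 1$. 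You also correctly observe that $(r+\gamma)+(s-\gamma)=r+s$, so every term in the claimed expansion falls into the same case of Definition~\ref{twistedcanbasis}, and the second case really is the first with $r$ replaced by $j-s$ throughout. What your approach buys is a fully explicit, non-recursive verification that doubles as a consistency check on the $q$-exponents in Definition~\ref{twistedcanbasis}; what the induction buys is brevity and the option of reusing the same recursion that defines $\spadesuit$ via $E^{(s)}$ and $F^{(i-r)}$. Either argument is acceptable here.
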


\begin{proof}
This can be proved by induction on $s$.
\end{proof}

\begin{ex}
\label{example11}
In case $V_1\otimes V_1$ we have
\begin{eqnarray*}
&v_0\spadesuit v_0=v_0\otimes v_0,  v_1\spadesuit v_0=v_1\otimes v_0, v_0\spadesuit v_1=qv_1\otimes v_0+v_0\otimes v_1, v_1\spadesuit v_1=v_1\otimes v_1,&\\
&v_0\heartsuit v_0=v_0\otimes v_0,  v_1\heartsuit v_0=v_1\otimes v_0, v_0\heartsuit v_1=qv_1\otimes v_0+v_0\otimes v_1, v_1\heartsuit v_1=v_1\otimes v_1.&
\end{eqnarray*}
\end{ex}

\begin{lemma}
\label{adjointnessform} Consider the semi-linear form $\langle\;, \;\rangle$ on $V_k$. Then
\begin{eqnarray*}
\langle v_{t+a}, E^{(a)} v_t \rangle &=&\langle q^{2at-ak+a^2} F^{(a)}
v_{t+a},v_t \rangle,\\
\langle v_{t-a}, F^{(a)}
    v_t \rangle&=&\langle q^{ak-2at+a^2} E^{(a)} v_{t-a}, v_t \rangle.
\end{eqnarray*}
\end{lemma}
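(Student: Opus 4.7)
The plan is to verify both identities by direct evaluation on each side, using the explicit $\mathcal{U}_q$-action \eqref{irreddef} on the standard basis of $V_k$ together with the values of the semi-linear form. Iterating $E v_i = [i+1] v_{i+1}$ and dividing by $[a]!$, one obtains the divided-power formulas
$$E^{(a)} v_t = {t+a \brack a}\, v_{t+a}, \qquad F^{(a)} v_t = {k-t+a \brack a}\, v_{t-a}$$
in $V_k$. On the other hand, from $v^i = \tfrac{1}{{k \brack i}} v_i$ together with the specialization of \eqref{scalar} to a single tensor factor, linearity of $\langle\cdot,\cdot\rangle$ in the second entry gives $\langle v_i, v_j \rangle = \delta_{i,j}\, {k \brack i}\, q^{i(k-i)}$.

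Inserting these two ingredients into the first claimed identity, the left-hand side evaluates to ${t+a \brack a}{k \brack t+a}\, q^{(t+a)(k-t-a)}$. For the right-hand side, anti-linearity in the first slot (with the bar involution $q \mapsto q^{-1}$) extracts the scalar as $q^{-(2at-ak+a^2)}$, and the formula for $F^{(a)} v_{t+a}$ combined with the value $\langle v_t, v_t\rangle$ yields $q^{-(2at-ak+a^2)}\, {k-t \brack a}{k \brack t}\, q^{t(k-t)}$. Equality thus reduces to two routine checks: the quantum binomial identity
$${t+a \brack a}{k \brack t+a} = {k-t \brack a}{k \brack t},$$
which is transparent since both sides collapse to $\tfrac{[k]!}{[a]![t]![k-t-a]!}$; and the exponent identity $(t+a)(k-t-a) = t(k-t) - (2at - ak + a^2)$, which is elementary expansion.

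The second formula is established by the same recipe: the left side becomes ${k-t+a \brack a}{k \brack t-a}\, q^{(t-a)(k-t+a)}$, while the right side becomes $q^{-(ak-2at+a^2)}\, {t \brack a}{k \brack t}\, q^{t(k-t)}$. The binomial factors now both equal $\tfrac{[k]!}{[a]![t-a]![k-t]!}$, and the exponent equality $(t-a)(k-t+a) = t(k-t) - (ak - 2at + a^2)$ is again immediate. I do not expect any genuine obstacle here: the statement is essentially a bookkeeping record of the $q$-adjunction between $E$ and $F$ in our chosen normalization of $\langle\cdot,\cdot\rangle$, and readers preferring an inductive route could equally prove it by induction on $a$ starting from the one-line base case $a=1$ computed from $Ev_t=[t+1]v_{t+1}$ and $Fv_{t+1}=[k-t]v_t$.
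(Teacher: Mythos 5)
Your computation is correct and is exactly the argument the paper has in mind: the paper's proof is the one-liner "this follows directly from the $\mathcal{U}_q$-action and \eqref{scalarprod}," and your proposal simply carries out that verification in full (divided-power formulas, the value $\langle v_i,v_j\rangle=\delta_{i,j}{k\brack i}q^{i(k-i)}$, the binomial cancellation, and the exponent identity). No discrepancy in approach or substance.
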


\begin{proof}
This follows directly from the $ \mathcal{U}_q$-action,
see Section ~\ref{U2basics} and \eqref{scalarprod}.
\end{proof}

Note that the definition of the $3j$-symbols depends on a choice of (basis) vectors. By choosing the twisted canonical basis we define
$$D_{i,j}^k(r,s,t)=\langle v_t,A_{i,j}^k(v_r \spadesuit v_s)\rangle.$$ The following formulas resemble formulas in \cite{GZ}.

\begin{theorem}[Positivity]\hfill\\
\label{positivity}
In the twisted canonical basis, the $3j$-symbols (up to a factor of $(-1)^{\frac{i+j+k}{2}}$), belong to $ \mathbb{N}[q,q^{-1}]$, more precisely:
\begin{equation*}
D_{i,j}^k (r,s,t)=
\begin{cases}
(-1)^{\frac{i+j-k}{2}} q^{\eta_1} {k-t+s \brack s}{t-s+i-r \brack
t-s}&\text{if $r+s\leq j$}\\
\\
(-1)^{\frac{i+j-k}{2}} q^{\eta_2} {i-r+t \brack t}{k-i+r-t+s \brack
s}&\text{if $r+s\geq j$}
\end{cases}
\end{equation*}
where
\small
\begin{eqnarray*}
\eta_1&=&
s(k+s-2r)+(i-r)(h-i-r)+t(k-t)+\frac{i+j-k}{2}+\left(\frac{i+k-j}{2}\right)\left(\frac{j+k-i}{2}\right),
\\
\eta_2&=&(i-r)(2s+r-h)+s(k-s-2i)+t(k-t)
+\left(\frac{i+k-j}{2}\right)\left(\frac{j+k-i}{2}\right).
\end{eqnarray*}
\normalsize
\end{theorem}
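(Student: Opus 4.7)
The plan is to use that $A_{i,j}^k$ is a $\cU_q$-intertwiner so as to move the divided powers appearing in Definition~\ref{twistedcanbasis} past $A_{i,j}^k$. Concretely, for $r+s \leq j$ this gives
\[
A_{i,j}^k(v_r\spadesuit v_s) \;=\; E^{(s)}\, F^{(i-r)}\, A_{i,j}^k(v_i\otimes v_0),
\]
and in the case $r+s \geq j$ the same holds with $F^{(i-r)}E^{(s)}$ in place of $E^{(s)}F^{(i-r)}$. This reduces the whole question to evaluating $A_{i,j}^k$ on the single vector $v_i\otimes v_0$ and then applying divided powers inside $V_k$.

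Next I would compute $A_{i,j}^k(v_i\otimes v_0)$ directly from $A_{i,j}^k = \pi_k \circ \Phi_{i,j}^k \circ (\iota_i \otimes \iota_j)$. By \eqref{in} both $\iota_i(v_i) = v_1^{\otimes i}$ and $\iota_j(v_0) = v_0^{\otimes j}$ consist of a single term, so the input to $\Phi_{i,j}^k$ is the sole basis vector $v_1^{\otimes i} \otimes v_0^{\otimes j}$. In the diagram~\eqref{intertwiner} each of the $z = (i+j-k)/2$ nested caps reads $(v_1,v_0)$ and contributes the factor $-q^{-1}$ by \eqref{defcupcap}; the left $x$ and right $y$ strands carry $v_1$'s and $v_0$'s unchanged. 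Since the resulting output sequence has $l=0$, the projector $\pi_k$ from \eqref{pn} sends it to $v^x$, giving
\[
A_{i,j}^k(v_i\otimes v_0) \;=\; (-1)^z\, q^{-z}\,v^x \;=\; \frac{(-1)^z\, q^{-z}}{{k \brack x}}\,v_x,
\]
which already produces the sign $(-1)^{(i+j-k)/2}$ of the statement.

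Applying the divided-power formulas $E^{(a)} v_m = {m+a \brack a}\, v_{m+a}$ and $F^{(a)} v_m = {k-m+a \brack a}\, v_{m-a}$ (immediate from \eqref{irreddef}) to $v_x$ in the correct order expresses $A_{i,j}^k(v_r\spadesuit v_s)$ as an explicit scalar multiple of $v_t$ for $t=r+s-z$, and pairing with $v_t$ via $\langle v_t, v_t\rangle = q^{t(k-t)}\,{k \brack t}$ (from \eqref{scalar}) produces $D_{i,j}^k(r,s,t)$ as $(-1)^z$ times a $q$-power times a ratio of four $q$-binomial coefficients. Using the elementary identities
\[
y+i-r \;=\; k-t+s \;=\; \tfrac{i+j+k}{2} - r, \qquad t-s+i-r \;=\; x \;=\; \tfrac{i+k-j}{2},
\]
which both follow from $t=r+s-z$, the resulting ratio ${{y+i-r \brack i-r}{t \brack s}{k \brack t}}/{{k \brack x}}$ collapses at the level of $q$-factorials to ${k-t+s \brack s}{t-s+i-r \brack t-s}$; an analogous cancellation (using $x+s-t = i-r$ and $y-s+i-r = k-t$) yields ${i-r+t \brack t}{k-i+r-t+s \brack s}$ in the case $r+s \geq j$.

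I expect the main obstacle to be the final bookkeeping of $q$-powers: the stated exponents $\eta_1$ and $\eta_2$ consist of many cross-terms, and all of them must arise by correctly collecting the factor $q^{-z}$ from the caps, the factor $q^{t(k-t)}$ from $\langle v_t, v_t\rangle$, and the internal $q$-powers hidden in the divided-power formulas as $v_x$ is carried through the intermediate weight to $v_t$. Once this is done, positivity of $D_{i,j}^k(r,s,t)$ up to the overall sign $(-1)^{(i+j-k)/2}$ follows at once from the manifest positivity of each $q$-binomial factor.
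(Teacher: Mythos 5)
Your proposal is correct and rests on the same two decisive ideas as the paper's proof: the reduction $A_{i,j}^k(v_r\spadesuit v_s)=E^{(s)}F^{(i-r)}A_{i,j}^k(v_i\otimes v_0)$ coming from the intertwining property together with Definition \ref{twistedcanbasis}, and the one-line evaluation $A_{i,j}^k(v_i\otimes v_0)=(-1)^{z}q^{-z}{k \brack x}^{-1}v_x$, which is where the overall sign $(-1)^{(i+j-k)/2}$ is born. The one place you genuinely diverge is the middle step. The paper invokes the adjointness formulas of Lemma \ref{adjointnessform} to move the divided powers into the first slot of the pairing, so that $E^{(i-r)}F^{(s)}v_t$ yields the product ${k-t+s \brack s}{t-s+i-r \brack t-s}$ immediately, at the price of the adjunction exponent $\delta$. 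You instead push $E^{(s)}F^{(i-r)}$ forward onto $v_x$ and are left with the ratio ${k \brack t}{y+i-r \brack i-r}{t \brack s}\big/{k \brack x}$; this does collapse as you claim, since expanding in $q$-factorials and using $y+i-r=k-t+s$ and $x=t-s+i-r$ gives exactly $\frac{[k-t+s]!\,[t-s+i-r]!}{[s]!\,[k-t]!\,[t-s]!\,[i-r]!}$, and the analogous cancellation (via $x+s=t+i-r$ and $y=k-i+r-t+s$) goes through for $r+s\geq j$. So your route is complete, and it trades the adjointness lemma for an elementary $q$-factorial identity. Two small remarks: your worry about hidden $q$-powers in the divided-power action is unfounded, because in $V_k$ one has $E^{(a)}v_m={m+a \brack a}v_{m+a}$ and $F^{(a)}v_m={k-m+a \brack a}v_{m-a}$ on the nose, so the only $q$-powers you must track are the $q^{-z}$ from the caps and the contribution of the evaluation pairing; consequently your exponent will come out in a much more compact form than the printed $\eta_1,\eta_2$ (which, as stated, contain an undefined symbol $h$), so do not be alarmed if it does not match them term by term. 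The positivity assertion, which is the real content of the theorem, follows on your route exactly as on the paper's from the manifest positivity of the two $q$-binomial factors.
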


\begin{proof}
Assume $ r+s \leq j$.  The other case is similar and therefore omitted.
\begin{eqnarray*}
&&D_{i,j}^k(r,s,t)=\langle
v_t,A_{i,j}^k(v_r \spadesuit v_s) \rangle\\
&=&\langle v_t, E^{(s)}
F^{(i-r)} A_{i,j}^k(v_i \otimes v_0)\rangle
\\
&=& q^{\delta} \langle  E^{(i-r)} F^{(s)} v_t, A_{i,j}^k(v_i \otimes v_0),
\rangle
%\label{formadjointness}
\\
&=& q^{\delta} {k-t+s \brack s}{t-s+i-r \brack t-s} \langle
v_{t+i-r-s}, A_{i,j}^k(v_i \otimes v_0) \rangle
\\
&=& q^{\delta} (-q)^{\frac{i+j-k}{2}} {k-t+s \brack
s}{t-s+i-r \brack t-s} \langle \pi_k (v_1^{\otimes {\frac{i+k-j}{2}}}
\otimes v_0^{\otimes \frac{j+k-i}{2}}), v_{t+i-r-s} \rangle
\\
&=& q^{\delta} (-q)^{\frac{i+j-k}{2}}
q^{(\frac{i+k-j}{2})(\frac{j+k-i}{2})}{k-t+s \brack
s}{t-s+i-r \brack t-s}
\end{eqnarray*}
where
\small
$$\delta = sk-2sr-s^2+2(i-r)k-2i(i-r)+(i-r)^2.$$
\normalsize
The second, third, fourth, and fifth equalities above follow from the
definition of the twisted canonical basis, Lemma ~\ref{adjointnessform},
the definition of the action of the divided powers, and the definition of $
A_{i,j}^k$ respectively. The sixth equality follows from the definition of
the projection map and the semi-linear form. The theorem follows.
\end{proof}

Using the twisted canonical basis we get expressions for the
$3j$-symbols in terms of  binomial coefficients, which differ from the ones described in Theorem~\ref{ClebschGordon}:
\small
\begin{theorem}
\begin{equation*}
C_{i,j}^k (r,s,t)=
\begin{cases}
\displaystyle\sum_{\gamma} (-1)^{\gamma+\frac{r+s+t}{2}} q^{\zeta_1} {r+\gamma
\brack r}{k-t+s-\gamma \brack s-\gamma}{t-s+i-r \brack t-s+\gamma}&\text{if
$r+s\leq j$}\\
\\
\displaystyle\sum_{\gamma} q^{\zeta_2} {j-s+\gamma
\brack \gamma}{i-r-\gamma+t \brack t}{k-i+r-t+s \brack s-\gamma}&\text{if
$r+s\geq j$}
\end{cases}
\end{equation*}
\normalsize
where
\small
\begin{eqnarray*}
\zeta_1=-\gamma(j-s+1)+
(s-\gamma)(k-2t+s-\gamma)+(i-r-\gamma)(k-2t+2s+i+r-3\gamma)-r-s+t\\
\zeta_2=-\gamma
(r+1)+(i-r-\gamma)(2t-k+i-r-\gamma)+(s-\gamma)(k-2i+2r-2t+s+\gamma)-r-s+t.
\end{eqnarray*}
\end{theorem}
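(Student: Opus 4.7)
The plan is to reduce the computation of $C_{i,j}^k(r,s,t)$ (in the standard basis) to that of $D_{i,j}^k$ (in the twisted canonical basis), which has already been settled by Theorem~\ref{positivity}. The bridge is Proposition~\ref{inversetwistedcanbasis}, which expresses the standard basis as an explicit linear combination of the twisted canonical basis.

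More precisely, consider first the case $r+s\leq j$. Since the pairing $\langle v_t,-\rangle$ is linear in its second argument and $A_{i,j}^k$ is $\mC(q)$-linear, applying $A_{i,j}^k$ to the identity
\[
v_r\otimes v_s \;=\; \sum_{\gamma\geq 0}(-1)^{\gamma}\, q^{\gamma(j-s+1)}\,{r+\gamma\brack \gamma}\; v_{r+\gamma}\spadesuit v_{s-\gamma}
\]
from Proposition~\ref{inversetwistedcanbasis} and pairing the result with $v_t$ yields
\[
C_{i,j}^k(r,s,t) \;=\; \sum_{\gamma\geq 0}(-1)^{\gamma}\, q^{\gamma(j-s+1)}\,{r+\gamma\brack \gamma}\; D_{i,j}^k(r+\gamma,\,s-\gamma,\,t).
\]
Since $(r+\gamma)+(s-\gamma)=r+s\leq j$, each term on the right-hand side falls under the first branch of Theorem~\ref{positivity}. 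Substituting the closed expression for $D_{i,j}^k(r+\gamma,s-\gamma,t)$ from that theorem produces the stated sum, with $\zeta_1$ obtained as the aggregate exponent $\gamma(j-s+1)+\eta_1\big|_{r\mapsto r+\gamma,\, s\mapsto s-\gamma}$ after algebraic simplification. The combined sign $(-1)^{\gamma+(i+j-k)/2}$ can be rewritten in the form $(-1)^{\gamma+(r+s+t)/2}$ using the weight condition $r+s-t=(i+j-k)/2$, which is necessary for $C_{i,j}^k(r,s,t)$ to be nonzero (both sides vanish otherwise).

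The case $r+s\geq j$ is strictly parallel: one uses the second branch of Proposition~\ref{inversetwistedcanbasis} together with the second branch of Theorem~\ref{positivity}, producing the $\zeta_2$-version of the formula. The summation range in both cases is the natural one, $0\leq\gamma\leq \min(i-r,s)$, dictated by the support of the change-of-basis coefficients and by the nonvanishing of the binomials.

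The main obstacle is the purely algebraic bookkeeping. The exponents $\eta_1$, $\eta_2$ are polynomials of degree two in $r$ and $s$, so substituting $r\mapsto r+\gamma$, $s\mapsto s-\gamma$ produces several cross-terms in $\gamma$ that must recombine, together with the linear contribution $\gamma(j-s+1)$ (respectively $\gamma(r+1)$) from the change of basis, precisely into $\zeta_1$ (respectively $\zeta_2$). Each step is mechanical, but the cancellation of the mixed quadratic terms is not transparent and requires careful expansion, with no additional conceptual input beyond Proposition~\ref{inversetwistedcanbasis} and Theorem~\ref{positivity}.
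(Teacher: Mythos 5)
Your proposal follows exactly the paper's own proof: expand $v_r\otimes v_s$ in the twisted canonical basis via Proposition~\ref{inversetwistedcanbasis}, apply $A_{i,j}^k$ term by term, and evaluate each $\langle v_t, A_{i,j}^k(v_{r+\gamma}\spadesuit v_{s-\gamma})\rangle$ with Theorem~\ref{positivity}, leaving only the exponent bookkeeping. The one detail to watch is that the paper records the change-of-basis coefficient's exponent as $-\gamma(j-s+1)$ (matching the leading term of $\zeta_1$), whereas you carry it as $+\gamma(j-s+1)$; reconcile this sign when you do the simplification.
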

\normalsize

\begin{proof}
Let us assume $r+s\leq j$, the other case is similar. Using Proposition \ref{inversetwistedcanbasis} and Theorem \ref{positivity} we obtain
\begin{eqnarray*}
C_{i,j}^k(r,s,t)&=&\langle v_t, A_{i,j}^k(\sum_{\gamma} (-1)^{\gamma} q^{\gamma(j-s+1)}
{r+\gamma \brack r} v_{r+\gamma} \;\spadesuit\; v_{s-\gamma})
\rangle\nonumber\\
&=&\sum_{\gamma} (-1)^{\gamma} q^{-\gamma(j-s+1)} {r+\gamma \brack r}
\langle v_t,A_{i,j}^k(v_{r+\gamma}\; \spadesuit\; v_{s-\gamma}))
\rangle\\
&=&\sum_{\gamma} (-1)^{\gamma}(1)^\frac{i+j-k}{2} q^{\delta_1}
{r+\gamma \brack r}{k-t+s-\gamma \brack s-\gamma}{t-s+i-r \brack
t-s+\gamma}
\end{eqnarray*}

%&=& \sum_{\gamma} (-1)^{\gamma} q^{-\gamma(j-s+1)} {r+\gamma \brack r}
%langle E^{(s-\gamma)}
%F^{(i-r-\gamma)} A_{i,j}^k(v_i \otimes v_0), v_t \rangle
%\label{deftwistedcanbasis}
%\\
%&=& \sum_{\gamma} (-1)^{\gamma} q^{\delta} {r+\gamma
%\brack r} \langle A_{i,j}^k(v_i \otimes v_0), E^{(i-r-\gamma)}
%F^{(s-\gamma)} v_t \rangle
%\label{formadjointness}
%\\
%&=&\sum_{\gamma} (-1)^{\gamma} q^{\delta} {r+\gamma \brack r}{k-t+s-\gamma
%\brack s-\gamma}{t-s+i-r \brack t-s+\gamma} \langle
%A_{i,j}^k(v_i \otimes v_0), v_{t+i-r-s} \rangle
%\\
%&=&\sum_{\gamma} (-1)^{\gamma} q^{\delta} {r+\gamma \brack r}{k-t+s-\gamma
%\brack
%s-\gamma}{t-s+i-r \brack t-s+\gamma} \langle \pi_k \Phi_{i,j}^k
%(v_1^{\otimes i} \otimes v_0^{\otimes j}), v_{t+i-r-s} \rangle.
%\end{eqnarray*}
where $\delta_1=-2(s-r)(r+\gamma)+(s-\gamma)k-(s-\gamma)^2-(i-r-\gamma)k+2i(i-r-\gamma)-(i-r-\gamma)^2)+t(k-t)+
\frac{i+j-k}{2}+\frac{i+k-j}{2}\frac{j+k-i}{2}$.
%\normalsize
%The second, third and fourth equality above follow from the fact that the
%semi-linear form is anti-linear in the first argument, from the definition
%of the twisted canonical basis, and Lemma ~\ref{adjointnessform}
%respectively. After applying the intertwiner $ \Phi_{i,j}^k $, this is
%equal to $\beta\left\langle \pi_k  \left(v_1^{\otimes (t+i-r-s)} \otimes
%v_0^{\otimes (k-t-i+r+s)}\right), v_{t+i-r-s} \right\rangle$
%where
%\begin{eqnarray*}
%\beta=\sum_{\gamma} (-1)^{\gamma+r+s-t} q^{\delta-r-s+t} {r+\gamma \brack
%r}{k-t+s-\gamma \brack s-\gamma}{t-s+i-r \brack t-s+\gamma}
%\end{eqnarray*}
%Using the definitions of $ \pi_k $ and the semi-linear form we then
%finally obtain
%\begin{eqnarray*}
%C_{i,j}^k(r,s,t)&=&
The asserted formula follows.
\end{proof}

\begin{remark}{\rm
The above formulas seem to be different from the standard formulas in the literature expressing $3j$-symbols as an alternating sum. For instance the formula using the dual canonical basis \cite[Proposition 3.18]{FK}.}
\end{remark}

\section*{Part II}
\section{Fractional graded Euler characteristics}
\label{Euler}
A general belief in the categorification community is that only integral structures can be categorified. Note however that \eqref{pn} includes a division by a binomial coefficient. In this section we illustrate how complete intersection rings can be used to categorify rational (quantum) numbers in terms of an Euler characteristic of an Ext-algebra. This will later be used in our categorification of the Jones-Wenzl projector. The approach provides furthermore interesting and subtle categorifications of integers and quantum numbers in terms of infinite complexes with cohomology in infinitely many degrees. The most important example will be the colored unknot discussed later in this paper. For the complete categorification of the colored Jones polynomial we refer to \cite{SS}. We start with a few results on Poincare polynomials of complete intersection rings.

Let $\lsem n\rsem=1+q^2+q^{4}+\cdots+q^{2(n-1)}$ be the renormalized quantum number, set  $\lsem n\rsem!=\lsem1\rsem\lsem2\rsem\cdots \lsem n\rsem$, and denote the corresponding binomial coefficients by $\left[n \brack k\right]$. By convention this binomial expression is zero if one of the numbers is negative or if $k>n$.

For an abelian (or triangulated) category $\cA$ let $[\cA]$ be the {\it Grothendieck group} of $\cA$ which is by definition the free abelian group generated by the isomorphism classes $[M]$ of objects $M$ in $\cA$  modulo the relation $[C]=[A]+[B]$ whenever there is a
short exact sequence (or distinguished triangle) of the form $A\rightarrow C\rightarrow B$. When $ \cA $ is a triangulated category,
denote $n$ compositions of the shift functor by $\ullcorner n\ulrcorner$. In the following $\cA$ will always be a (derived) category of $\mZ$-graded modules over some finite dimensional algebra $A$. Then $[\cA]$ has a natural $\mZ[q,q^{-1}]$-module structure where $q$ acts by shifting the grading up by $1$. We denote by $\langle i\rangle$ the functor which shifts the grading up by $i$. In the following we will only consider the case where $[\cA]$ is free of finite rank $r$ and often work with the $q$-adic completion of $[\cA]$, which is by definition, the free module over the formal Laurent series ring $\mZ[[q]][q^{-1}]$ of rank $r$. We call this the {\it completed Grothendieck group} (see \cite{AcS} for details).

\subsection{Categorifying $\frac{1}{\lsem2\rsem}$}
The complex cohomology ring of $\mC \mathbb{P}^1$ is the graded ring $R=\mC[x]/(x^2)$ where $x$ is homogeneous of degree $2$. In particular, $\lsem2\rsem$ agrees with its Poincare polynomial. We would like to have a categorical interpretation of its inverse. As a graded $R$-modules, $R$ fits into a short exact sequence of the form $\mC\langle 2\rangle\rightarrow R\rightarrow\mC$, where $\langle i\rangle$ means the grading is shifted up by $i$. Hence, we have the equality $[R]=[\mC\oplus\mC\langle 2\rangle]$ in the Grothendieck group of graded $R$-modules. The latter is a free $\mZ$-module with basis given by the isomorphism classes $[\mC\langle i\rangle]$, of the modules $\mC\langle i\rangle$ where $i\in \mZ$. Alternatively we can view it as a free $\mZ[q,q^{-1}]$-module on basis $[\mC]$ where $q^i[\mC]=[\mC\langle i\rangle]$. Then the above equality becomes $[R]=(1+q^{2})[\mC]$. We might formally write $[\mC]=\frac{1}{1+q^2}[R]=(1-q^2+q^4-q^6\ldots)[R]$ which then makes perfect sense in the completed Grothendieck group. Categorically it can be  interpreted as the existence of a (minimal) graded projective resolution of $\mC$ of the form
\begin{eqnarray}
\label{SL2}
\cdots\stackrel{f} {\longrightarrow}R\langle4\rangle\stackrel{f} {\longrightarrow}R\langle 2\rangle\stackrel{f} {\longrightarrow}R\stackrel{p}{\surj}\mC,
\end{eqnarray}
where $f$ is always multiplication by $x$ and $p$ is the standard projection. The graded Euler characteristic of the above complex resolving $\mC$ is of course just $1=(1+q^2)^{-1}(1+q^2)$. However, the Ext-algebra $\Ext^*_R(\mC,\mC)$ has the graded Euler characteristic $\frac{1}{1+q^2}$.

\subsection{Categorifying $\frac{1}{\lsem n\rsem}$}
More generally we could consider the ring $R=\mC[x]/(x^{n})$ viewed as the cohomology ring of $\mC \mathbb{P}^{n-1}$. Then there is a graded projective resolution of $\mC$ of the form
\begin{eqnarray}
\label{hyper}
\cdots\quad R\langle 4n\rangle \stackrel{g}{\longrightarrow}R\langle 2n+2\rangle \stackrel{f} {\longrightarrow}R\langle 2n\rangle\stackrel{g} {\longrightarrow}R\langle 2\rangle\stackrel{f} {\longrightarrow}R\stackrel{p}{\surj}\mC,
\end{eqnarray}
where $f$ is multiplication with $x$ and $g$ is multiplication with $x^{n-1}$. The identity of formal power series
$\frac{1}{\lsem n\rsem}=(1+q^2+q^4+\cdots +q^{2(n-1)})^{-1}=1-q^2+q^{2n}-q^{2n+2}+q^{4n}-\cdots$ can be verified easily. The algebra  $\Ext^*_R(\mC,\mC)$ has graded Euler characteristic $\frac{1}{\lsem n\rsem}$.

\subsection{Categorifying $\frac{1}{\lsem n\rsem!}$}
Consider the graded ring $$H=H_n=\mC[x_1,x_2,\ldots ,x_n]/I,$$ where $I$ is the ideal generated by symmetric polynomials
without constant term, and where $x_i$ has degree $2$. Via the Borel presentation this ring $H_n$ of coinvariants can be identified with the cohomology ring of the full flag variety $GL(n,\mC)/B$, where $B$ is the Borel subgroup of all upper triangular matrices or with the cohomology ring of $SL(n,\mC)/(B\cap SL(n,\mC))$.

\begin{theorem}[Categorification of fractions]\label{Thmres}\hfill
\begin{enumerate}[(i)]
\item We have the equalities $[H_n]=\lsem n\rsem![\mC]$ in the Grothendieck group of graded $H_n$-modules.
\item Any projective resolution of the module $\mC$ is infinite. The graded Poincare polynomial of the minimal resolution is of the form
\begin{eqnarray}
\label{Poincare}
\frac{(1+q^2t)^{n-1}}{\prod_{j=2}^n(1-q^{2j}t^2)}.
\end{eqnarray}
Here $t$ encodes the homological grading, whereas $q$ stands for the internal algebra grading.
\item The Ext-algebra $\Ext^*_{H_n}(\mC,\mC)$ has graded Euler characteristic $\frac{1}{\lsem n\rsem!}$.
\end{enumerate}
\end{theorem}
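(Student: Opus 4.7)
The plan is to prove the three statements in order, exploiting the fact that $H_n$ is a graded complete intersection.

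For (i), I would first compute the Hilbert series of $H_n$. By the Chevalley--Shephard--Todd theorem, $\mC[x_1,\ldots,x_n]$ is free of rank $n!$ over its ring of symmetric polynomials, so
\[
\op{Hilb}_{H_n}(q) \;=\; \prod_{j=1}^n \frac{1-q^{2j}}{1-q^2} \;=\; \prod_{j=1}^n \lsem j\rsem \;=\; \lsem n\rsem!.
\]
Since $H_n$ is graded local with residue field $\mC$, every finitely generated graded $H_n$-module admits a graded composition series whose simple subquotients are grading shifts of $\mC$. Applied to $H_n$ itself, this gives $[H_n] = \op{Hilb}_{H_n}(q)\,[\mC] = \lsem n\rsem!\,[\mC]$ in the Grothendieck group, viewed as a $\mZ[q,q^{-1}]$-module with $q$ acting as $\langle 1\rangle$.

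For (ii), I would invoke the Tate--Gulliksen theory of minimal free resolutions of the residue field over a graded complete intersection. The key preliminary observation is that $e_1 = x_1+\cdots+x_n$ is linear, hence outside $\mathfrak{m}_S^2$, and must be removed before Tate's construction becomes minimal. After using $e_1$ to eliminate $x_n$, set $S' = \mC[x_1,\ldots,x_{n-1}]$, so that $H_n \cong S'/(\bar e_2,\ldots,\bar e_n)$ where $\bar e_2,\ldots,\bar e_n$ is a regular sequence sitting inside $\mathfrak{m}_{S'}^2$ and carrying internal degrees $4,6,\ldots,2n$. The minimal free resolution of $\mC$ over $S'$ is the Koszul complex on $n-1$ exterior generators of bidegree $(q^2,t)$, with Poincar\'e polynomial $(1+q^2t)^{n-1}$. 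Tate's construction then adjoins divided-power generators $y_j$ of bidegree $(q^{2j},t^2)$ for each $\bar e_j$, contributing factors $1/(1-q^{2j}t^2)$. Because the $\bar e_j$ lie in $\mathfrak{m}_{S'}^2$, all differentials in the resulting DG resolution land in the graded maximal ideal of $H_n$, so the Tate complex is already minimal; reading off its Betti numbers yields exactly \eqref{Poincare}. Infiniteness of any projective resolution of $\mC$ is automatic from Auslander--Buchsbaum--Serre since $H_n$ is a non-regular Noetherian graded local ring for $n \geq 2$.

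For (iii), I would specialize $t \mapsto -1$ in the Poincar\'e polynomial of (ii) to obtain the graded Euler characteristic of $\Ext^*_{H_n}(\mC,\mC)$, namely
\[
\chi \;=\; \frac{(1-q^2)^{n-1}}{\prod_{j=2}^n(1-q^{2j})}.
\]
Using the factorization $1-q^{2j} = (1-q^2)\lsem j\rsem$, the $n-1$ copies of $(1-q^2)$ cancel, leaving $\chi = 1/\prod_{j=2}^n \lsem j\rsem = 1/\lsem n\rsem!$ as required. The main obstacle I anticipate is justifying the minimality claim in (ii) --- i.e.\ that the Tate construction, which a priori only produces some free DG resolution, actually gives the minimal one in the graded sense. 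This is exactly what guarantees $\dim_q \Ext^i(\mC,\mC)$ equals the $i$-th graded Betti number, legitimizing the specialization $t \mapsto -1$ that yields (iii); the justification ultimately reduces to the degree bound $\bar e_j \in \mathfrak{m}_{S'}^2$ that became available only after eliminating $e_1$.
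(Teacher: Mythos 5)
Your proposal is correct and follows essentially the same route as the paper: both reduce to the observation that $H_n$ is a graded complete intersection minimally presented by $n-1$ degree-$2$ generators and $n-1$ relations in internal degrees $4,\ldots,2n$, from which the product formula for the bigraded Poincar\'e series and the specialization $t\mapsto -1$ yield (ii) and (iii). The only difference is in packaging: you invoke the Tate--Gulliksen construction directly (and justify minimality via $\bar e_j\in\mathfrak{m}_{S'}^2$ after eliminating the linear relation $e_1$), whereas the paper encodes the same data through Avramov's deviations $\epsilon_m$ and the characterization $\epsilon_m=0$ for $m\geq 3$; your elimination of $e_1$ is precisely the paper's remark that $H$ is minimally generated by $n-1$ of the $x_i$ with $I$ minimally generated by $n-1$ symmetric functions of higher degree.
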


\begin{proof}
Since $H=H_n$ is a graded local commutative ring, the first statement is equivalent to the statement that the Poincare polynomial of $R_n$ equals $\lsem n\rsem!$, which is a standard fact, see for example \cite[Theorem 1.1]{Haglund}, \cite[10.2]{Fu}. To see the second statement, assume there is a finite minimal projective resolution
\begin{equation}
0\longrightarrow H^{n_r}\longrightarrow\cdots\longrightarrow H^{n_0}\surj \mC.
\end{equation}
Up to shifts, $H$ is self-dual (\cite[26-7]{Kane}) and so in particular injective as module over itself. Hence, the sequence splits and we get a contradiction to the minimality of the resolution. Moreover, $H$ has Krull dimension zero, since any prime ideal is maximal. (To see this let $\mathfrak{m}$ be the unique maximal ideal and $\mathfrak{p}$ any prime ideal. If $x\in \mathfrak{m}$ then $x^k=0\in\mathfrak{p}$ for big enough $k$ by degree reasons. Hence $x\in\mathfrak{p}$ and therefore $\mathfrak{m}=\mathfrak{p}$.) Now $H$ is minimally generated by the $x_i$'s for $1\leq i\leq n-1$, and $I$ is minimally generated by the $n-1$ different elementary symmetric functions of degree $r>2$ in the $x_i$'s for $1\leq i\leq n-1$. Hence $R$ is a complete intersection ring (\cite[Theorem 2.3.3]{CM}) and the Poincare series can be computed using \cite{Avramov}. We sketch the main steps. For each formal power series $P(t)=1+\sum_{i=1}^\infty a_jt^j$, there exist uniquely defined
$c_k\in\mZ$ such that the equality
\begin{eqnarray}
\label{product}
P(t)&=&\frac{\prod_{i=1}^{\infty}(1+t^{2i-1})^{c_{2i-1}}}
{\prod_{i=1}^{\infty}(1-t^{2i})^{c_{2i}}}
\end{eqnarray}
holds as formal power series or in the $(t)$-adic topology. (If we define $q_i(t):=(1-(-t)^i)^{(-1)^{i+1}}$, the right hand side of formula \eqref{product} is exactly $\prod_{i=1}^{\infty}q_i(t)^{c_i}$. We define $Q_0:=1$ and
inductively $Q_{m+1}:=Q_m(t)(q_{m+1}(t))^{c_m}$ with $c_m$ defined as
$P(t)-Q_m(t)\equiv c_mt^{m+1}\pmod{t^{m+2}}$. By definition we have
$Q_m(t)=\prod_{i=1}^{m}q_i(t)^{c_i}$. Using the binomial formula we see that
$P(t)\equiv Q_m(t)\pmod{t^{m+1}}$ holds for all $m\in\mN$.) In the case of the (ungraded) Poincare polynomial we are interested in the {\it deviations} $c_m$ are usually
denoted by $\epsilon_m(R)$ and complete intersections are characterized (\cite[Theorem 7.3.3]{Avramov}) by the property  that $c_m(R)=0$ for $m\geq 3$. Moreover, $c_1(H_n)=n=c_2(H_n)$ and $c_1(C_n)=n-1=c_2(C_n)$,  see \cite[Corollary 7.1.5]{Avramov}. Formula \eqref{product} implies the statement \eqref{Poincare} for the ungraded case $q=1$. The graded version follows easily by invoking the degrees of the generators $x_i$ and the degrees of the homogeneous generators of $I$.  The statement \eqref{3} is clear.
\end{proof}
\begin{ex}
\label{exflag}
{\rm
Consider the flag variety $SL(3,\mC)/B$. Then its cohomology algebra is isomorphic to the algebra of coinvariants
$C\cong\mC[X,Y]/(X^2+XY+Y^2,X^3+\frac{3}{2}X^2Y-\frac{3}{2}XY^2-Y^3)$, where $X$ and $Y$ correspond to the
simple coroots. If we choose as generators of the maximal ideal the elements
$x:=\overline{X}$, and $y:=\overline{X}+2\overline{Y}$, the defining relations turn into $y^2=-3x^2$, $xy^2=x^3$, $x^2y=0$ and the elements $1,x,y,x^2,xy,x^3$ form a basis of $C$. By a direct calculation one can check that in this basis the minimal projective resolution is of the form
\begin{eqnarray}
\cdots\longrightarrow C^4\stackrel{f_3} {\longrightarrow}C^3\stackrel{f_2} {\longrightarrow}C^2\stackrel{f_1} {\longrightarrow}C\stackrel{p}{\surj}\mC
\end{eqnarray}
with the linear maps given by matrices of the form
\small
\begin{eqnarray*}
[f_{2i+1}]=
\begin{pmatrix}
\boxed{\quad M\quad}\quad\quad\\
\quad\quad\quad\fbox{\quad M\quad }\\
\quad\quad\quad\quad\quad\ddots\\
&\boxed{\quad M\quad}\quad\quad\\
&\quad\quad\boxed{\:[x]\:[y]\:}
\end{pmatrix}
&&[f_{2i}]=
\begin{pmatrix}
\boxed{\quad N\quad}\quad\quad\\
\quad\quad\quad\fbox{\quad N\quad }\\
\quad\quad\quad\quad\quad\ddots\\
&\boxed{\quad N\quad}\\
\end{pmatrix},
\end{eqnarray*}
\normalsize
where $[x]$ denotes the matrix describing the multiplication with $x$ and where $M$ and $N$ are $2\cdot6\times 3\cdot6$ matrices as follows:
\begin{eqnarray*}
M=
\begin{pmatrix}
[x]&[y]&0\\
[y]&[-3x]&[x^2]\\
\end{pmatrix}
&\mbox{and}&
N=
\begin{pmatrix}
[3x]&[y]&0\\
[y]&[-x]&[x^2]\\
\end{pmatrix}.
\end{eqnarray*}
Hence the graded resolution is of the form
\begin{eqnarray*}
{\rightarrow}(2q^{12}+2q^{14}+2q^{16}+q^{20})C{\rightarrow}(2q^{10}+2q^{12}+2q^{14})C{\rightarrow}(2q^8+2q^{10}+q^{12})C
\quad\quad\\
\rightarrow (2q^6+2q^8)C{\rightarrow}(2q^4+q^6)C{\rightarrow}2q^2C{\rightarrow}C
\stackrel{p}{\surj}\mC.
\end{eqnarray*}
Pictorially this can be illustrated as follows, where we drew the standard generators of the free modules as dots labeled by their homogeneous degrees.
\begin{equation}
\cdots\rightarrow\;\stackrel{10}\bullet\;\stackrel{10}\bullet\;\stackrel{12}\bullet\;\stackrel{12}\bullet\;\stackrel{14}\bullet\;\stackrel{14}\bullet\;\;
\rightarrow\;\stackrel{8}\bullet\;\stackrel{8}\bullet\;\stackrel{10}\bullet\;\stackrel{10}\bullet\;\stackrel{12}\bullet\;\;
\rightarrow\;\stackrel{6}\bullet\;\stackrel{6}\bullet\;\stackrel{8}\bullet\;\stackrel{8}\bullet\;
\;\rightarrow\;
\stackrel{4}\bullet\;\stackrel{4}
\bullet\;\stackrel{6}\bullet\;
\;\rightarrow\;\stackrel{2}\bullet\;\stackrel{2}\bullet\;\;\rightarrow\; \stackrel{0}\bullet
\end{equation}

The graded Euler characteristic of the Ext-algebra $\Ext^*_C(\mC,\mC)$ equals
\begin{eqnarray*}
\frac{(1-q)^2}{(1-q^2)(1-q^3)}=\frac{1}{(1+q)(1+q+q^2)}=\frac{1}{\lsem3\rsem!}
\end{eqnarray*}
}
\end{ex}
\subsubsection*{Categorifying $\frac{1}{\left[n\brack k\right]}$ and inverse quantum multinomial coefficients}
Generalizing the above example one can consider the Grassmannian $\op{Gr}(k,n)$ of $k$-planes in $\mC^{n}$. Its complex cohomology ring $H_{k,n-k}=H^\bullet(\op{Gr}(k,n),\mC)$ is explicitly known (see for instance \cite{Fu}) and by the same arguments as above, a complete intersection (see e.g. \cite{RWY}). We have the equality $[H_{k,n-k}]=\left[n\brack k\right][\mC]$ in the Grothendieck group of graded $H_{k,n-k}$-modules. The graded Euler characteristic of $\op{Ext}^*_{H_{k,n-k}}(\mC,\mC)$ is equal to $\frac{1}{\left[n\brack k\right]}$. Following again \cite{Avramov} one could give an explicit formula for the Poincare polynomial of the minimal projective resolution of $\mC$. Note that all this generalizes directly to partial flag varieties such that the Euler characteristic of $\Ext^*_{H_{\bf d}}(\mC,\mC)$ is the inverse of a quantum multinomial coefficient
$$ \binom{n}{d_1, \ldots, d_r} = \frac{n!}{d_1 ! d_2 ! \cdots d_r ! (n-d_1
- \cdots - d_r)!},$$
if $H_{\bf d}$ denotes the cohomology ring of the partial flag variety of type ${\bf d}=(d_1,d_2,\cdots, d_r)$.

\section{Serre subcategories and quotient functors}
\label{sec:Serre}
Let $\cA$ be an abelian category. A {\it Serre subcategory} is a full subcategory $\mathcal{S}$ such that for any short exact sequence $M_1\rightarrow M\rightarrow M_2$  the object $M$ is contained in $\mathcal{S}$ if and only if $M_1$ and $M_2$ are contained in $\cS$.
Let $A$ be a finite dimensional algebra and let $S$ be a subset of the isomorphism classes of simple objects. Let $X$ be a system of representatives for the complement of $S$. Then the modules with all composition factors isomorphic to elements from $S$ form a Serre subcategory $\mathcal{S}$ of $\cA:=A-\Mod$. In fact, any Serre subcategory is obtained in this way and obviously an abelian subcategory. The quotient category $\cA/\cS$ can be characterized by a universal property, \cite{Ga}, similar to the characterization of quotients of rings or modules. The objects in $\cA/\cS$ are the same objects as in $\cA$, but the morphisms are given by
$$\Hom_{\cA/\cS}(M,N) =\varinjlim \Hom_{\cA}(M_0,N/N_0),$$
where the limit is taken over all pairs of submodules $M_0\subset M$ and $N_0\subset N$ such that $M/M_0$, and $N_0$ are contained in $\cS$. For an irreducible $A$-module $N$ let $P(N)$ denote the projective cover of $N$. Then the following is well-known (see for instance \cite[Prop. 33]{AM} for a detailed proof).

\begin{prop}
\label{Serre}
With the notation above set $P=P_{\cS}=\bigoplus_{N\in X} P(N)$. Then there is an equivalence of categories $$\cA/\cS\cong \mod-\End(P).$$ In particular, $\cA/\cS$ is abelian.
\end{prop}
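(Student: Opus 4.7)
The plan is to realize $\pi P$ (where $\pi \colon \cA \to \cA/\cS$ is the natural quotient functor) as a projective generator of $\cA/\cS$ whose endomorphism ring is canonically isomorphic to $\End_\cA(P)$, and then to invoke the standard Morita/Gabriel equivalence for abelian categories possessing a compact projective generator. Gabriel's general theory of Serre quotients \cite{Ga} will be taken for granted: it guarantees that $\cA/\cS$ is abelian and that the localization functor $\pi$ is exact with $\ker\pi = \cS$.

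The central computation is the identification
\begin{equation*}
\Hom_{\cA/\cS}(\pi P, \pi N) \;\cong\; \Hom_\cA\bigl(P,\, N/t_\cS(N)\bigr),
\end{equation*}
where $t_\cS(N)$ denotes the largest subobject of $N$ lying in $\cS$. Two ingredients enter: firstly, because each summand $P(N')$ of $P$ has top $N' \in X$, any nonzero quotient of $P$ has a composition factor in $X$ and hence cannot lie in $\cS$; this forces the left-hand index in the filtered colimit defining morphisms in the Serre quotient to be just $P$ itself. Secondly, projectivity of $P$ in $\cA$ together with finite generation of $P$ lets $\Hom_\cA(P,-)$ commute with the filtered colimit over $\cS$-subobjects of $N$, which telescopes to the stated formula via the short exact sequence $0 \to t_\cS(N) \to N \to N/t_\cS(N) \to 0$.

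Specializing to $N = P$ I compute $\End_{\cA/\cS}(\pi P) = \Hom_\cA(P, P)/\Hom_\cA(P, t_\cS(P))$, and observe that $\Hom_\cA(P, t_\cS(P)) = 0$: indeed, any nonzero morphism from $P(M)$ with $M \in X$ into a module $L$ forces $M$ to appear as a composition factor of $L$, yet $t_\cS(P)$ has all composition factors in $S$. This yields the canonical isomorphism $\End_{\cA/\cS}(\pi P) \cong \End_\cA(P)$. The same Hom formula shows that $\Hom_{\cA/\cS}(\pi P, -)$ is exact on $\cA/\cS$, so $\pi P$ is projective; and $\pi P$ is a generator because every simple object of $\cA/\cS$ is the image of some $N' \in X$, which is a quotient of the summand $P(N')$.

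With $\pi P$ recognized as a compact projective generator of $\cA/\cS$ whose endomorphism ring is $\End_\cA(P)$, the standard Morita-style equivalence (as formulated in \cite[Prop.~33]{AM}) provides the equivalence $\cA/\cS \simeq \Mod\text{-}\End(P)$, realized by the functor $\Hom_{\cA/\cS}(\pi P, -)$. I expect the main technical hurdle to be the filtered colimit computation underlying the Hom formula together with the verification that the resulting functor is exact on $\cA/\cS$ rather than merely on $\cA$; once these are settled, the remaining steps follow from routine abelian-category arguments.
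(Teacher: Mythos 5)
Your argument is correct and is exactly the standard proof that the paper delegates to \cite[Prop.~33]{AM}: one checks that the only subobject $M_0\subseteq P$ with $P/M_0\in\cS$ is $P$ itself, that $\Hom_{\cA}(P,-)$ kills $\cS$ and computes $\Hom$ out of $\pi P$ in the quotient, and then applies the Morita-type equivalence for a projective generator. This matches the paper's intended mechanism, which it records immediately after the proposition by identifying the quotient functor with $\Hom_\cA(P,?)$.
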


The {\it quotient functor} is then $\Hom_\cA(P,?):\cA\rightarrow \cA/\cS$. We call its left adjoint $P\otimes_{\End(P)} ?$ the {\it inclusion functor}.

\section{Categorification of the Jones-Wenzl projector - basic example}
\label{categjones}
In the following we will categorify the Jones-Wenzl projector as an exact quotient functor followed by the corresponding derived inclusion functor, see Theorem \ref{catJW}.

The categorification of both, the $3j$-symbol and the colored Jones polynomial is based on a categorification of the representation ${V}_1^{\otimes n}$ and the Jones-Wenzl projector. By this we roughly mean that we want to upgrade each weight space into a $\mZ$-graded abelian category with the action of $E$, $F$ and $K$, $K^{-1}$ via exact functors (see below for more precise statements). Such categorifications were first constructed in \cite{FKS} (building on previous work of \cite{BFK}) via graded versions of the category $ \mathcal{O}$ for $ \mathfrak{gl}_n $ and various functors acting on this category.\\

\subsection{Categorification of irreducible modules}
\label{motivation}

The categorification from \cite[6.2]{FKS} of the irreducible modules $V_n$ has a very explicit description in terms of cohomology rings of Grassmannians and correspondences. It was axiomatized (using the language of $2$-categories) by Chuang and Rouquier in \cite{CR}. Although they only work in the not quantized setup, their results could easily be generalized to the quantized version. In the smallest non-trivial case, the example of $V_2$, we consider the direct sum of categories
$$\mathcal{C}_2\;:=\;\mC-\gmod\;\oplus\; \mC[x]/(x^2)-\gmod\;\oplus\;\mC-\gmod$$
of graded modules. Then there is an isomorphism of $\mC(q)$-vector spaces from the Grothendieck space $\mC(q)\otimes_{\mZ[q,q^{-1}]}[\cC]$ of $\cC$ to $V_2$ by mapping the isomorphism classes of simple modules concentrated in degree zero to the dual canonical basis elements $v^i$. The action of $E$ and $F$ are given by induction functors $\mC-\gmod\rightarrow \mC[x]/(x^2)-\gmod, M\mapsto  \mC[x]/(x^2)\otimes_\mC M\langle -1\rangle$ and restriction functors  $\mC[x]/(x^2)-\gmod\rightarrow \mC-\gmod$, illustrated in Figure \ref{fig:cat1}. For general $n$, the category $\mathcal{C}_2$ should be replaced by $\mathcal{C}_n=\bigoplus_{i=0}^n H^*(\op{Gr}(i,n))-\gmod$, see \cite[6.2]{FKS}, \cite[Example 5.17]{CR}.

\subsection{Categorification of $V_1\otimes V_1$}
The smallest example for a non-trivial Jones-Wenzl projector is displayed in Figure \ref{exbasic}.
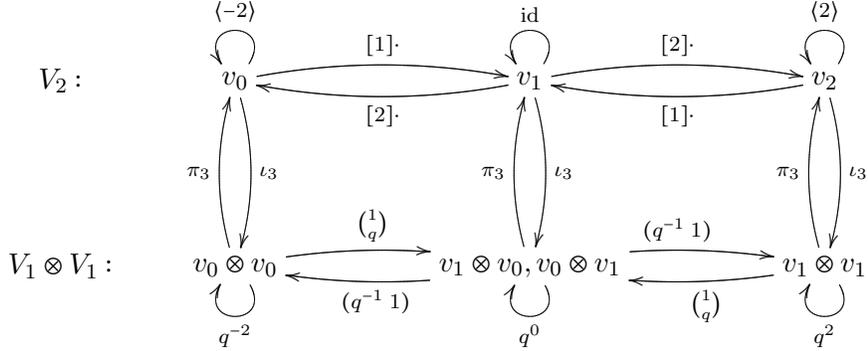
\begin{figure}
\label{exbasic}
\begin{array}[t]{lcl}
\text{ $\bf U_q(\mathfrak{sl}_2)$ \bf (not categorified)}\\
\xymatrix{
V_2:&v_0\ar@(ur,ul)[]_{\langle-2\rangle}\ar@/^/[rr]^{[1]\cdot}\ar@/^/[dd]^{\iota_3}&&
v_1\ar@(ur,ul)[]_{\op{id}}\ar@/^/[rr]^{[2]\cdot}\ar@/^/[ll]^{[2]\cdot}\ar@/^/[dd]^{\iota_3}&&
v_2\ar@(ur,ul)[]_{\langle2\rangle}\ar@/^/[ll]^{[1]\cdot}\ar@/^/[dd]^{\iota_3}
\\
\\
V_1\otimes V_1:&v_0\otimes v_0\ar@(dr,dl)[]^{q^{-2}}
\ar@/^/[rr]^{\!\!\!\!\!\!\binom{1}{q}}\ar@/^/[uu]^{\pi_3}
&&
v_1\otimes v_0, v_0\otimes v_1\ar@/^/[rr]^{(q^{-1}\;1)}
\ar@/^/[ll]^{\!\!\!\!(q^{-1}\;1)}\ar@(dr,dl)[]^{q^{0}}\ar@/^/[uu]^{\pi_3}
&&
v_1\otimes v_1\ar@/^/[ll]^{\quad\quad\binom{1}{q}}\ar@(dr,dl)[]^{q^{2}}\ar@/^/[uu]^{\pi_3}
}
\end{array}
\caption{Example of a Jones-Wenzl projector and inclusion.}
\end{figure}
where the horizontal lines denote the modules $V_2$ and $V_1\otimes V_1$ respectively with the standard basis denoted as ordered tuples. The horizontal arrows indicate the action of $E$ and $F$ in this basis, whereas the loops show the action of $K$. The vertical arrows indicate the projection and inclusion morphisms.

Let $A=\End_{\mC[x]/(x^2)}(\mC\oplus \mC[x]/(x^2))$. This algebra clearly contains $R$ as a subalgebra with basis $1,X$. One can identify $A$ with the path algebra of  the quiver $\stackrel{1}\bullet\leftrightarrows\stackrel{2}\bullet$ (with the two primitive idempotents $e_1$ and $e_2$) subject to the relation $1\rightarrow 2\rightarrow 1$ being zero. It is graded by the path length and $R=\Hom_A(Ae_2,Ae_2)=e_2Ae_2\cong\mC[x]/(x^2)$. In particular, $R-\Mod$ is a quotient category of $A-\Mod$. Using the graded version we have $R-\gmod\cong A-\gmod/\cS$, where $\cS$ denotes the Serre subcategory of all modules containing simple composition factors isomorphic to graded shifts of $\mC e_1$. Figure~\ref{fig:cat1} presents now a categorification of $V_1\otimes V_1$.

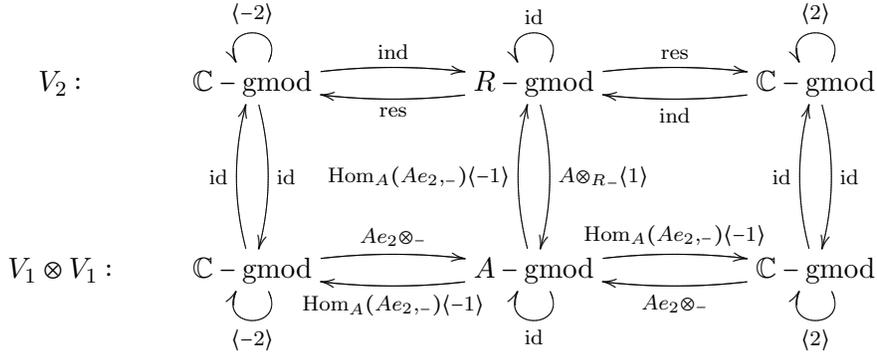
\begin{figure}
\label{fig:cat1}
\begin{eqnarray*}
\xymatrix{
V_2:&\mC-\gmod\ar@(ur,ul)[]_{\langle{-2}\rangle}\ar@/^/[rr]^{\op{ind}}\ar@/^/[dd]^{\op{id}}&&
R-\gmod\ar@(ur,ul)[]_{\op{id}}\ar@/^/[rr]^{\op{res}}\ar@/^/[ll]^{\op{res}}\ar@/^/[dd]^{{A\otimes_R}_-\langle 1\rangle}&&
\mC-\gmod\ar@(ur,ul)[]_{\langle 2\rangle}\ar@/^/[ll]^{\op{ind}}\ar@/^/[dd]^{\op{id}}\\
\\
V_1\otimes V_1:&\mC-\gmod\ar@(dr,dl)[]^{\langle-2\rangle}
\ar@/^/[rr]^{Ae_2\otimes_-}\ar@/^/[uu]^{\op{id}}
&&
A-\gmod\ar@/^/[rr]^{\Hom_A(Ae_2,_-)\langle -1\rangle}
\ar@/^/[ll]^{\Hom_A(Ae_2,_-)\langle -1\rangle}\ar@(dr,dl)[]^{\op{id}}\ar@/^/[uu]^{\Hom_A(Ae_2,_-)\langle -1\rangle}
&&
\mC-\gmod\ar@/^/[ll]^{Ae_2\otimes_-}\ar@(dr,dl)[]^{\langle 2\rangle}\ar@/^/[uu]^{\op{id}}
}
\end{eqnarray*}
\caption{The categorification of the Jones Wenzl-projector and the inclusion (using projective modules).}
\end{figure}

Note that the bases from Example \ref{example11} have a nice interpretation here. If we identify the dual canonical basis elements with the isomorphism classes of simple modules, then the standard basis corresponds to the following isomorphism classes of representations of the above quiver
\small
\begin{eqnarray}
\label{Vermas}
&\left\{[\mC], [\xymatrix{\mC\ar@/^/[r]^{0}&\mC\ar@/^/[l]^{\op{id}}}], [\xymatrix{0\ar@/^/[r]^{0}&\mC\ar@/^/[l]^{0}}], [\mC]\right\}, \left\{[\mC], [\xymatrix{0\ar@/^/[r]^{0}&\mC\ar@/^/[l]^{0}}], [\xymatrix{\mC\ar@/^/[r]^{0}&0\ar@/^/[l]^{0}}], [\mC]\right\}, &\nonumber\\
&\left\{[\mC], [\xymatrix{\mC\ar@/^/[r]^{\!\!\!\!\!\!\!\!\!\binom{1}{0}}&\mC\langle1\rangle\oplus\mC
\langle-1\rangle\ar@/^/[l]^{\!\!\!\!\!\!\!\!\!(0\;1)}}], [\xymatrix{0\ar@/^/[r]^{0}&\mC\ar@/^/[l]^{0}}], [\mC] \right\}&
\end{eqnarray}
\normalsize
and the twisted canonical basis corresponds to the indecomposable projective modules. One can easily verify that, when applied to the elements of the twisted canonical basis, the functors induce the $U_q(\mathfrak{sl}_2)$-action and morphisms of the above diagram. Since however the displayed functors are not exact, one has to derive them and pass to the (unbounded) derived category to get a well-defined action on the Grothendieck group.  We will do this in Section \ref{sec:JW} and at the same time extend the above to a Lie theoretic categorification which works in greater generality.

\begin{remark}
{\rm Note that the algebra $A$ has finite global dimension, hence a phenomenon as in Theorem \ref{Thmres} does not occur. Note also that for $V_1^{\otimes 2}$ we have three distinguished bases such that the transformation matrix is upper triangular with $1$'s on the diagonal. This is not the case for the irreducible representations. Categorically this difference can be expressed by saying $A$ is a quasi-hereditary algebra, whereas $R$ is only properly stratified, \cite[2.6]{MS2}.
}
\end{remark}

\begin{remark}{\rm
 The explicit connection of the above construction to Soergel modules can be found in \cite{Strquiv}. As mentioned already in the introduction we would like to work with the abelianization of the category of Soergel modules which we call later the {\it Verma category}.}
\end{remark}

\section{The Verma category $\cO$ and its graded version}
We start by recalling the Lie theoretic categorification of $\overline{V}_1^{\otimes n}$.
Let $n$ be a non-negative integer. Let $\mathfrak{g}=\mathfrak{gl}_n$ be the Lie algebra of complex $n\times n$-matrices.
Let $\mh$ be the standard Cartan subalgebra of all diagonal matrices with the standard basis $\lbrace E_{1,1}, \ldots, E_{n,n} \rbrace $ for $ i=1,\ldots, n$.  The dual space $ \mathfrak{h}^* $ comes with the dual basis $ \lbrace e_i \mid i=1,
\ldots, n \rbrace $ with $e_i(E_{j,j})=\delta_{i,j}$. The nilpotent subalgebra of strictly upper diagonal matrices spanned by $ \lbrace E_{i,j} \mid i<j \rbrace$ is denoted $\mathfrak{n}^{+}$. Similarly, let $ \mathfrak{n}^{-} $ be the subalgebra consisting of lower triangular matrices.  We fix the standard Borel subalgebra $\mathfrak{b}=\mathfrak{h} \bigoplus \mathfrak{n}^{+}$.  For any Lie algebra $L$ we denote by $\mathcal{U}(L)$ its universal enveloping algebra, so $L$-modules are the same as (ordinary) modules over the ring $\mathcal{U}(L)$.

Let $W=\mathbb{S}_n$ denote the Weyl group of $ \mathfrak{gl}_n $ generated by simple reflections (=simple transpositions) $ \lbrace s_i, 1 \leq i \leq n-1 \rbrace$. For $w\in W $ and $ \lambda \in \mathfrak{h}^*$, let $w\cdot\lambda = w(\lambda+\rho_n)-\rho_n$, where $ \rho_n= \frac{n-1}{2}e_1 + \cdots + \frac{1-n}{2}e_n$ is half the sum of the positive roots. In the following we will always consider this action. For $\la\in\mh^*$ we denote by  $W_{\lambda} $ the stabilizer of $\lambda \in \mathfrak{h}^*$.

Let $\la\in\mh^*$ and $\mC_\la$ the corresponding one-dimensional $\mh$-module. By letting $\mathfrak{n}^{+}$ act trivially we extend the action to $\mb$. Then the {\it Verma module} of highest weight $\la$ is
$$M(\la)=\mathcal{U}\otimes_{\mathcal{U}(\mb)}\mC_\la.$$

\begin{define}
\label{defO}
{\rm
We denote by $\mathcal{O} = \mathcal{O}(\mathfrak{gl}_n)$  the smallest abelian category of $\mathfrak{gl}_n$-modules containing all Verma modules and which is closed under tensoring with finite dimensional modules, finite direct sums, submodules and quotients. We call this category the {\it Verma category} $\cO$.
}
\end{define}

This category was introduced in \cite{BGG} (although it was defined there in a slightly different way) under the name category $\mathcal{O}$. For details and standard facts on this category we refer to \cite{Hu}.

Every Verma module $M(\la)$ has a unique simple quotient which we denote by $L(\la)$. The latter form precisely the isomorphism classes of simple objects in $\cO$. Moreover, the Verma category $\cO$ has enough projectives. We denote by  $P(\la)$ the projective cover in $\cO$. The category decomposes into indecomposable summands $\cO_\la$, called {\it blocks}, under the action of the center of $\cU(\mg)$. These blocks are indexed by the $W$-orbits (or its maximal representatives $\la$, called {\it dominant weights}, in $\mh^*$ for the Bruhat ordering). Note that the module $L(\la)$ is finite dimensional if and only if $\la$ is dominant and integral. Then the $L(w\cdot\la)$, $w\in W/W_\la$ are precisely the simple objects in $\cO_\la$.

Weight spaces of $V_1^{\otimes n}$ will be categorified using the blocks $\mathcal{O}_k(\mathfrak{gl}_n)$ corresponding to the integral dominant weights $e_1+\cdots+e_k-\rho_n$ for $1\leq k\leq n$. To make calculations easier denote also by $ M(a_1, \ldots, a_n) $ the Verma module with highest weight $ a_1 e_1 + \cdots + a_n e_n - \rho_n$ with simple quotient $L(a_1, \ldots, a_n) $ and projective cover $ P(a_1, \ldots, a_n) $ in $ \mathcal{O}(\mathfrak{gl}_n).$ They are all in the same block and belong to $ \mathcal{O}_k(\mathfrak{gl}_n) $ if and only if $k$ of the $a_j$'s are $1$ and $n-k$ of them are $0$. In this case we can identify the isomorphism class $[M(a_1, \ldots, a_n)]$ of $M(a_1, \ldots, a_n)$ with a standard basis  vector in $(V_1^{\otimes n})_k$ via
\begin{eqnarray}
\label{isoK0}
[M(a_1, \ldots, a_n)]&\mapsto&v_{a_1}\otimes v_{a_2}\otimes \cdots\otimes v_{a_n}
\end{eqnarray}
which then can be reformulated (using e.g. \cite[Theorems 3.10, 3.11]{Hu}) as

\begin{lemma}{\rm (\cite{BFK})}
\label{Grothungraded} The map \eqref{isoK0} defines an isomorphism of vector spaces:
$$ \mathbb{C} \otimes_{\mathbb{Z}} [\bigoplus_{k=0}^{n} \mathcal{O}_k(\mathfrak{gl}_n)] \cong \bar{V}_1^{\otimes n}. $$
\end{lemma}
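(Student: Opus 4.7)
The plan is to verify that the proposed assignment sends a $\mathbb{Z}$-basis of the left hand side bijectively onto a $\mathbb{C}$-basis of the right hand side, and therefore extends uniquely to a vector space isomorphism after base change. There is really no computation to do; the content is entirely in identifying the correct bases on both sides.

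First I would recall the standard fact (e.g.\ from \cite{Hu}) that for any block $\mathcal{O}_\mu(\mathfrak{gl}_n)$ of the Verma category, the classes $[M(\nu)]$ of the Verma modules contained in that block form a $\mathbb{Z}$-basis of the Grothendieck group $[\mathcal{O}_\mu(\mathfrak{gl}_n)]$. This follows because the simple objects $L(\nu)$ trivially form a basis, and the change-of-basis matrix between $\{[M(\nu)]\}$ and $\{[L(\nu)]\}$ is unitriangular with respect to the Bruhat order (each Verma has $L(\nu)$ as its unique simple quotient, and all other composition factors $L(\nu')$ satisfy $\nu' < \nu$).

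Next I would enumerate which Vermas lie in each block $\mathcal{O}_k(\mathfrak{gl}_n)$. By the definition given just above the lemma, $\mathcal{O}_k(\mathfrak{gl}_n)$ is the block of the integral dominant weight $e_1+\cdots+e_k-\rho_n$, and $M(a_1,\ldots,a_n)$ lies in it precisely when exactly $k$ of the $a_j$'s equal $1$ and the remaining $n-k$ equal $0$ (so that the weights are all in the dot orbit of the dominant representative, namely permutations of $(1,\ldots,1,0,\ldots,0)$). Consequently $[\mathcal{O}_k(\mathfrak{gl}_n)]$ is free of rank $\binom{n}{k}$, and after taking the direct sum over $0 \leq k \leq n$ the Grothendieck group is free of rank $2^n$.

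On the other side, $\bar{V}_1^{\otimes n}$ has the standard basis $\{v_{a_1}\otimes\cdots\otimes v_{a_n} : (a_1,\ldots,a_n)\in\{0,1\}^n\}$, which splits according to weight spaces so that the $(2k-n)$-weight space is spanned by the $\binom{n}{k}$ tensors with exactly $k$ ones. Thus the map \eqref{isoK0} sends the Verma basis of $[\mathcal{O}_k(\mathfrak{gl}_n)]$ bijectively onto the standard basis of the corresponding weight space. Extending $\mathbb{C}$-linearly therefore yields a $\mathbb{C}$-linear isomorphism
\[
\mathbb{C}\otimes_{\mathbb{Z}}\bigl[\textstyle\bigoplus_{k=0}^{n}\mathcal{O}_k(\mathfrak{gl}_n)\bigr]\;\xrightarrow{\sim}\;\bar{V}_1^{\otimes n},
\]
as desired. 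There is no real obstacle; the only point that might deserve a line of justification is the block decomposition statement identifying exactly which Vermas sit in $\mathcal{O}_k(\mathfrak{gl}_n)$, which is immediate from the Harish--Chandra description of central characters and the fact that two integral weights share a central character iff they lie in the same dot-orbit of $W=\mathbb{S}_n$.
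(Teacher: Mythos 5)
Your proposal is correct and follows the same route the paper (implicitly) takes: the paper simply cites \cite{BFK} and \cite[Theorems 3.10, 3.11]{Hu}, which amount to exactly your two ingredients, namely that the classes of Verma modules form a $\mathbb{Z}$-basis of each block's Grothendieck group by unitriangularity against the simples, and that the Vermas in $\mathcal{O}_k(\mathfrak{gl}_n)$ are precisely the $M(a_1,\ldots,a_n)$ with $k$ ones, matching the $\binom{n}{k}$ standard basis vectors of the corresponding weight space. Nothing is missing.
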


\begin{ex}
\label{easiest}{\rm
The natural basis $v_0\otimes v_0$, $v_1\otimes v_0$, $v_0\otimes v_1$, $v_1\otimes v_1$ will be identified with the isomorphism classes of the Verma modules $$M(00), M(10), M(01), M(11)$$ respectively. The first  and last Verma module are simple modules as well and  the categories $\mathcal{O}_0(\mathfrak{gl}_n)$ and  $\mathcal{O}_2(\mathfrak{gl}_n)$ are both equivalent to the category of finite dimensional complex vector spaces.  The category $\mathcal{O}_1(\mathfrak{gl}_n)$ is equivalent to the category of finite dimensional modules over the above mentioned path algebra.}
\end{ex}

Each block of $\cO$ is equivalent to a category of right modules over a finite dimensional algebra, namely the endomorphism ring of a minimal projective generator. These algebras are not easy to describe, see \cite{Strquiv} where small examples were computed using (singular) Soergel modules. Therefore, our arguments will mostly be Lie theoretic in general, but we will need some properties of the algebras. Denote by  $A_{k,n}$ the endomorphism algebra of a minimal projective generator $P_k$ of $ \mathcal{O}_k(\mathfrak{gl}_n)$, hence
$$\epsilon:\quad\mathcal{O}_k(\mathfrak{gl}_n)\cong \MOD-A_{n,k}, \quad M\mapsto\HOM_\mg(P_k,_-).$$
The following statement is crucial and based on a deep fact from \cite{BGS}:

\begin{prop}
\label{Koszul}
 There is a unique non-negative $\mZ$-grading on $A_{k,n}$ which is Koszul.
\end{prop}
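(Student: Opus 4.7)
The plan is to deduce both existence and uniqueness from the main Koszulity theorem of Beilinson--Ginzburg--Soergel in \cite{BGS}.

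For \emph{existence}, BGS prove that the endomorphism algebra $B_n$ of a minimal projective generator of the principal (regular integral) block $\cO_0(\mg)$ carries a Koszul non-negative $\mZ$-grading. The block $\cO_k(\mg)$ has stabilizer $\mathbb{S}_k \times \mathbb{S}_{n-k}$ and, for $0 < k < n$, is singular rather than regular. To transfer the grading, I would use the translation functor $\theta_k \colon \cO_0 \to \cO_k$ onto the $W_k$-wall: the image $\theta_k(P_0)$ of a projective generator $P_0$ of $\cO_0$ contains a minimal projective generator of $\cO_k$ as a summand, giving an identification $A_{k,n} \cong e_k B_n e_k$ for an appropriate idempotent $e_k \in B_n$. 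Since $e_k$ is the projection onto a sum of indecomposable projectives (hence may be chosen homogeneous of degree zero), the Koszul grading on $B_n$ restricts to a positive grading on $A_{k,n}$ with semisimple degree-zero part. Koszulity of the truncated algebra $e_k B_n e_k$ is then deduced from the Koszulity of $B_n$ together with the compatibility of translation functors with the grading, as worked out by Backelin (or directly from Soergel's combinatorial description via singular Soergel bimodules).

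For \emph{uniqueness}, I would invoke the general principle that a Koszul grading on a basic finite-dimensional algebra $A$ with $A_0$ semisimple is unique up to graded algebra automorphism. Concretely, for any Koszul grading $A = \bigoplus_{i \geq 0} A_i$ the degree-zero piece equals $A/\RAD(A)$, and the Koszul condition forces each $A_i$ to be a canonical complement of $\RAD^{i+1}(A)$ inside $\RAD^i(A)$, pinned down (up to isomorphism) by the quadratic dual $A^! = \Ext^*_A(A_0,A_0)$. Since the radical filtration on $A_{k,n}$ is intrinsic, any two Koszul gradings yield isomorphic graded algebras, and after fixing a primitive idempotent decomposition they coincide.

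The main obstacle I anticipate is transferring Koszulity from the regular to the singular block. While BGS treat the regular case directly, Koszulity of $e_k B_n e_k$ is not purely formal: it relies on the geometric origin of the grading (via perverse sheaves on the flag variety together with a compatible Schubert stratification) and on the compatibility of wall-crossing with this structure. Once this geometric input is in place, both existence and uniqueness follow cleanly, but the singular case is really where the substantive work lies, which is why the appeal to \cite{BGS} (suitably augmented by its parabolic--singular extensions) is essential.
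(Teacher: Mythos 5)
The paper offers no argument for this proposition beyond the citation: the blocks $\cO_k(\mathfrak{gl}_n)$ are singular integral blocks of category $\cO$, and the main theorem of \cite{BGS} (via parabolic--singular duality) establishes Koszulity of their endomorphism algebras \emph{directly}, with uniqueness of the Koszul grading (up to isomorphism of graded algebras) being the general statement in \cite[Section 2.5]{BGS}; the remark following the proposition notes that for these particular blocks Koszulity can alternatively be proved diagrammatically via \cite{BS1}, \cite{BS2}. Your uniqueness paragraph is essentially this standard argument and is fine.

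The existence half of your argument, however, rests on a false identification: $A_{k,n}$ is \emph{not} of the form $e_kB_ne_k$ for an idempotent $e_k$ in the regular block algebra $B_n=\End(P_0)$. Translation onto the wall is not fully faithful on projectives; by adjunction $\End_{\cO_\la}(T_0^\la P_0)\cong\Hom_{\cO_0}(P_0,T_\la^0T_0^\la P_0)$, which is controlled by a Soergel \emph{bimodule} and is strictly larger than any idempotent truncation of $B_n$. Concretely, for $n=3$, $k=1$ the antidominant indecomposable projective of $\cO_1(\mathfrak{gl}_3)$ has endomorphism ring $C^{S_1\times S_2}\cong H^\bullet(\mathbb{P}^2)$ of dimension $3$ (Soergel's Endomorphismensatz; compare Theorem \ref{Endstandards}), whereas the corners $e_xB_3e_x=\End(P(x\cdot 0))$ at the six primitive idempotents of $B_3$ have dimensions $1,2,2,4,4,6$, so no choice of idempotent realizes $A_{1,3}$ as a corner of $B_3$. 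The correct way to transfer the grading from the regular to the singular block is through Soergel's combinatorial functor $\mathbb{V}_\la$ and graded translation functors as in \cite{Strgrad} (restriction of Soergel modules from $C$ to the invariants $C^{W_\la}$, not idempotent truncation of the algebra), or one simply invokes the singular case of \cite{BGS} directly, which is what the paper does. Since the $e_kB_ne_k$ reduction is the only concrete mechanism you offer for existence, this is a genuine gap rather than a cosmetic one.
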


\begin{remark}
{\rm Note that we only work with very special blocks, hence use very special cases of \cite{BGS}. In these cases the Koszul dual algebra $A_{k,n}^!$ can be defined diagrammatically, \cite{BS1}, \cite{BS3}, by slightly generalizing Khovanov's arc algebra from \cite{KhovJones}. Koszulity can then be proved by elementary tools \cite{BS2}.
}
\end{remark}

Proposition \ref{Koszul} allows us to work with the category $A_{k,n}-\gmod$, hence a graded version of our Verma category $\cO$ with the grading forgetting functor ${\bf f}: A_{k,n}-\gmod\rightarrow A_{k,n}-\gmod$. An object in $\cO_k$ is called {\it gradable}, if there exists a graded module $\hat{M}\in A_{k,n}-\gmod$ such that ${\bf f} (\hat{M})\cong\epsilon (M)$. The following is well-known (note however that not all modules are gradable by \cite[Theorem 4.1]{Strgrad}):

\begin{lemma}
Projective modules, Verma modules and simple modules are gradable. Their graded lifts are unique up to isomorphism and grading shift.
\end{lemma}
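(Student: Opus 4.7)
The plan is to handle each class of modules separately and then invoke a uniform argument for uniqueness.

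For \textbf{simple modules}, observe that the Koszulity in Proposition \ref{Koszul} means $A_{k,n}$ is non-negatively graded with $A_{k,n}^{0}$ semisimple, so $A_{k,n}^{>0}$ coincides with the Jacobson radical. Every simple $A_{k,n}$-module is therefore a simple $A_{k,n}^{0}$-module and admits an obvious graded lift concentrated in a single degree; by convention we place it in degree $0$. For \textbf{indecomposable projectives}, standard idempotent-lifting in a positively graded algebra with semisimple $A_{k,n}^{0}$ lets us choose a complete set of primitive orthogonal idempotents which are homogeneous of degree $0$. Then $A_{k,n}e$ is automatically a graded left module, and its image under $\epsilon^{-1}$ gives the graded lift of $P(\lambda)$.

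For \textbf{Verma modules} I would use that $M(\lambda)$ is characterized (inside the block) as the quotient of $P(\lambda)$ by the submodule generated by the images of all $\mathfrak{g}$-homomorphisms $P(\mu) \to P(\lambda)$ with $\mu$ strictly larger than $\lambda$ in the Bruhat order (the ``$>\lambda$-trace''). Since each $P(\mu)$ already carries a graded lift, and since homogeneous maps between graded projectives span the full ungraded Hom-space after allowing grading shifts, the trace is a homogeneous submodule of the graded lift of $P(\lambda)$. The quotient is then a graded module whose image under ${\bf f}$ is $\epsilon(M(\lambda))$. Equivalently, one can use the fact that the BGG resolution is built from graded projectives with homogeneous differentials, so $M(\lambda)$ arises as a cokernel in the graded category.

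For \textbf{uniqueness} of the graded lift up to isomorphism and shift, the key input is that in each of the three classes the endomorphism ring is local (fields for simples; local by standard category $\mathcal{O}$ theory for indecomposable projectives and Vermas). Suppose $\hat M_1$ and $\hat M_2$ are two graded lifts of the same indecomposable $M$. Any ungraded isomorphism ${\bf f}(\hat M_1) \xrightarrow{\sim} {\bf f}(\hat M_2)$ decomposes as a finite sum of homogeneous components in $\bigoplus_{d\in\mZ}\Hom_{\gmod}(\hat M_1, \hat M_2\langle d\rangle)$. Composing with its inverse and using locality of $\End(M)$, at least one homogeneous component must be an isomorphism, yielding $\hat M_1 \cong \hat M_2\langle d\rangle$ for some $d$.

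The main obstacle is the Verma case: one must verify that the $>\lambda$-trace (equivalently, the kernel of $P(\lambda) \surj M(\lambda)$) is actually a graded submodule of the graded lift of $P(\lambda)$. This reduces to the statement that $\Hom_{\mathcal{O}}(P(\mu),P(\lambda)) \cong \bigoplus_{d\in\mZ} \gHOM(\hat P(\mu), \hat P(\lambda)\langle d\rangle)$, which in turn is a formal consequence of the projectives being gradable together with the Koszul (in particular positive) grading on $A_{k,n}$.
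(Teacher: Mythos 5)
Your proof is correct and follows essentially the same route as the paper, which simply delegates gradability to \cite[Lemma 8.1]{MS2} (using exactly the description of the Verma module as $P(\lambda)$ modulo the trace of the larger projectives) and uniqueness to \cite[Lemma 2.5.3]{BGS} or \cite[Lemma 2.5]{Strgrad} (the homogeneous-component argument via locality of endomorphism rings); you have just written out the content of those references. The only cosmetic slip is the aside about the ``BGG resolution'': that resolves simples by Vermas, not Vermas by projectives, but your primary trace argument does not rely on it.
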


\begin{proof}
The first part can be proved precisely as for instance in \cite[Lemma 8.1]{MS2} using the description of Verma modules as in \cite[p.2945]{MS2}. The uniqueness is \cite[Lemma 2.5.3]{BGS} or \cite[Lemma 2.5]{Strgrad}.
\end{proof}

We pick and fix such lifts $\hat{P}(a_1, \ldots, a_n), \hat{M}(a_1, \ldots, a_n) $ and $ \hat{L}(a_1, \ldots, a_n) $ by requiring that their heads are concentrated in degree zero.

Category $\cO$ has a contravariant duality which via $\epsilon$ just amounts to the fact that $A_{k,n}$ is isomorphic to its opposite algebra and after choosing such an isomorphism the duality is just $\Hom_{A_{k,n}}(_-,\mC)$.
In particular we can from now on work with the category $A_{k,n}-\gmod$ of graded left modules and choose the graded lift  $\op{d}=\Hom_{A_{k,n}}(_-,\mC)$ of the duality, see \cite[(6.1)]{Strgrad}. Then $\op{d}$ preserves the $\hat{L}(a_1, \ldots, a_n)$'s.
The module $\hat{\nabla}(a_1, \ldots, a_n)=\op{d}(\hat{M}(a_1, \ldots, a_n)$ is a graded lift of the ordinary dual Verma module ${\nabla}(a_1, \ldots, a_n)$. Similarly $\hat{I}(a_1, \ldots, a_n)=\op{d}(\hat{P}(a_1, \ldots, a_n))$ is the injective hull of $\hat{L}(a_1, \ldots, a_n)$ and a graded lift of the injective module ${I}(a_1, \ldots, a_n)$.

\section{Categorification of $ V_1^{\otimes n}$ using the Verma category $\cO$}
Let $ \langle r \rangle \:\colon\: A_{k,n}-\gmod \rightarrow A_{k,n}-\gmod$ be the functor of shifting the grading up by $r$ such that if $M=M_i$ is concentrated in degree $i$ then $M\langle k\rangle=M\langle k\rangle_{i+k}$ is concentrated in degree $i+k$. The
additional grading turns the Grothendieck group into a $\mZ[q,q^{-1}]$-module, the shift functor $\langle r\rangle$ induces the multiplication with $q^r$. Then we obtain the following:

\begin{prop}{\rm \cite[Theorem 4.1, Theorem 5.3]{FKS}}
\label{Grothgraded} There is an isomorphism of $\mathbb{C}(q)$-vector spaces:
\begin{eqnarray}
\label{iso26}
\Phi_n:\; \mathbb{C}(q) \otimes_{\mathbb{Z}[q,q^{-1}]} [\bigoplus_{k=0}^{n} A_{k,n}-\gmod] &\cong&V_1^{\otimes n}.\\
\left[\hat{M}(a_1, \ldots, a_n)\right]&\mapsto&v_{a_1}\otimes v_{a_2}\otimes \cdots\otimes v_{a_n}\nonumber
\end{eqnarray}
Under this isomorphism, the isomorphism class of $[L(a_1, \ldots, a_n)]$ is mapped to the dual canonical basis element $v^{a_1} \heartsuit \cdots \heartsuit v^{a_n}$.
\end{prop}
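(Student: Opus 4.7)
The plan is to establish the claim in two stages: first that $\Phi_n$ is a well-defined $\mathbb{C}(q)$-linear isomorphism on the level of graded Verma classes, and then to identify the image of the graded simples with the dual canonical basis via (parabolic) Kazhdan-Lusztig polynomials. First I would exploit the Koszul grading from Proposition \ref{Koszul}: because $A_{k,n}$ is a finite dimensional non-negatively graded algebra with semisimple degree-zero part, its graded Grothendieck group $[A_{k,n}\text{-}\gmod]$ is a free $\mathbb{Z}[q,q^{-1}]$-module with basis $\{[\hat{L}(a_1,\ldots,a_n)] : \sum a_i = k\}$, where $q$ acts by $\langle 1\rangle$. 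A standard Jordan-H\"older argument yields a change-of-basis
\[
[\hat{M}(\mathbf{a})] = \sum_{\mathbf{b}} n_{\mathbf{b},\mathbf{a}}(q)\,[\hat{L}(\mathbf{b})]
\]
which is unitriangular (with leading term $[\hat{L}(\mathbf{a})]$) with respect to the Bruhat order on $\mathbb{S}_n/(\mathbb{S}_k\times\mathbb{S}_{n-k})$, so the graded Verma classes also form a $\mathbb{Z}[q,q^{-1}]$-basis.

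Summing over $k$ and tensoring with $\mathbb{C}(q)$ produces a $\mathbb{C}(q)$-vector space of dimension $2^n$, weight-space decomposed with $\binom{n}{k}$-dimensional summand indexed by $k$. I would then define $\Phi_n$ on basis elements by $[\hat{M}(a_1,\ldots,a_n)]\mapsto v_{a_1}\otimes\cdots\otimes v_{a_n}$ and extend $\mathbb{C}(q)$-linearly. Since the standard basis $\{v_{\mathbf{a}} : |\mathbf{a}|=k\}$ is a $\mathbb{C}(q)$-basis of $(V_1^{\otimes n})_k$, this map sends a basis to a basis preserving the $k$-decomposition, hence is an isomorphism of $\mathbb{C}(q)$-vector spaces. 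Specialization at $q=1$ recovers Lemma \ref{Grothungraded}.

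For the identification of $[\hat{L}(\mathbf{a})]$ with $v^{a_1}\heartsuit\cdots\heartsuit v^{a_n}$, the key ingredient is that the polynomials $n_{\mathbf{b},\mathbf{a}}(q)$ appearing above are precisely the parabolic Kazhdan-Lusztig polynomials for the pair $(\mathbb{S}_n,\mathbb{S}_k\times\mathbb{S}_{n-k})$. This is a consequence of the Koszul grading combined with the Beilinson-Ginzburg-Soergel/Soergel identification of graded multiplicities in category $\cO$ with coefficients of parabolic KL polynomials, and is the content hidden behind Proposition \ref{Koszul}. Inverting the unitriangular matrix yields
\[
[\hat{L}(\mathbf{a})] = \sum_{\mathbf{b}} m_{\mathbf{b},\mathbf{a}}(q)\,[\hat{M}(\mathbf{b})]
\]
with $m_{\mathbf{b},\mathbf{a}}$ the inverse parabolic KL polynomials. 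On the other hand, the formula of Frenkel-Khovanov (\cite[Proposition 1.7]{FK}, together with the identifications recorded in Remark \ref{annoying}) expresses the dual canonical basis vector $v^{a_1}\heartsuit\cdots\heartsuit v^{a_n}$ in the standard basis with exactly these same inverse parabolic KL coefficients. Matching coefficients then forces $\Phi_n([\hat{L}(\mathbf{a})]) = v^{a_1}\heartsuit\cdots\heartsuit v^{a_n}$.

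The hardest part is the KL-multiplicity identification of the previous paragraph, which requires genuinely geometric input (perverse sheaves on flag varieties and Soergel's \emph{Endomorphismensatz}) rather than elementary manipulations; this is already built into Proposition \ref{Koszul}. A second, more pedestrian but delicate task will be bookkeeping: one must check that the normalization conventions chosen in Section \ref{U2basics} for the comultiplication, for $\heartsuit$, and for the semi-linear form are consistent with the convention fixed for the graded lifts $\hat{L}(\mathbf{a})$ (heads concentrated in degree zero), so that no global grading shift or sign is lost. Once these two issues are handled, the rest is a straightforward assembly of the isomorphism from its description on the Verma basis.
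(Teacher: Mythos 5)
The paper does not actually prove this proposition: it is quoted verbatim from \cite[Theorems 4.1 and 5.3]{FKS}, so there is no in-text argument to compare against. Your sketch is, in substance, the argument of the cited reference: freeness of the graded Grothendieck group on the graded simples, unitriangularity of the Verma-to-simple transition matrix, definition of $\Phi_n$ on the Verma basis, and then the identification of graded decomposition numbers with (Grassmannian) parabolic Kazhdan--Lusztig polynomials matched against the Frenkel--Khovanov expansion of the dual canonical basis. So the approach is sound and is essentially the intended one.

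Two caveats. First, the step you correctly single out as ``the hardest part'' is not merely hidden behind Proposition \ref{Koszul}: Koszulity (or \cite{BGS}) gives you that the graded multiplicities $[\hat M(\mathbf a):\hat L(\mathbf b)\langle i\rangle]$ are coefficients of parabolic KL polynomials, but the equality of \emph{those} polynomials with the explicit coefficients in \cite[Proposition 1.7]{FK} is a separate combinatorial theorem (Lascoux--Sch\"utzenberger/Zelevinsky for Grassmannian KL polynomials, reproved in \cite{FK}); your proof is complete only modulo citing that identification explicitly, since it is the entire mathematical content of the statement. Second, the ``bookkeeping'' you defer is genuinely delicate and not purely cosmetic: one must fix which orbit representative is antidominant (hence which $\hat M(\mathbf a)$ is simple), whether the non-simple head appears with $q$ or $q^{-1}$, and how the twist $\Omega$ of Remark \ref{annoying} enters, before the signs and $q$-powers in the expansion of $[\hat L(\mathbf a)]$ line up with $v^{a_1}\heartsuit\cdots\heartsuit v^{a_n}$ (compare Example \ref{example11} with the $n=2$ decomposition $[\hat M(10)]=[\hat L(10)]+q^{\pm1}[\hat L(01)]$). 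Getting one of these conventions wrong produces a statement that fails already for $n=2$, so this check should be carried out rather than asserted.
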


The following theorem categorifies the $\cU_q$-action.

\begin{theorem}[{\rm \cite[Theorem 4.1]{FKS}}]
\label{projfunc} There are exact functors of graded categories
\begin{eqnarray*}
&\hat{\mathcal{E}}_k \colon\quad A_{k,n}-\gmod \rightarrow A_{k+1,n}-\gmod,\quad
\hat{\mathcal{F}}_k \colon\quad A_{k,n}-\gmod \rightarrow A_{k-1,n}-\gmod,&\\
&\hat{\mathcal{K}}_k^{\pm 1} \colon\quad A_{k,n}-\gmod
\rightarrow A_{k,n}-\gmod&
\end{eqnarray*}
such that
\begin{eqnarray*}
&{\hat{\mathcal{K}}_{i+1}} \hat{\mathcal{E}}_i \cong {\hat{\mathcal{E}}_i} {\hat{\mathcal{K}}}_i \langle 2
    \rangle&\\
    &{\hat{\mathcal{K}}}_{i-1}\hat{\mathcal{F}}_i \cong {\hat{\mathcal{F}}_i} {\hat{\mathcal{K}}}_i \langle -2 \rangle& \\
    &{\hat{\mathcal{K}}}_i {\hat{\mathcal{K}}}_i^{-1} \cong {\Id}
    \cong {\hat{\mathcal{K}}}_i^{-1} {\hat{\mathcal{K}}}_i^1&\\
    &\displaystyle \hat{\mathcal{E}}_{i-1}
    \hat{\mathcal{F}}_i \oplus
    \bigoplus_{j=0}^{n-i-1} {\Id} \langle n-2i-1-2j \rangle \cong \hat{\mathcal{F}}_{i+1} \hat{\mathcal{E}}_i
    \oplus
    \bigoplus_{j=0}^{i-1}
    {\Id} \langle 2i-n-1-2j \rangle&
\end{eqnarray*}
and the isomorphism \eqref{iso26} becomes an isomorphism of $\mathcal{U}_q(\mathfrak{sl}_2)$-modules.
\end{theorem}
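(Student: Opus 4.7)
The plan is to construct the functors as graded lifts of projective (translation) functors on the Verma category $\cO$, then check the relations by computing their effect on Verma modules and using uniqueness of graded lifts.

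First, I would define ungraded versions $\mathcal{E}_k \colon \cO_k(\mg) \to \cO_{k+1}(\mg)$ and $\mathcal{F}_k \colon \cO_k(\mg) \to \cO_{k-1}(\mg)$ by $\mathcal{E}_k(M) = \op{pr}_{k+1}(M \otimes V)$ and $\mathcal{F}_k(M) = \op{pr}_{k-1}(M \otimes V^*)$, where $V = \mC^n$ is the natural $\mg$-representation and $\op{pr}_\ell$ is the exact projection onto $\cO_\ell(\mg)$. Exactness is automatic since tensoring with a finite-dimensional module and projection onto a block are both exact. For $\mathcal{K}_k^{\pm 1}$, take the identity functor (its non-triviality comes only from the grading shift imposed below).

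Next I would verify the action on the ungraded Grothendieck group. By the standard tensor identity for Verma modules (\cite[Theorem 3.6]{Hu}), the module $M(a_1, \ldots, a_n) \otimes V$ has a Verma filtration with subquotients $M(a_1, \ldots, a_i+1, \ldots, a_n)$ for $i = 1, \ldots, n$; passing to $\cO_{k+1}$ keeps precisely those with $a_i = 0$. Under the identification \eqref{isoK0}, this reproduces the action of $E$ via the comultiplication \eqref{comult}, with the $K^{-1}$ on the right giving a weight-dependent grading shift after lifting; an analogous computation handles $\mathcal{F}$ and recovers the ungraded Lemma \ref{Grothungraded}.

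The main obstacle is the graded lift with the correct quantum shifts. Using Proposition \ref{Koszul}, each $A_{k,n}$ carries a canonical Koszul grading, and projective functors admit graded lifts, unique up to overall grading shift (as in \cite[Lemma 2.5]{Strgrad}). I would fix the normalization by demanding that $\hat{\mathcal{E}}_k$ sends $\hat{M}(\ldots, 0, \ldots)$ to an extension of $\hat{M}(\ldots, 1, \ldots)\langle s_i\rangle$ whose shifts $s_i$ match the exponents in the tensor formula from the comultiplication \eqref{comult} (where the $K^{-1}$-factor produces a shift depending on how many $1$'s sit to the right of position $i$), and analogously for $\hat{\mathcal{F}}_k$. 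For $\hat{\mathcal{K}}_k^{\pm 1}$, take the grading shift $\langle \pm(n-2k)\rangle$, matching the weight $q^{\pm(2k-n)}$ from \eqref{irreddef} on the $k$-th weight space.

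Finally I would verify the four displayed relations. The two $\hat{\mathcal{K}}\hat{\mathcal{E}}$, $\hat{\mathcal{K}}\hat{\mathcal{F}}$ identities are immediate bookkeeping once $\hat{\mathcal{K}}$ is a pure grading shift; the inverse relation is trivial. The hard one is the Chevalley-type decomposition of $\hat{\mathcal{E}}_{i-1}\hat{\mathcal{F}}_i$ versus $\hat{\mathcal{F}}_{i+1}\hat{\mathcal{E}}_i$. Ungraded, this follows from $V \otimes V^* \cong \mg \oplus \mC$ as $\mg$-modules and the resulting direct summand decomposition of the composed translation functors applied to projectives. The graded lift of this isomorphism is the delicate point: one shows both sides are graded lifts of isomorphic exact functors sending graded projectives to graded projectives, and then invokes the uniqueness of graded lifts (up to shift) together with the matching of graded Euler characteristics established in the previous step to pin down the shifts $\langle n-2i-1-2j\rangle$ and $\langle 2i-n-1-2j\rangle$, reproducing the quantum identity $EF - FE = (K - K^{-1})/(q - q^{-1})$ on Grothendieck groups. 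Combining this with the Grothendieck-group calculation upgrades the map \eqref{iso26} to an isomorphism of $\mathcal{U}_q(\mathfrak{sl}_2)$-modules.
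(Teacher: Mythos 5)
Your construction---the translation functors $\op{pr}(\,\cdot\otimes V)$ and their adjoints, graded lifts via the Koszul grading unique up to shift, and the projective-functor/Krull--Schmidt argument with a Grothendieck-group computation to pin down the shifts in the Chevalley relation---is exactly the route indicated in the paper, which gives no proof of its own but cites \cite[Theorem 4.1]{FKS} where this argument is carried out. The only slips are cosmetic: $\hat{\mathcal{K}}_k^{\pm1}$ must be $\langle\pm(2k-n)\rangle$ since $\langle r\rangle$ induces multiplication by $q^{r}$ (your choice $\langle\pm(n-2k)\rangle$ would violate ${\hat{\mathcal{K}}}_{i+1}\hat{\mathcal{E}}_i\cong\hat{\mathcal{E}}_i\hat{\mathcal{K}}_i\langle2\rangle$), and $V\otimes V^*\cong\mathfrak{gl}_n\cong\mathfrak{sl}_n\oplus\mC$ rather than $\mg\oplus\mC$.
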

The functor $\hat{\mathcal{E}}=\oplus_{i\in\mZ}\hat{\mathcal{E}}_i$ is defined as a graded lift of tensoring with
the natural $n$-dimensional representation of $\mathfrak{gl}_n$ and $\hat{\mathcal{E}}_i$ is then obtained by projecting onto the required block. The functors
$\hat{\mathcal{F}}=\oplus \hat{\mathcal{F}}_i$ and
$\hat{\mathcal{F}}_i$ are then (up to grading shifts) their adjoints, whereas ${\hat{\mathcal{K}}}$ acts by an appropriate grading shift ${\hat{\mathcal{K}}}_i$ on each block.

\begin{remark}
\label{divpowers}
{\rm More generally, exact functors $\hat{\mathcal{E}}_k^{(r)}, \hat{\mathcal{F}}_k^{(r)}$ were defined which categorify divided
powers $E^{(r)}, F^{(r)}$ (\cite[Proposition 3.2]{FKS}). The ungraded version, denote them by ${\mathcal{E}}_k^{(r)}, {\mathcal{F}}_k^{(r)}$, are just certain direct summands of tensoring with the $k$-th tensor power of the natural representation or its dual respectively.}
\end{remark}

\begin{remark}
\label{rmCR}
{\rm
Chuang and Rouquier showed that the (ungraded) functorial action of $ \mathfrak{sl}_2$ on the Verma category $\cO$ is an example of an $\mathfrak{sl}_2$-categorification \cite[Sections 5.2 and 7.4]{CR}. Hence there is an additional action of a certain Hecke algebra on the space of $2$-morphisms or natural transformations between compositions of the functors $\mathcal{E}$ and $\mathcal{F}$.
}
\end{remark}

\subsection{Categorification of the Jones-Wenzl projector as a quotient functor}
\label{sec:JW}
Theorem \ref{FKsimples} characterizes the projector as a quotient map which either sends dual canonical basis elements to shifted dual canonical basis elements or annihilates them. On the other hand, Theorem \ref{Grothgraded} identifies the dual canonical basis elements with simple modules. Hence it is natural to categorify the projector as the quotient functor with respect to the Serre subcategory $\cS$ generated by all simple modules whose corresponding dual canonical basis element is annihilated by the projector.

Let $n=d_1+d_2+\cdots d_r$ with $d_j\in\mZ_{\geq 0}$. Define ${}_k\pi'_{\bf d}$ to be the projector $\pi_{\bf d}=\pi_{d_1}\otimes \pi_{d_2}\otimes\cdots\otimes \pi_{d_r}(v)$ from \eqref{iso26} restricted to the $q^{(2i-n)}$-weight space of $V_1^{\otimes n}$.
Consider the set of dual canonical basis vectors in $V_1^{\otimes n}$ annihilated resp. not annihilated by the projector  ${}_k\pi'_{\bf d}$. Let $S$ and $S'$, respectively, the corresponding set of isomorphism classes of simple modules under the bijection \eqref{iso26} taken with all possible shifts in the grading. According to Proposition~\ref{Serre} the set $S$ defines a Serre subcategory $\cS$ in $\cA:=A_{k,n}-\gmod$.  If we set $P_{k,{\bf d}}:=P_\cS=\bigoplus_{N} P(N)$, where the
where we sum over the projective covers $P(N)$ of all $N$ from a complete system of representatives from $S'$, then the quotient category $\cA/\cS$ is canonically equivalent to the category of modules over the endomorphism ring $A_{k, {\bf d}}$ of $P_{k,{\bf d}}$.

\begin{definition}
Let $e$ be an idempotent of $A_{k,n}$ such that $P_{k,{\bf d}}=A_{k,n}e$. Define the quotient and inclusion functors
\begin{eqnarray}
{}_k\hat\pi_{\bf d}:=\Hom_{A_{k, n}}(A_{k,n}e,_-)\langle -k(n-k)\rangle:\; A_{k, n}-\gmod\rightarrow A_{k, {\bf d}}-\gmod\label{algebraic1}\\
{}_k\hat\iota_{\bf d}:={A_{k, {\bf d}}\otimes_{eA_{k, {\bf d}}e}}_-\langle k(n-k)\rangle:\;A_{k, {\bf d}}-\gmod\rightarrow A_{k, n}-\gmod.\label{algebraic2}
\end{eqnarray}
Set ${}_k\hat\pi_{\bf d}=\bigoplus_{k=0}^n{}_k\hat\pi_{\bf d}$ and $\hat\iota_{\bf d}=\bigoplus_{k=0}^n{}_k\hat\iota_{\bf d}$.
\end{definition}

\begin{lemma}
\label{idempotent}
The composition $\tilde{p}_{k,\bf d}={}_k\iota'_{\bf d}{}_k\pi'_{\bf d}$ is an idempotent, i.e. $\tilde{p}_{k,\bf d}^2=\tilde{p}_{k,\bf d}\tilde{p}_{k,\bf d}$.
\end{lemma}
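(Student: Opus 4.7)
The plan is to reduce the claimed idempotency to the identity $\pi_{\bf d} \circ \iota_{\bf d} = \mathrm{id}_{V_{\bf d}}$ on the full tensor product, and then to verify this tensor-factor by tensor-factor. Since ${}_k\pi'_{\bf d}$ and ${}_k\iota'_{\bf d}$ are obtained from $\pi_{\bf d}$ and $\iota_{\bf d}$ by restriction to the $q^{2k-n}$-weight space (which both maps preserve, as they are $\cU_q$-intertwiners and therefore commute with $K$), writing out
\[
\tilde{p}_{k,{\bf d}}^{\,2} \;=\; {}_k\iota'_{\bf d}\circ({}_k\pi'_{\bf d}\circ {}_k\iota'_{\bf d})\circ {}_k\pi'_{\bf d}
\]
shows that the sought equality $\tilde p_{k,{\bf d}}^{\,2}=\tilde p_{k,{\bf d}}$ is equivalent to $\pi_{\bf d}\circ\iota_{\bf d}$ acting as the identity on each weight space $(V_{\bf d})_k$, i.e.\ to the unweighted statement $\pi_{\bf d}\circ\iota_{\bf d}=\mathrm{id}_{V_{\bf d}}$.

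By construction $\pi_{\bf d}=\pi_{d_1}\otimes\cdots\otimes\pi_{d_r}$ and $\iota_{\bf d}=\iota_{d_1}\otimes\cdots\otimes\iota_{d_r}$, so the composition factors as $\bigotimes_{j=1}^{r}(\pi_{d_j}\circ\iota_{d_j})$. It thus suffices to prove $\pi_n\circ\iota_n=\mathrm{id}_{V_n}$ for every $n\geq 0$.

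For this last step I would invoke that both $\pi_n$ and $\iota_n$ are $\cU_q(\mathfrak{sl}_2)$-intertwiners by construction, and that $V_n$ is irreducible; hence $\pi_n\circ\iota_n\colon V_n\to V_n$ is a scalar by Schur's lemma. To determine the scalar it is enough to evaluate at the weight vector $v_0\in V_n$: the only multi-index ${\bf a}\in\{0,1\}^n$ with $|{\bf a}|=0$ is the zero tuple, which has $b({\bf 0})=0$, so \eqref{in} gives $\iota_n(v_0)=v_0\otimes\cdots\otimes v_0$; applying \eqref{pn} with $l({\bf 0})=0$ and ${n\brack 0}=1$ yields $\pi_n(v_0\otimes\cdots\otimes v_0)=v^0=v_0$. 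Hence the scalar is $1$ and $\pi_n\circ\iota_n=\mathrm{id}_{V_n}$, finishing the argument.

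I do not foresee a genuine obstacle here: the statement is a clean consequence of the defining formulas \eqref{pn} and \eqref{in} together with irreducibility of $V_n$. The only point where one must be mildly careful is to check that the weight-space restrictions ${}_k\pi'_{\bf d}$ and ${}_k\iota'_{\bf d}$ actually do compose with one another (immediate from weight-preservation), so that writing $\tilde p_{k,{\bf d}}^{\,2}=\tilde p_{k,{\bf d}}\,\tilde p_{k,{\bf d}}$ makes sense and factors through the middle copy of $(V_{\bf d})_k$. Alternatively, one could avoid Schur's lemma entirely and verify $\pi_n\iota_n v_m=v_m$ for every $0\le m\le n$ by the direct computation $\sum_{|{\bf a}|=m}q^{b({\bf a})-l({\bf a})}={n\brack m}$, which follows from the identity $\sum_{w\in\mathbb{S}_n/\mathbb{S}_m\times\mathbb{S}_{n-m}}q^{2b(w)-m(n-m)}={n\brack m}$ recalled in the proof of Theorem~\ref{ClebschGordon}.
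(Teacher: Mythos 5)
Your proof is correct and follows the same essential reduction as the paper: write $\tilde p_{k,{\bf d}}^{\,2}={}_k\iota'_{\bf d}\circ({}_k\pi'_{\bf d}\circ{}_k\iota'_{\bf d})\circ{}_k\pi'_{\bf d}$ and use that $\pi\circ\iota$ is the identity. The only difference is that the paper simply cites this last fact as the standard statement (via Gabriel) that the quotient functor composed with the inclusion functor is the identity, whereas you verify the decategorified identity $\pi_n\circ\iota_n=\mathrm{id}_{V_n}$ directly by Schur's lemma together with the evaluation at $v_0$ (or by the binomial identity); both verifications are valid.
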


\begin{proof}
This follows directly from the standard fact that ${}_k\pi'_{\bf d}{}_k\iota'_{\bf d}$ is the identity functor, \cite{Ga}.
\end{proof}

The shift in the grading should be compared with \eqref{D}. The above functors are graded lifts of the functors
\begin{eqnarray}
{}_k\pi'_{\bf d}=\Hom_{A_{k, n}}(A_{k,n}e,_-): &&A_{k, n}-\mod\rightarrow A_{k, {\bf d}}-\MOD\label{alg1}\\
{}_k\iota'_{\bf d}={A_{k, {\bf d}}\otimes_{eA_{k, {\bf d}}e}}_-:&&A_{k, {\bf d}}-\mod\rightarrow A_{k, n}-\MOD.\label{alg2}
\end{eqnarray}

\section{The Lie theoretic description of the quotient categories}
In this section we recall a well-known Lie theoretic construction of the category  $A_{k, {\bf d}}-\Mod$ which then will be used to show that the quotient and inclusion functor, hence also $\tilde{p}_{k,\bf d}$, naturally commute with the categorified quantum group action.

 We first describe an equivalence of $A_{k, {\bf d}}-\Mod$ to the category $ {}_{k} \mathcal{H}_{\mu}^{1}(\mathfrak{gl}_n)$ of certain Harish-Chandra bimodules. Regular versions of such categories in connection with categorification were studied in detail in \cite{MS2}. For the origins of (generalized) Harish-Chandra bimodules and for its description in terms of Soergel bimodules see \cite{Soergel-HC}, \cite{StGolod}. Ideally one would like to have a graphical description of these quotient categories $A_{k, {\bf d}}-\Mod$, similar to \cite{EK}.
 The Lie theoretic details can be found in \cite{BG}, \cite[Kapitel 6]{Ja}.  Note that in contrast to the additive category  of Soergel bimodules, the categories $A_{k, {\bf d}}-\Mod$ have in general many indecomposable objects, more precisely are of wild representation type, see \cite{DM}.

\begin{define}{\rm
Let $\la, \mu$ be integral dominant weights. Let $ \mathcal{O}_{\lambda, \mu}(\mathfrak{gl}_n) $ be the full subcategory of $ \mathcal{O}_{\lambda}(\mathfrak{gl}_n) $ consisting of
modules $ M $ with projective presentations $ P_2 \rightarrow P_1 \rightarrow M \rightarrow 0 $ where $ P_1 $ and $ P_2 $ are direct sums
of projective objects of the form $ P(x.\lambda) $ where $x$ is a longest element representative in the double coset space $ \mathbb{S}_{\mu}
\backslash \mathbb{S}_n / \mathbb{S}_{\lambda} $ and $ \mathbb{S}_{\mu}, \mathbb{S}_{\lambda} $ are the stabilizers of the weights $ \mu $ and $ \lambda $
respectively.}
\end{define}

The following result is standard (see e.g. \cite[Prop. 33]{AM} for a proof).
\begin{lemma}
\label{ppres}
Let $\mathbb{S}_{\lambda}\cong \mathbb{S}_k\times \mathbb{S}_{n-k}$ and $\mathbb{S}_{\mu}\cong \mathbb{S}_{\bf d}$. Then there is an equivalence of categories $\alpha:\mathcal{O}_{\lambda, \bf d}(\mathfrak{gl}_n)\rightarrow A_{k,{\bf d}}-\MOD$ such that $\alpha\circ\iota\cong {}_k\iota'_{\bf d}\circ\alpha$, where $\iota$ is the inclusion functor from  $\mathcal{O}_{\lambda, \mu}(\mathfrak{gl}_n)$ to $\mathcal{O}_{\lambda}(\mathfrak{gl}_n)$.
\end{lemma}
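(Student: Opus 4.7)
\medskip

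\noindent\textbf{Proof proposal.} The plan is to reduce the statement to the standard equivalence between the category of modules admitting a two--step presentation by summands of a projective $Q$ and the category of modules over $\End(Q)^{op}$, and then to check that the inclusion functors on both sides agree by identifying them as the left adjoint to the relevant quotient functor.

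First I would identify the correct projective generator. Set $Q := \bigoplus_{x} P(x\cdot\lambda)$, where $x$ runs over longest representatives of double cosets in $\mathbb{S}_{\bf d}\backslash \mathbb{S}_n/(\mathbb{S}_k\times\mathbb{S}_{n-k})$; these are precisely the labels appearing in the definition of $\cO_{\lambda,{\bf d}}$. By Theorem~\ref{FKsimples} (together with Proposition~\ref{Grothgraded}), the simple modules $L(x\cdot\lambda)$ for $x$ \emph{not} a longest double coset representative correspond to those dual canonical basis vectors annihilated by $\pi_{\bf d}$, i.e.\ they generate the Serre subcategory $\cS$. Hence the projective $P_{k,{\bf d}}=A_{k,n}e$ from Section~\ref{sec:JW} is the image of $Q$ under $\epsilon$, and consequently
\[
\End_{\mg}(Q)^{op}\;\cong\;eA_{k,n}e\;=\;A_{k,{\bf d}}.
\]

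Next I would apply the classical projectivization argument (see e.g.\ \cite[Prop.~33]{AM}). For any abelian category with enough projectives and any projective object $Q$, the functor $\HOM_\mg(Q,-)$ restricts to an equivalence between the full subcategory of objects possessing a two--step projective presentation $Q_2\to Q_1\to M\to 0$ with $Q_i\in\op{add}(Q)$, and the category of $\End(Q)^{op}$-modules; fullness, faithfulness and essential surjectivity are all formal consequences of the Yoneda lemma applied to sums of summands of $Q$ together with right exactness. By definition this subcategory is $\cO_{\lambda,{\bf d}}$, so the functor
\[
\alpha\;:=\;\HOM_\mg(Q,-)\;:\;\cO_{\lambda,{\bf d}}\;\longrightarrow\;A_{k,{\bf d}}\mathrm{-}\MOD
\]
is the desired equivalence.

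Finally I would verify the intertwining $\epsilon\circ\iota\cong {}_k\iota'_{\bf d}\circ\alpha$ (this is how the displayed isomorphism should be read, with the outer $\alpha$ on the left standing for the global equivalence $\epsilon\colon\cO_\lambda\to A_{k,n}\mathrm{-}\MOD$). For $M\in\cO_{\lambda,{\bf d}}$, transporting along $\epsilon$ identifies $\alpha(M)$ with $e\cdot\epsilon(M)={}_k\pi'_{\bf d}(\epsilon(M))$, so what must be checked is that the counit
\[
A_{k,n}e\otimes_{eA_{k,n}e}\bigl(e\cdot\epsilon(M)\bigr)\;\longrightarrow\;\epsilon(M)
\]
of the adjunction $({}_k\iota'_{\bf d},{}_k\pi'_{\bf d})$ is an isomorphism. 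This counit is an isomorphism precisely when $\epsilon(M)$ admits a two--step presentation by summands of $A_{k,n}e$, which is exactly the defining property of $\cO_{\lambda,{\bf d}}$; one checks this by applying $A_{k,n}e\otimes_{eA_{k,n}e}e(-)$ to the presentation $Q_2\to Q_1\to \epsilon(M)\to 0$ and noting that on summands of $A_{k,n}e$ the counit is tautologically the identity, so exactness gives the result on $\epsilon(M)$.

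The main obstacle is the identification in the first step: verifying that the longest double coset representatives appearing in the definition of $\cO_{\lambda,{\bf d}}$ indeed parametrize exactly those simples whose classes survive the tensor product projector $\pi_{\bf d}$. Once this combinatorial/Lie--theoretic bookkeeping is in place, the remaining two steps are essentially formal applications of projectivization and adjunction.
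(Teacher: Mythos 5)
Your proposal is correct and follows essentially the same route as the paper, which simply cites the standard projectivization/quotient-category argument (\cite[Prop.~33]{AM}): identify the relevant projective generator with $P_{k,{\bf d}}=A_{k,n}e$ via the combinatorial matching of longest double coset representatives with the simples surviving $\pi_{\bf d}$, apply $\HOM_\mg(Q,-)$ to get the equivalence onto $\End(Q)$-modules, and verify the compatibility of the inclusion functors through the counit of the adjunction on two-step presentations. No gaps; the reading of the displayed isomorphism with the outer functor being the global equivalence $\epsilon$ is the intended one.
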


The subcategory has a nice intrinsic definition in terms of the following category:

\begin{define}{\rm
Let $\mg=\mathfrak{gl}_n$ and define for $\mu$, $\la$ dominant integral weights $ {}_{\lambda} \mathcal{H}_{\mu}^1(\mg) $ to be the full subcategory of $\mathcal{U}(\mg)$-bimodules of finite length with objects $M$ satisfying the following conditions
\begin{enumerate}[(i)]
\item $M$ is finitely generated as $\mathcal{U}(\mg)$-bimodule,
\item every element $m\in  M$ is contained in a finite dimensional vector space stable under the adjoint action $x.m=xm-mx$ of $\mg$ (where $x\in\mg$, $m\in M$),
\item for any $m\in M$ we have $m\chi_\mu=0$ and there is some $n\in\mZ_{>0}$ such that $(\chi_\lambda)^nm=0$, where $\chi_\mu$, resp $\chi_\la$ is the maximal ideal of the center of $\cU(\mg)$ corresponding to $\mu$ and $\la$ under the Harish-Chandra isomorphism. (One usually says {\it $M$ has generalized central character $\chi_\la$ from the left and (ordinary) central character $\chi_\mu$ from the right}).
\end{enumerate}
}
\end{define}
We call the objects in these categories short {\it Harish-Chandra bimodules}.

\subsection{Simple Harish-Chandra bimodules and proper standard modules}
Given two $\mg$-modules $M$ and $N$ we can form the space $\HOM_\mC(M,N)$ which is naturally a $\mg$-bimodule, but very large. We denote by $\cL(M,N)$ the ad-finite part, that is the subspace of all vectors lying in a finite dimensional vector space invariant under the adjoint action $X.f:=Xf-fX$ for $X\in \mg$ and $f\in\HOM_\mC(M,N)$. This is a Harish-Chandra bimodule, see \cite[6]{Ja}. This construction defines the so-called Bernstein-Gelfand-Joseph-Zelevinsky functors (see \cite{BG}, \cite{Ja}):
\begin{eqnarray}
{}_{\lambda} \bar{\pi}_{\mu} \colon \mathcal{O}_{\lambda}(\mathfrak{gl}_n) \rightarrow {}_{\lambda}
    \mathcal{H}_{\mu}^1(\mathfrak{gl}_n),&& {}_{\lambda} \bar{\pi}_{\mu}(X) =\cL(M(\mu),X),\label{Lietheoretic1}\\
{}_{\lambda} \bar{\iota}_{\mu} \colon {}_{\lambda}\mathcal{H}_{\mu}^1(\mathfrak{gl}_n) \rightarrow \mathcal{O}_{\lambda}(\mathfrak{gl}_n), &&{}_{\lambda} \bar{\iota}_{\mu}(M) = M \otimes_{\mathcal{U}(\mathfrak{gl}_n)} M(\mu).\label{Lietheoretic2}
\end{eqnarray}

\begin{theorem}{\rm (\cite{BG})}
\label{BG} The functors $ {}_{\lambda} \bar{\iota}_{\mu} $ and $ {}_{\lambda} \bar{\pi}_{\mu} $ provide inverse equivalences of categories
between $ \mathcal{O}_{\lambda, \mu}(\mathfrak{gl}_n) $ and $ {}_{\mu} \mathcal{H}_{\lambda}^1(\mathfrak{gl}_n).$
\end{theorem}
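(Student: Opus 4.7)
The plan is to establish that $({}_\lambda\bar{\iota}_\mu, {}_\lambda\bar{\pi}_\mu)$ is an adjoint pair and show that the unit and counit are isomorphisms when restricted to the subcategories in the statement. First I would verify the adjointness: by the tensor-Hom adjunction,
\begin{equation*}
\Hom_{\mg\text{-bimod}}(M, \HOM_\mC(M(\mu), X)) \cong \Hom_\mg(M \otimes_{\cU(\mg)} M(\mu), X)
\end{equation*}
for any $\mg$-bimodule $M$ and any $\mg$-module $X$. Since any $M \in {}_\lambda\cH_\mu^1$ is ad-finite, every homomorphism from $M$ into $\HOM_\mC(M(\mu), X)$ lands inside the ad-finite part $\cL(M(\mu), X)$, giving the adjunction $\bar{\iota}_\mu \dashv \bar{\pi}_\mu$ on the level of the ambient categories.

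Next I would verify that the two functors respect the subcategories. For $\bar{\iota}_\mu$, I would check on a projective generator: by a standard computation via translation functors, any indecomposable projective in ${}_\mu\cH_\lambda^1$ is a summand of $\cL(M(\mu), P(x \cdot \lambda))$ for some longest double coset representative $x \in \mathbb{S}_\mu \backslash \mathbb{S}_n / \mathbb{S}_\lambda$. Tensoring back with $M(\mu)$ then produces (a summand of) such a $P(x \cdot \lambda)$, which is exactly the projective presentation required by the definition of $\cO_{\lambda,\mu}$. For $\bar{\pi}_\mu$, the ad-finiteness and central character conditions are immediate from the construction.

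The key step — and the main obstacle — is to show that the evaluation counit
\begin{equation*}
\varepsilon_X \colon \cL(M(\mu), X) \otimes_{\cU(\mg)} M(\mu) \longrightarrow X
\end{equation*}
is an isomorphism on projectives $X = P(x \cdot \lambda)$ for $x$ a longest double coset representative, and that the unit $\eta_M \colon M \to \cL(M(\mu), M \otimes_{\cU(\mg)} M(\mu))$ is an isomorphism on the corresponding projectives in ${}_\mu\cH_\lambda^1$. Both reductions rely on translation functors: one writes $P(x \cdot \lambda) = \theta^\lambda_\mu P(\mu)$ (up to taking summands), where $\theta^\lambda_\mu$ is the appropriate translation, and uses the known compatibility $\cL(M(\mu), \theta^\lambda_\mu Y) \cong \theta \cL(M(\mu), Y)$ of Joseph-type to reduce to the base case $X = P(\mu) = M(\mu)$, where $\cL(M(\mu), M(\mu)) \otimes_{\cU(\mg)} M(\mu) \cong M(\mu)$ follows from the universal property of the Verma module.

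Finally I would deduce the equivalence abstractly. Let $Q = \bigoplus_x P(x \cdot \lambda)$, summed over longest double coset representatives. By Proposition \ref{Serre} (or Lemma \ref{ppres}), $\cO_{\lambda,\mu}$ is equivalent to $\MOD{-}\End_\mg(Q)$, while the images $\bar{\pi}_\mu(P(x \cdot \lambda))$ form a projective generator on the Harish-Chandra side. Since by the previous step $\bar{\pi}_\mu$ induces an isomorphism
\begin{equation*}
\End_\mg(Q) \xrightarrow{\sim} \End_{{}_\mu\cH^1_\lambda}(\bar{\pi}_\mu(Q)),
\end{equation*}
the functor $\bar{\pi}_\mu$ restricts to an equivalence, with inverse $\bar{\iota}_\mu$ provided by the counit isomorphism.
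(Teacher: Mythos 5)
The paper offers no proof of this statement---it is quoted directly from Bernstein--Gelfand \cite{BG} (see also \cite[Kapitel 6]{Ja})---so your sketch has to be measured against the standard argument, whose overall architecture (adjunction, reduction to projectives via translation functors, Morita-type conclusion) you do reproduce. There is, however, a genuine gap at the crux: your base case. The claim that $\cL(M(\mu),M(\mu))\otimes_{\cU(\mg)}M(\mu)\cong M(\mu)$ ``follows from the universal property of the Verma module'' is not correct as stated. The universal property only computes $\GHOM(M(\mu),M(\mu))=\mC$, whereas $\cL(M(\mu),M(\mu))$ is the full ad-finite part of $\HOM_\mC(M(\mu),M(\mu))$ and is infinite dimensional. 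What you actually need is the theorem that for $\mu$ dominant the natural map $\cU(\mg)\to\cL(M(\mu),M(\mu))$ is surjective, i.e.\ $\cL(M(\mu),M(\mu))\cong\cU(\mg)/\ANN M(\mu)$; only then does the counit compute as $M(\mu)/(\ANN M(\mu))M(\mu)=M(\mu)$. This surjectivity is the real content of the Bernstein--Gelfand equivalence---it fails for non-dominant $\mu$, which is exactly why dominance of $\mu$ is required---and it needs a separate, non-formal argument (the analysis of $M(\mu)$ as a free rank-one $\cU(\mn^-)$-module together with Kostant's description of the ad-finite part of $\END_\mC(M(\mu))$, as in \cite[Kapitel 6]{Ja}). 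It cannot be absorbed into a universal-property step.

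A second, smaller soft spot is the final Morita argument: you assert that the $\bar\pi_\mu(P(x\cdot\lambda))$ form a projective generator of ${}_\mu\cH^1_\lambda$, but the fact that \emph{every} object of ${}_\mu\cH^1_\lambda$ admits a presentation by direct sums of these (equivalently, that $\bar\pi_\mu$ hits the whole Harish-Chandra category rather than a proper subcategory) is itself part of what must be proved. In \cite{BG} and \cite{Ja} this is deduced from the unit being an isomorphism on all of ${}_\mu\cH^1_\lambda$, which again rests on the surjectivity theorem above combined with the compatibility $\cL(M(\mu),E\otimes Y)\cong E\otimes\cL(M(\mu),Y)$ for finite-dimensional $E$ (which you do invoke, correctly, for the reduction to the base case). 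With these two inputs made explicit, your plan goes through and coincides with the argument the paper is citing.
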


From the quotient category construction the following is obvious. It might be viewed, with Proposition \ref{Grothgraded}, as a categorical version of \cite[Theorem 1.11]{FK}.
\begin{corollary}
\label{simples}
\begin{enumerate}[(i)]
\item$ {}_{\lambda} \bar{\pi}_{\mu} $ maps simple objects to simple objects or zero. All simple Harish-Chandra bimodules are obtained in this way.
\item The simple objects in ${}_{\mu} \mathcal{H}_{\lambda}^1(\mathfrak{gl}_n)$  are precisely the $\cL(M(\mu), L(x.\lambda))$ where $ x $ is a longest element representative in the double coset space $ \mathbb{S}_{\mu}
\backslash \mathbb{S}_n / \mathbb{S}_{\lambda}$. In particular if $\mu$ has stabilizer $\mathbb{S}_{\bf d}$, then the
$$L\left(k_1,d_1|k_2,d_2|\cdots |k_r,d_r\right):=\cL\left(M(\mu), L(\underbrace{0, \ldots, 0}_{d_1-k_1}
\underbrace{1, \ldots, 1}_{k_1}\cdots
\underbrace{0, \ldots, 0}_{d_r-k_r}, \underbrace{1, \ldots, 1}_{k_r})\right)$$
for $0\leq k_i\leq d_i$, $\sum k_i=k$ are precisely the simple modules in ${}_{k} \mathcal{H}_{\bf {d}}^1(\mathfrak{gl}_n)$.
\end{enumerate}
\end{corollary}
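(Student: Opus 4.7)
The plan is to deduce both claims from the Bernstein--Gelfand equivalence (Theorem~\ref{BG}), the presentation of $\mathcal{O}_{\lambda,\mu}$ furnished by Lemma~\ref{ppres}, and the general Serre-quotient theory of Section~\ref{sec:Serre}. The pivotal step will be to identify ${}_\lambda \bar{\pi}_\mu$, up to the equivalences above, with the Gabriel quotient functor ${}_k \pi'_{\bf d}$ of~\eqref{alg1}.

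First I would combine Theorem~\ref{BG} with the equivalence $\alpha$ of Lemma~\ref{ppres} to exhibit, up to natural isomorphism, a commutative square with top row ${}_\lambda \bar{\pi}_\mu \colon \mathcal{O}_\lambda \to {}_\mu \mathcal{H}_\lambda^1$ and bottom row the Gabriel quotient ${}_k \pi'_{\bf d}$, the right vertical arrow being the composite equivalence $\alpha \circ {}_\lambda \bar{\iota}_\mu$. Concretely, ${}_\lambda \bar{\iota}_\mu \circ {}_\lambda \bar{\pi}_\mu$ is the reflection of $\mathcal{O}_\lambda$ onto $\mathcal{O}_{\lambda,\mu}$, which under $\alpha$ becomes the Gabriel quotient followed by its right adjoint; the two horizontal functors therefore differ only by the equivalence on the target.

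With this identification in hand, part~(i) is immediate: it is standard (Proposition~\ref{Serre}) that a Serre-quotient functor sends simples either to simples or to zero, and that the induced map on isomorphism classes is a bijection onto the simples that do not belong to the defining Serre subcategory $\cS$. For part~(ii), the simples in $\mathcal{O}_\lambda$ are the $L(y.\lambda)$ for $y$ a shortest coset representative in $\mathbb{S}_n / \mathbb{S}_\lambda$; by the very definition of $\mathcal{O}_{\lambda,\mu}$, a simple $L(y.\lambda)$ survives ${}_\lambda \bar{\pi}_\mu$ precisely when $y$ is a longest double-coset representative in $\mathbb{S}_\mu \backslash \mathbb{S}_n / \mathbb{S}_\lambda$, which gives the first sentence of (ii).

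For the explicit description in the second sentence of (ii), I would identify $\mathbb{S}_n / \mathbb{S}_\lambda$ (with $\mathbb{S}_\lambda \cong \mathbb{S}_k \times \mathbb{S}_{n-k}$) with $0$-$1$ sequences of length $n$ having $k$ ones, on which $\mathbb{S}_{\bf d}$ acts by permuting positions inside each of the $r$ consecutive blocks of sizes $d_1, \ldots, d_r$. Using the length convention in the paper (pairs $(i,j)$ with $i<j$ and $a_i<a_j$), a sequence is the longest representative of its $\mathbb{S}_{\bf d}$-double coset precisely when inside each block of size $d_i$ all zeros precede all ones; writing $k_i$ for the number of ones in the $i$-th block, this yields exactly the weight pattern displayed in the statement, with the constraint $\sum_i k_i = k$ recording the weight-space decomposition. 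The main obstacle is really only the compatibility check in the first step; once ${}_\lambda \bar{\pi}_\mu$ has been identified with a Gabriel quotient functor, everything else is dictionary translation via the combinatorics of longest double-coset representatives.
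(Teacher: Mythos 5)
Your proposal is correct and follows essentially the route the paper intends: the paper dismisses the corollary as "obvious from the quotient category construction," and your argument simply spells out that dictionary — identifying $ {}_{\lambda}\bar{\pi}_{\mu} $ with the Serre/Gabriel quotient functor via Theorem~\ref{BG} and Lemma~\ref{ppres}, invoking Proposition~\ref{Serre} for the behaviour on simples, and then translating "longest double coset representative" into the block-wise zeros-before-ones sequences. The combinatorial step is consistent with the paper's length convention (pairs $(i,j)$ with $i<j$ and $a_i<a_j$), so no gap remains.
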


The following special modules are the images of Verma modules under the quotient functor and will play an important role in our categorification:

\begin{definition}
\label{propstandard}
{\rm The {\it proper standard module} labeled by $(k_1,d_1|k_2,d_2|\cdots |k_r,d_r)$ is defined to be
\begin{eqnarray*}
&\blacktriangle\left(k_1,d_1|k_2,d_2|\cdots |k_r,d_r\right)&\\
&:=\cL\left(M(\mu), M(\underbrace{0, \ldots, 0}_{d_1-k_1},
\underbrace{1, \ldots, 1,}_{k_1}\cdots,
\underbrace{0, \ldots, 0,}_{d_r-k_r} \underbrace{1, \ldots, 1}_{k_r})\right)\in {}_{k} \mathcal{H}_{\bf {d}}^1(\mathfrak{gl}_n).&
\end{eqnarray*}
The name stems from the fact that this family of modules form the proper standard modules in a properly stratified structure (see \cite[2.6, Theorem 2.16]{MS2} for details.)}
\end{definition}
To summarize: we have an equivalence of categories
$$A_{k,{\bf d}}{\text -}\Mod\cong {}_{k} \mathcal{H}_{\lambda}^1(\mathfrak{gl}_n)$$ under which (after forgetting the grading) the algebraically defined functors \eqref{alg1}, \eqref{alg2} turn into the Lie theoretically defined functors \eqref{Lietheoretic1}, \eqref{Lietheoretic2}. Hence $A_{k,{\bf d}}-\gmod$ is a graded version of Harish-Chandra bimodules. Let
\begin{eqnarray*}
\hat{L}\left(k_1,d_1|k_2,d_2|\cdots |k_r,d_r\right)&&\hat{\blacktriangle}\left(k_1,d_1|k_2,d_2|\cdots |k_r,d_r\right)
\end{eqnarray*}
be the standard graded lifts with head concentrated in degree zero of the corresponding Harish-Chandra bimodules.

\begin{lemma}
\label{action}
The  $U_q(\mathfrak{sl}_2)$-action from Theorem \ref{projfunc} factors through the Serre quotients and induces via
\begin{eqnarray*}
{}_k\hat\pi_{\bf d}:=\Hom_{A_{k, n}}(A_{k,n}e,_-)\langle -k(n-k)\rangle: &&A_{k, n}-\gmod\rightarrow A_{k, {\bf d}}-\gmod\\
{}_k\hat\iota_{\bf d}:={A_{k, {\bf d}}}{\otimes_{eA_{k, {\bf d}}e}}{}_-\langle k(n-k)\rangle:&&A_{k, {\bf d}}-\gmod\rightarrow A_{k, n}-\gmod.
\end{eqnarray*}
an $U_q(\mathfrak{sl}_2)$-action on $\bigoplus_{k=0}^n A_{k, {\bf d}}-\gmod$ such that
$\hat\pi_{\bf d}\hat{\mathcal{E}}\cong \hat\pi_{\bf d}\hat{\mathcal{E}}$ and  $\hat\iota_{\bf d}\hat{\mathcal{E}}\cong\hat{\mathcal{E}}\hat\iota_{\bf d}$, similarly for $\hat{\mathcal{F}}$,  $\hat{\mathcal{K}}^\mp$. The composition ${}_k\hat\iota_{\bf d}{}_k\hat\pi_{\bf d}$ equals $\tilde{p}_{k,\bf d}$, hence is an idempotent.
\end{lemma}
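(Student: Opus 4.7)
The plan is to prove the lemma in three steps: first establish that the Serre subcategory $\cS$ cut out by the projector $\pi_{\bf d}$ is stable under $\hat{\mathcal{E}}_k$, $\hat{\mathcal{F}}_k$, $\hat{\mathcal{K}}_k^{\pm 1}$; then descend these functors to $A_{k,{\bf d}}\text{-gmod}$ and verify intertwining with $\hat\pi_{\bf d}$ tautologically from the universal property; finally deduce intertwining with $\hat\iota_{\bf d}$ by adjunction.

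For stability under $\hat{\mathcal{K}}^{\pm 1}$ nothing is required: it is a grading shift on a block. For $\hat{\mathcal{E}}$ and $\hat{\mathcal{F}}$, the Lie theoretic description (Remark after Theorem \ref{projfunc}) identifies these, up to grading shifts, with translation functors on $\cO$, i.e.\ tensoring with the natural representation or its dual followed by projection onto the appropriate block. Under the equivalences of Lemma \ref{ppres} and Theorem \ref{BG}, the Serre subcategory $\cS$ is identified with the kernel of the Bernstein-Gelfand-Joseph-Zelevinsky functor ${}_{\la}\bar\pi_{\mu}=\cL(M(\mu),-)$. The crucial input is the standard commutation
\[
\cL(M(\mu),\,V\otimes X)\;\cong\;V\otimes \cL(M(\mu),X)
\]
for $V$ finite-dimensional and $X\in\cO_\la$, where on the right $V$ acts through the $\mg$-action compatible with the left factor (see e.g.\ \cite[Kap.~6]{Ja}). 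Block projection commutes with $\cL(M(\mu),-)$ since it only concerns the left central character. Combining these, if $L$ is simple with $\cL(M(\mu),L)=0$ then every composition factor of $\hat{\mathcal{E}}(L)$ (resp.\ $\hat{\mathcal{F}}(L)$) lies in $\cS$, which by exactness upgrades to the desired stability of $\cS$.

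Given stability, the universal property of the Serre quotient recalled in Section \ref{sec:Serre} produces unique exact descended functors on $A_{k,{\bf d}}\text{-gmod}$, which we continue to denote $\hat{\mathcal{E}}, \hat{\mathcal{F}}, \hat{\mathcal{K}}^{\pm 1}$. All relations of Theorem \ref{projfunc} are expressed via natural isomorphisms of compositions of exact functors, so they survive passage to the quotient, yielding the $\cU_q$-action on $\bigoplus_k A_{k,{\bf d}}\text{-gmod}$. The intertwiner $\hat\pi_{\bf d}\hat{\mathcal{E}}\cong\hat{\mathcal{E}}\hat\pi_{\bf d}$ (and its analogues for $\hat{\mathcal{F}}$ and $\hat{\mathcal{K}}^{\pm 1}$) is built into the factorization. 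For the inclusion side, $\hat\iota_{\bf d}$ is a graded lift of the left adjoint of $\hat\pi_{\bf d}$; the shifts $\langle\pm k(n-k)\rangle$ in \eqref{algebraic1}-\eqref{algebraic2} are calibrated so that adjunction unit and counit are grading-preserving. Since $\hat{\mathcal{E}}$ and $\hat{\mathcal{F}}$ form a biadjoint pair up to explicit grading shifts (cf.\ the last relation in Theorem \ref{projfunc}), taking adjoints of the isomorphisms $\hat\pi_{\bf d}\hat{\mathcal{E}}\cong\hat{\mathcal{E}}\hat\pi_{\bf d}$ produces $\hat\iota_{\bf d}\hat{\mathcal{E}}\cong\hat{\mathcal{E}}\hat\iota_{\bf d}$, with the asymmetric shifts canceling cleanly; analogously for $\hat{\mathcal{F}}$ and $\hat{\mathcal{K}}^{\pm 1}$. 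The idempotent claim is then immediate: Lemma \ref{idempotent} gives ${}_k\pi'_{\bf d}\,{}_k\iota'_{\bf d}\cong\Id$ (it is the standard fact that the composition of a Serre quotient with its section is the identity, \cite{Ga}), so $\tilde p_{k,{\bf d}}^{\,2}={}_k\iota'_{\bf d}({}_k\pi'_{\bf d}{}_k\iota'_{\bf d}){}_k\pi'_{\bf d}\cong\tilde p_{k,{\bf d}}$.

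The main obstacle is the first step: making the commutation of translation functors with $\cL(M(\mu),-)$ rigorous requires care with the left/right $\mg$-actions on Harish-Chandra bimodules, with how the grading shift $\langle 1\rangle$ in the definition of the graded $\hat{\mathcal{E}}$ interacts with the grading on the Harish-Chandra side, and with the adjoints in the graded setting. Once this bookkeeping is settled, the remainder of the proof is formal category theory applied to the Serre quotient construction.
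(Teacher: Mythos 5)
Your proposal is correct, and its overall architecture (stability of the Serre subcategory, descent via the universal property, adjunction for the inclusion, then Lemma \ref{idempotent} for the idempotent claim) matches the paper's. But your key step is argued by a genuinely different route. The paper proves that $\hat{\mathcal{E}}$ preserves the kernel $\cC$ of $\hat\pi_{\bf d}$ by identifying $\cC$ with the subcategory of modules of non-maximal Gelfand--Kirillov dimension and invoking the fact that tensoring with a finite-dimensional module does not increase $\GK$-dimension (\cite[8.8]{Ja}); this relies on Proposition \ref{GK}, which is only established later via the cup-diagram combinatorics, so the published proof carries a forward reference. You instead identify $\cC$, via Lemma \ref{ppres} and Theorem \ref{BG}, with the kernel of $\cL(M(\mu),\,{}_-)$ and use the tensor identity $\cL(M(\mu),V\otimes X)\cong V\otimes\cL(M(\mu),X)$ together with exactness and compatibility with block projection. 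Both arguments are sound. Yours is more self-contained for this lemma and avoids the forward dependency on the $\GK$-dimension computation; the paper's pays that cost once but gets a characterization of $\cC$ that is reused later (e.g.\ in Theorem \ref{charJWcat} and the filtration results), and it dispatches $\hat{\mathcal{F}}$ by adjunction where you treat it symmetrically. In both treatments the graded refinement is deferred to the explicit combinatorics of \cite[(39)]{FKS}, which is the one place where neither argument is fully spelled out.
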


\begin{proof}
Denote by $\cS_k$ the Serre subcategory of ${A_{k, n}}-\gmod$ with respect to which we take the quotient. Consider the direct sum $\cC:=\oplus_{k=0}^n\cS_k$, a subcategory of $\bigoplus_{k=0}^n A_{k, n}-\gmod$, the kernel of the functor $\hat\pi_{\bf d}$.
Recall that the  $U_q(\mathfrak{sl}_2)$-action is given by exact functors, hence by \cite{Ga}, it is enough to show that the functors preserve the subcategory $\cC$. This is obvious for the functors $\hat{\mathcal{K}}^{\mp 1}$, since $\cC$ is closed under grading shifts. It is then enough to verify the claim for $\hat{\mathcal{E}}$, since the statement for $\hat{\mathcal{F}}$ follows by adjointness, see \cite{Ga}. Since the functor $\hat{\mathcal{E}}$ is defined as a graded lift of tensoring with the natural $n$-dimensional representation of $\mathfrak{gl}_n$ and then projecting onto the required block, it preserves the category of all modules which are not of maximal possible Gelfand-Kirillov dimension by \cite[Lemma 8.8]{Ja}. We will show in Proposition \ref{GK} that this category agrees with $\cC$. Hence $\hat\pi_{\bf d}\hat{\mathcal{E}}\cong \hat\pi_{\bf d}\hat{\mathcal{E}}$ and $\hat\iota_{\bf d}\hat{\mathcal{E}}\cong\hat{\mathcal{E}}\hat\iota_{\bf d}$ at least when we forget the grading. It is left to show that the grading shift is chosen correctly on each summand. This is just a combinatorial calculation and clear from the embedding \cite[(39)]{FKS}. The last statement follows from Lemma \ref{idempotent}, since the two grading shifts cancel each other.
\end{proof}

The following theorem describes the combinatorics of our categories and strengthens \cite[Theorem 4.1]{FKS}:

\begin{theorem}[Arbitrary tensor products and its integral structure]\hfill\\
\label{cattensor}
With the structure from Lemma \ref{action}, there is an isomorphism of  $U_q(\mathfrak{sl}_2)$-modules
\begin{eqnarray}
\Phi_{\bf d}:\; \displaystyle \mathbb{C}(q) \otimes_{\mathbb{Z}[q,q^{-1}]} \left[\bigoplus_{k=0}^{n} A_{k, {\bf
d}}-\gmod\right] &\cong& V_{d_1} \otimes \cdots \otimes V_{d_r} \nonumber\\
\hat{L}\left(k_1,d_1|k_2,d_2|\cdots |k_r,d_r\right)&\longmapsto&v^{k_1}\heartsuit v^{k_2}\heartsuit \cdots \heartsuit v^{k_r},\label{basissimples}
\end{eqnarray}
This isomorphism sends proper standard modules to the dual standard basis:
\begin{eqnarray}
\label{basisproperstandards}
\hat{\blacktriangle}\left(k_1,d_1|k_2,d_2|\cdots |k_r,d_r\right)&\longmapsto&v^{k_1}\otimes v^{k_2}\otimes \cdots \otimes v^{k_r}.
\end{eqnarray}
\end{theorem}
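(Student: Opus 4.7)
\emph{Strategy.} The plan is to construct $\Phi_{\bf d}$ by descending from $\Phi_n$ of Proposition~\ref{Grothgraded} via the exact quotient functor ${}_k\hat\pi_{\bf d}$ from \eqref{algebraic1}. Since $A_{k,{\bf d}}-\gmod$ is a Serre quotient of $A_{k,n}-\gmod$, the induced map on Grothendieck groups is surjective with kernel spanned by the classes $[\hat L({\bf a})\langle j\rangle]$ for $\hat L({\bf a})$ in the Serre subcategory $\cS_k$. By construction of $\cS_k$, these are precisely the simples whose dual canonical basis images $v^{a_1}\heartsuit\cdots\heartsuit v^{a_n}$ are annihilated by $\pi_{\bf d}$, which by Theorem~\ref{FKsimples} comprise all dual canonical vectors outside the specific subset in bijection with simple Harish-Chandra bimodules.

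\emph{The isomorphism and $\cU_q$-equivariance.} First I would establish the commutativity of the square
\[
\begin{array}{ccc}
\mathbb{C}(q)\otimes\bigoplus_k[A_{k,n}-\gmod] & \xrightarrow{\hat\pi_{\bf d}} & \mathbb{C}(q)\otimes\bigoplus_k[A_{k,{\bf d}}-\gmod]\\
\Phi_n\downarrow & & \downarrow\Phi_{\bf d}\\
V_1^{\otimes n} & \xrightarrow{\pi_{\bf d}} & V_{\bf d}
\end{array}
\]
by defining $\Phi_{\bf d}$ as the unique $\mathbb{C}(q)$-linear map that makes it commute; surjectivity of $\pi_{\bf d}$ together with the matching of kernels then forces $\Phi_{\bf d}$ to be a linear bijection. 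The $\cU_q(\mathfrak{sl}_2)$-equivariance of $\Phi_{\bf d}$ is immediate from Lemma~\ref{action}, which states that the quotient functor intertwines the categorified actions of $E$, $F$, $K^{\pm 1}$ on both sides; the shift $\langle -k(n-k)\rangle$ in \eqref{algebraic1} is tailored for this compatibility to hold on the nose.

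\emph{Images of simple and proper standard objects.} For \eqref{basissimples} I would use Corollary~\ref{simples}, which identifies the simple Harish-Chandra bimodule $L(k_1,d_1|\ldots|k_r,d_r)$ as ${}_\lambda\bar\pi_\mu(L({\bf a}))$ for the standard pattern ${\bf a}=(0,\ldots,0,1,\ldots,1|\cdots|0,\ldots,0,1,\ldots,1)$. Under $\Phi_n$ this $\hat L({\bf a})$ corresponds to $v^{a_1}\heartsuit\cdots\heartsuit v^{a_n}$, and $\pi_{\bf d}$ sends it to a scalar multiple of $v^{k_1}\heartsuit\cdots\heartsuit v^{k_r}$ by Theorem~\ref{FKsimples}; the scalar is precisely $q^{\sum k_i(d_i-k_i)}$ arising from $D(v^{k_1}\heartsuit\cdots\heartsuit v^{k_r})$ as in Remark~\ref{annoying}, which combines with the built-in $\langle-k(n-k)\rangle$ and the normalization of graded lifts (heads in degree zero) to give the claimed image without residual $q$-power. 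For \eqref{basisproperstandards}, the same argument with the Verma module $\hat M({\bf a})$ in place of $\hat L({\bf a})$, invoking Definition~\ref{propstandard} and the formula $\pi_n(v_{\bf a})=q^{-l({\bf a})}v^{|{\bf a}|}$ with $l({\bf a})=\sum_i k_i(d_i-k_i)+\sum_{p<q}(d_p-k_p)k_q$, yields $\hat\blacktriangle(k_1,d_1|\ldots|k_r,d_r)\mapsto v^{k_1}\otimes\cdots\otimes v^{k_r}$.

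\emph{Main obstacle.} The principal difficulty is the bookkeeping of the several grading shifts at play: the implicit shifts fixing graded lifts to have head in degree zero, the $\langle -k(n-k)\rangle$ built into ${}_k\hat\pi_{\bf d}$, the Weyl length $l({\bf a})$ in $\pi_n$, and the $q^{\sum k_i(d_i-k_i)}$ arising from $D$. Showing that these contributions cancel exactly so that simples and proper standards land on the dual canonical and dual standard bases (rather than $q$-multiples thereof) is the real content of the statement and will require a careful combinatorial check. Once this is done, the identifications of bases fall out simultaneously with the construction of $\Phi_{\bf d}$ itself.
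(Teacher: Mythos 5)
Your proposal follows essentially the same route as the paper: both arguments rest on the commutative square $\Phi_{\bf d}\circ p=\pi_{\bf d}\circ\Phi_n$, with Proposition~\ref{Grothgraded} and Theorem~\ref{FKsimples} identifying simples with dual canonical basis vectors, Lemma~\ref{action} supplying $\cU_q$-equivariance via surjectivity of $p$, and the cancellation of the grading shift $\langle -k(n-k)\rangle$ in \eqref{algebraic1} against the $q$-power in \eqref{pn} handling both \eqref{basissimples} and \eqref{basisproperstandards}. The only cosmetic difference is that the paper defines $\Phi_{\bf d}$ directly on the basis of simples and then checks the square, whereas you define it by descent through the quotient; the bookkeeping you flag as the main obstacle is treated at the same (brief) level of detail in the paper itself.
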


\begin{proof}
Theorem \ref{Grothgraded} identifies the isomorphism classes of simple objects in the categorification of $V_1^{\otimes n}$ with dual canonical basis elements and the isomorphism classes of Verma modules with the standard basis. The exact quotient functor $\hat\pi_{\bf d}$ sends simple objects to simple objects or zero and naturally commutes with the $U_q(\mathfrak{sl}_2)$-action.
Since the isomorphism classes of the $\hat{L}\left(k_1,d_1|k_2,d_2|\cdots |k_r,d_r\right)$ form a $\mC(q)$-basis of the Grothendieck space the map $\Phi_{\bf d}$ is a well-defined $\mC(q)$-linear isomorphism.
Let $p$ be the linear map induced by $\hat\pi_{\bf d}$ on the Grothendieck space. Then Proposition \ref{Grothgraded}, Theorem \ref{FKsimples} imply that
\begin{eqnarray}
\label{commutes}
\Phi_{\bf d}\circ p=\pi_{\bf d}\circ\Phi_n,
\end{eqnarray}
in the basis of the isomorphism classes of simple modules (note that the $q$-power $q^{-\prod_{i=1}^rk_i(d_i-k_i)}$ appearing in \eqref{pn} is precisely corresponding to the shift in the grading appearing in the definition of \eqref{algebraic1}) and \eqref{basissimples} follows.
Now $p$ is surjective, and  $p$, $\pi_{\bf d}$ and $\Phi_{\bf d}$ are $U_q(\mathfrak{sl}_2)$-linear, hence so is $\Phi_{\bf d}$.
By definition, $\hat{\blacktriangle}\left(k_1,d_1|k_2,d_2|\cdots |k_r,d_r\right)$ is (up to some grading shift), the image under ${}_k\hat{\pi}_{\bf d}$ of $\hat{M}({\bf a})$ for ${\bf a}$ as in Definition \ref{propstandard}. On the other hand the Jones-Wenzl projector maps $v^{\bf a}$ for such ${\bf a}$ to $v^{k_1}\otimes v^{k_2}\otimes \cdots \otimes v^{k_r}$ up to some $q$-power. This $q$-power equals $q^{-\prod_{i=1}^rk_i(d_i-k_i)}$ by \eqref{pn}, hence by comparing with the grading shift in \eqref{algebraic1}, the statement follows from \eqref{commutes}.
\end{proof}

\subsection{The Categorification theorem}
While the functor $ {}_{\lambda}{\pi}_{\mu} $ is exact, the functor $ {}_{\lambda}{\hat\iota}_{\mu} $ is only right exact, so we consider its left derived functor $\mathbb{L}{}_{\la}{\hat\iota}_{\mu}$. Note that ${\hat\pi}_{\bf d}$ extends uniquely to a functor on the bounded derived category, but might produce complexes which have only infinite projective resolutions. Therefore to be able to apply  $\mathbb{L}{}_{\la}{\hat\iota}_{\mu}$ afterwards, we have to work with certain unbounded derived categories. In particular, it would be natural to consider the graded functors
\begin{eqnarray*}
\mathbb{L}{}_k\hat\iota_{\bf d}:\;D^-(A_{k, {\bf d}}-\gmod)\longrightarrow D^-(A_{k, n}-\gmod).
\end{eqnarray*}
where we use the symbol $D^{-}(?)$ to denote the full subcategory of the derived category $D(?)$ consisting of complexes bounded to the right.  The exact functors from the $U_q(\mathfrak{sl}_2)$-action extend uniquely to the corresponding  $D^{}(?)$ and preserve $D^{-}(?)$.

However passing to $D^{-}(?)$ causes problems from the categorification point of view. The Grothendieck group of $D^{-}(?)$ might collapse, \cite{Miyachi}. To get the correct or desired Grothendieck group we therefore restrict to a certain subcategory $\Ddf\subset D^{-}(A_{k, {\bf d}}-\gmod)$ studied in detail in \cite{AcharStroppel}. This category is still large enough so that our derived functors make sense, but small enough to avoid the collapsing of the Grothendieck group.
It is defined as the full subcategory of $D^{-}(A_{k, {\bf d}}-\gmod)$ of all complexes $X$ such that for each $m \in \mZ$, only finitely many of the cohomologies $H^i(X)$ contain a composition factor concentrated in degree $<m$. It is shown in \cite{AcharStroppel} that
the Grothendieck group $[\Ddf(A_{k, {\bf d}}-\gmod)]$ is a complete topological $\mZ[q,q^{-1}]$-module and that the natural map
$[A_{k, {\bf d}}-\gmod] \rightarrow [\Ddf(A_{k, {\bf d}}-\gmod)]$ is injective and induces an isomorphism
\label{it:compl-isom}
\[
\hat{K}\left(A_{k, {\bf d}}-\gmod\right) \cong [\Ddf(A_{k, {\bf d}}-\gmod)]
\]
where $\hat{K}\left(A_{k, {\bf d}}-\gmod\right)$ denotes the $q$-adic completion of $[A_{k, {\bf d}}-\gmod]$ which is as $\mZ[[q]][q^{-1}]$-module free of rank equal to the rank of $[A_{k, {\bf d}}-\gmod]$. Let $\hat{V}_{\bf d}$ be the $\mZ[[q]][q^{-1}]$-module obtained by $q$-adic completion of $V_{\bf d}$.

\begin{theorem}[Categorification of the Jones-Wenzl projector]\hfill
\label{catJW}
\begin{enumerate}
\item The composition $\hat{p}_{k,\bf d}:=(\mathbb{L}{}_k\hat{\iota}_{\bf d}){}_k\hat{\pi}_{\bf d}$ is an idempotent.
\item $\mathbb{L}({}_{k} {\hat\iota}_{\bf d})$ induces a functor
$$\Ddf(\bigoplus_{k=0}^n A_{k,{\bf d}}-\gmod) \rightarrow \Ddf(\bigoplus_{k=0}^n A_{k,n}-\gmod).$$
\item There is an isomorphism of  $\mZ[[q]][q^{-1}]$-modules
$$\Phi:\quad \bigoplus_{k=0}^n\hat{K}\left(A_{k, {\bf d}}-\gmod\right)\cong \hat{V}_{\bf d}$$
sending the isomorphism classes of the standard modules defined in \eqref{defstandard} to the corresponding standard basis elements.
\item Under the isomorphism $\Phi$ the induced map
$$ \displaystyle \bigoplus_{k=0}^n [\mathbb{L}({}_{k} \hat{\iota}_{\bf d})] \colon\quad
    \bigoplus_{k=0}^n\hat{K}\left(A_{k, {\bf d}}-\gmod\right)\rightarrow \bigoplus_{k=0}^n \hat{K}(A_{k,n}-\gmod) $$
    is equal to the tensor  product ${\iota}_{d_1} \otimes \cdots \otimes {\iota}_{d_r}$
    of the inclusion maps.
\item Moreover,
$$ \displaystyle \bigoplus_{k=0}^n [\mathbb{L}({}_{k} \hat{\pi}_{\bf d})] \colon\quad
\bigoplus_{k=0}^n \hat{K}(A_{k,n}-\gmod)\rightarrow \bigoplus_{k=0}^n\hat{K}\left(A_{k, {\bf d}}-\gmod\right)$$
is equal to the tensor product  $\pi_{d_1} \otimes \cdots \otimes \pi_{d_r}$ of the projection maps.
\end{enumerate}
\end{theorem}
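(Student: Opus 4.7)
The plan is to address the five claims in the order (5), (1), (3), (4), (2), since each step feeds into the next. First, (5) is essentially a reformulation of Theorem \ref{cattensor}. The quotient functor ${}_k\hat{\pi}_{\mathbf{d}}$ is exact, so $\mathbb{L}({}_k\hat{\pi}_{\mathbf{d}})={}_k\hat{\pi}_{\mathbf{d}}$, and its action on the Grothendieck group is determined by its effect on any basis. I would use Verma modules $\hat{M}(\mathbf{a})$, which under $\Phi_n$ of Proposition \ref{Grothgraded} correspond to standard basis vectors $v_{\mathbf{a}}$. Their images are the proper standard modules $\hat{\blacktriangle}(k_1,d_1|\ldots|k_r,d_r)$, which by \eqref{basisproperstandards} correspond to $v^{k_1}\otimes\cdots\otimes v^{k_r}$. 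A direct comparison of the grading shift $\langle -k(n-k)\rangle$ in \eqref{algebraic1} with the $q^{-l(\mathbf{a})}$ factor of \eqref{pn} shows that $[{}_k\hat{\pi}_{\mathbf{d}}]$ matches $\pi_{d_1}\otimes\cdots\otimes\pi_{d_r}$ on the respective weight space.

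Claim (1) is then formal: in any Serre quotient setup one has ${}_k\hat{\pi}_{\mathbf{d}}\circ {}_k\hat{\iota}_{\mathbf{d}}\cong \Id$ (\cite{Ga}), and since ${}_k\hat{\pi}_{\mathbf{d}}$ is exact this identity extends to ${}_k\hat{\pi}_{\mathbf{d}}\circ\mathbb{L}({}_k\hat{\iota}_{\mathbf{d}})\cong\Id$ on the derived side. Consequently
\[
\hat{p}_{k,\mathbf{d}}^{\,2}=\mathbb{L}({}_k\hat{\iota}_{\mathbf{d}})\bigl({}_k\hat{\pi}_{\mathbf{d}}\mathbb{L}({}_k\hat{\iota}_{\mathbf{d}})\bigr){}_k\hat{\pi}_{\mathbf{d}}\cong\mathbb{L}({}_k\hat{\iota}_{\mathbf{d}})\,{}_k\hat{\pi}_{\mathbf{d}}=\hat{p}_{k,\mathbf{d}}.
\]

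For (3), Corollary \ref{simples} shows that the simples in $A_{k,\mathbf{d}}\text{-gmod}$ are in bijection with the elements $v^{k_1}\heartsuit\cdots\heartsuit v^{k_r}$ of the dual canonical basis of $V_{\mathbf{d}}$, so the completed Grothendieck group is free of the correct rank over $\mathbb{Z}[[q]][q^{-1}]$. I would define $\Phi$ on simple classes as in \eqref{basissimples} and then verify that it sends the (later-defined) standard modules to the standard basis by working out the graded proper-standard composition factors of each standard module using the properly stratified structure of \cite{MS2}. The resulting change-of-basis matrix from standard to proper standard modules is upper-triangular for an appropriate order, with invertible power series on the diagonal, hence invertible over the completion, and one checks that the same transition matrix relates the standard basis to the dual standard basis of $V_{\mathbf{d}}$ (cf.\ Proposition \ref{inversetwistedcanbasis} and the definition \eqref{in} of $\iota_n$).

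The core work is (4), with (2) coming for free. I would construct an explicit \emph{standard resolution} of each standard module by direct sums of proper standard modules (with grading shifts dictated by the exponents $b(\mathbf{a})$ from \eqref{in}), and then apply $\mathbb{L}({}_k\hat{\iota}_{\mathbf{d}})$ to obtain a resolution by Verma modules in $A_{k,n}\text{-gmod}$. The Euler characteristic of this complex in the completed Grothendieck group will, by construction, reproduce $\iota_{d_1}\otimes\cdots\otimes\iota_{d_r}$ applied to the corresponding standard basis vector via the identification of (5). A careful count of graded multiplicities along the resolution simultaneously shows that only finitely many cohomological degrees contribute composition factors of internal degree below any fixed bound, which is precisely the $\Ddf$-condition needed for (2). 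The hard part will be making the standard resolutions sufficiently explicit: by the introduction the endomorphism ring of the sum of standard modules is a tensor product of cohomology rings of Grassmannians, so the infinite resolutions of Theorem \ref{Thmres} are the source of the $q$-adically convergent series summing to the rational coefficient $\tfrac{1}{\left[n\brack k\right]}$ hidden in \eqref{pn}, and matching these series term-by-term against the finite expansion of $\iota_n$ is the crux of the argument.
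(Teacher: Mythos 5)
Your treatment of (1), (3) and (5) is essentially the paper's: (1) is the formal consequence of ${}_k\hat{\pi}_{\bf d}\circ\mathbb{L}({}_k\hat{\iota}_{\bf d})\cong\Id$, and (5) is exactly the identity \eqref{commutes}, i.e.\ the comparison of the grading shift $\langle -k(n-k)\rangle$ in \eqref{algebraic1} with the $q$-power in \eqref{pn}, applied to Verma modules and their images, the proper standard modules.

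The gap is in (4). You propose to compute $[\mathbb{L}({}_k\hat{\iota}_{\bf d})]$ on a standard module by breaking it into proper standard pieces and then summing the infinite series coming from the resolutions behind Theorem \ref{Thmres}. This is circular: the infinite resolutions of the proper standard modules $\hat{\blacktriangle}$ are resolutions \emph{by standard modules} $\hat{\Delta}$ (Corollary \ref{standardres}), so to extract $[\mathbb{L}({}_k\hat{\iota}_{\bf d})\hat{\blacktriangle}]$ from them you must already know $\mathbb{L}({}_k\hat{\iota}_{\bf d})\hat{\Delta}$ --- which is exactly what (4) asserts. Moreover the proper standard modules are \emph{not} acyclic for ${}_k\hat{\iota}_{\bf d}$ (Example \ref{gigantic}: already for $n=2$ the derived inclusion of a proper standard module has homology in all degrees), so you cannot push the finite proper-standard filtration of $\hat{\Delta}$ through the merely right-exact functor ${}_k\hat{\iota}_{\bf d}$ without controlling all the higher $\mathbb{L}^i$. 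The missing idea is the paper's: the standard modules themselves are acyclic for the inclusion functor (Proposition \ref{acyclic}, proved by induction on the partial order using the standard filtration of projectives), and ${}_k\hat{\iota}_{\bf d}(\hat{\Delta})$ is an honest module with a finite graded Verma flag whose subquotients are precisely the $\hat{M}({\bf a})\langle b({\bf a})\rangle$ of \eqref{in} (Proposition \ref{prop50}, via parabolic induction and graded translation functors). No fractional Euler characteristics enter statement (4) at all; they only appear when $\mathbb{L}({}_k\hat{\iota}_{\bf d})$ is evaluated on simple or proper standard objects. Finally, (2) does not follow from the computation on standard modules alone: one needs $\mathbb{L}({}_k\hat{\iota}_{\bf d})$ to preserve the $\Ddf$-condition for arbitrary complexes, which the paper delegates to \cite{AcharStroppel}.
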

\begin{proof}
Note that ${}_k\hat{\pi}_{\bf d}$ is exact and that ${}_k\hat{\pi}_{\bf d}\mathbb{L}\hat{\iota}_{\bf d}$ is the identity functor (this follows e.g. from \cite[Lemma 2.1]{MS2}). Hence  $\hat{p}_{k,\bf d}$ is an idempotent as well. The second statement is proved in \cite{AcharStroppel}. As well as the analogous statement for $\mathbb{L}({}_{k} \hat{\pi}_{\bf d})$.
The last statement is then \eqref{commutes}. The statement about the induced action on the Grothendieck group for the inclusion functors will be proved as Corollary \ref{standards} in the next section.
\end{proof}

\begin{ex}[Gigantic complexes]
\label{gigantic}
{\rm The complexity of the above functors is already transparent in the situation of Example \ref{JWex}:
The projective module $P:=\hat{P}(01)\in A_{1,2}-\gmod$ fits into a short exact sequence of the form
\begin{eqnarray}
\label{ses}
\hat{M}(10)\langle 1\rangle\rightarrow\hat{P}(01)\rightarrow \hat{M}(01),
\end{eqnarray}
categorifying the twisted canonical basis from Example \ref{example11}. Now both, $\hat{M}(10)$ and $\hat{M}(01)$ are mapped under the Jones-Wenzl projector to one-dimensional $\mC[x]/(x^2)$-modules only different in the grading, namely to $\mC$ and $\mC\langle -1\rangle$ respectively. To compute the derived inclusion  $\mathbb{L}({}_1\hat\iota_{1,1})$ we first have to choose a projective resolution, see \eqref{SL2},
and then apply the functor ${}_1\hat{\iota}_{1,1}$. So, for instance the formula $\iota_2\circ\pi_2(v_0\otimes v_1)=\iota_2(q^{-1}[2]^{-1}v_1)=[2]^{-1}(v_1\otimes v_0+q^{-1}v_0\otimes v_1)=\lsem2\rsem^{-1}(qv_1\otimes v_0+v_0\otimes v_1)$ gets categorified by embedding the infinite resolution from \eqref{SL2}, shifted by $\langle -1\rangle\langle1\rangle=\langle 0\rangle$, via ${}_1\hat\iota_{1,1}$ to obtain
\begin{eqnarray}
\label{SL2P}
\cdots\stackrel{f} {\longrightarrow}\hat{P}(01)\langle4\rangle\stackrel{f} {\longrightarrow}\hat{P}(01)\langle 2\rangle\stackrel{f} {\longrightarrow}\hat{P}(01)\stackrel{p}{\surj}\mC.
\end{eqnarray}

This complex should then be interpreted as an extension of $\lsem2\rsem^{-1}\hat{M}(10)\langle1\rangle$ and $\lsem2\rsem^{-1}\hat{M}(01)$ recalling that we have a short exact sequence \eqref{ses}.
\begin{center}
{\it Note that this is a complex which has homology in all degrees!}
\end{center}
}
\end{ex}

A categorical version of the characterization of the Jones-Wenzl projector (Proposition \ref{charJW}) will be given in Theorem \ref{charJWcat} and a renormalized (quasi-idempotent) of the Jones-Wenzl projector in terms of Khovanov's theory will be indicated in Section \ref{normalizedJW} and studied in more detail in a forthcoming paper.

\section{Fattened Vermas and the cohomology rings of Grassmannians}
\label{fatVermas}
In this section we describe the natural appearance of the categorifications from Section \ref{Euler} in the representation theory of semisimple complex Lie algebras. More precisely we will show that the Ext-algebra of proper standard modules has a fractional Euler characteristic. Hence their homological properties differ seriously from the homological properties of Verma modules. Verma modules $M(\mu)$ are not projective modules in $\cO$, but they are projective in the subcategory of all modules whose weights are smaller than $\mu$. When passing to the Harish-Chandra bimodule category (using the categorified Jones-Wenzl projector) these Vermas become proper standard and the corresponding homological property gets lost. To have objects with similar properties as Verma modules we will introduce {\it fattened Vermas} or {\it standard objects}. They can be realized as extensions of all those Verma modules that are, up to shift, mapped to the same proper standard object under the Jones-Wenzl projector.

\begin{definition}{\rm
The simple modules $L(\la)$ in $\cO$ are partially ordered by their highest weight. We say $L(\la)<L(\mu)$ if $\la<\mu$ in the usual ordering of weights. This induces also an ordering on the simple modules in $A_{k,{\bf d}}$ which is explicitly given by
$$\hat{L}\left(k_1,d_1|k_2,d_2|\cdots |k_r,d_r\right)\leq\hat{L}\left(l_1,d_1|l_2,d_2|\cdots |l_r,d_r\right)$$
if and only if $\sum_{i=1}^nk_i=\sum_{i=1}^nl_i$ and $\sum_{i=1}^jk_i\leq\sum_{i=1}^jl_i$ for all $j$. For a simple module $\hat{L}:=\hat{L}\left(k_1,d_1|k_2,d_2|\cdots |k_r,d_r\right)\in A_{k,{\bf d}}-\gmod$, let $\hat{P}\left(k_1,d_1|k_2,d_2|\cdots |k_r,d_r\right)$ be its projective cover. Then we define the {\it standard module}
\begin{equation}
\label{defstandard}
\hat{{\Delta}}\left(k_1,d_1|k_2,d_2|\cdots |k_r,d_r\right)
\end{equation}
to be the maximal quotient of $\hat{P}\left(k_1,d_1|k_2,d_2|\cdots |k_r,d_r\right)\langle-\prod_{i=1}^rk_i(d_i-k_i)\rangle$ contained in the full subcategory $A_{k,{\bf d}}^{\leq L}-\gmod$ which contains only simple composition factors smaller or equal than $\hat{L}$.
}
\end{definition}

\begin{remark}
\label{standardsinduced}
{\rm After forgetting the grading, the standard objects can also be defined Lie theoretically as parabolically induced `big projectives' in category $\cO$ for the Lie algebra $\mathfrak{gl}_{\bf d}:=\mathfrak{gl}_{d_1}\oplus\mathfrak{gl}_{d_2}\oplus\cdots\oplus\mathfrak{gl}_{d_r}$, in formulas
\small
\begin{eqnarray*}
{\Delta}\left(k_1,d_1|k_2,d_2|\cdots |k_r,d_r\right)&=&\cU(\mathfrak{gl_n})\otimes_{\cU(\mathfrak{p})}\left(P(k_1|d_1)\boxtimes P(k_2|d_2)\boxtimes \cdots\boxtimes P(k_r|d_r)\right),
\end{eqnarray*}
\normalsize
where the $\mathfrak{gl}_{\bf d}$-action is extended by zero to $\mathfrak{p}=\mathfrak{gl}_{\bf d}+\mathfrak{n}$,
see \cite[Proposition 2.9]{MS2}. Similarly, \cite[p.2948]{MS2}, the proper standard modules can be defined as
\small
\begin{eqnarray*}
{\blacktriangle}\left(k_1,d_1|k_2,d_2|\cdots |k_r,d_r\right)&=&\cU(\mathfrak{gl_n})\otimes_{\cU(\mathfrak{p})}\left(L(k_1|d_1)\boxtimes L(k_2|d_2)\boxtimes \cdots\boxtimes L(k_r|d_r)\right).
\end{eqnarray*}
\normalsize
}
\end{remark}

\begin{prop}
\label{acyclic}
The standard objects are acyclic with respect to the inclusion functors, i.e.
$$\mathbb{L}^i({}_k\hat\iota_{\bf d}){\hat\Delta}=0$$ for any $i>0$ and $\hat{\Delta}={\hat\Delta}\left(k_1,d_1|k_2,d_2|\cdots |k_r,d_r\right)$.
\end{prop}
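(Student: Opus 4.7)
The plan is to translate the statement to Lie theory via the Bernstein--Gelfand equivalence, realize $\hat\Delta$ as a parabolically induced module with a finite Verma flag, and reduce the acyclicity to a per-Verma vanishing which can be handled via projective resolutions internal to the subcategory $\cO_{\lambda,{\bf d}}$.

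First, by Lemma \ref{ppres} combined with Theorem \ref{BG}, one identifies $A_{k,{\bf d}}$-mod with the subcategory $\cO_{\lambda,{\bf d}}(\mathfrak{gl}_n)\subset\cO_\lambda$, under which ${}_k\iota'_{\bf d}$ becomes the (only right exact) subcategory inclusion $\iota\colon\cO_{\lambda,{\bf d}}\hookrightarrow\cO_\lambda$. By Remark \ref{standardsinduced}, the ungraded $\Delta$ corresponds to $\operatorname{Ind}_\mathfrak{p}^{\mathfrak{gl}_n}(Q)$ with $Q=P(k_1|d_1)\boxtimes\cdots\boxtimes P(k_r|d_r)$ projective in $\cO(\mathfrak{gl}_{\bf d})$. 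Since parabolic induction is exact (by PBW) and $Q$ carries a finite Verma flag in $\cO(\mathfrak{gl}_{\bf d})$, the module $\operatorname{Ind}(Q)$ acquires a finite filtration in $\cO_\lambda$ whose successive quotients are $\mathfrak{g}$-Vermas $M(y\cdot\nu)$ for $y$ ranging over $\mathbb{S}_{\bf d}/\operatorname{Stab}_{\mathfrak l}(\nu)$, where $\nu$ is the antidominant-per-block weight $(0^{d_1-k_1},1^{k_1},\ldots,0^{d_r-k_r},1^{k_r})-\rho_n$. Crucially, all these weights lie in a single $\mathbb{S}_{\bf d}$-orbit, equivalently in a single double coset in $\mathbb{S}_{\bf d}\backslash\mathbb{S}_n/\mathbb{S}_\lambda$.

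A long exact sequence in Tor, applied iteratively to this finite filtration, reduces the desired vanishing to the statement that $\mathbb{L}^i\iota(\bar\pi M(y\cdot\nu))=0$ for $i>0$ and each Verma in the flag. Each such proper standard $\bar\pi M(y\cdot\nu)$ is, up to grading shift, a copy of $\hat\blacktriangle(k_1,d_1|\cdots|k_r,d_r)$. The highest weight $y\cdot\nu$ is of the form $x\cdot\lambda$ with $x$ a longest representative of its double coset, so the projective cover $P(y\cdot\nu)\in\cO_\lambda$ is among the generating projectives of $\cO_{\lambda,{\bf d}}$. Using the properly stratified structure on $A_{k,{\bf d}}$-mod from \cite[Section 2]{MS2}, I choose a (generally infinite, minimal) projective resolution of $M(y\cdot\nu)$ in $\cO_\lambda$ every term of which is a direct sum of such generating projectives; applying the exact quotient $\bar\pi$ yields a projective resolution of $\hat\blacktriangle$ in $A_{k,{\bf d}}$-gmod, and then re-applying $\iota={}_k\iota'_{\bf d}$ returns the original resolution verbatim (since $\iota\circ\pi$ is the identity on generating projectives of $\cO_{\lambda,{\bf d}}$). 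Its exactness forces $\operatorname{Tor}^{eA_{k,n}e}_i(A_{k,n}e,\hat\blacktriangle)=0$ for $i>0$. The graded version then follows by fixing compatible graded lifts throughout.

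The hard part is verifying that the projective resolution of $M(y\cdot\nu)$ stays inside $\cO_{\lambda,{\bf d}}$ at every syzygy, i.e.\ that the composition factors of the radical of $P(y\cdot\nu)$ again correspond to longest double coset representatives in $\mathbb{S}_{\bf d}\backslash\mathbb{S}_n/\mathbb{S}_\lambda$. This combinatorial-homological fact is exactly what the properly stratified structure of $A_{k,{\bf d}}$ in \cite[Theorem 2.16]{MS2} provides, and the necessity of an infinite resolution is consistent with the gigantic complexes already encountered in Example \ref{gigantic}.
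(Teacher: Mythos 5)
Your reduction step reduces the proposition to a false statement, and this is fatal. To run a long exact sequence for $\mathbb{L}^\bullet({}_k\hat\iota_{\bf d})$ you must filter $\hat\Delta$ in the source category $A_{k,{\bf d}}-\gmod$; the image under the exact quotient $\bar\pi$ of the Verma flag of $\operatorname{Ind}(Q)$ is precisely the filtration of $\hat\Delta$ by proper standard modules $\hat\blacktriangle$ from Proposition \ref{transfmatrix}. So your intermediate claim "$\mathbb{L}^i\iota(\bar\pi M(y\cdot\nu))=0$ for $i>0$" asserts that \emph{proper} standard modules are acyclic for the inclusion functor, and this is exactly what fails: $\mathbb{L}({}_k\hat\iota_{\bf d})\hat\blacktriangle$ is an infinite complex with homology in infinitely many degrees (see Example \ref{gigantic}, where the complex \eqref{SL2P} is explicitly non-exact everywhere), and this non-acyclicity is what produces the fractional Euler characteristic $1/\prod_i\left[{d_i\brack k_i}\right]$ of Corollary \ref{standardres} -- a genuine fraction cannot arise from a finite complex. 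Correspondingly, the projective resolution of $M(y\cdot\nu)$ you posit, with every syzygy a direct sum of the generating projectives $P(x\cdot\lambda)$ for $x$ a longest double coset representative, does not exist: if it did, applying $\bar\pi$ and then $\iota$ (which is the identity on these projectives) would exhibit $\mathbb{L}\iota(\hat\blacktriangle)$ as a complex exact in positive degrees, contradicting \eqref{SL2P}. The properly stratified structure of \cite[Theorem 2.16]{MS2} that you invoke in the last paragraph delivers the opposite of what you need here (infinite global dimension, infinite minimal resolutions of proper standards whose syzygies leave $\cO_{\lambda,{\bf d}}$), not a confirmation.

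The correct argument runs in the other direction along the stratification: one inducts downward on the partial order of Definition \eqref{defstandard}. If $\hat L$ is maximal, $\hat\Delta$ is projective and there is nothing to prove. In general the indecomposable projective $\hat P$ in $A_{k,{\bf d}}-\gmod$ has a finite filtration by (shifted) \emph{standard} modules in which $\hat\Delta$ occurs exactly once, as the top quotient, while all other subquotients are standard modules with strictly larger labels and hence acyclic by induction. The long exact sequence for $0\to K\to\hat P\to\hat\Delta\to0$, with $K$ filtered by those larger standards, gives $\mathbb{L}^i({}_k\hat\iota_{\bf d})\hat\Delta\cong\mathbb{L}^{i-1}({}_k\hat\iota_{\bf d})K=0$ for $i\geq2$, and the remaining term $\mathbb{L}^1({}_k\hat\iota_{\bf d})\hat\Delta$ is killed by comparing characters: since $[{}_k\hat\iota_{\bf d}\hat P]=[{}_k\hat\iota_{\bf d}K]+[{}_k\hat\iota_{\bf d}\hat\Delta]$ by the Verma-flag count, the Euler-characteristic identity forces $[\mathbb{L}^1({}_k\hat\iota_{\bf d})\hat\Delta]=0$, hence the module itself vanishes. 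Your use of Remark \ref{standardsinduced} and the parabolic induction picture is fine as background, but it must feed into this upward filtration of projectives, not a downward filtration of $\hat\Delta$.
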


\begin{proof}
We do induction on the partial ordering. If $\hat{L}\left(k_1,d_1|k_2,d_2|\cdots |k_r,d_r\right)$ is maximal, then the corresponding standard module is projective and the statement is clear. From the arguments in \cite[Prop. 2.9, Prop. 2.13, Lemma 8.4]{MS2} it follows that any indecomposable projective module $$P\left(k_1,d_1|k_2,d_2|\cdots |k_r,d_r\right)\in A_{k,{\bf d}}-\gmod$$ has a graded {\it standard filtration} that means a filtration with subquotients isomorphic to (possibly shifted in the grading) standard modules.
Amongst the subquotients, there is a unique occurrence of ${\hat\Delta}\left(k_1,d_1|k_2,d_2|\cdots |k_r,d_r\right)$, namely as a quotient, and all other ${\hat\Delta}$'s are strictly larger in the partial ordering. Hence by induction hypothesis, using the long exact cohomology sequence we obtain $\mathbb{L}^i({}_k\hat\iota_{\bf d}){\hat\Delta}$ for $i\geq 2$ and then finally, by comparing the characters, also the vanishing for $i=1$.
\end{proof}

The following should be compared with \eqref{in}:
\begin{prop}
\label{prop50}
Let $\hat{\Delta}=\left({\hat\Delta}\left(k_1,d_1|k_2,d_2|\cdots |k_r,d_r\right)\right)\in A_{k,{\bf d}}-\gmod$.
${}_k\hat\iota_{\bf d}(\hat{\Delta})\in A_{k,n}-\gmod$ has a graded Verma flag. The occurring subquotients are, all occurring with multiplicity $1$, precisely the $\hat{M}({\bf a})\langle{b(\bf a)}\rangle$'s, where ${\bf a}$ runs through all possible sequences with $k$ ones such that precisely $\sum_{i=1}^jk_i$ appear amongst the first $\sum_{i=1}^jd_i$ indices.
\end{prop}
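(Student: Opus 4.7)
The plan is to combine the Lie theoretic description of $\hat\Delta$ from Remark \ref{standardsinduced} with the general fact that parabolic induction preserves Verma flags. By Lemma \ref{ppres}, ${}_k\hat\iota_{\bf d}$ agrees (up to its built-in grading shift) with the natural embedding $\mathcal{O}_{\lambda,{\bf d}}\hookrightarrow\mathcal{O}_\lambda$, so ${}_k\hat\iota_{\bf d}(\hat\Delta)$ is realized in $\mathcal{O}_\lambda$ as the parabolically induced module $\cU(\mg)\otimes_{\cU(\mathfrak{p})}\bigl(\hat P(k_1|d_1)\boxtimes\cdots\boxtimes\hat P(k_r|d_r)\bigr)$ equipped with an appropriate overall grading shift.

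First I would handle the single-block case: each graded big projective $\hat P(k_i|d_i)$ in $\cO_{k_i}(\mathfrak{gl}_{d_i})\text{-}\gmod$ admits a graded Verma flag in which every Verma $\hat M_{\mathfrak{gl}_{d_i}}({\bf a}^{(i)})$ with $|{\bf a}^{(i)}|=k_i$ appears exactly once, with grading shift $\langle b({\bf a}^{(i)})\rangle$. The existence of the flag is BGG reciprocity, while the graded multiplicities follow from identifying $[\hat P(k_i|d_i)]$ via Proposition~\ref{Grothgraded} with $\iota_{d_i}(v_{k_i})=\sum_{{\bf a}^{(i)}}q^{b({\bf a}^{(i)})}v_{{\bf a}^{(i)}}$ of \eqref{in}.

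Next I would pass to the multi-block case. Outer tensor products preserve graded Verma flags, and parabolic induction is exact (since $\cU(\mg)$ is free as a right $\cU(\mathfrak{p})$-module) and sends the outer product $\hat M_{\mathfrak{gl}_{d_1}}({\bf a}^{(1)})\boxtimes\cdots\boxtimes\hat M_{\mathfrak{gl}_{d_r}}({\bf a}^{(r)})$ to the single Verma $\hat M_\mg({\bf a})$, where ${\bf a}$ is the concatenation. Combining these facts produces a graded Verma flag of ${}_k\hat\iota_{\bf d}(\hat\Delta)$ in which each $\hat M({\bf a})$ occurs with multiplicity exactly one, and the tuples ${\bf a}$ that appear are precisely those with $k_i$ ones in the $i$-th block of length $d_i$, which is the partial-sum condition stated in the proposition.

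The hard part will be tracking the grading shifts. A priori, the tensor-inducted Verma $\hat M_\mg({\bf a})$ acquires the shift $\langle\sum_i b({\bf a}^{(i)})\rangle$, whereas the proposition asserts the shift $\langle b({\bf a})\rangle$. These are reconciled by the elementary identity
\[
b({\bf a}) \;=\; \sum_{i=1}^r b({\bf a}^{(i)}) \;+\; \sum_{1\le l<m\le r} k_l(d_m-k_m),
\]
proved by classifying each inversion $(p,q)$ of ${\bf a}$ (with $a_p>a_q$) according to whether $p$ and $q$ lie in the same block, or in two distinct blocks $l<m$ in which case it is a $1$ from block $l$ paired with a $0$ from block $m$. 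The cross-block correction is absorbed exactly by the overall grading shifts built into the definition \eqref{defstandard} of $\hat\Delta$ together with the shift $\langle k(n-k)\rangle$ carried by the inclusion functor \eqref{algebraic2}. As a sanity check, Proposition \ref{acyclic} implies that $\hat\Delta$ is $\hat\iota$-acyclic, so Theorem \ref{catJW}(4) yields $[\iota\hat\Delta]=\iota_{d_1}\otimes\cdots\otimes\iota_{d_r}([\hat\Delta])$ in the completed Grothendieck group, and applying \eqref{in} tensor-factorwise directly reproduces both the multiplicity-one statement and the shift $\langle b({\bf a})\rangle$.
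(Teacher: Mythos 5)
Your argument is correct in outline but takes a genuinely different route from the paper's at the key step of identifying the subquotients. Both proofs start from Remark \ref{standardsinduced} and both extract the existence of the (graded) Verma flag from the results of \cite{MS2} cited there. But where you compute the flag of the induced module directly --- graded Verma flag of each big projective $\hat P(k_i|d_i)$ for the Levi factor (each Verma exactly once, with shift $b({\bf a}^{(i)})$), outer tensor product, and exactness of parabolic induction carrying Levi Vermas to $\mathfrak{gl}_n$-Vermas --- the paper instead translates to the principal block via $\theta^0_\lambda$, quotes the known flag of $\Delta(w\cdot 0)$ there (subquotients $M(yw\cdot 0)$ for $y\in\mathbb{S}_\mu$), and recovers the singular-block answer by counting orbits for the right $\mathbb{S}_\lambda$-action using $[\theta^0_\lambda M(w\cdot0)]=[\bigoplus_{x\in\mathbb{S}_\lambda}M(wx\cdot0)]$. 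Your route is more self-contained and makes both the multiplicity-one statement and the block-combinatorial description of the occurring ${\bf a}$'s transparent; the paper's leans on existing translation-functor technology and avoids discussing flags of the Levi projectives.

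Two soft spots. First, the graded multiplicity $b({\bf a}^{(i)})$ in the flag of $\hat P(k_i|d_i)$ does not ``follow from Proposition \ref{Grothgraded}'': that proposition only fixes the isomorphism of Grothendieck groups, and computing the class of the big projective in the Verma basis is precisely the graded flag you are trying to produce. The correct input is graded BGG reciprocity together with the fact that the antidominant simple occurs exactly once in $\hat M({\bf a}^{(i)})$, in degree $b({\bf a}^{(i)})$. Relatedly, your closing sanity check via Theorem \ref{catJW} is circular, since that part of Theorem \ref{catJW} is deduced from the present proposition (via Corollary \ref{standards} and the proof given right after it); it cannot serve as independent confirmation. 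Second, the final grading bookkeeping is asserted rather than verified: the two explicit shifts you invoke contribute $k(n-k)-\sum_i k_i(d_i-k_i)=\sum_{l\neq m}k_l(d_m-k_m)$, which is not the required cross-term $\sum_{l<m}k_l(d_m-k_m)$, and the discrepancy must be absorbed by the (nowhere normalized) graded lift of parabolic induction itself, since Remark \ref{standardsinduced} is only stated after forgetting the grading. The clean way to finish --- and what the paper effectively does --- is to note that your inversion identity already gives the correct \emph{relative} shifts, because $b({\bf a})-\sum_i b({\bf a}^{(i)})$ is a constant independent of ${\bf a}$, and then to pin down the single overall shift by identifying one distinguished subquotient, e.g.\ the top Verma quotient of the flag.
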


\begin{proof}
 As above, the existence of a Verma flag when we forget the grading follows directly from \cite[Prop. 2.18]{MS2} using Remark \ref{standardsinduced}. In the graded setting this is then the general argument \cite[Lemma 8.4]{MS2}.
 To describe the occurring subquotients we first work in the non-graded setting. The highest weight of $\Delta$ is of the form $w\cdot\mu$, where $\la=e_1+\cdots+e_k-\rho_n$ and $w\in\mathbb{S}_{\mu}\backslash \mathbb{S}_n / \mathbb{S}_{\lambda}$ a longest double coset representative, where $\mathbb{S}_{\lambda}=\mathbb{S}_k\times\mathbb{S}_{n-k}$ and $\mathbb{S}_{\mu}=\mathbb{S}_{\bf d}$.

 Our claim is then equivalent to the assertion that the occurring modules are precisely the isomorphism classes of the form $M(yw\cdot\la)$, where $y$ runs through a set of representatives from
 \begin{eqnarray}
 \label{indices}
 \mathbb{S}_{\mu}/(\mathbb{S}_{\mu}\cap (w\mathbb{S}_{\la}w^{-1})).
\end{eqnarray}
 By Remark \ref{standardsinduced} the translation functor $\theta^0_\la$ to the principal block maps $\Delta$ to $\Delta(w\cdot0)$ (since translation commutes with parabolic induction and translation out of the wall sends antidominant projective modules to such). The module $\Delta(w\cdot0)$ has by \cite[Prop. 8.3]{MS2} a Verma flag with subquotients precisely the $M(yw\cdot0)$'s with $y\in \mathbb{S}_{\mu}$. Since ${}_{0} \bar{\iota}_{\mu}\theta^0_\la\cong \theta^0_\la{}_{\lambda} \bar{\iota}_{\mu}$ the claim follows directly from the formula $[\theta^0_\la M(w\cdot0)]=[\bigoplus_{x\in \mathbb{S}_{\la}}M(wx\cdot0)]$. Indeed the latter formula says that we have to find a complete set of representatives for the $\mathbb{S}_\la$-orbits acting from the right on $\{yw\mid y\in \mathbb{S}_{\mu}\}$. Now $y'w=ywa$ for  $y, y'\in \mathbb{S}_{\mu}$ and  $a\in \mathbb{S}_{\la}$ if and only if $y^{-1}y'w=wa$ or equivalently $y'=ywaw^{-1}$, hence \eqref{indices} follows. Using the graded versions of translation functors as in \cite{Strgrad}, the proposition follows at least up to an overall shift.
 Since $\hat{\Delta}({\bf a})$ is a quotient of $\hat{P}({\bf a})\langle -\prod_{i=1}^rk_i(d_i-k_i)\rangle$, the additional shift appearing in \eqref{in} implies that $\hat{M}(\bf {a})$ appears as a quotient of ${}_k\hat\iota_{\bf d}(\hat{\Delta}(\bf {a}))$ which agrees with the fact that $b({\bf a})=0$.
\end{proof}

\begin{proof}[Proof of Theorem \ref{catJW}]
By Proposition \ref{acyclic}, the inclusion applied to a standard module is a module which has a graded Verma flag as in Proposition \ref{prop50}.
\end{proof}

\begin{prop}
\label{transfmatrix}
The standard module $\hat{{\Delta}}\left(k_1,d_1|k_2,d_2|\cdots |k_r,d_r\right)$ has a filtration with subquotients isomorphic to $\hat{\blacktriangle}\left(k_1,d_1|k_2,d_2|\cdots |k_r,d_r\right)$ such that in the Grothendieck space  $[\hat{\Delta}]={{d_1}\brack{k_1}}{{d_2}\brack{k_2}}\cdots{{d_r}\brack{k_r}}[\hat{\blacktriangle}]$.
\end{prop}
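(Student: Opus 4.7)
The proof proceeds by establishing the filtration on the Verma-category side, then descending via the exact quotient functor. By Proposition~\ref{prop50}, the module ${}_k\hat{\iota}_{\bf d}(\hat{\Delta})$ carries a graded Verma filtration whose subquotients are the shifted Vermas $\hat{M}({\bf a})\langle b({\bf a})\rangle$, each occurring once, indexed by sequences ${\bf a}$ with exactly $k_i$ ones among the positions in the $i$-th block of size $d_i$.

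Now apply the exact quotient functor ${}_k\hat{\pi}_{\bf d}$. Since ${}_k\hat{\pi}_{\bf d}\circ{}_k\hat{\iota}_{\bf d}\cong \op{id}$ (as used already in Lemma~\ref{idempotent}), we recover $\hat{\Delta}$ equipped with a filtration whose subquotients are the images ${}_k\hat{\pi}_{\bf d}(\hat{M}({\bf a}))\langle b({\bf a})\rangle$. The crucial step is to identify each such image as a graded shift of $\hat{\blacktriangle}(k_1,d_1|\cdots|k_r,d_r)$. Ungraded, the quotient functor is $\bar{\pi}_\mu(-)=\cL(M(\mu),-)$; for the defining block-dominant sequence ${\bf a}'$ it produces $\blacktriangle$ by Definition~\ref{propstandard}, and for any other ${\bf a}$ in the same $\mathbb{S}_\mu$-orbit of highest weights, the BG equivalence of Theorem~\ref{BG} combined with the fact that the $\mathbb{S}_\mu$-action is absorbed by the left argument $M(\mu)$ forces $\cL(M(\mu),M({\bf a}))\cong\blacktriangle$. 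The graded refinement introduces a shift $c({\bf a})$, computable via the graded translation functors linking Vermas within the $\mathbb{S}_\mu$-orbit.

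To conclude we sum multiplicities. The filtration yields $[\hat{\Delta}]=\bigl(\sum_{{\bf a}}q^{b({\bf a})+c({\bf a})}\bigr)[\hat{\blacktriangle}]$, and this sum factors blockwise as $\prod_{i=1}^r\bigl(\sum_{{\bf a}_i}q^{b({\bf a}_i)+c({\bf a}_i)}\bigr)$. Each factor evaluates to ${d_i \brack k_i}$ by the identity $\sum_{w\in \mathbb{S}_{d_i}/(\mathbb{S}_{k_i}\times \mathbb{S}_{d_i-k_i})}q^{2b(w)-k_i(d_i-k_i)}={d_i \brack k_i}$ invoked in the proof of Theorem~\ref{ClebschGordon}, giving both the asserted $\hat{\blacktriangle}$-filtration and the Grothendieck-group identity. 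The principal technical obstacle is precisely this grading-shift bookkeeping: matching the $c({\bf a})$ arising from ${}_k\hat{\pi}_{\bf d}(\hat{M}({\bf a}))\cong\hat{\blacktriangle}\langle c({\bf a})\rangle$ against the $b({\bf a})$ of Proposition~\ref{prop50} so that the combined shifts aggregate blockwise into the quantum binomial, which ultimately reduces to standard facts about the graded action of projective/translation functors on Vermas.
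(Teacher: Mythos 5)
Your argument is correct, but it is not the route the paper takes. The paper realizes $\hat{\Delta}$ and $\hat{\blacktriangle}$ via parabolic induction from the Levi $\mathfrak{gl}_{\bf d}$ (Remark \ref{standardsinduced}): since induction is exact and the head of a proper standard module occurs exactly once in it, the number of $\hat{\blacktriangle}$-subquotients equals the blockwise product of the multiplicities $[P(k_i|d_i):L(k_i|d_i)]={d_i\choose k_i}$ of the simple module in each ``big projective'' of $\cO(\mathfrak{gl}_{d_i})$, and the graded refinement is deferred to the endomorphism-ring computation of Theorem \ref{Endstandards}. You instead stay entirely inside the quotient-functor formalism: you push the graded Verma flag of ${}_k\hat\iota_{\bf d}\hat{\Delta}$ from Proposition \ref{prop50} through the exact functor ${}_k\hat\pi_{\bf d}$, use ${}_k\hat\pi_{\bf d}{}_k\hat\iota_{\bf d}\cong\op{id}$, and count the resulting shifted copies of $\hat{\blacktriangle}$ with the $q$-binomial identity. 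Your route produces the graded multiplicities directly and sidesteps the mild circularity in the paper (the graded case of Proposition \ref{transfmatrix} is referred to the proof of Theorem \ref{Endstandards}, which in turn quotes Proposition \ref{transfmatrix} for the dimension count). Its cost is that the identification ${}_k\hat\pi_{\bf d}(\hat M({\bf a}))\cong\hat{\blacktriangle}\langle c({\bf a})\rangle$ for \emph{every} ${\bf a}$ in the flag, not just the block-dominant one, carries the real content, and your justification of it is thin: this is the standard Harish-Chandra--bimodule fact that $\cL(M(\mu),M(yx\cdot\la))\cong\cL(M(\mu),M(x\cdot\la))$ for $y\in\mathbb{S}_{\mu}$ (which is exactly what collapses all Vermas in one $\mathbb{S}_{\mu}$-orbit onto a single proper standard object), and it should be cited explicitly, e.g.\ from \cite[Kapitel 6]{Ja} or \cite{BG}, rather than gestured at as ``absorption''. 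With that reference supplied your bookkeeping does close up: blockwise $b({\bf a})+c({\bf a})=\sum_i\bigl(k_i(d_i-k_i)-2l({\bf a}_i)\bigr)$ and each block contributes ${d_i \brack k_i}$, so the filtration and the Grothendieck-group identity both follow.
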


\begin{proof}
The existence of the filtration follows by the same arguments as in \cite[Theorem 2.16]{MS2}.
Note that the projective module $P(k_i|d_i)$ has $\binom{k_i}{d_i}$ occurrences of the simple module $L(k_i|d_i)$ in a composition series, \cite[4.13]{Ja}. Hence the module $\left(L(k_1|d_1)\boxtimes L(k_2|d_2)\boxtimes \cdots\boxtimes L(k_r|d_r)\right)$ occurs $\binom{k_1}{d_1}\binom{k_2}{d_2}\cdots\binom{k_r}{d_r}$ times as a composition factor in $P(k_1|d_1)\boxtimes P(k_2|d_2)\boxtimes \cdots\boxtimes P(k_r|d_r)$. Since parabolic induction is exact and the simple module in the head of a proper standard module appears with multiplicity one, this gives by Remark \ref{standardsinduced} precisely the number of proper standard modules appearing as subquotients in a filtration. The formula therefore holds when we forget the grading. The graded version will follow from the proof of Theorem \ref{Endstandards}.
\end{proof}

\begin{corollary}
\label{standards}
Under the isomorphism $\Phi_{\bf d}$ the standard basis corresponds to the standard modules:
$\hat\Delta\left(k_1,d_1|k_2,d_2|\cdots |k_r,d_r\right)\longmapsto v_{k_1}\otimes v_{k_2}\otimes \cdots \otimes v_{k_r}.$
\end{corollary}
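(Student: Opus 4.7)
The plan is to deduce this corollary as an essentially formal consequence of Theorem \ref{cattensor} together with Proposition \ref{transfmatrix}, by matching the transformation matrix between proper standard objects and standard objects with the transformation matrix between the dual standard basis and the standard basis.

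First, I would recall from Section \ref{U2basics} that the standard basis and dual standard basis are related by
\[
v^{k_i} \;=\; \frac{1}{\left[{d_i\atop k_i}\right]}\, v_{k_i},
\]
so in each tensor factor we have $v_{k_i} = \left[{d_i\atop k_i}\right] v^{k_i}$, and therefore
\[
v_{k_1}\otimes v_{k_2}\otimes\cdots\otimes v_{k_r}
\;=\;\left[{d_1\atop k_1}\right]\left[{d_2\atop k_2}\right]\cdots \left[{d_r\atop k_r}\right]\,
 v^{k_1}\otimes v^{k_2}\otimes\cdots\otimes v^{k_r}.
\]

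Next, by Theorem \ref{cattensor}, formula \eqref{basisproperstandards}, the isomorphism $\Phi_{\bf d}$ sends the class of the proper standard module $\hat{\blacktriangle}(k_1,d_1|\cdots|k_r,d_r)$ to $v^{k_1}\otimes\cdots\otimes v^{k_r}$. On the other hand, Proposition \ref{transfmatrix} expresses the class of the standard module in the Grothendieck space as
\[
[\hat{\Delta}(k_1,d_1|\cdots|k_r,d_r)] \;=\; \left[{d_1\atop k_1}\right]\left[{d_2\atop k_2}\right]\cdots \left[{d_r\atop k_r}\right]\,[\hat{\blacktriangle}(k_1,d_1|\cdots|k_r,d_r)].
\]
Applying the $\mathbb{C}(q)$-linear isomorphism $\Phi_{\bf d}$ to both sides and substituting \eqref{basisproperstandards} yields
\[
\Phi_{\bf d}\bigl([\hat{\Delta}(k_1,d_1|\cdots|k_r,d_r)]\bigr)
=\left[{d_1\atop k_1}\right]\cdots \left[{d_r\atop k_r}\right]\,v^{k_1}\otimes\cdots\otimes v^{k_r}
= v_{k_1}\otimes\cdots\otimes v_{k_r},
\]
which is exactly the claim.

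The only nontrivial input is Proposition \ref{transfmatrix}, whose proof (via parabolic induction and the multiplicity $\binom{d_i}{k_i}$ of $L(k_i|d_i)$ in $P(k_i|d_i)$) is already supplied in the paper; aside from that, the proof is a bookkeeping exercise reconciling the two different normalizations of standard vs.\ dual standard basis vectors with the two different normalizations (a shift by $\langle -\prod_i k_i(d_i-k_i)\rangle$ in \eqref{defstandard} versus the analogous factor coming from $\pi_{d_i}(v_{\bf a}) = q^{-l({\bf a})}v^{|{\bf a}|}$) built into $\Phi_{\bf d}$. The main point to check is that these normalizations are compatible, which is precisely what Theorem \ref{cattensor} has already established; hence no additional obstacle remains.
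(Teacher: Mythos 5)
Your proposal is correct and follows essentially the same route as the paper's own (very terse) proof: both combine Theorem \ref{cattensor} (proper standard modules map to the dual standard basis), Proposition \ref{transfmatrix} (the binomial transformation matrix between standard and proper standard classes), and the relation $v^{k_i}=\left[{d_i\brack k_i}\right]^{-1}v_{k_i}$, with the grading-shift normalizations already reconciled in Theorem \ref{cattensor}. No further comment is needed.
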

\begin{proof}
By Theorem \ref{cattensor}, proper standard modules correspond to the dual standard basis.
Theorem \ref{transfmatrix} and the relationship between the dual standard basis and the standard basis imply the claim.
\end{proof}

The following result will be used later to compute the value of the categorified colored unknot. It exemplifies \cite[Theorem 6.3]{MS2} and connects with Section \ref{Euler}:
\begin{theorem}[Endomorphism ring of standard objects]\hfill\\
\label{Endstandards}
Let $\hat\Delta\left(k_1,d_1|k_2,d_2|\cdots |k_r,d_r\right)$ be a standard object in $A_{k,{\bf d}}-\gmod$.  Then there is a canonical isomorphism of rings
\begin{equation*}
\End_{A_{k,{\bf d}}}(\hat\Delta\left(k_1,d_1|k_2,d_2|\cdots |k_r,d_r\right))
\cong H^\bullet(Gr(k_1,d_1))\otimes \cdots\otimes H^\bullet(Gr(k_r,d_r)).
\end{equation*}
\end{theorem}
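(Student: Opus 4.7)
The plan is to use the parabolic-induction description of standard modules in Remark~\ref{standardsinduced} to reduce the computation to an endomorphism ring inside a Levi subcategory of $\cO$, and then to apply Soergel's Endomorphismensatz on each tensor factor to identify it with the cohomology of a Grassmannian.

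Concretely, set
$$Q := P(k_1|d_1)\boxtimes\cdots\boxtimes P(k_r|d_r)\in\cO(\mathfrak{gl}_{\bf d}),\qquad \mathfrak{gl}_{\bf d}:=\mathfrak{gl}_{d_1}\oplus\cdots\oplus\mathfrak{gl}_{d_r},$$
the projective cover of its simple head $L:=L(k_1|d_1)\boxtimes\cdots\boxtimes L(k_r|d_r)$ in the singular block of $\cO(\mathfrak{gl}_{\bf d})$ whose integral stabilizer is $\prod_{i}(\mathbb{S}_{k_i}\times\mathbb{S}_{d_i-k_i})$. By Remark~\ref{standardsinduced}, $\Delta=\cU(\mg)\otimes_{\cU(\mathfrak{p})}Q$, and Frobenius reciprocity gives $\End_\mg(\Delta)\cong\Hom_\mathfrak{p}(Q,\Res^\mg_\mathfrak{p}\Delta)$. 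The PBW filtration of the restriction has associated graded $S(\mg/\mathfrak{p})\otimes Q$; for $k>0$ the piece $S^k(\mg/\mathfrak{p})\otimes Q$ has all its $\mathfrak{h}$-weights shifted by a nonzero sum of non-Levi roots, which lie outside the Levi root lattice and hence outside the block of $Q$. Since $Q$ is the projective cover of $L$ inside its block, $\Hom_\mathfrak{p}(Q,-)$ kills these higher pieces, so
$$\End_\mg(\Delta)\;\cong\;\End_{\mathfrak{gl}_{\bf d}}(Q)\;\cong\;\bigotimes_{i=1}^r\End_{\mathfrak{gl}_{d_i}}(P(k_i|d_i)).$$
Each factor is the endomorphism ring of the antidominant (``big'') projective in a block of $\cO(\mathfrak{gl}_{d_i})$ with singularity stabilizer $\mathbb{S}_{k_i}\times\mathbb{S}_{d_i-k_i}$. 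Soergel's Endomorphismensatz identifies this ring with the subring of $\mathbb{S}_{k_i}\times\mathbb{S}_{d_i-k_i}$-invariants in the $\mathbb{S}_{d_i}$-coinvariant algebra, which by Borel is $H^\bullet(Gr(k_i,d_i),\mC)$. Tensoring over $i$ yields the claimed isomorphism.

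For the graded upgrade, both sides carry canonical $\mZ$-gradings: on the left via the Koszul grading of Proposition~\ref{Koszul} restricted to the fixed lift $\hat\Delta$, on the right via cohomological degree. The uniqueness of graded lifts together with the intrinsically $\mZ$-graded formulation of Soergel's theorem \cite{Soergel-HC},~\cite{Strgrad} makes the grading compatibility automatic. The main technical obstacle is the reduction step $\End_\mg(\Delta)\cong\End_{\mathfrak{gl}_{\bf d}}(Q)$: one must rigorously verify that the higher pieces of the PBW filtration contribute no morphisms, which amounts to handling the $\mathfrak{n}$-invariance condition in $\Hom_\mathfrak{p}(Q,-)$ alongside the block-separation argument above. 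This is exactly the properly stratified phenomenon isolated in \cite[Theorem~6.3]{MS2}, and once it is in hand the remaining steps — tensor decomposition, Soergel's theorem, and Borel's identification — are standard.
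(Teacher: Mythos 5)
Your proof is correct, but it reaches the conclusion by a genuinely different route than the paper. The paper argues in two halves: first a dimension count --- $\hat\Delta$ is projective with simple head $L$ in the truncated category $A_{k,{\bf d}}^{\leq L}$, so $\DIM\End(\hat\Delta)=[\hat\Delta:L]$, which is evaluated via the filtration by proper standard modules (Proposition \ref{transfmatrix}) and matches the graded dimension of $\bigotimes_i H^\bullet(Gr(k_i,d_i))$ --- and second, the observation that parabolic induction together with Soergel's Endomorphismensatz yields an \emph{injective} ring map $\bigotimes_i H^\bullet(Gr(k_i,d_i))\hookrightarrow\End(\hat\Delta)$, whence equality of dimensions forces the isomorphism. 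You instead prove directly that this natural map is bijective: Frobenius reciprocity gives $\End_\mg(\Delta)\cong\Hom_{\mathfrak p}(Q,\Res\Delta)$, and the block separation on the PBW graded pieces $S^k(\mg/\mathfrak p)\otimes Q$ (which works because a nonzero sum of nilradical roots has a nonzero coordinate sum on some $\mathfrak{gl}_{d_i}$-factor, so it shifts the central characters of the Levi factors and in particular leaves the linkage class of $Q$) kills everything above degree zero. Your route buys two things: it needs no multiplicity computation at all, and it sidesteps a mild circularity in the paper, where the graded version of Proposition \ref{transfmatrix} is explicitly deferred to the proof of this very theorem; it also upgrades the paper's merely asserted injectivity to a proved bijectivity. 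What it costs is the extra care you yourself flag --- one must check that the PBW filtration is $\mathfrak p$-stable and that $\Hom_{\mathfrak p}(Q,-)\subseteq\Hom_{\mathfrak l}(Q,-)$ vanishes on each higher layer, and the graded compatibility at the end is asserted rather than derived --- but the paper is no more careful on the grading point, and the block argument is standard. Both proofs share the two essential external inputs: the parabolic-induction description of $\Delta$ from Remark \ref{standardsinduced} and Soergel's theorem identifying $\End(P(k_i|d_i))$ with $H^\bullet(Gr(k_i,d_i))$.
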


\begin{proof}
Abbreviate $\hat\Delta=\hat\Delta\left(k_1,d_1|k_2,d_2|\cdots |k_r,d_r\right)$ and let $L$ be its simple quotient.
By definition $\hat\Delta$ is a projective object in $A_{k,{\bf d}}^{\leq L}$, hence the dimension of its endomorphism ring equals $[\hat\Delta\::\:L]$, that is the number of occurrences of $L$ as a composition factor (i.e. as a subquotient in a Jordan-H\"older series). Note that $L$ occurs precisely once in the corresponding proper standard object $\hat\blacktriangle$, \cite[Theorem 2.16]{MS2}. Since $\hat\Delta$ has a filtration with subquotients isomorphic to $\hat\blacktriangle$, we only have to count how many such subquotients we need. This is however expressed by the transformation matrix from Proposition \ref{transfmatrix}. Hence the two rings in question have the same (graded) dimension and therefore isomorphic as graded vector spaces. To understand the ring structure we invoke the alternative definition of the fattened Vermas from Remark \ref{standardsinduced} which says that $\Delta$ is isomorphic to $U(\mathfrak{gl_n})\otimes_{\cU(\mathfrak{p})} \left(P(d_1)\boxtimes P(d_1)\boxtimes\cdots\boxtimes P(d_r)\right)$, where $P(d_i)$ is the antidominant projective module in $\cO_{k_i}(\mathfrak{gl}_{d_i})$ and $\mathfrak{p}$ is the parabolic in $\mathfrak{gl}_n$ containing $\mathfrak{gl}_{d_1}\oplus\cdots\oplus \mathfrak{gl}_{d_r}$. Now by Soergel's endomorphism theorem, \cite{Sperv}, we know that the endomorphism ring of $P(d_i)$ is isomorphic to $H^\bullet(Gr(k_i,d_i))$. Hence there is an inclusion of rings $H^\bullet(Gr(k_1,d_1))\otimes \cdots\otimes H^\bullet(Gr(k_r,d_r))\hookrightarrow \End_{A_{k,{\bf d}}}(\hat\Delta\left(k_1,d_1|k_2,d_2|\cdots |k_r,d_r\right))$. The statement follows.
\end{proof}

\begin{corollary}[Standard resolution of proper standard modules]\hfill\\
\label{standardres}
Let $\hat\Delta:=\hat\Delta\left(k_1,d_1|k_2,d_2|\cdots |k_r,d_r\right)$ be a standard object in $A_{k,{\bf d}}-\gmod$ and $\hat{\blacktriangle}$ the corresponding proper standard module. Then $\hat{\blacktriangle}$ has an infinite resolution by direct sums of copies of $\hat\Delta$. The graded Euler characteristic of $\Ext^*_{A_{k,{\bf d}}}(\hat{\blacktriangle}, \hat{\blacktriangle})$ equals $$\frac{1}{\left[{{d_1}\brack{k_1}}\right]\left[{{d_2}\brack{k_2}}\right]\cdots\left[{{d_r}\brack{k_r}}\right]}.$$
\end{corollary}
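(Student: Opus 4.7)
The two claims will follow, respectively, from transporting a minimal projective resolution from the ring $H := H^\bullet(Gr(k_1,d_1)) \otimes \cdots \otimes H^\bullet(Gr(k_r,d_r))$ to our category via the endomorphism identification of Theorem \ref{Endstandards}, and from a K\"unneth-style reduction of the resulting Ext computation to the Grassmannian case treated in Section \ref{Euler}.

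First I would construct the standard resolution. Since $\End_{A_{k,\bf d}}(\hat\Delta) \cong H$ by Theorem \ref{Endstandards}, and $\Hom(\hat\Delta,\hat\blacktriangle)$ is one-dimensional (the head $\hat L$ appears with multiplicity one in $\hat\blacktriangle$ by \cite[Theorem 2.16]{MS2}), the functor $V := \Hom_{A_{k,\bf d}}(\hat\Delta,-)$ sends $\hat\Delta$ to the free rank-one $H$-module and $\hat\blacktriangle$ to the simple quotient $\mathbb{C} = H/\mathfrak{m}$. Restricted to the exact subcategory of $\hat\blacktriangle$-filtered modules, $V$ is an equivalence with the category of finitely generated graded $H$-modules (in the style of Soergel's Endomorphismensatz, compare \cite[Theorem 6.3]{MS2}). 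Take a minimal graded projective resolution $\cdots \to H^{\oplus n_1}\langle\bullet\rangle \to H^{\oplus n_0} \to \mathbb{C} \to 0$; this resolution is necessarily infinite because $H$ is a graded complete intersection of infinite global dimension (cf.\ Theorem \ref{Thmres} applied factorwise). Transporting back via $V^{-1}$ produces the claimed infinite resolution of $\hat\blacktriangle$ by direct sums of shifted copies of $\hat\Delta$.

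Next I would compute the graded Euler characteristic. Applying $\Hom_{A_{k,\bf d}}(-,\hat\blacktriangle)$ to this standard resolution, and using $\Hom(\hat\Delta,\hat\blacktriangle)=\mathbb{C}$ termwise, identifies the resulting complex with $\Hom_H(P^\bullet,\mathbb{C})$, which computes $\Ext^*_H(\mathbb{C},\mathbb{C})$. Provided that $\Ext^i_{A_{k,\bf d}}(\hat\Delta,\hat\blacktriangle)=0$ for $i>0$ -- a standard Ext-vanishing in the properly stratified setup \cite[Section 2.6]{MS2} -- the standard resolution is a $\Hom(-,\hat\blacktriangle)$-acyclic resolution, so
$$\Ext^*_{A_{k,\bf d}}(\hat\blacktriangle,\hat\blacktriangle) \;\cong\; \Ext^*_H(\mathbb{C},\mathbb{C})$$
as graded vector spaces. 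Since $H = \bigotimes_{i=1}^r H^\bullet(Gr(k_i,d_i))$ and each factor is a local graded $\mathbb{C}$-algebra with residue field $\mathbb{C}$, the graded K\"unneth formula yields
$$\Ext^*_H(\mathbb{C},\mathbb{C}) \;\cong\; \bigotimes_{i=1}^r \Ext^*_{H^\bullet(Gr(k_i,d_i))}(\mathbb{C},\mathbb{C}),$$
so graded Euler characteristics multiply. By the Grassmannian case of Section \ref{Euler}, the $i$-th factor has graded Euler characteristic $\frac{1}{\left[{d_i\brack k_i}\right]}$, and taking the product yields the formula claimed in the corollary.

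The main obstacle is rigorously establishing the two ingredients on which the argument rests: the Ext-vanishing $\Ext^i(\hat\Delta,\hat\blacktriangle)=0$ for $i>0$, and the exactness of the transported complex. Both are manifestations of the properly stratified structure of $A_{k,\bf d}\text{-gmod}$; the cleanest route is to realize $\hat\Delta$ Lie-theoretically as the parabolic induction of a big projective (Remark \ref{standardsinduced}), so that the $\hat\blacktriangle$-filterability of kernels along the resolution (needed for inductive application of $V$) and the desired vanishings both follow from the filtration properties collected in \cite[Sections 2.6 and 6]{MS2}.
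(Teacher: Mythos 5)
Your proposal is correct and follows essentially the same route as the paper: both reduce the computation to $\Ext^*_H(\mC,\mC)$ for the complete intersection ring $H=\bigotimes_i H^\bullet(Gr(k_i,d_i))$ via Theorem \ref{Endstandards} and the functor $\Hom_{A_{k,{\bf d}}}(\hat\Delta,{}_-)$, then invoke Section \ref{Euler}. The only (minor) difference is that the paper builds the $\hat\Delta$-resolution intrinsically -- showing kernels of surjections between $\hat{\blacktriangle}$-filtered modules stay $\hat{\blacktriangle}$-filtered and that maps from projective covers factor through $\hat\Delta$ -- and only uses the Hom-functor in one direction, whereas you lift a free resolution of $\mC$ back through a claimed equivalence; the obstacles you flag at the end are exactly the points the paper's intrinsic construction is designed to handle.
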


\begin{proof}
The main part of the proof is to show that the situation reduces to the one studied in Section \ref{Euler}. Let $\cF(\hat{\blacktriangle})$ be the full subcategory of ${A_{k,{\bf d}}}-\gmod$ containing modules which have a filtration with subquotients isomorphic to $\hat{\blacktriangle}\langle j\rangle$ with various $j\in \mZ$. Clearly, $\cF(\hat{\blacktriangle})$ contains $\hat{\blacktriangle}$, but also $\hat\Delta$. Now assume $M,N\in\cF(\hat{\blacktriangle})$ and $f:M\rightarrow N$ is a surjective homomorphism, then the kernel, $\op{ker}(f)$ is contained in $\cF(\hat{\blacktriangle})$: First of all it contains a filtration with subquotients isomorphic to various shifted proper standard objects. This follows easily from the characterization \cite[Proposition 2.13 (iv)]{MS2}, since given {\it any} dual standard module $\nabla$ we can consider the part of the part of the long exact Ext-sequence
$$\cdots\rightarrow\Ext^i_{A_{k,{\bf d}}}(M,\nabla)\rightarrow \Ext^i_{A_{k,{\bf d}}}(\op{ker}{f},\nabla)\rightarrow\Ext^{i+1}_{A_{k,{\bf d}}}(N,\nabla)\rightarrow\cdots$$
for any $i>0$. Then the outer terms vanish, and so does the middle. Since $[\op{ker}(f)]=[M]-[N]$ in the Grothendieck group, we deduce that all proper standard modules which can occur are isomorphic to $\hat{\blacktriangle}$ up to shift in the grading.

Now the projective cover in  ${A_{k,{\bf d}}}-\gmod$ of any  object $X$ in $\cF(\hat{\blacktriangle})$ is just a direct sum of copies of the projective cover $P$ of $\cF(\hat{\blacktriangle})$ which agrees with the projective cover of $\hat\Delta$. Now the definition of standard modules \eqref{defstandard} implies that any morphism $P\rightarrow X$ factors through  $\hat\Delta$. Altogether, we can inductively build a minimal resolution of $\hat{\blacktriangle}$ in terms of direct sums of copies (with appropriate shifts) of $\Delta$. This resolution lies actually in $A_{k,{\bf d}}^{\leq L}-\gmod$ and is a projective resolution there. Moreover, $\Ext^i_{A_{k,{\bf d}}}(\hat\Delta,X)=0$ for any $X\in\cF(\hat{\blacktriangle})$ and $i>0$, we can use this resolution to compute the graded Euler characteristic of the algebra of extensions we are looking for.

Now consider the functor
\begin{eqnarray*}
G:=\HOM_{A_{k,{\bf d}}}(\hat{\Delta},_-):\quad A_{k,{\bf d}}^{\leq L}-\gmod&\rightarrow&\Mod-\End(\hat\Delta)
\end{eqnarray*}
This functor is obviously exact and induced natural isomorphisms
$$\HOM_{A_{k,{\bf d}}}(Z_1,Z_2)\cong\HOM_{\End(\hat\Delta)}(GZ_1,GZ_2),$$
if $Z_1$ is just copies of (possibly shifted in the grading) $\hat\Delta$'s and $Z_2$ arbitrary.
Hence, by Theorem \ref{Endstandards}, the claim is equivalent to finding a projective (=free) resolution of the trivial one-dimensional $ H^\bullet(Gr(k_1,d_1))\otimes \cdots\otimes H^\bullet(Gr(k_r,d_r))$-module. Recall from Section \ref{Euler} that each factor is a complete intersection ring, hence obviously also the tensor product. Therefore we can directly apply the methods of that section and the statement follows.
\end{proof}

\begin{remark}{\rm
Note that the above corollary provides a infinite standard resolutions of proper standard objects (which are in principle computable). On the other hand, the minimal projective resolutions of standard objects are quite easy to determine and in particular finite. This step will be done in Theorem \ref{resolution}. This resembles results from \cite{W}. Combining the above we are able to determine a projective resolution of proper standard objects. This is necessary for explicit computations of our colored Jones invariants from \cite{SS} as well as the more general invariants constructed in \cite{W}.
}
\end{remark}

As a special case we get the following version of Soergel's endomorphism theorem:

\begin{corollary}
\label{HGr}
There is an equivalence $A_{k,{(n)}}-\gmod\cong H^\bullet(\op{Gr}(k,n))-\gmod$.
\end{corollary}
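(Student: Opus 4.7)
The plan is to derive the equivalence as a graded Morita equivalence that follows directly from the endomorphism computation of Theorem \ref{Endstandards}, applied in the special case of a single tensor factor $\mathbf{d}=(n)$.

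First I would observe that when $\mathbf{d}=(n)$, the category $A_{k,(n)}\text{-gmod}$ categorifies the $k$-th weight space of $V_n$, which is one-dimensional. By Theorem \ref{cattensor} the isomorphism classes of simple objects (up to grading shift) are in bijection with the dual canonical basis elements of this weight space, and there is exactly one, namely $v^k$. Hence up to shift there is a unique simple $\hat{L}(k,n)$ and a unique indecomposable projective $\hat{P}(k,n)$, so $\hat{P}(k,n)$ is automatically a projective generator of $A_{k,(n)}\text{-gmod}$.

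Next, because the only simple module is $\hat{L}(k,n)$, the subcategory $A_{k,(n)}^{\leq \hat{L}(k,n)}\text{-gmod}$ coincides with the whole category $A_{k,(n)}\text{-gmod}$. By the definition of standard modules, this forces $\hat{\Delta}(k,n) = \hat{P}(k,n)\langle -k(n-k)\rangle$, which is again a (graded) projective generator. Applying Theorem \ref{Endstandards} in the case $r=1$, $k_1=k$, $d_1=n$ then gives a canonical isomorphism of graded rings
\[
\End_{A_{k,(n)}}(\hat{\Delta}(k,n)) \;\cong\; H^\bullet(\op{Gr}(k,n)).
\]

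Finally I would invoke the standard graded Morita theorem: for any graded algebra $A$ and any graded projective generator $P$, the functor $\HOM_A(P,-)$ induces an equivalence $A\text{-gmod} \cong \End_A(P)^{\op}\text{-gmod}$. Since $H^\bullet(\op{Gr}(k,n))$ is commutative, the opposite ring is itself, and we conclude $A_{k,(n)}\text{-gmod} \cong H^\bullet(\op{Gr}(k,n))\text{-gmod}$. There is essentially no obstacle here, as everything reduces to two already-established inputs: the enumeration of simples in $A_{k,(n)}\text{-gmod}$ via Theorem \ref{cattensor}, and the endomorphism ring calculation of Theorem \ref{Endstandards}. This presentation makes the corollary transparently a refinement of Soergel's endomorphism theorem, extended to a full Morita equivalence by virtue of the block containing only one simple object.
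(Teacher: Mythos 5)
Your proposal is correct and follows exactly the paper's (one-line) argument: the paper's proof simply notes that there is precisely one simple object and that the corresponding standard module is projective, which is the content you have spelled out before invoking Theorem \ref{Endstandards} and graded Morita theory. Your version is a faithful, more detailed expansion of the same reasoning.
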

\begin{proof}
In this case we have precisely one simple object and the corresponding standard module is projective.
\end{proof}

\section{Decomposition into isotypic components, Lusztig's a-function and $\mathfrak{sl}_2$-categorification}
\label{filtrations}
The Jones-Wenzl projector projects onto a specific summand using the fact that the category of finite dimensional $\mathcal{U}_q$-modules is semisimple. However, their categorification is not. In this section we explain what remains of the original structure and connect it to the theory of Chuang and Rouquier on minimal categorifications of irreducible $\mathfrak{sl}_2$-modules.

The tensor product of irreducible representations can be decomposed as a sum of its weight spaces or as a direct sum of irreducible
representations which occur in it. In terms of categorification, we have already seen the weight space decomposition as a decomposition
of categories into blocks.  Here, we give a categorical analogue of the decomposition \eqref{isotypic} based on Gelfand-Kirillov dimension (which is directly connected with Lusztig's ${\bf a}$-function, see \cite[Remark 42]{MS4}). For the definition and basic properties of this dimension we refer to \cite{Ja}. The idea of constructing such filtrations is not new and was worked out earlier in much more generality for modules over symmetric groups and Hecke algebras (\cite[6.1.3]{GGOR}, \cite[7.2]{MS4}) and over Lie algebras \cite[Proposition 4.10]{CR}, \cite[Theorem 5.8]{Rou2}).

In this paper we want to describe this filtration in terms of the graphical calculus from \cite{FK} with an easy explicit formula for the Gelfand-Kirillov dimension of simple modules: basically it just amounts to counting cups in a certain cup diagram.

\subsection{Graphical calculus, GK-dimension and Lusztig's a-function}
\label{graphical}
Recall from \eqref{basissimples} the bijection between dual canonical bases elements of $V_1^{\otimes n}$ and isomorphism classes of simple objects in $\bigoplus_{k=0}^nA_{k,n}-\gmod$ up to grading shift. In \cite[p.444]{FK}, to each dual canonical basis was
assigned in a unique way an oriented cup diagram. This finally provides a graphical description of the dual canonical basis in $V_{\bf d}$. The cup diagram $\op{Cup}({\bf a})$ associated with $v_{\bf a}^\heartsuit$ is constructed as follows: First we turn the sequence ${\bf a}$ into a sequence $\la=\la({\bf a})$ of $\up$'s and $\down$'s by replacing $1$ by $\up$ and $0$ by $\down$. Then we successively match any $\up$, $\down$ pair (in this ordering), not separated by an unmatched $\up$ or $\down$, by an arc. We repeat this as long as possible. (Note that it is irrelevant for the result in which order we choose the pairs. It only matters that the pairs do not enclose an unmatched $\up$ or $\down$). Finally we put vertical rays for the unmatched $\up$'s and $\down$'s. The result is a cup diagram consisting of clockwise oriented arcs and oriented rays which do not intersect. Figure \ref{fig:CMs} displays the $10$ cup diagrams associated to sequences with $3$ $\up$'s and $2$ $\down$'s.
\begin{figure}
\includegraphics{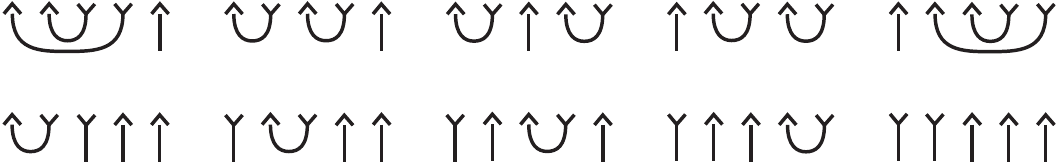}
\caption{Diagrams associated with simple modules. The Gelfand-Kirillov dimensions are $8,8,8,8,8,9,9,9,9,10$.}
\label{fig:CMs}
\end{figure}

\begin{prop}
\label{GK}
The Gelfand-Kirillov dimension of $L({\bf a})\in \cO(\mathfrak{gl}_n)$ is equal to
$$\GK(L({\bf a}))= n(n-1)/2 -c,$$
where $c$ denotes the number of cups in $\op{Cup}({\bf a})$.
\end{prop}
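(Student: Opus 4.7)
The plan is to reduce the computation to a known Kazhdan--Lusztig invariant and then evaluate that invariant combinatorially. First I would identify $L({\bf a})$ with $L(w\cdot\lambda_{\mathrm{dom}})$, where $w$ is the shortest representative of the coset in $\mathbb{S}_n/(\mathbb{S}_k\times\mathbb{S}_{n-k})$ corresponding to ${\bf a}$ under the bijection fixed in Section~\ref{U2basics}. Since Gelfand--Kirillov dimension is preserved by projective (translation) functors whenever they do not annihilate the module, the translation principle for singular blocks (see \cite[Kapitel~7]{Ja}) gives
\[
\GK(L({\bf a})) \;=\; \GK(L(ww_0^\lambda\cdot 0)),
\]
where $w_0^\lambda$ is the longest element of the stabilizer $\mathbb{S}_\lambda=\mathbb{S}_k\times\mathbb{S}_{n-k}$ and the right-hand side is computed in the principal block of $\cO(\mathfrak{gl}_n)$.

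Next I would invoke Joseph's theorem on associated varieties of primitive ideals, together with the Barbasch--Vogan description of the associated variety as the closure of a single nilpotent orbit $\mathcal{O}_{\mu(u)}\subset\mathfrak{gl}_n$, indexed in type $A$ by the Robinson--Schensted shape $\mu(u)$ of the Weyl group element $u\in\mathbb{S}_n$ (see \cite[Remark~42]{MS4} and the references therein). The standard formula $\dim\mathcal{O}_{\mu}=n(n-1)-2n(\mu')$ with $n(\nu):=\sum_i\binom{\nu_i}{2}$, combined with Joseph's identity $\GK(L(u\cdot 0))=\tfrac{1}{2}\dim\mathcal{O}_{\mu(u)}$, yields
\[
\GK(L(u\cdot 0)) \;=\; \frac{n(n-1)}{2}\;-\;n(\mu(u)').
\]
Applied to $u=ww_0^\lambda$, this reduces the proposition to the purely combinatorial identity $n(\mu(ww_0^\lambda)')=c$.

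Finally, the combinatorial identification of $n(\mu(ww_0^\lambda)')$ with the cup count is the heart of the argument. For the maximal parabolic cosets $\mathbb{S}_n/(\mathbb{S}_k\times\mathbb{S}_{n-k})$, the longest coset representatives have a well-understood RSK-shape: up to a small computation one may check on examples and verify by induction on $n$ that the quantity $\sum_i\binom{\mu_i}{2}$ equals the number of $\up\down$-pairs produced by the iterative bracket matching defining $\op{Cup}({\bf a})$. Intuitively, each cup contributes one pair of equal entries in the same column of the RSK-tableau for $ww_0^\lambda$, and each ray contributes a column of length one; thus the two-column-ish structure of the shape records exactly the cups of ${\bf a}$.

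The main obstacle is this last combinatorial step: while the identification with Lusztig's $\mathbf{a}$-function is standard, expressing $\mathbf{a}(ww_0^\lambda)$ as the number of cups requires unpacking the RSK correspondence on longest coset representatives for the maximal parabolic, and connecting it with the bracket-matching procedure of Section~\ref{graphical}. Once this is done, substituting back gives $\GK(L({\bf a}))=n(n-1)/2-c$, as claimed.
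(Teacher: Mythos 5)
Your proposal follows essentially the same route as the paper's proof: reduce to the regular block by translation, apply Joseph's formula expressing $\GK(L(u\cdot 0))$ in terms of the Robinson--Schensted shape $\mu$ of $u$, and then identify that shape for longest coset representatives with the cup count. The combinatorial step you flag as the main obstacle is exactly where the paper supplies the needed structural input: the one-line words $(w(1),\dots,w(n))$ of longest representatives for $\mathbb{S}_n/(\mathbb{S}_k\times\mathbb{S}_{n-k})$ are precisely the concatenations of two decreasing sequences of lengths $k$ and $n-k$, so the longest increasing subsequence has length at most $2$ and the shape is $(2^c,1^{n-2c})$ with second column of length equal to the number of cups in $\op{Cup}(\varphi(w))$, whence $\sum_i\binom{\mu_i}{2}=c$ and Joseph's formula gives $n(n-1)/2-c$. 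One further caution: ``translation preserves GK-dimension'' is not automatic in the direction out of the wall, since $T^0_\lambda L(w\cdot\lambda)$ is not simple; the paper obtains the equality $\GK(L(w\cdot\lambda))=\GK(L(w\cdot 0))$ from two opposite inequalities, using adjunction to produce an embedding $L(w\cdot 0)\hookrightarrow T^0_\lambda L(w\cdot\lambda)$, and your argument should do the same.
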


\begin{remark}
{\rm The simple modules with minimal Gelfand-Kirillov dimension correspond under Koszul duality to precisely the projective modules which are also tilting, \cite[Theorem 6.1]{BS2}.
}
\end{remark}

Before we start the proof we want to fix  a bijection between the set of longest coset representatives from $\mathbb{S}_n/\mathbb{S}_k\times \mathbb{S}_{n-k}$ and either of the following sets
\label{longestcosets}
\begin{itemize}
\item sequences $x_1>x_2>\ldots >x_k$ and $x_{k+1}>x_{k+2}>\cdots >x_n$ of distinct numbers $1\leq x_j\leq n$, ($1\leq j\leq n$) by mapping $w\in \mathbb{S}_n$ to the sequence with $x_j=w(j)$.
\item $\{\up,\down\}$-sequences of length $n$ with $k$ $\up$'s and $n-k$ $\down$'s by mapping $w\in \mathbb{S}_n$ to the sequence $\varphi(w)$ which has $\down$'s at the places $w(1), w(2),\ldots, w(k)$ and $\up$'s at all the other places.
\end{itemize}
In the situation above the longest increasing subsequence of $(w(1), \ldots, w(n))$ has length at most $2$ and the total number of such length $2$ subsequences equals the number of cups in $\op{Cup}(\varphi(w))$. On the other hand, the Robinson-Schenstedt algorithm (see e.g. \cite[4]{Fu}, \cite[5]{Ja}) associates to the sequence $(w(1), w(2),\cdots, w(n))$ a pair of standard tableaux of the same shape $Y(w)$.

\begin{ex}
Let $n=4$, $k=2$. Then the bijections from above identify the elements
\begin{equation}
\down\down\up\up\quad \down\up\down\up \quad \down\up\up\down\quad \up\down\down\up\quad \up\down\up\down\quad \up\up\down\down
\end{equation}
with the values $(x_1,x_2,x_3,x_4)=(w(1),w(2),w(3),w(4))$ in the following list:
\begin{equation}
(4,3,2,1)\quad (4,2,3,1)\quad (3,2,4,1)\quad (4,1,3,2)\quad (3,1,4,2)\quad (2,1,4,3)
\end{equation}
and the Young diagrams $Y(w)$ of the form
\begin{equation}
\Yvcentermath1\yng(1,1,1,1)\quad\quad\Yvcentermath1\yng(2,1,1)\quad\quad\Yvcentermath1\yng(2,1,1)\quad\quad  \Yvcentermath1\yng(2,1,1)\quad\quad\Yvcentermath1\yng(2,2)\quad\quad\Yvcentermath1\yng(2,2)
\end{equation}
\end{ex}

\begin{proof}[Proof of Proposition \ref{GK}]
Let $w\in\mathbb{S}_n$. Joseph's formula for the Gelfand-Kirillov dimension (see e.g. [Ja2, 10.11 (2)])
states that $$2\GK(L(w\cdot 0)) = n(n -1)-\mu_i(\mu_i-1),$$ where $\mu$ is the partition encoding the shape of the tableaux associated to  $(w(1), w(2),\cdots, w(n))$. If $w$ is a longest coset representative as above, then $Y(w)$ has at most two columns, the second of length $c$, the number of cups in $\op{Cup}(\varphi(w))$, and the first column of length $n-2c$, hence $\GK(L(w.0))=n(n-1)/2 -c$. Now it is enough to prove the following: let $\lambda$ be an integral dominant weight with stabilizer $\mathbb{S}_k \times \mathbb{S}_{n-k}$, then $$\GK(L(w.\lambda)) = \GK(L(w.0)).$$ To see this let $ T_0^{\lambda} \colon \mathcal{O}_0 \rightarrow \mathcal{O}_{\lambda} $ and $ T_{\lambda}^0 \colon \mathcal{O}_{\lambda} \rightarrow
\mathcal{O}_0 $ be translation functors to and out of the wall in the sense of \cite{Ja}, in particular $T_0^{\lambda}L(w.0)=L(w\cdot\la)$. By definition, translation functors do not increase the Gelfand-Kirillov dimension \cite{Ja}. Thus
$$ \GK(L(w.\lambda)) = \GK(T_0^{\lambda}L(w.0)) \leq \GK(L(w.0)). $$
Also, $\GK(L(w.\lambda)) \geq \GK(T_{\lambda}^0 L(w.\lambda)). $ Now $ L(w.0) $ is a submodule of $ T_{\lambda}^0 L(w.\lambda)$, as
$\Hom_{\mathfrak{g}}(L(w.0), T_{\lambda}^0 L(w.\lambda)) = \Hom_{\mathfrak{g}}(T_0^{\lambda} L(w.0), L(w.\lambda)) =
\Hom_{\mathfrak{g}}(L(w.\lambda), L(w.\lambda)) = \mathbb{C}$ using the adjunction properties of translation functors, and so there is a nontrivial map $ \Phi \colon L(w.0) \rightarrow
T_{\lambda}^0 L(w.\lambda) $ which is clearly injective, since $L(w.0)$ is a simple object. Thus $ \GK(L(w.0)) \leq \GK(T_{\lambda}^0 L(w.\lambda))$ (\cite[8.8]{Ja}). This gives $
\GK(L(w.\lambda)) \geq \GK(L(w.0))$ and the statement follows.
\end{proof}

The following is the a direct consequence of Proposition \ref{GK} and \cite[10.12]{Ja}:
\begin{corollary}
\label{GKbimod}
Let $\mu\in\mh^*$ be integral and dominant. If $\cL\left(M(\mu), L({\bf a})\right)\not=0$ then
$$\GK\left(\cL\left(M(\mu), L({\bf a})\right)\right) = 2 \GK(L(w.\lambda))= n(n-1) -2c.$$
\end{corollary}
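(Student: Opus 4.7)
The asserted equality is really a chain of two equalities, and each has an independent source. I would first dispose of the second equality $2\GK(L(w\cdot\la)) = n(n-1) - 2c$: this is just twice Proposition \ref{GK}, once I fix the identification between ${\bf a}$ and $w\cdot\la$. Concretely, under the bijection from Section \ref{graphical} between $\{\up,\down\}$-sequences with $k$ up's and $n-k$ down's and longest coset representatives $w\in\mathbb{S}_n/(\mathbb{S}_k\times\mathbb{S}_{n-k})$, the simple module $L({\bf a})$ in a block $\cO_k$ coincides with $L(w\cdot\la)$ for $\la=e_1+\cdots+e_k-\rho_n$. Proposition \ref{GK} then immediately yields $\GK(L({\bf a})) = n(n-1)/2 - c$.

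The content of the corollary is therefore the first equality, namely that the Harish-Chandra bimodule functor $\cL(M(\mu),-)$ doubles the Gelfand-Kirillov dimension whenever its value is nonzero. This is precisely Jantzen \cite[10.12]{Ja}: for $\mu$ integral and dominant, one has
\[
\GK\bigl(\cL(M(\mu),L(\nu))\bigr) = 2\,\GK(L(\nu))
\]
whenever the left-hand side is nonzero. The geometric picture behind this is that the associated variety (or characteristic variety) of a Harish-Chandra bimodule with fixed generalized infinitesimal characters on both sides is contained in the product of two nilpotent orbit closures in $\mg^*$, and for modules of the form $\cL(M(\mu),L(\nu))$ with $\mu$ regular integral dominant the relevant closure on the $\mu$-side is the full nilpotent cone, so only the $\nu$-side contributes a nontrivial codimension, and the dimensions add to give twice the dimension of the associated variety of $L(\nu)$.

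Combining the two inputs gives
\[
\GK\bigl(\cL(M(\mu),L({\bf a}))\bigr) \;=\; 2\,\GK(L(w\cdot\la)) \;=\; n(n-1) - 2c,
\]
which is exactly the claim. No real obstacle arises beyond correctly invoking Jantzen's theorem; the only minor point to check is that the hypotheses of \cite[10.12]{Ja} are indeed in force (integrality and dominance of $\mu$, and nonvanishing of the bimodule, both of which are assumed in the statement).
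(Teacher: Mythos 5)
Your proposal is correct and matches the paper exactly: the corollary is stated there as a direct consequence of Proposition \ref{GK} (for the equality $\GK(L(w\cdot\la))=n(n-1)/2-c$) together with \cite[10.12]{Ja} (for the doubling under $\cL(M(\mu),-)$), which is precisely the two-step decomposition you give. The extra geometric remarks about associated varieties are a fine heuristic but not needed, since Jantzen's result is invoked as a black box in both your argument and the paper's.
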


\subsection{Filtrations}
Let $(\mathcal{O}_i(\mathfrak{gl}_n))^k$ denote the full subcategory of objects having Gelfand-Kirillov dimension at most $ n(n-1)/2 -
k$. Then there is a filtration of categories:
\begin{equation}
\label{isofilt}
\mathcal{O}_k(\mathfrak{gl}_n) = (\mathcal{O}_k(\mathfrak{gl}_n))^0 \supseteq (\mathcal{O}_k(\mathfrak{gl}_n))^1 \supseteq \cdots
\supseteq (\mathcal{O}_k(\mathfrak{gl}_n))^{\lfloor n/2 \rfloor}.
\end{equation}
which induces by \cite[8.6]{Ja} a corresponding filtration of Serre subcategories
\begin{equation}
\label{isofilt2}
A_{k,n}-\gmod= (A_{k,n}-\gmod)^0\supseteq (A_{k,n}-\gmod)^1\supseteq\cdots\supseteq (A_{k,n}-\gmod)^{\lfloor n/2 \rfloor}.
\end{equation}

\begin{theorem}[Categorical decomposition into irreducibles]\hfill
\label{filto}
\begin{enumerate}[(i)]
\item The category $\bigoplus_{k=0}^n (A_{k,n}-\gmod)^j$ is (for any $j$) stable under the functors $\hat{\mathcal{E}}$ and $\hat{ \mathcal{F}}$
\item There is an isomorphism of $\mathcal{U}_q$-modules
$$ \displaystyle \mathbb{C}(q) \otimes_{\mathbb{Z}[q,q^{-1}]} [\bigoplus_{k=0}^n(A_{k,n}-\gmod)^j] \cong
\bigoplus_{r=j}^{\lfloor n/2 \rfloor} {V}_{n-2r}^{\oplus b_{n-2r}} $$ where $ b_{n-2r} = \dim
\Hom_{\mathfrak{sl}_2}(\bar{V}_1^{\otimes n}, \bar{V}_{n-2r}). $
\item Set $r=\sum_{t=0}^{\lfloor n/2 \rfloor}b_{n-2t}$. The filtration \eqref{isofilt2} can be refined to a filtration
$$\{0\}=\cS_r\subset \cS_{r-1}\subset \cS_{r-2}\subset\cdots\subset\cS_{1}\subset\cS_{0}=\bigoplus_{k=0}^n A_{k,n}-\gmod$$ such that $\cS_{m+1}$ is a Serre subcategory inside $\cS_{m}$ for all $m$. With the induced additional structure from Remark \ref{rmCR}, the quotient  $\cS_{m}/\cS_{m+1}$ is, after forgetting the grading, an $\mathfrak{sl}_2$-categorification in the sense of \cite{CR}. It is isomorphic to a minimal one in the sense of \cite[5.3]{CR}.
\end{enumerate}
\end{theorem}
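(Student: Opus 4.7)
My plan is to handle the three assertions in turn, exploiting the combinatorics of cup diagrams from Section \ref{graphical} together with Proposition \ref{GK}.

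For part (i), I would observe that $\hat{\mathcal{E}}$ is defined as (a graded lift of) tensoring with the natural $n$-dimensional representation of $\mathfrak{gl}_n$ followed by projecting onto the appropriate block, and $\hat{\mathcal{F}}$ is defined analogously using the dual. Tensoring with a finite-dimensional module neither increases nor decreases Gelfand-Kirillov dimension (see \cite[8.8]{Ja}), and projecting onto a block is exact and can only decrease GK dimension. Hence both functors preserve the full subcategory of modules of GK dimension at most $n(n-1)/2-j$, which gives the stability of $\bigoplus_k (A_{k,n}-\gmod)^j$.

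For part (ii), the plan is to combine Proposition \ref{Grothgraded} with the cup diagram bookkeeping. By Proposition \ref{GK}, a simple object $\hat{L}({\bf a})$ lies in $(A_{k,n}-\gmod)^j$ exactly when $\op{Cup}({\bf a})$ has at least $j$ cups, and by part (i) this Serre subcategory is stable under the $\mathcal{U}_q$-action. Under the isomorphism $\Phi_n$ of Proposition \ref{Grothgraded}, the complexified Grothendieck group $\mathbb{C}(q)\otimes[\bigoplus_k (A_{k,n}-\gmod)^j]$ therefore corresponds to the $\mathbb{C}(q)$-span of the dual canonical basis vectors whose cup diagram has $\geq j$ cups. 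Now invoke the description of the dual canonical basis in \cite{FK}: the number of unoriented cup diagrams with exactly $r$ cups (and $n-2r$ rays) is $b_{n-2r}$, and the orientations of the rays parametrize a weight basis of a single $V_{n-2r}$, so the span of dual canonical basis vectors with $\geq j$ cups is precisely the $\mathcal{U}_q$-submodule $\bigoplus_{r\geq j} V_{n-2r}^{\oplus b_{n-2r}}$ of $V_1^{\otimes n}$. This gives the asserted isomorphism.

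For part (iii), I would refine \eqref{isofilt2} by ordering the finitely many unoriented cup diagram shapes $\sigma_1,\sigma_2,\ldots,\sigma_r$ (with $r=\sum_t b_{n-2t}$) by any total order refining the partial order given by number of cups, and defining $\cS_m$ to be the Serre subcategory generated by all simples $\hat{L}({\bf a})$ whose underlying cup shape is one of $\sigma_{m+1},\ldots,\sigma_r$. Stability under $\hat{\mathcal{E}},\hat{\mathcal{F}}$ in each step reduces to the same GK-dimension argument as in (i) together with the fact that the cup shape is controlled by the primitive part of the representation and is preserved (up to passing to the subquotient) under the induced action (cf.\ \cite[4.10]{CR}, \cite[7.2]{MS4}). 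The subquotient $\cS_m/\cS_{m+1}$ then has a Grothendieck group of rank $n-2r_m+1$ (where $r_m$ is the number of cups in $\sigma_m$), isomorphic as a $\mathcal{U}_q$-module to $V_{n-2r_m}$ by part (ii). The induced $\mathfrak{sl}_2$-action (Remark \ref{rmCR}) together with the Hecke algebra on natural transformations equips the ungraded version with an $\mathfrak{sl}_2$-categorification structure in the sense of \cite{CR}; since the Grothendieck group is the irreducible module $\bar{V}_{n-2r_m}$, the uniqueness theorem \cite[5.3]{CR} for minimal categorifications identifies $\cS_m/\cS_{m+1}$ with the minimal one modeled on $\bigoplus_{i=0}^{n-2r_m} H^\bullet(\op{Gr}(i,n-2r_m))\text{-mod}$. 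The main obstacle I anticipate is verifying that the induced $\mathfrak{sl}_2$-categorification axioms (in particular the existence of compatible adjunctions and the faithfulness of the affine Hecke action on morphism spaces) descend to each Serre subquotient; this should follow from exactness of the projection and from the fact that the $\mathcal{E}^{(r)},\mathcal{F}^{(r)}$ of Remark \ref{divpowers} respect the filtration by cup number.
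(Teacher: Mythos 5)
Your treatment of (i) and (ii) follows the paper's route: part (i) is exactly the observation that tensoring with finite-dimensional modules and projecting onto blocks does not increase Gelfand--Kirillov dimension (\cite[8.8]{Ja}), and part (ii) is the translation through Proposition \ref{Grothgraded}, Proposition \ref{GK} and the cup-diagram combinatorics of \cite{FK}. For (iii) your construction (choose a total order on the unoriented cup shapes refining the order by number of cups, and let $\cS_m$ be generated by the simples of the later shapes) is a legitimate repackaging of the paper's inductive construction, which instead builds the chain from the bottom by repeatedly selecting a simple of minimal GK-dimension and taking the smallest abelian subcategory closed under the functorial action. But the key combinatorial input, which you only gesture at, is the explicit formula of \cite[p.445]{FK}: the composition factors of $\cE L({\bf a})$ are labelled either by the same unoriented cup diagram (with one ray reoriented) or by diagrams with strictly more cups. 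This is what makes each individual $\cS_m$ stable under $\hat{\cE}$ and $\hat{\cF}$ and identifies $[\cS_m/\cS_{m+1}]$ with a \emph{single} copy of $V_{n-2r_m}$; the GK-dimension argument alone only controls the number of cups, not the shape, so it cannot separate the $b_{n-2r_m}$ isotypic copies from one another.

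The one genuine gap is the minimality claim at the end of (iii). An $\mathfrak{sl}_2$-categorification whose Grothendieck group is irreducible is not automatically isomorphic to the minimal one; \cite[Proposition 5.26]{CR}, which the paper invokes, requires an additional hypothesis, namely that the lowest weight object be projective in the category. In the paper this is supplied by noting that a simple module has no first self-extensions, so the subcategory $\langle L({\bf a})\rangle$ it generates is semisimple with one simple object, whence $L({\bf a})$ is projective there and hence in the relevant subquotient. Your appeal to a bare uniqueness statement from \cite[5.3]{CR} skips this verification; you should add the projectivity argument (or an equivalent characterization) before concluding that each $\cS_m/\cS_{m+1}$ is minimal.
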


\begin{proof}
We formulate the proof in the ungraded case, the graded case follows then directly from the definitions. The first part is a consequence of the fact that tensoring with finite dimensional modules and projection onto blocks do not increase the Gelfand-Kirillov dimension (\cite[8.8]{Ja}).

Consider all simple modules in $\overline{S}_r:=\bigoplus_{k=0}^n\mathcal{O}_k(\mathfrak{gl}_n)$ of minimal GK-di\-men\-sion. By Proposition \ref{GK} these are precisely the ones corresponding to cup diagrams with the maximal possible number, say $c$, of cups.
Amongst these take the ones where $k$ is minimal, and amongst them choose $L({\bf a})$ minimal (in terms of the $\up$-$\down$-sequences a sequence gets smaller if we swap some $\up$ with some down $\down$, moving the $\up$ to the right.) Let $\langle L({\bf a})\rangle$ be the subcategory generated by $L({\bf a})$ (that means the smallest abelian subcategory containing $L({\bf a})$ and closed under extensions). Since simple modules have no first self-extensions,  $\langle L({\bf a})\rangle$ is a semisimple subcategory with one simple object. Proposition \ref{Grothgraded} and Lemma \ref{Grothungraded} in particular imply that the simple composition factors of $\cE_k L(\la)$ can be determined completely combinatorially using the formula \cite[p.445]{FK}. This formula says that given $\op{Cup}({\bf a})$, then the composition factors of  $\cE_k L({\bf a})$ are labeled by the same oriented cup diagram but with the rightmost down-arrow turned into an up-arrow or by cup diagrams with more cups. If neither is possible, then $\cE_k L(\la)$ is zero, see Figure \ref{fig:filtration}.

Consider the smallest abelian subcategory $\overline{\cS}_r$ of $\overline{\cS}_0$ containing $L({\bf a})$ and closed under the functorial $\mathfrak{sl}_2$-action. Now by \cite[7.4.3]{CR} and Remark \ref{rmCR}, the categorification of $\overline{V}_1^{\otimes n}$ can be refined to the structure of a strong  $\mathfrak{sl}_2$-categorification which induces such a structure of a strong $\mathfrak{sl}_2$-categorification on $\overline{\cS}_0$. From the combinatorics it then follows directly that this is a $\mathfrak{sl}_2$-categorification of a simple $\mathfrak{sl}_2$-module where the isomorphism class of $L({\bf a})$ corresponds to the canonical lowest weight vector. Since $L({\bf a})$ is projective in $\langle L({\bf a})\rangle$, hence also in $\overline{\cS}_r$, it is a minimal categorification by \cite[Proposition 5.26]{CR}. Note that it categorifies the $n-2c+1$-dimensional irreducible representation (with $c$ as above).
Next consider the quotient of $\overline{\cS}_0/\overline{\cS}_r$ and choose again some $L({\bf a})$ there with the above properties and consider the Serre subcategory $\overline{\cS_{r-1}}$ in $\overline{\cS}_0$ generated by $\overline{\cS}_r$ and this $L({\bf a})$. Then the same arguments as above show that $\overline{\cS}_{r-1}/\overline{\cS}_r$ is a minimal categorification. In this way we can proceed and finally get the result.
\end{proof}

In Figure \ref{fig:filtration} we display the simple modules from  $\bigoplus_{k=0}^n(\mathcal{O}_k(\mathfrak{gl}_4))$ in terms of their cup diagrams. The bottom row displays the subcategory in the bottom of our filtration, whereas the top row displays the quotient category in the top of the filtration. The red arrows indicate (up to multiples) the action of $\cE$ on the subquotient categories giving rise to irreducible $U_q(\mathfrak{sl}_2)$-modules. The black arrows give examples of additional terms under the action of $\cE$ which disappear when we pass to the quotient. Note that the filtration by irreducibles strictly refines the filtration given by the Gelfand-Kirillov dimension.

\begin{figure}[htb]
\includegraphics{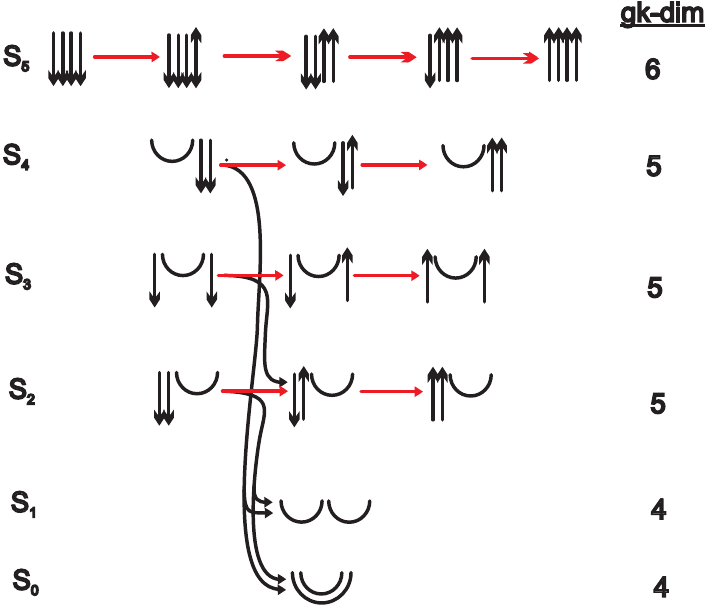}
\caption{The filtration of $V_1\otimes V_1\otimes V_1\otimes V_1$.}
\label{fig:filtration}
\end{figure}

\begin{remark}
{\rm The above filtration should be compared with the more general, but coarser filtrations \cite[Proposition 5.10]{CR} and \cite[7.2]{MS4}. Although the above filtrations carry over without problems to the quantum/graded version, the notion of  {\it minimal categorification} is not yet completely developed in this context, but see \cite{Rou2}.
}
\end{remark}

The above Theorem \ref{filto} generalizes directly to arbitrary tensor products, namely let $(A_{k,{\bf d}}-\gmod)^j$ be the Serre subcategory of $A_{k,{\bf d}}-\gmod$ generated by all simple modules corresponding to cup diagrams with at most $j$ cups, then the following holds:

\begin{theorem}
\begin{enumerate}[(i)]
\item The category $\bigoplus_{k=0}^n (A_{k,{\bf d}}-\gmod)^j$ is (for any $j$) stable under the functors $\mathcal{E}$ and $ \mathcal{F}$
\item There is an isomorphism of $\mathcal{U}_q$-modules
$$ \displaystyle \mathbb{C}(q) \otimes_{\mathbb{Z}[q,q^{-1}]} [\bigoplus_{k=0}^n(A_{k,{\bf d}}-\gmod)^j] \cong
\bigoplus_{t=j}^{\lfloor n/2 \rfloor} {V}_{n-2t}^{\oplus b_{n-2t}} $$ where $ b_{n-2t} = \dim
\Hom_{\mathfrak{sl}_2}(\bar{V}_{\bf d}, \bar{V}_{n-2t}). $
\item Set $r'=\sum_{t=0}^{\lfloor n/2 \rfloor}b_{n-2t}$. Then the filtration \eqref{isofilt2} induces a filtration which can be refined to a filtration $${0}\subset\cS_{r'}\subset \cS_{r'-1}\subset\cdots\subset\cS_{1}\subset\cS_{0}=\bigoplus_{k=0}^n A_{k,{\bf d}}-\gmod$$ such that $\cS_{m-1}$ is a Serre subcategory inside $\cS_{m}$ for all $m$ and, after forgetting the grading and with the induced additional structure from Remark \ref{rmCR}, the quotient  $\cS_{m}/\cS_{m-1}$ is an $\mathfrak{sl}_2$-categorification in the sense of \cite{CR}, isomorphic to a minimal one in the sense of \cite[5.3]{CR}.
\end{enumerate}
\end{theorem}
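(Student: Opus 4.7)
The plan is to deduce the theorem from Theorem~\ref{filto} by transferring the filtration through the exact quotient functor $\hat\pi_{\bf d}$, whose intertwining properties with the categorified quantum group action are supplied by Lemma~\ref{action}. Two auxiliary results bridge the two settings: Corollary~\ref{simples} identifies the simples of $A_{k,{\bf d}}-\gmod$ as the non-zero images under $\hat\pi_{\bf d}$ of simples in $A_{k,n}-\gmod$, and Corollary~\ref{GKbimod} expresses the Gelfand--Kirillov dimension of a simple Harish-Chandra bimodule $\cL(M(\mu), L({\bf a}))$ in terms of the number of cups in $\op{Cup}({\bf a})$. Together these force the Serre subcategory $(A_{k,{\bf d}}-\gmod)^j$ to be the essential image of $(A_{k,n}-\gmod)^j$ under $\hat\pi_{\bf d}$.

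Part~(i) then follows immediately from the natural isomorphisms $\hat\pi_{\bf d}\hat{\cE} \cong \hat{\cE}\hat\pi_{\bf d}$ and $\hat\pi_{\bf d}\hat{\cF} \cong \hat{\cF}\hat\pi_{\bf d}$ of Lemma~\ref{action}, combined with the stability of $(A_{k,n}-\gmod)^j$ under $\hat{\cE}$ and $\hat{\cF}$ from Theorem~\ref{filto}(i). For part~(ii), I would apply the induced map on Grothendieck groups $[\hat\pi_{\bf d}]$ to the isomorphism of Theorem~\ref{filto}(ii). By~\eqref{commutes} and Theorem~\ref{cattensor}, this map corresponds, under $\Phi_n$ and $\Phi_{\bf d}$, to the surjective $\mathcal{U}_q$-morphism $\pi_{d_1}\otimes\cdots\otimes\pi_{d_r}\colon V_1^{\otimes n}\twoheadrightarrow V_{\bf d}$. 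This intertwiner sends each $V_{n-2t}$-isotypic component of the source onto the corresponding isotypic component in the target, so the decomposition of Theorem~\ref{filto}(ii) descends to the claimed one for $V_{\bf d}$, with multiplicities adjusting to $b_{n-2t}({\bf d}) = \dim\Hom_{\mathfrak{sl}_2}(\bar{V}_{\bf d}, \bar{V}_{n-2t})$.

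For part~(iii), I would replay the inductive construction from the proof of Theorem~\ref{filto}(iii) in the parabolic setting: start with a simple $\hat{L}(k_1,d_1|\cdots|k_r,d_r)$ of minimal Gelfand--Kirillov dimension, equivalently of maximal cup count, subject to the same secondary minimality conditions, and let $\cS_{r'}$ be the Serre subcategory it generates. The combinatorial cup-diagram rule from \cite[p.~445]{FK} governing $\hat{\cE}$ on simples is preserved by $\hat\pi_{\bf d}$ and so transfers directly to $A_{k,{\bf d}}-\gmod$, and the Chuang--Rouquier functorial structure from Remark~\ref{rmCR} descends through $\hat\pi_{\bf d}$. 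Iterating produces the required chain of Serre subcategories and identifies each subquotient, after forgetting the grading, as an $\mathfrak{sl}_2$-categorification.

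The technical heart of the argument, and the main obstacle I expect, is to verify that in each subquotient $\cS_m/\cS_{m+1}$ the generating simple is projective there --- the hypothesis needed to invoke \cite[Proposition~5.26]{CR} in order to conclude \emph{minimality} of the $\mathfrak{sl}_2$-categorification. In the regular case one exploited the Koszul highest-weight structure of $A_{k,n}$ from Proposition~\ref{Koszul}; for the parabolic $A_{k,{\bf d}}$ one must instead use the properly stratified structure (cf.\ \cite[Theorem~2.16]{MS2}) together with Theorem~\ref{Endstandards}, which identifies the endomorphism rings of the standard objects with tensor products of cohomology rings of Grassmannians, to control projective covers inside each subquotient.
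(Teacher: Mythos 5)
The paper offers no proof of this statement at all --- it is introduced with the single sentence that Theorem \ref{filto} ``generalizes directly'' --- so your strategy of pushing the proof of Theorem \ref{filto} through the quotient functor $\hat\pi_{\bf d}$ is exactly the intended one. Your parts (i) and (ii) are correct and complete: identifying $(A_{k,{\bf d}}-\gmod)^j$ with the essential image of $(A_{k,n}-\gmod)^j$ via Corollary \ref{simples} and the cup-diagram/GK-dimension dictionary, and then using the commutation of $\hat{\cE},\hat{\cF}$ with $\hat\pi_{\bf d}$ (Lemma \ref{action}) together with the surjectivity of $\pi_{\bf d}$ on isotypic components, is all that is needed there.

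The gap is in part (iii), exactly where you place it, but it is more serious than your sketch suggests, and the remedy you propose does not close it. In the proof of Theorem \ref{filto}, minimality of each subquotient is obtained by invoking \cite[Proposition 5.26]{CR}, whose hypothesis is supplied by the fact that simples in category $\cO$ have no self-extensions, so the generating simple is projective in the lowest weight space of its subquotient. In $A_{k,{\bf d}}-\gmod$ this input fails. Already for $V_2\otimes V_2$ the generator of the $V_2$-component is $L=\hat{L}(1,2|0,2)$, whose projective cover coincides with the standard module $\hat\Delta(1,2|0,2)$; by Theorem \ref{Endstandards} its endomorphism ring is $H^\bullet(\op{Gr}(1,2))$, so $[P(L):L]=2$, and since the degree-two endomorphism of the corresponding projective $P(0100)\in\cO_1(\mathfrak{gl}_4)$ factors only through the projectives killed by the quotient functor (there are no degree-one maps among $P(0100)$ and $P(0001)$), one finds $\Ext^1_{A_{1,(2,2)}}(L,L)\neq 0$. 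This non-split self-extension lies in every Serre subcategory containing $L$ and persists in the subquotient $\cS_m/\cS_{m+1}$, because the smaller term of the filtration has no nonzero objects in that weight. Hence the lowest weight space of the subquotient is not semisimple, $L$ is not projective there, and the argument of Theorem \ref{filto} cannot be repeated verbatim; note that Theorem \ref{Endstandards}, which you offer as the fix, is precisely the source of the obstruction, since it is what forces $[P(L):L]>1$. Establishing minimality therefore needs a genuinely different argument --- for instance working with the standard object $\hat\Delta$ (which \emph{is} projective in the relevant weight space, with local endomorphism ring) and a relative version of the Chuang--Rouquier uniqueness statement, or a direct comparison of the subquotient with $\bigoplus_i H^\bullet(\op{Gr}(i,n-2t))-\gmod$ --- and your proposal does not supply one.
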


The case of $V_2\otimes V_2$ is displayed in Figure \ref{fig:HCfiltration} (note that then only cups which connect one of the first two points with one of the last two points are allowed.)

\begin{figure}[htb]
\includegraphics{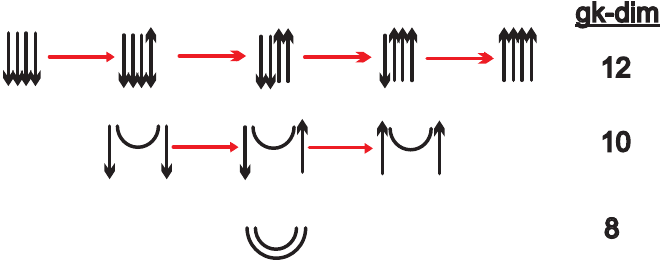}
\caption{The filtration of $V_2\otimes V_2$.}
\label{fig:HCfiltration}
\end{figure}

\section{Categorified characterization of the Jones-Wenzl projector}
In this section we give a categorical version of the characterizing property of the Jones-Wenzl projector. We first recall the special role of the cup and cap functors and put them into the context of Reshetikhin-Turaev tangle invariants. The first categorification of Reshetikhin-Turaev tangle invariants was constructed in \cite{StDuke}. The main result there is the following

\begin{theorem}{\rm(\cite[Theorem 7.1, Remark 7.2]{StDuke})}
\label{catjones} Let $ T $ be an oriented tangle from $ n $ points to $ m $ points.  Let $ D_1 $ and $ D_2 $ be two tangle diagrams of $
T. $ Let
$${\Phi}(D_1),{\Phi}(D_2)\colon \quad D^b\left(\bigoplus_{k=0}^n A_{k,n}-\gmod\right) \rightarrow
D^b\left(\bigoplus_{k=0}^m A_{k,m}\right)-\gmod $$ be the corresponding functors associated to the oriented tangle. Then there is an isomorphism of functors $
{\Phi}(D_1)\langle 3 \gamma(D_1) \rangle \cong {\Phi}(D_2)\langle 3 \gamma(D_2) \rangle. $
\end{theorem}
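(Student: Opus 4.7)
The plan is to reduce the invariance statement to a finite list of local moves between tangle diagrams, using the standard fact that any two oriented diagrams of a tangle $T$ differ by a sequence of planar isotopies and oriented Reidemeister moves (R0/pivot, R1, R2, R3). So the first step is to define $\Phi$ on each elementary tangle (identity strand, oriented cup, oriented cap, positive and negative crossing), check that these lifts extend to well-defined functors on the relevant bounded derived categories, and then verify a functor isomorphism corresponding to each move.

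For the generators, the natural choice is to model the cup and cap functors by the derived inclusion $\mathbb{L}({}_k\hat\iota_{\bf d})$ and the exact quotient ${}_k\hat\pi_{\bf d}$ from the Jones-Wenzl categorification (Theorem \ref{catJW}), appropriately oriented. Crossings should be realized as Khovanov-style mapping cones of a distinguished natural transformation between the identity functor and the composition $\hat\iota\hat\pi$, with the sign of the crossing controlling the direction of the cone and the grading shifts; the overall internal shift contributed by each crossing is what ultimately absorbs the writhe correction $\langle 3\gamma(D)\rangle$.

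With the generators in place, one verifies the move relations one at a time. The zig-zag (pivot) identities follow from the adjunction $\mathbb{L}({}_k\hat\iota_{\bf d}) \dashv {}_k\hat\pi_{\bf d}$ together with the fact that ${}_k\hat\pi_{\bf d}\,\mathbb{L}({}_k\hat\iota_{\bf d})\cong \op{Id}$ noted in the proof of Theorem \ref{catJW}. Reidemeister II amounts to the cancellation of the double mapping cone associated to a pair of oppositely signed crossings, reducing to a shifted identity by the usual Gaussian elimination on cones. Reidemeister I produces the single shift by $\langle 3\rangle$ that is absorbed into $\gamma(D)$; concretely, computing the functor on a proper standard object along the kink and using the filtration from Proposition \ref{transfmatrix} together with the endomorphism description of Theorem \ref{Endstandards} isolates precisely one such shift.

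The main obstacle, as always in such categorifications, is Reidemeister III. One has to exhibit an isomorphism between the two iterated mapping cones corresponding to the two sides of the braid relation. The strategy is to identify the crossing functor with a shuffling (equivalently, twisting) functor on the relevant blocks of Harish-Chandra bimodules, since these are known to satisfy the braid group relations by the general theory of projective functors and the braid group action on category $\mathcal{O}$. Practically, one writes the two triple compositions as bicomplexes built from $\hat\iota$, $\hat\pi$, and the connecting natural transformations, and constructs a term-by-term isomorphism by matching bidegrees and appealing to uniqueness of certain adjoint pairs. Once the braid relation is established at the level of the underlying shuffling functors on ordinary category $\mathcal{O}$, it transfers verbatim to the derived endofunctors appearing in $\Phi(D_1)$ and $\Phi(D_2)$, completing the verification and yielding the desired isomorphism $\Phi(D_1)\langle 3\gamma(D_1)\rangle \cong \Phi(D_2)\langle 3\gamma(D_2)\rangle$.
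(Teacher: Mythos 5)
The first thing to say is that the paper does not prove this statement at all: it is quoted verbatim from \cite[Theorem 7.1, Remark 7.2]{StDuke}, and the only "argument" offered here is that the present setup is the Koszul dual of the one in \cite{StDuke}, with the dictionary supplied by \cite[Theorems 35, 39]{MOS}. Your generators-and-relations strategy (define $\Phi$ on elementary tangles, verify planar isotopy and the Reidemeister moves) is exactly the strategy of \cite{StDuke}, so re-proving it this way is legitimate in principle. However, your choice of generators is wrong in a way that derails the whole verification.

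The functors in this theorem go between $\bigoplus_k A_{k,n}-\gmod$ and $\bigoplus_k A_{k,m}-\gmod$, i.e.\ between categorifications of $V_1^{\otimes n}$ and $V_1^{\otimes m}$; no Jones--Wenzl projector appears. The cup and cap intertwiners $\cup_{i,n}\colon V_1^{\otimes n}\to V_1^{\otimes(n+2)}$ and $\cap_{i,n}\colon V_1^{\otimes n}\to V_1^{\otimes(n-2)}$ are \emph{not} the projection/inclusion $\pi_n,\iota_n$ onto the top constituent $V_n$: the cup inserts the invariant vector $v_1\otimes v_0-qv_0\otimes v_1$ (the copy of $V_0$ inside $V_1^{\otimes2}$), and the corresponding functors are the $\hat\cup_{i,n},\hat\cap_{i,n}$ of Section \ref{catcupscaps}, built from derived Zuckerman functors $\hat Z_i$, parabolic inclusions $\hat\epsilon_i$, and the Enright--Shelton equivalence $\zeta_n$. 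The Jones--Wenzl functors ${}_k\hat\pi_{\bf d}$ and $\mathbb{L}({}_k\hat\iota_{\bf d})$ land in $A_{k,{\bf d}}-\gmod$, which is not of the form $A_{k',n\pm2}-\gmod$, and they do not change the weight index $k$, whereas a cap must take $A_{k,n}$ to $A_{k-1,n-2}$. For the same reason the crossing should be the cone of a natural transformation between $\op{Id}$ and $\hat\cup_{i}\hat\cap_{i}$ (categorifying the Temperley--Lieb generator $U_i$, equivalently a Koszul dual shuffling functor), not between $\op{Id}$ and $\hat\iota\hat\pi$; the zig-zag identities you need are $\hat\cap_{i+1,n+2}\hat\cup_{i,n}\cong\op{Id}$, which are properties of the Zuckerman/Enright--Shelton functors, not of the Serre-quotient adjunction. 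Your Reidemeister I argument also invokes Proposition \ref{transfmatrix} and Theorem \ref{Endstandards}, which live in the Harish-Chandra categories $A_{k,{\bf d}}$ and are irrelevant here; moreover checking the shift on a single (proper) standard object does not produce an isomorphism of functors. Your Reidemeister III outline — reduce to braid relations for shuffling/twisting functors, which hold as isomorphisms of derived functors — is the correct idea and is essentially how \cite{StDuke} proceeds, but with the generators as you have set them up the earlier moves already fail.
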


We have to explain briefly how to associate a functor to a tangle. This is done by associating to each elementary tangle (cup, cap, braid) a functor and checking the Reidemeister moves. To a braid one associates a certain derived equivalence whose specific construction is irrelevant for the present paper. We however introduce the cup and cap functors, since they are crucial.

Note that we work here in a Koszul dual picture of the one developed in \cite{StDuke}, since we focus more on the quantum group action which is easier in our context. The translation between these two picture is given by the result in \cite[Theorem 35, Theorem 39]{MOS} relating the corresponding functors via the Koszul duality equivalence of categories.

\subsection{Functors associated to cups and caps}
\label{catcupscaps}
In the following we briefly recall the definition of the functors and the main properties which will be used later, illustrated by a small example. For each $1\leq i\leq n$ we will define now functors which send a module to its maximal quotient which has only composition factors from a certain allowed set:

Given such $i$ consider the set $S$ of isomorphism classes of simple $A_{k,n}$-modules  ${L}(a_1,a_2,\ldots, a_n)$ where the sequence ${\bf a}=a_1,a_2,\ldots, a_n$ is obtained from the sequence $1^k0^{n-k}$ by applying an element $w\in \mathbb{S}_n$ which is a shortest coset representative in $\mathbb{S}_{i}\times \mathbb{S}_{n-i}\backslash \mathbb{S}_n$ such that each other element in the same $\mathbb{S}_n/\mathbb{S}_k\times \mathbb{S}_{n-k}$-coset has again this property (see e.g. \cite{MSslk} for an interpretation in terms of tableaux combinatorics). Let $A^i_{k,n}-\Mod$ be the full subcategory of $A_{k,n}-\Mod$ containing only modules with simple composition factors from the set $S$. There are the natural functors
\begin{eqnarray*}
\epsilon_i:\quad A^i_{k,n}-\gmod\rightarrow A_{k,n}-\gmod && Z_i:\quad A_{k,n}-\gmod\rightarrow A^i_{k,n}-\gmod,
\end{eqnarray*}
of inclusion of the subcategory respectively of taking the maximal quotient contained in the subcategory. Note that $Z_i$ is left adjoint to $\epsilon_i$.

The category $A^i_{k,n}-\Mod$ is a graded version of the so-called parabolic category $\cO$ defined as follows: let $\mathfrak{p}_i$ be the {\it $i$-th minimal parabolic} subalgebra of $ \mathfrak{g} $ which has basis the matrix units $E_{r,s}$, where $s\geq r$ or $r,s  \in \lbrace i, i+1 \rbrace$. Now replace locally $\mb$-finiteness in Definition \ref{defO} by locally $ \mathfrak{p}_i$-finiteness and obtain the {\it parabolic category} $ \mathcal{O}_k^i(\mathfrak{gl}_n)$, a full subcategory of $\cO_k(\mathfrak{gl}_n)$, see \cite[Section 9.3]{Hu}).  We have  $ \mathcal{O}_k^i(\mathfrak{gl}_n)\cong A^i_{k,n}-\op{mod}$. In this context $Z_i$ is the {\it Zuckerman functor} of taking the maximal locally finite quotient with respect to $ \mathfrak{p}_i$. That means we send a module $M\in\cO_k(\mathfrak{gl}_n)$ to the largest quotient in $\mathcal{O}_k^i(\mathfrak{gl}_n)$. A classical result of Enright and Shelton \cite{ES} relates parabolic category $\cO$ with non-parabolic category $\cO$ for smaller $n$. This equivalence had been explained geometrically in \cite{SoergelES}, and was lifted to the graded setup in \cite{Rh}.

\begin{prop}
Let $n\geq 0$. There is an equivalence of categories $$\zeta_n \colon
    \mathcal{O}_k
    (\mathfrak{gl}_n) \rightarrow \mathcal{O}_{k+1}^1 (\mathfrak{gl}_{n+2})$$ which can be lifted to an equivalence $A_{k,n}-\gmod\cong A^1_{k+1,n+2}-\gmod$, where for $n=0$ the corresponding category is equivalent to the category of graded vector spaces.
\end{prop}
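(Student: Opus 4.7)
The plan is to establish this as the graded lift of the Enright--Shelton equivalence \cite{ES}, with the graded lift supplied by \cite{Rh} (or, equivalently, by the uniqueness of graded lifts combined with the Koszul grading of Proposition~\ref{Koszul}). First I would pin down what the bijection on simples looks like. The parabolic $\mathfrak{p}_1\subset\mathfrak{gl}_{n+2}$ has Weyl group of Levi generated by $s_1$, so the simple objects of $\cO_{k+1}^1(\mathfrak{gl}_{n+2})$ are indexed by $\{\up,\down\}$-sequences of length $n+2$ with $k+1$ $\up$'s whose first two entries form the unique $s_1$-antidominant pattern (the one where $s_1$ increases length). Removing this forced leading pair yields a natural bijection with $\{\up,\down\}$-sequences of length $n$ with $k$ $\up$'s, i.e.\ with the labels of simples in $\cO_k(\mathfrak{gl}_n)$, which is the combinatorial shadow of $\zeta_n$.

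Next I would construct $\zeta_n$ itself on the ungraded level. This is the content of Enright--Shelton \cite{ES}, who produce a functor between the two categories and check that it is an equivalence by matching Ext--groups between simples. Soergel's geometric refinement \cite{SoergelES} provides the cleanest argument: both categories describe blocks of perverse sheaves on the same partial flag variety for $GL(n+2,\mC)$ (with the $\mathfrak{gl}_2\times \mathfrak{gl}_n$-Levi parabolic on one side matching a suitable singular block on the other), so endomorphism algebras of minimal projective generators agree, forcing an equivalence of module categories. The map on simples is $L({\bf a})\mapsto L(\down\up,{\bf a})$ (in our convention).

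To lift this to a graded equivalence $A_{k,n}-\gmod\cong A_{k+1,n+2}^1-\gmod$, I would invoke Ryom-Hansen \cite{Rh}, who constructs a graded version of $\zeta_n$ compatible with the Koszul gradings from Proposition~\ref{Koszul}. Alternatively, an ungraded equivalence between categories of graded-module type (both carrying their unique non-negative Koszul grading) automatically admits a graded lift, unique up to overall shift, by the uniqueness of graded lifts of simple modules; normalising $\zeta_n$ so that heads of simples remain in degree zero removes this ambiguity. For the base case $n=0$, the parabolic $\mathfrak{p}_1$ in $\mathfrak{gl}_2$ equals $\mathfrak{gl}_2$, so $\cO^1_{k+1}(\mathfrak{gl}_2)$ is the block of locally $\mathfrak{gl}_2$-finite modules of the prescribed integral central character. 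It is semisimple with a single simple object, hence equivalent to $\mC-\gmod$ after passing to the graded version.

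The main obstacle is the bookkeeping of grading shifts: ensuring that the normalised graded lift of $\zeta_n$ sends $\hat{L}({\bf a})$ to $\hat{L}(\down\up,{\bf a})$ with no extra shift, and that standard and projective lifts correspond on the nose. Given the uniqueness of graded lifts (with heads in degree zero) and the explicit match of heads under Enright--Shelton, this reduces to a routine comparison of conventions with \cite{Rh}; the genuine mathematical content sits entirely in the ungraded equivalence.
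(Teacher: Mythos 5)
Your proposal is correct and follows exactly the route the paper takes: the paper gives no formal proof, instead citing Enright--Shelton \cite{ES} for the ungraded equivalence, Soergel \cite{SoergelES} for its geometric explanation, and Ryom-Hansen \cite{Rh} for the graded lift, which is precisely your argument. The extra details you supply (the combinatorial bijection on simples, the uniqueness of graded lifts via Proposition~\ref{Koszul}, and the $n=0$ base case) are consistent elaborations of that same strategy.
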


Now there are functors (up to shifts in the internal and homological degree) pairwise adjoint in both directions
\begin{eqnarray*}
&{{\cap}}_{i,n} \colon\quad D^b(A_{k,n}-\gmod) \rightarrow D^b(A_{k,n-2}-\gmod)&\\
&{\zeta_n}^{-1} \hat{Z}_1 \circ \hat\epsilon_2 \hat{Z}_2 \circ \cdots \circ \hat\epsilon_{i}\circ L\hat{Z}_i.&\\
&{{\cup}}_{i,n}\colon\quad D^b(A_{k,n}-\gmod) \rightarrow D^b(A_{k,n+2}-\gmod)& \\
&\hat\epsilon_i\hat{Z}_i \circ \cdots \hat\epsilon_2\circ L\hat{Z}_2 \circ \hat\epsilon_1\circ \zeta_n.
\end{eqnarray*}
where we denote $\hat\epsilon_i=\mathbb{L}\epsilon_i$, the standard lift of the inclusion functor compatible with \eqref{defcupcap} and the lift $\hat{Z_i}=\mathbb{L}{Z_i}\lsem1\rsem\langle -1\rangle$ of the Zuckerman functor.

The following theorem means that these functors provide a functorial action of the Temperley-Lieb category:

\begin{theorem}
Let $ j \geq k$.  There are isomorphisms
\begin{enumerate}
\item $ \hat{\cap}_{i+1,n+2} \hat{\cup}_{i,n} \cong \hat{\Id} $ \item $ \hat{\cap}_{i,n+2}
    \hat{\cup}_{i+1,n} \cong \hat{\Id} $ \item $ \hat{\cap}_{j,n} \hat{\cap}_{i,n+2} \cong
    \hat{\cap}_{i,n} \hat{\cap}_{j+2,n+2} $ \item $ \hat{\cup}_{j,n-2} \hat{\cap}_{i,n} \cong
    \hat{\cap}_{i,n+2} \hat{\cup}_{j+2,n} $ \item $ \hat{\cup}_{i,n-2} \hat{\cap}_{j,n} \cong
    \hat{\cap}_{j+2,n+2} \hat{\cup}_{i,n} $ \item $ \hat{\cup}_{i,n+2} \hat{\cup}_{j,n} \cong
    \hat{\cup}_{j+2,n+2} \hat{\cup}_{i,n} $ \item $ \hat{\cap}_{i, n+2} \hat{\cup}_{i,n} \cong
    \hat{\Id}\lsem1\rsem\langle 1 \rangle \bigoplus \hat{\Id}\lsem-1\rsem\langle -1 \rangle. $
\end{enumerate}
of graded endofunctors of $\oplus_{k=0}^n A_{k,n}-\gmod$. In the Grothendieck group, $ [\hat{\cap}_{i,n}]=\cap_{i,n} $ and $ [\hat{\cup}_{i,n}]=\cup_{i,n} $.
\end{theorem}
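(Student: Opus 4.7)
The plan is to reduce each of the seven isomorphisms to elementary identities among the building-block functors $\zeta_n$, $\hat\epsilon_j$, and $\hat Z_j$. Two inputs suffice: (a) since $\epsilon_j$ is a fully faithful embedding of $A^j_{k,n}-\gmod$ into $A_{k,n}-\gmod$, its derived left adjoint satisfies $L\hat Z_j\hat\epsilon_j\cong\hat{\Id}$ on the parabolic subcategory (the counit of a localization is invertible); and (b) Zuckerman and inclusion functors commute for distant indices, $\hat Z_i\hat\epsilon_j\cong\hat\epsilon_j\hat Z_i$ whenever $|i-j|\geq 2$, because the corresponding minimal parabolics involve disjoint simple roots. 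With (a) and (b) in hand, relations (1)--(6) follow by unfolding the definitions of $\hat\cup$ and $\hat\cap$ and collapsing adjacent pairs $\hat Z_j\hat\epsilon_j$; one only has to track carefully the index shift by $2$ that $\zeta_n$ builds in when transporting between $\mathcal{O}_k(\mathfrak{gl}_n)$ and $\mathcal{O}^1_{k+1}(\mathfrak{gl}_{n+2})$, and this shift is what forces the $+2$ appearing on one side of relations (3)--(6).

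The main obstacle is relation (7), which categorifies the local loop value $[2]=q+q^{-1}$. After telescoping via the cancellations above, $\hat{\cap}_{i,n+2}\hat{\cup}_{i,n}$ reduces to a central two-fold composition of the form $L\hat Z_i\hat\epsilon_iL\hat Z_i\hat\epsilon_i$ on a parabolic subcategory. Locally this subcategory is controlled by the cohomology ring $H^\bullet(\mathbb{CP}^1)\cong \mathbb{C}[x]/(x^2)$, and the composition computes the bigraded derived self-Ext of the trivial module, which by the explicit resolution \eqref{SL2} has basis in homological degree $0$ (internal degree $0$) and homological degree $1$ (internal degree $2$). Combined with the normalization $\hat Z_j=\mathbb{L}Z_j\lsem 1\rsem\langle -1\rangle$ from the definitions preceding the theorem, one obtains exactly $\hat{\Id}\lsem 1\rsem\langle 1\rangle\oplus\hat{\Id}\lsem -1\rsem\langle -1\rangle$; keeping track of the signs of the homological shifts is the technical heart of the argument.

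For the Grothendieck group identities $[\hat{\cap}_{i,n}]=\cap_{i,n}$ and $[\hat{\cup}_{i,n}]=\cup_{i,n}$, I would evaluate both on the classes $[\hat M({\bf a})]$ of graded Verma modules, which label the standard basis of $V_1^{\otimes n}$ by Proposition \ref{Grothgraded}. Since $\hat\epsilon_j$ is exact and $L\hat Z_j$ applied to a Verma is either zero or a single shifted Verma (computable from a rank-one calculation using the two-step presentation $M(s_j\cdot\la)\hookrightarrow M(\la)$ of a simple quotient), while $\zeta_n$ acts combinatorially on highest weights by insertion of a fixed pair of coordinates, the whole computation reduces to combinatorics of $\{\up,\down\}$-sequences. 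A direct termwise comparison then matches the induced maps with the formulas \eqref{defcupcap}, completing the verification.
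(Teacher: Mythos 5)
There is a genuine gap, and it sits in the very first ``input'' of your argument. You assert that $\mathbb{L}\hat Z_j\hat\epsilon_j\cong\hat{\Id}$ on the parabolic subcategory because ``the counit of a localization is invertible.'' But $\epsilon_j$ is the inclusion of a Serre \emph{sub}category (parabolic category $\cO$ inside $\cO$), not the inclusion functor of a Serre \emph{quotient}; the counit $Z_j\epsilon_j\to\Id$ is an isomorphism only at the abelian level, and it does not survive deriving. Indeed, the whole content of item (7) is that the derived composition is $\hat{\Id}\lsem1\rsem\langle 1\rangle\oplus\hat{\Id}\lsem-1\rsem\langle-1\rangle$ — two shifted copies of the identity, not one — because $\mathbb{L}Z_i\epsilon_i$ is Koszul dual to translation through the $i$-th wall. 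If your input (a) were true, (7) would read $\hat{\cap}_{i,n+2}\hat{\cup}_{i,n}\cong\hat{\Id}$, contradicting the statement you are trying to prove. Your local model for (7) compounds the problem: you compute $\Ext^*_{\mC[x]/(x^2)}(\mC,\mC)$, which by the paper's own resolution \eqref{SL2} is nonzero in \emph{every} homological degree (it categorifies $1/(1+q^2)$); the two-dimensional answer comes instead from the free rank-one module $\mC[x]/(x^2)$ itself, i.e.\ from $\mathrm{res}\circ\mathrm{ind}$, not from derived Hom into the simple. Finally, relations (1) and (2) involve the \emph{adjacent} indices $i$ and $i+1$: the middle of the composition is $\mathbb{L}\hat Z_{i+1}\hat\epsilon_i\hat Z_i(\cdots)$, which is covered neither by your (a) (same index) nor by your (b) (distance $\geq 2$); these zig-zag identities are exactly the hard Temperley--Lieb relations and need a separate argument.

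For comparison, the paper does not attempt a direct computation at all: relations (1)--(7) are imported from the Koszul dual statement \cite[Theorem 6.2]{StDuke}, where the functors are projective functors (translation onto and out of walls) and the relations are classical, and then transported to the present Zuckerman picture via \cite[Section 6.4]{MOS}. Your strategy for the Grothendieck group identities — evaluating on classes of graded Verma modules and reducing to the combinatorics of $\{\up,\down\}$-sequences via Lemma \ref{ZonVermas} — is sound and is essentially what the paper does (citing \cite[Proposition 15]{BFK} together with that lemma). If you want to keep a direct approach for the functor isomorphisms, the correct starting point is the computation of $\mathbb{L}Z_i\epsilon_i$ as the Koszul dual of $\theta_{s_i}$, together with an explicit treatment of the adjacent-index adjunction triangles; the distant-index commutations (b) are the only part of your toolkit that is both true and sufficient as stated.
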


\begin{proof}
The first part was proven in the Koszul dual case in \cite[Theorem 6.2]{StDuke} and then holds by \cite[Section 6.4]{MOS}. The second part follows directly from \cite[Proposition 15]{BFK} and Lemma \ref{ZonVermas} below.
\end{proof}

The following Lemma is the main tool in computing the functors explicitly and categorifies \eqref{defcupcap}:
\begin{lemma}
\label{ZonVermas}
Let $\hat M({\bf a})\in A_{k,n}-\gmod$ be the standard graded lift of the Verma module $M({\bf a})\in\cO_k(\mathfrak{gl}_n)$. Let ${\bf b}$ be the sequence ${\bf a}$ with $a_i$ and $a_{i+1}$ removed. Then there are isomorphisms of graded modules
\begin{eqnarray}
\mathbb{L}Z_i\hat M(\bf a)
&\cong&
\begin{cases}
0&\text{if $a_i=a_{i+1}$}\\
\hat M({\bf b})\langle-1\rangle\lsem-1\rsem\in A_{k,n-2}-\gmod&\text{if $a_i=1$, $a_{i+1}=0$}\\
\hat M({\bf b})\in A_{k,n-2}-\gmod&\text{if $a_i=0$, $a_{i+1}=1$}
\end{cases}
\end{eqnarray}
Whereas $\hat\epsilon_i(\hat M({\bf b}))$ is quasi-isomorphic to a complex of the form
$$\cdots\quad0\longrightarrow\hat M({\bf c})\langle 1\rangle\longrightarrow\hat M({\bf d})\longrightarrow0\quad\cdots$$
where ${\bf c}$ and ${\bf d}$ are obtained from ${\bf b}$ by inserting $01$ respectively $10$ at the places $i$ and $i+1$.
\end{lemma}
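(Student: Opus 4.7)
The plan is to reduce to standard Lie-theoretic calculations at the ungraded level and then transport to the graded derived category using uniqueness of graded lifts. First I would compute $\mathbb{L}Z_i M({\bf a})$ in the ordinary category $\cO_k(\mathfrak{gl}_n)$. Because $Z_i$ is the projection onto the maximal locally $\mathfrak{p}_i$-finite quotient and can be computed from a two-step resolution of the trivial $\mathfrak{sl}_2$-module as a $\mathfrak{p}_i$-module, only the cohomologies in degrees $0$ and $1$ are possibly non-zero, and I would split into three cases on $(a_i, a_{i+1})$. If $a_i = a_{i+1}$, the highest weight sits on the $s_i$-wall and every locally $\mathfrak{p}_i$-finite quotient of $M({\bf a})$ has strictly smaller highest weight, so all $\mathbb{L}^j Z_i M({\bf a})$ vanish. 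If $a_i = 0$ and $a_{i+1} = 1$, the simple quotient $L({\bf a})$ already lies in $\cO_k^i$, and the natural surjection $M({\bf a}) \twoheadrightarrow N({\bf a})$ onto the generalized Verma module induced from $\mathfrak{p}_i$ identifies $Z_i M({\bf a}) = N({\bf a})$ with vanishing higher derived. If $a_i = 1$ and $a_{i+1} = 0$, the short exact sequence $0 \to M(s_i \cdot {\bf a}) \to M({\bf a}) \to N({\bf a}) \to 0$ together with $Z_i M({\bf a}) = 0$ (since $L({\bf a}) \notin \cO_k^i$) and the previous case applied to $M(s_i \cdot {\bf a})$ yields, via the long exact sequence, $\mathbb{L}^1 Z_i M({\bf a}) \cong N(s_i \cdot {\bf a})$.

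Next, I would apply the Enright--Shelton equivalence $\zeta_n^{-1}$ (as built into the cap functor) to identify each generalized Verma $N({\bf a})$ or $N(s_i \cdot {\bf a})$ with an ordinary Verma $\hat M({\bf b}) \in \cO_k(\mathfrak{gl}_{n-2})$, where ${\bf b}$ is obtained from ${\bf a}$ by deleting positions $i$ and $i+1$. Graded lifts of Verma, generalized Verma and simple modules are unique up to shift once their heads are normalized to sit in degree zero, so the graded identities are forced by the ungraded ones together with explicit tracking of the shifts: the homological shift $\lsem -1\rsem$ in case (c) reflects that the cohomology lives in homological degree $1$, and the internal shift $\langle -1 \rangle$ comes from the degree-one homogeneous Verma embedding appearing in the short exact sequence for $\hat M({\bf a})$ in the Koszul-graded category.

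For the second assertion, I would begin from the short exact sequence $0 \to \hat M({\bf c})\langle 1 \rangle \to \hat M({\bf d}) \to \hat N({\bf d}) \to 0$ of graded modules, where ${\bf c}$ and ${\bf d}$ are obtained from ${\bf b}$ by inserting $01$ and $10$, respectively, at positions $i, i+1$, and the degree-one shift again records the standard homogeneous Verma embedding. Via $\zeta_n$, the Verma $\hat M({\bf b}) \in \cO_k(\mathfrak{gl}_{n-2})$ corresponds to $\hat N({\bf d}) \in \cO_{k+1}^i(\mathfrak{gl}_n)$, and therefore $\hat \epsilon_i \hat M({\bf b})$ is quasi-isomorphic to the two-term complex $\hat M({\bf c})\langle 1 \rangle \to \hat M({\bf d})$ obtained by replacing $\hat N({\bf d})$ with this resolution. \textbf{The main obstacle} is the careful bookkeeping of grading and homological shifts: the definitions of $\hat Z_i$, $\hat \epsilon_i$, and $\zeta_n$ each carry fixed shifts that must be consistently tracked, and the normalization of graded lifts must be pinned down precisely enough so that the ungraded identities upgrade to the stated graded ones.
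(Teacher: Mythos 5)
The paper's own ``proof'' of this lemma is a citation (to the graded Zuckerman/translation computations of \cite{Strgrad} transported by Koszul duality \cite{MOS}), so your attempt at a direct Lie-theoretic computation is a genuinely different route. The overall skeleton --- compute $\mathbb{L}Z_i$ on Vermas ungraded via the sequence $0\to M(s_i\cdot\lambda)\to M(\lambda)\to N(\lambda)\to 0$ with $N(\lambda)$ the parabolic Verma, then use uniqueness of graded lifts and the Enright--Shelton equivalence --- is the right kind of argument, but three steps do not hold up as written. First, the parabolic membership is reversed: the paper's $\mathfrak{p}_i$ contains $E_{i+1,i}$, so a highest weight module $L({\bf a})$ is locally $\mathfrak{p}_i$-finite exactly when $a_i-a_{i+1}\geq 1$, i.e.\ $a_i=1,a_{i+1}=0$; the example following the lemma confirms this ($L(10)$, not $L(01)$, generates $A^1_{1,2}$, and $Z_1\hat M(10)=\hat L(10)\neq 0$). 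Your assertion that $L({\bf a})\in\cO^i_k$ for $a_i=0,a_{i+1}=1$ therefore attaches the unshifted and shifted answers to the wrong cases, and makes your third case internally inconsistent: you simultaneously claim $L({\bf a})\notin\cO^i_k$ (so $Z_iM({\bf a})=0$) and use a short exact sequence whose quotient is a \emph{nonzero} parabolic Verma $N({\bf a})$, which exists only when $L({\bf a})$ \emph{does} lie in $\cO^i_k$. (Be warned that the displayed formula in the lemma and the example after it are themselves not obviously consistent, so conventions must be pinned down against the decategorified formula for $\cap$.)

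Second, in the cases where you claim the answer is concentrated in homological degree $0$ or vanishes entirely, you only control $\mathbb{L}^0Z_i$: ``every locally $\mathfrak{p}_i$-finite quotient has strictly smaller highest weight'' says nothing about $\mathbb{L}^jZ_i$ for $j\geq 1$, and the assertion ``with vanishing higher derived'' for the $\mathfrak{p}_i$-dominant Verma is precisely the nontrivial content of the cited computation; it needs a projective (or BGG-type) resolution argument. Third, the long exact sequence step fails as stated: to extract $\mathbb{L}^1Z_i$ of the submodule $M(s_i\cdot\lambda)$ from $0\to M(s_i\cdot\lambda)\to M(\lambda)\to N(\lambda)\to 0$ you need $\mathbb{L}^2Z_iN(\lambda)$, not ``the previous case,'' and this term is nonzero in general: already for $\mathfrak{gl}_2$ the length-two projective resolution of $L(10)$ gives $\mathbb{L}^2Z_1L(10)\cong L(10)\langle 2\rangle$ (this nonvanishing is exactly what produces the two summands in $\hat\cap\,\hat\cup$). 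So the argument must either compute $\mathbb{L}Z_i$ on parabolic Vermas as well, or simply apply $Z_i$ to an explicit graded projective resolution of the antidominant-side Verma, as the paper does in its $n=2$ example. Your treatment of the second assertion (resolving $\hat N({\bf d})$ by $\hat M({\bf c})\langle 1\rangle\to\hat M({\bf d})$ and transporting through $\zeta_n$ and the exact inclusion $\epsilon_i$) is fine.
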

\begin{proof}
This follows directly from \cite[Theorem 8.2, Theorem 5.3]{Strgrad} and Koszul duality \cite[Theorem 35]{MOS}.
\end{proof}

\begin{ex}{\rm Consider the special case $n=2$, $k=1=i$. Then $A_{1,1}^1-\mod$ has one simple object, ${L}(10)$.
Note that $\hat{L}(10)$ has a resolution of the form $\hat{M}(01)\langle 1\rangle\rightarrow M(10)$. In particular, $\hat{L}(10)$ is quasi-isomorphic to this resolution and so the functor $\hat{\epsilon}_1$ categorifies the first formula in \eqref{defcupcap}. On the other hand $\mathbb{L}{Z}_1\hat{M}(10)={Z}_1\hat{M}(10)$, since $M(10)$ is projective. Furthermore, $Z_1\hat{M}(10)=L(10)$. To compute $\mathbb{L}{Z}_1\hat{M}(10)$ we apply he functor to a projective resolution and get the complex ${Z}_1\hat{P}(10)\langle 1\rangle \rightarrow Z_1\hat{P}(01)$ isomorphic to $M(01)\langle 1\rangle$ shifted in homological degree by $1$. Hence $\hat{Z_1}$ categorifies the second formula in \eqref{defcupcap}.
}
\end{ex}

The following should be viewed as a categorification of Proposition \ref{charJW} and will be used in \cite{SS} in the proof of the categorification of the colored Jones polynomial to show that one can slide projectors along strands.
\begin{theorem}
\label{charJWcat}
Assume $F:\;\bigoplus_{k=0}^nA_{k, n}-\gmod\rightarrow \bigoplus_{k=0}^nA_{k, (n)}-\gmod$ is a non-zero exact functor such that
\begin{enumerate}[(i)]
\item $Q:=(\oplus_{k=0}^n\mathbb{L}({}_k\hat\iota_{n})F$ is an idempotent (i.e. $Q^2\cong Q$),
\item $F$ commutes with the $U_q(\mathfrak{sl}_2)$-action (i.e. $FH\cong HF$, where $H=\hat{\mathcal{E}}, \hat{\mathcal{F}}, \hat{\mathcal{K}}$).
\item $F\epsilon_i=0$ for $1\leq i\leq n$,
\item $\mathbb{L}Z_iQ=0$ for $1\leq i\leq n$.
\end{enumerate}
Then $F\cong\oplus_{k=0}^n{}_k\hat\pi_{n}$.
\end{theorem}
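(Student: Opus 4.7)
The proof will mirror the classical characterization in Proposition~\ref{charJW}: we read (iii) as the categorification of $p_n \circ C_{i,n} = 0$, (iv) as that of $C_{i,n} \circ p_n = 0$, and (i) as categorical idempotency. The plan is to use (iii) to factor $F$ through $\hat{\pi}_n$, and then use (i) and (ii) together with the rigidity of the minimal categorification of $V_n$ to force the residual functor to be the identity. The conclusion follows by combining this factorization with the counit-like identity $\hat{\pi}_n \mathbb{L}\hat{\iota}_n \cong \Id$ coming from the Serre quotient construction.

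\emph{Factorization through $\hat{\pi}_n$.} Condition (iii) and the exactness of $F$ imply that $F$ annihilates the Serre subcategory $\mathcal{S} \subset \bigoplus_k A_{k,n}-\gmod$ generated by the essential images of all the $\hat{\epsilon}_i$. A short combinatorial argument identifies the simples of $\mathcal{S}$: a simple $\hat{L}(\mathbf{a})$ lies in some $A^i_{k,n}-\gmod$ precisely when the cup diagram of $\mathbf{a}$ has a short cup at adjacent positions $i, i+1$, so taking Serre closure one obtains the simples $\hat{L}(\mathbf{a})$ for all $\mathbf{a} \neq 0^{n-k}1^k$. By Theorems~\ref{FKsimples} and~\ref{cattensor} these are exactly the simples killed by $\hat{\pi}_n$, hence $\mathcal{S} = \ker(\hat{\pi}_n)$. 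The universal property of Serre quotients then produces an exact functor $\tilde{F} \colon \bigoplus_k A_{k,(n)}-\gmod \to \bigoplus_k A_{k,(n)}-\gmod$ with $F \cong \tilde{F} \circ \hat{\pi}_n$.

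\emph{Identification with the identity.} Since $\hat{\pi}_n$ is $U_q$-equivariant (Lemma~\ref{action}), condition (ii) transfers to $\tilde{F}$. Using $\hat{\pi}_n \mathbb{L}\hat{\iota}_n \cong \Id$, condition (i) rewrites as $\mathbb{L}\hat{\iota}_n \tilde{F}^2 \hat{\pi}_n \cong \mathbb{L}\hat{\iota}_n \tilde{F} \hat{\pi}_n$, and applying $\hat{\pi}_n$ on the left and using essential surjectivity yields $\tilde{F}^2 \cong \tilde{F}$. By Corollary~\ref{HGr} and Theorem~\ref{filto}, $\bigoplus_k A_{k,(n)}-\gmod$ is the graded minimal $\mathfrak{sl}_2$-categorification of the irreducible $V_n$; by the rigidity of minimal categorifications (a graded enhancement of \cite[Proposition~5.26]{CR}) every $U_q$-equivariant exact endofunctor of it is a direct sum of grading and homological shifts of $\Id$. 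Idempotency then forces a single summand with trivial shift—any shift $(i,j)$ produces $\tilde{F}^2 \cong \Id\langle 2i\rangle\lsem 2j\rsem$ which differs from $\tilde{F}$ unless $(i,j) = (0,0)$, and multiplicity $\geq 2$ likewise violates idempotency—and non-vanishing of $F$ excludes $\tilde{F} = 0$. Thus $\tilde{F} \cong \Id$ and $F \cong \hat{\pi}_n$.

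\emph{Main obstacle and role of (iv).} The crucial nontrivial input is the rigidity statement above: $U_q$-equivariant exact endofunctors of the graded minimal categorification of $V_n$ decompose as direct sums of shifts of $\Id$. The ungraded version is built into Chuang--Rouquier; promoting it to the graded quantum setting should proceed via the Koszul grading of Proposition~\ref{Koszul} together with the explicit description $A_{k,(n)}-\gmod \cong H^\bullet(\op{Gr}(k,n))-\gmod$ (Corollary~\ref{HGr}) and the fact that $H^\bullet(\op{Gr}(k,n))$ is graded local, admitting only trivial idempotents as central endomorphisms of $\Id$. Condition (iv) does not appear in the argument above; for the resulting $F \cong \hat{\pi}_n$ it holds automatically, since $\mathbb{L}Z_i \mathbb{L}\hat{\iota}_n \cong 0$ follows from $\hat{\pi}_n \hat{\epsilon}_i = 0$ via the adjunctions $(\mathbb{L}Z_i, \hat{\epsilon}_i)$ and $(\mathbb{L}\hat{\iota}_n, \hat{\pi}_n)$. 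It is presumably included in the hypotheses to preserve the symmetry with Proposition~\ref{charJW}.
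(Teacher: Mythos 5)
Your strategy is genuinely different from the paper's, and most of it is sound. The paper never factors through the Serre quotient: it argues directly on simple objects. Condition (iii) plus the Enright--Shelton criterion kills every $\hat L({\bf a})$ with a descent $a_i=1,\ a_{i+1}=0$ (exactly your short-cup condition); weak commutation with $\hat{\mathcal{E}},\hat{\mathcal{F}}$, the Gelfand--Kirillov filtration of Theorem \ref{filto} and the combinatorial formula for $[\hat{\mathcal{E}}\hat L({\bf a})]$ then force $F$ to send each surviving simple $\hat L(0^{n-k}1^k)$ to the corresponding simple of $A_{k,(n)}-\gmod$; and the proof closes by citing Soergel's uniqueness lemma \cite[p.90]{Soergel-HC} that an exact functor is determined up to isomorphism by this behaviour on simples. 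Your alternative --- factor $F\cong\tilde F\circ\hat\pi_n$ through the quotient, then show $\tilde F\cong\Id$ --- is attractive, and the factorization step, the transfer of (i) and (ii) to $\tilde F$, the elimination of shifts and multiplicities by idempotency, and the observation that (iv) is not needed for uniqueness (the paper's converse argument never invokes it either) are all correct.

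The genuine gap is the rigidity statement you rest the second half on: that every $U_q$-equivariant exact endofunctor of $\bigoplus_k A_{k,(n)}-\gmod$ is a direct sum of shifts of $\Id$. This is not what \cite[Proposition 5.26]{CR} asserts --- that result identifies certain categorifications with the minimal one and says nothing about endofunctors --- and the universality results of Chuang--Rouquier require a morphism of $2$-representations, whereas hypothesis (ii) only provides weak commutation $FH\cong HF$. As written, the load-bearing step is therefore unproven. The statement is true, but you must prove it, e.g.: by Eilenberg--Watts, $\tilde F_k\cong B_k\otimes_{H_k}(-)$ for a bimodule $B_k$ over $H_k=H^\bullet(\op{Gr}(k,n))$ that is projective, hence free (as $H_k$ is graded local), as a right module; $\tilde F_0=(-)\otimes W$ with $W\otimes W\cong W$ forces $W=\mC$ (the case $W=0$ kills $F$ on projective generators, contradicting $F\neq 0$); equivariance applied to the projective generator $\hat{\mathcal{E}}^{(k)}\mC\cong H_k$ gives $B_k=\tilde F_k(H_k)\cong H_k$ as a left module, and a dimension count then shows $B_k$ is free of rank one on both sides, i.e.\ $\tilde F_k$ is the twist by a graded algebra automorphism $\sigma$ of $H_k$; idempotency gives $\sigma^2=\sigma$, hence $\sigma=\op{id}$ since $H_k$ is commutative. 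With this lemma supplied, your proof is complete and is a legitimate alternative to the paper's argument.
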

\begin{proof}
First we prove that  $G:=\oplus_{k=0}^n{}_k\hat\pi_{n}$ has the required properties. The first two properties are clear by Theorem \ref{catJW}, and the third or fourth can be checked on the Grothendieck group. By Theorem \ref{catJW} this follows then from Proposition \ref{charJW}. For the converse let $L=\hat{L}({\bf a})$ be a simple object in $A_{k, n}-\gmod$. Without loss of generality concentrated in degree zero.
Then $QL\not=0$  is equivalent to $FL\not=0$. If we have now a simple module of the form $L:=L({\bf a})$ where ${\bf a}$ contains two consecutive numbers $a_i=1$, $a_{i+1}=0$ for some $i$ not of the form $d_1+d_2+d_3+\cdots +d_s$ for $1\leq s\leq r$, then $L$ is in the image of $\epsilon_i$ (\cite[Proposition 5.5]{ES}) and so annihilated by $F$ by the third property. Since $F$ commutes with the $U_q(\mathfrak{sl}_2)$-action, its restriction $F_k$ of $F$ to $A_{k, n}-\gmod$ has image in $A_{k, {\bf d}}-\gmod$. Since $Q$ is an idempotent and $F$ non-zero, we necessarily have $F_0={}_0\hat\pi_{\bf d}$. Thanks to Theorem \ref{filto} we see that $F$ annihilates the subcategory $S_1$ (that is rows $2-5$ in Figure \ref{fig:filtration}) that is all modules which are not of maximal Gelfand-Kirillov dimension. Using again the $U_q(\mathfrak{sl}_2)$-action, it follows that $F$ sends all the other simple modules to the corresponding simple module in $A_{k, {\bf d}}-\gmod$. This property together with the exactness determines $F$ uniquely up to isomorphism by \cite[p.90]{Soergel-HC}.
\end{proof}

\subsection{A simplified categorification and connection with Khovanov homology}
\label{normalizedJW}
Instead of working with fractional Euler characteristics one could renormalize the Jones-Wenzl projector $p_n$ for instance by multiplication with $[n]!$. This has the disadvantage that the projection is not an idempotent anymore. It might be possible to use this to categorify a renormalized Jones polynomial which would be much easier to compute than ours, but there is no hope this might be a functor valued colored tangle invariant. (Note that the categorification of the Jones polynomial presented in \cite{Khovcolor} has unfortunately similar deficiencies.)

\subsection{Example: Categorification of the colored unknot}

\subsubsection{The unknot for color $V_1$}
Let $L$ be the unknot. The Reshetikhin-Turaev invariant associated to $L$ (colored by the vector representation $V_1$) can be obtained as follows: we consider the $\mathcal{U}_q(\mathfrak{sl}_2)$ homomorphism $V_0\rightarrow V_1\otimes V_1 \rightarrow V_0$ given by coevaluation followed by evaluation, explicitly
$$1\mapsto v_1\otimes v_0-q v_0\otimes v_1\mapsto (q^{-1}+q)1$$
using the formulas \eqref{defcupcap}. Hence $q^{-1}+q$ is the (renormalized) Jones polynomial associated to the unknot colored by $V_1$. Now the associated functor $F(L)$ is $\mathbb{L}\hat{Z}_1\hat\epsilon_1$ which by Lemma \ref{ZonVermas} first produces a complex of length two
$$ \cdots\quad0\longrightarrow\hat M(01)\langle 1\rangle\longrightarrow\hat M(10)\longrightarrow0$$
with two graded Verma modules which then both get sent to a graded lift of trivial module $\hat{L}(10)$, but shifted in homological degree,
\begin{eqnarray}
\label{sum}
\hat{L}(10)\lsem-1\rsem\langle -1\rangle\oplus \hat{L}(10)\lsem1\rsem\langle 1\rangle.
\end{eqnarray}
Altogether we therefore considered a functor from a semisimple graded category (namely the graded version of the category of finite dimensional $\mathfrak{gl}_2$-modules with trivial central character) with unique simple object $\hat{L}(10)$ up to grading shift which then is mapped under $F(L)$ to \eqref{sum} which gives $-(q+q^{-1})[\hat{L}(10)]$ in the Grothendieck group. Hence it categorifies $-(q+q^{-1})$ in a quite trivial way.
On the other hand, we could realize $L$ as a pairing of a cup with a cap. The evaluation form from \eqref{scalar} gets categorified by the bifunctor $\Ext^*_{A_{1,1}-\gmod}(_-._-)$, see Proposition \ref{evaluationformcat} below; we have $\Ext^*_{A_{1,1}-\gmod}(\hat{L}(10),\hat{L}(10))\cong\mC[x]/(x^{-1})$ categorifying $1+q^{2}$.

\subsubsection{The colored unknot for color $V_2$}
We illustrate the categorification of the colored unknot with color $V_2$. The general case is similar and is sketched below. Cabling the unknot means that we first apply two nested cup functors to $\mC$. This produces the simple module $\hat{L}(1100)$. Applying the Jones-Wenzl projector yields the simple module $\hat{L}(2,2|0,2)$.  The evaluation from \eqref{scalar} gets categorified by the bifunctor $\Ext^*_{A_{2,(2,2)}}(_-,_-)$ up to a grading shift. Hence we compute
$\Ext^*_{A_{2,(2,2)}}(\hat{L}(2,2|0,2),\hat{L}(2,2|0,2))$.
Lemma \ref{ZonVermas} tells as, that $\hat{L}(1100)$ has a resolution of the form
\begin{eqnarray}
\hat{M}(0011)\langle 2\rangle\rightarrow\hat{M}(0101)\langle 1\rangle\oplus\hat{M}(1010)\langle 1\rangle\rightarrow \hat{M}(1100)
\end{eqnarray}
By applying the Jones-Wenzl projector and Theorem \ref{cattensor} we obtain a corresponding resolution of $\hat{L}(2,2|0,2)$ in terms of proper standard modules
\begin{eqnarray}
\label{resunkno}
\hat{\blacktriangle}(0,2|2,2)\langle 2\rangle\rightarrow\hat{\blacktriangle}(1,2|1,2)\langle 3\rangle\oplus\hat{\blacktriangle}(1,2|1,2)\langle 1\rangle\rightarrow \hat{\blacktriangle}(2,2|0,2)
\end{eqnarray}
On the other hand, $\hat{L}(2,2|0,2)$ is self-dual, hence dualizing the complex gives a coresolution in terms of dual proper standard modules $\op{d}\hat{\blacktriangle}({\bf a})=:\hat{\blacktriangledown}({\bf a})$.

Unfortunately, proper standard modules and their duals do not form dual families with respect to our semi-linear Ext-pairing. However, the standard modules and the dual proper standard modules form such a pairing,
\begin{equation}
\label{evaluationform}
\Ext^i_{A_{k,\bf d}}\left(\hat{\Delta}({\bf a})\langle\prod_{i=1}^rk_i(d_i-k_i)\rangle,\hat{\blacktriangledown}({\bf b})\right)
=
\begin{cases}
0 &\text{ if $i\not=0$,}\\
\delta_{{\bf a},{\bf b}}\mC &\text{ if $i=0$.}
\end{cases}
\end{equation}

Therefore we want to replace \eqref{resunkno} by a resolution in terms of standard objects. Note that the outer terms are already standard modules, $$\hat{\blacktriangle}(0,2|2,2)=\hat{\Delta}(0,2|2,2),\quad\quad\hat{\blacktriangle}(2,2|0,2)=\hat{\Delta}(2,2|0,2).$$
Combinatorially we have $[\hat{\blacktriangle}(1,2|1,2)]=\frac{1}{[2][2]}[\hat{\Delta}(1,2|1,2)]$ and we have the standard resolution from Corollary \ref{standardres}.

Now we can compute the graded Euler characteristic of our Ext-algebra by pairing with the dualized complex \eqref{resunkno}.
Then $\hat{\Delta}(2,0|2,2)$ contributes a morphism of internal and homological degree zero, whereas $\hat{\Delta}(0,2|2,2)$ contributes an extension of homological and integral degree $4$, hence together $1+q^4$.
On the other hand,  $\hat{\Delta}(1,2|1,2)$ contributes $\frac{1}{[2][2]}q^{-2}(q+q^3)^2=q^2$. Therefore, the graded Euler characteristics of the Ext-algebra equals
$$1+q^2+q^4=\lsem3\rsem.$$

The value of the unknot differs now only by a total shift in the grading. Since the cup functor is right adjoint to the cap functor up to the shift $\langle -1\rangle$, and there are two cups, we finally get $q^{-2}\lsem3\rsem=q^{-2}(1+q^2+q^4)=[3]$ as the graded Euler characteristic of the categorified unknot colored by $2$.

\subsubsection{The colored unknot for arbitrary color}
In general, we have to replace \eqref{resunkno} by a resolution with $2^n$ Verma modules following the recipe indicated in Figure \ref{fig:unknot} for $n=3$.

\begin{figure}[htb]
\begin{eqnarray*}
&\xymatrix{
& \hat{M}(100110) \langle 2 \rangle \ar[r] \ar[rd] & \hat{M}(110100) \langle 1
\rangle \ar[rd] &\\
\hat{M}(000111) \langle 3 \rangle \ar[r] \ar[ru] \ar[rd] & \hat{M}(010101) \langle
2 \rangle \ar[ru] \ar[rd] & \hat{M}(101010) \langle 1 \rangle \ar[r] &
\hat{M}(111000)\\
& \hat{M}(001011) \langle 2 \rangle \ar[r] \ar[ru] & \hat{M}(011001) \langle 1
\rangle \ar[ru]&\\}&\\
%& & &\\
&\text {Contributions to the graded Euler characteristics:}&\\
\\
&q^6 \quad\quad\quad\quad \frac{q^{-4}(q^2+q^4+q^6)^2}{[3][3]}=q^4 \quad\quad\quad\quad
\frac{q^{-4}(q+q^3+q^5)^2}{[3][3]}=q^2 \quad\quad\quad\quad 1&
\end{eqnarray*}
\caption{The resolution computing the colored unknot.}
\label{fig:unknot}
\end{figure}

\section{Categorified evaluation form and pairing}
A comparison of \eqref{evaluationform} with \eqref{scalar} using Theorem \ref{cattensor} and Corollary \ref{standards} implies
\begin{prop}
\label{evaluationformcat}
The evaluation form \eqref{scalar} is categorified via the bifunctor
$$\Ext^i_{A_{k,\bf d}}(_-,_-).$$
\end{prop}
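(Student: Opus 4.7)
The plan is to verify that the bifunctor $\Ext^*_{A_{k,\bf d}}(_-,_-)$ descends via graded Euler characteristics to a bilinear form on the Grothendieck group which coincides with the evaluation form \eqref{scalar} on the standard and dual standard bases; since these bases span $V_{\bf d}$, this suffices. The hint is already in the preceding comparison of \eqref{evaluationform} with \eqref{scalar}, so the proof is mostly a matter of packaging existing results.

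First I would identify the relevant basis elements in the Grothendieck group. By Corollary \ref{standards} the standard module $\hat{\Delta}({\bf a})$ corresponds under $\Phi_{\bf d}$ to $v_{\bf a}$, and by Theorem \ref{cattensor} the proper standard $\hat{\blacktriangle}({\bf b})$ corresponds to $v^{\bf b}$. Since the graded duality $\op{d}$ preserves simple modules together with their grading, it acts as the identity on the Grothendieck group, and hence $[\hat{\blacktriangledown}({\bf b})]=[\hat{\blacktriangle}({\bf b})]$ also corresponds to $v^{\bf b}$. So both sides of the desired identity are determined by their values on pairs $\bigl(\hat{\Delta}({\bf a}),\hat{\blacktriangledown}({\bf b})\bigr)$.

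Next I would establish \eqref{evaluationform}. The module $\hat{\Delta}({\bf a})$ is by definition, modulo the shift $\langle-\prod k_i(d_i-k_i)\rangle$, a projective object in the Serre subcategory $A_{k,{\bf d}}^{\leq L}$-gmod where $L=\hat L({\bf a})$. On the other hand $\hat{\blacktriangledown}({\bf b})=\op{d}\hat{\blacktriangle}({\bf b})$ has socle $\hat L({\bf b})$ with all other composition factors strictly smaller in the partial order. Combining these two facts with the characterization of proper costandard objects in a properly stratified structure (cf.\ \cite[Prop.~2.13]{MS2}), all higher Ext-groups vanish, and the degree-zero Hom is one-dimensional precisely when the head of $\hat{\Delta}({\bf a})$ matches the socle of $\hat{\blacktriangledown}({\bf b})$, i.e.\ when ${\bf a}={\bf b}$. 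Reading off the shift in \eqref{evaluationform} gives the graded Euler characteristic $\delta_{{\bf a},{\bf b}}\, q^{\prod_i k_i(d_i-k_i)}$, which when ${\bf a}={\bf b}$ (so $k_i=a_i$) equals $\prod_i q^{a_i(d_i-a_i)}=\langle v_{\bf a},v^{\bf b}\rangle$.

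The main obstacle is the bookkeeping of the grading shift $\langle\prod k_i(d_i-k_i)\rangle$: the ungraded Ext-orthogonality between standards and dual proper standards is entirely standard in properly stratified theory, but the precise shift requires matching the normalization built into the definitions \eqref{defstandard} and \ref{propstandard} with the normalizations in \eqref{pn}, \eqref{in} that were already exploited in the proof of Theorem \ref{cattensor}. Once this shift is pinned down, bilinearity (respectively semilinearity) of both sides in the categorical and Grothendieck-theoretic senses finishes the identification of the two forms.
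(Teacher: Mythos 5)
Your proposal is correct and follows essentially the same route as the paper: the paper's entire argument is the one-line remark that Proposition \ref{evaluationformcat} follows by comparing the Ext-orthogonality \eqref{evaluationform} between shifted standard modules and dual proper standard modules with the formula \eqref{scalar}, using Corollary \ref{standards} (standards $\leftrightarrow$ standard basis) and Theorem \ref{cattensor} (proper standards $\leftrightarrow$ dual standard basis). You simply spell out the details, including the properly stratified Ext-orthogonality (which the paper takes from \cite[Theorem 5]{Dlab}) and the matching of the grading shift $\langle\prod_i k_i(d_i-k_i)\rangle$ with the $q$-power in \eqref{scalar}.
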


The semilinear form $\langle , \rangle \colon V_n \times V_n \rightarrow \mathbb{C}(q) $ from \eqref{scalarprod} can be categorified via the following bifunctor

\begin{lemma}
\label{bilform}
The anti-bilinear form  $( _-,_-)$  is categorified by
\begin{eqnarray}
([M], [N])=\sum_{i,j}(-1)^j\DIM\HOM(M, \op{d}N\langle i\rangle[j])\; q^i,
\end{eqnarray}
\end{lemma}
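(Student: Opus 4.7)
My plan is to reduce the identity to Proposition~\ref{evaluationformcat}, exploiting that the contravariant duality $\op{d}$ categorifies the bar involution $q \mapsto q^{-1}$ on the Grothendieck group. By Theorem~\ref{cattensor} and Corollary~\ref{standards}, the classes $[\hat{\Delta}({\bf a})]$ and $[\hat{\blacktriangle}({\bf b})]$ correspond to the standard and dual standard basis vectors $v_{\bf a}$ and $v^{\bf b}$ respectively. It therefore suffices to check that the proposed right-hand side is anti-bilinear in $[M]$ and $[N]$ and that it reproduces the value $\delta_{{\bf a},{\bf b}}\,q^{\sum_i a_i(d_i-a_i)}$ on such pairs of basis representatives.

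The conceptual input is that $\op{d}$ preserves the isomorphism classes of the simple modules $\hat{L}(\,\cdot\,)$ but reverses the internal grading, so that $[\op{d}N] = \overline{[N]}$ on the completed Grothendieck group and, in particular, $\op{d}\,\hat{\blacktriangle}({\bf b}) \cong \hat{\blacktriangledown}({\bf b})$. Comparing \eqref{scalar} with \eqref{pairing}, the evaluation form $\langle_-,_-\rangle$ and the pairing $(_-,_-)$ agree on matched basis pairs and are related on general vectors by $(v,w) = \langle v, \bar{w}\rangle$. Substituting $\op{d}N$ for $N$ in the bifunctor of Proposition~\ref{evaluationformcat} (which categorifies $\langle_-,_-\rangle$) therefore produces exactly the categorification of $(_-,_-)$ claimed in the lemma.

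To verify the evaluation on basis pairs I would invoke \eqref{evaluationform}: writing $c := \sum_i a_i(d_i-a_i)$, the graded Ext group $\Ext^j(\hat{\Delta}({\bf a})\langle c\rangle, \hat{\blacktriangledown}({\bf b}))$ vanishes unless $j = 0$ and ${\bf a} = {\bf b}$, in which case it is one-dimensional. Translating the shift via $\HOM(M\langle r\rangle, X) = \HOM(M, X\langle -r\rangle)$, the sum in the lemma collapses to a single term in homological degree $0$ concentrated in the appropriate internal degree, yielding the diagonal value as a single power of $q$ while all other basis pairs contribute zero. The main obstacle is the careful bookkeeping of grading shifts: one must reconcile the placement of the head of $\hat{\Delta}({\bf a})$ in degree $-c$ coming from \eqref{defstandard} with the anti-commutation $\op{d}(N\langle s\rangle) = \op{d}(N)\langle -s\rangle$, and check that grading shifts in either variable of the right-hand side produce the bar-involution behavior required for anti-linearity on the Grothendieck group.
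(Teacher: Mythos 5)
Your argument is correct, but it follows a genuinely different route from the paper's. The paper's proof is a two-line reduction to an external result: it quotes \cite[Theorem 5.3 (d)]{FKS}, which categorifies the \emph{twisted} bilinear form $\langle_-,_-\rangle'$ by $\sum_{i,j}(-1)^j\dim\Hom(\op{d}N,\hat{T}_{w_0}M\langle i\rangle[j])q^i$ with $\hat{T}_{w_0}$ the twisting functor categorifying $\Pi_{w_0}$, and then applies the duality $\op{d}$ to both arguments and invokes the identifications $\Omega$ and $D$ of Remark \ref{annoying} to trade $\op{d}\hat{T}_{w_0}M$ for $M$. You instead stay entirely inside the paper: you take Proposition \ref{evaluationformcat} (the evaluation form \eqref{scalar} is categorified by $\Ext^*$), insert $\op{d}$ in the second slot, and verify the result on the spanning pairs $\bigl(\hat{\Delta}({\bf a}),\hat{\blacktriangle}({\bf b})\bigr)$ via the homological orthogonality \eqref{evaluationform} between standards and dual proper standards, using that $\op{d}$ reverses the internal grading and hence converts linearity into anti-linearity in the second variable. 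What your route buys is self-containedness (no appeal to the FKS twisting functor, which this paper never develops) and an explicit basis verification; what the paper's route buys is brevity and the fact that the delicate grading-shift bookkeeping is outsourced to the already-normalized statement in \cite{FKS}.

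One caution on your second paragraph: the comparison of \eqref{scalar} with \eqref{pairing} on matched basis pairs identifies $(v,w)$ with $\langle v,\sigma(w)\rangle$ for $\sigma$ the anti-linear involution fixing the \emph{dual standard} basis, whereas $\op{d}$ induces the anti-linear involution fixing the \emph{dual canonical} basis (it sends $\hat{\blacktriangle}({\bf b})$ to $\hat{\blacktriangledown}({\bf b})$, whose class differs from $[\hat{\blacktriangle}({\bf b})]$ in general). These two involutions do not coincide, so the identity $(v,w)=\langle v,\bar{w}\rangle$ with your bar should not be asserted as an abstract consequence of the basis comparison. This does not sink the argument, because your third paragraph computes the right-hand side directly on the pairs $\bigl(\hat{\Delta}({\bf a}),\op{d}\hat{\blacktriangle}({\bf b})\bigr)=\bigl(\hat{\Delta}({\bf a}),\hat{\blacktriangledown}({\bf b})\bigr)$ using \eqref{evaluationform} and checks the transformation behavior under grading shifts in each variable; that direct verification, together with the fact that the classes of standard and proper standard modules span by Theorem \ref{cattensor} and Corollary \ref{standards}, is what actually proves the lemma. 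Just make sure the heuristic identity is demoted to motivation and the basis check carries the weight.
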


\begin{proof}
In \cite[Theorem 5.3 (d)]{FKS}, the bilinear form  $\langle _-,_-\rangle'$  was categorified by
\begin{eqnarray}
\label{bilformcat}
\langle [M], [N]\rangle=\sum_{i,j}(-1)^j\DIM\HOM(\op{d}N, {\hat T_{w_0}}M\langle i\rangle)[j])\; q^i,
\end{eqnarray}
where $M$, $N$ are objects in $\oplus_{k=0}^n A_{\bf d}-\gmod$ or in the bounded above derived category, ${\hat T_{w_0}}$ is the categorification of $\tilde{\Pi}_{w_0}$ and $\op{d}$ is the graded duality categorifying $D$ from Remark \eqref{identifications}. Applying the duality to \eqref{bilformcat}, we take morphisms from $X:=\op{d}{\hat T_{w_0}M}$ to $Y:=\op{d}\op{d}N$. Under the identifications from Remark \ref{identifications}, this is equivalent to taking morphisms from $M$ to $\op{d}N$. Hence the lemma follows.
\end{proof}

\section*{Part III}
\section{Categorifications of the $3j$-symbol}
In this section we present three categorifications of the $3j$-symbols corresponding to their three different descriptions.

\subsection{In terms of an abstract graded vector space of extensions}
Recall from Section ~\ref{comb} the intertwiner $A_{i,j}^k \colon V_i \otimes V_j \rightarrow V_k$ with the associated functor
$\hat{A}_{i,j}^k := \hat{\pi}_{k} \circ \hat{\Phi}_{i,j}^k \circ \mathbb{L}\hat{\iota}_{i,j}$, and the standard objects \eqref{defstandard} corresponding to the standard basis by Corollary \ref{standards}.
It follows now directly from Lemma \ref{bilform} that the $3j$-symbol can be realized as a graded Euler characteristics as follows:

\begin{theorem}
\label{cat3j}
Let $i,j,k,r,s,t$ be natural numbers such that  $C_{i,j}^k(r,s,t)$ is defined. Then
$$C_{i,j}^k(r,s,t) = \sum_{\alpha} \sum_{\beta} (-1)^{\beta} \dim\left(\Ext_{A_{k,(k)}}^{\beta}\left(\hat{\Delta}(t|k),
\hat{A}_{i,j}^k \hat\Delta(r,i|s,j) \langle -\alpha \rangle\right))\right) q^{\alpha}.$$
\end{theorem}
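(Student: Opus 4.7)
My plan is to prove the identity by decategorifying the right-hand side on the level of (completed) Grothendieck groups and matching it to the defining formula $C_{i,j}^k(r,s,t) = \langle v_t, A_{i,j}^k(v_r\otimes v_s)\rangle$ from Definition~\ref{3j}. The first step is to check that the derived functor $\hat{A}_{i,j}^k = \hat{\pi}_k\circ\hat{\Phi}_{i,j}^k\circ\mathbb{L}\hat{\iota}_{i,j}$ categorifies the intertwiner $A_{i,j}^k$. This follows by composing three known categorifications: $\mathbb{L}\hat{\iota}_{i,j}$ descends to $\iota_i\otimes\iota_j$ and $\hat{\pi}_k$ descends to $\pi_k$ by Theorem~\ref{catJW}, while the cup-cap functor $\hat{\Phi}_{i,j}^k$ built in Section~\ref{catcupscaps} descends to $\Phi_{i,j}^k$ by the tangle-invariance statement in Theorem~\ref{catjones}. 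Combined with the identification $[\hat{\Delta}(r,i|s,j)] = v_r\otimes v_s$ from Corollary~\ref{standards}, this yields $[\hat{A}_{i,j}^k\hat{\Delta}(r,i|s,j)] = A_{i,j}^k(v_r\otimes v_s)$ in the completion of $V_k$.

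The second step is to recognise the alternating double sum on the right-hand side as the graded Euler characteristic of $\Ext^{\bullet}_{A_{t,(k)}}(\hat{\Delta}(t|k),\hat{A}_{i,j}^k\hat{\Delta}(r,i|s,j))$ and to identify the resulting bilinear pairing with the evaluation form \eqref{scalar} via Proposition~\ref{evaluationformcat}. The key input is \eqref{evaluationform}, which says that $\Ext^{\bullet}(\hat{\Delta}(t|k),\hat{\blacktriangledown}(t'|k))$ is one-dimensional, concentrated in homological degree $0$ and internal degree $t(k-t)$ when $t=t'$, and zero otherwise; this exactly reproduces $\langle v_t,v^{t'}\rangle = \delta_{t,t'}q^{t(k-t)}$ from \eqref{scalar}. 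By $\mZ[[q]][q^{-1}]$-bilinearity of the Ext-pairing and the identification $[\hat{\Delta}(t|k)] = v_t$ from Corollary~\ref{standards}, the right-hand side of the theorem then equals $\langle v_t, A_{i,j}^k(v_r\otimes v_s)\rangle$, which is $C_{i,j}^k(r,s,t)$ by Definition~\ref{3j}.

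The main obstacle will be the careful bookkeeping of all the grading shifts: the shift $\langle k(n-k)\rangle$ appearing in the inclusion functor \eqref{algebraic2}, the shift $\langle t(k-t)\rangle$ in \eqref{evaluationform}, and the factor $q^{-l(\mathbf{a})}$ in the projection formula \eqref{pn} must be tracked consistently so that the internal-degree counts on both sides match. In addition, since $\hat{A}_{i,j}^k\hat{\Delta}(r,i|s,j)$ will in general be an unbounded complex living in $D^{\triangledown}$ rather than $D^b$, one needs the completed Grothendieck group framework of \cite{AcharStroppel} (invoked already in the statement of Theorem~\ref{catJW}) to guarantee that the alternating sum in $\alpha$ and $\beta$ converges and that the bilinear Ext-pairing is well-defined on the completed Grothendieck group.
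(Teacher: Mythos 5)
Your proposal is correct and follows essentially the same route as the paper, which simply asserts that the theorem ``follows directly'' from the categorified pairing (Lemma \ref{bilform}/Proposition \ref{evaluationformcat}) together with the identification of standard modules with the standard basis (Corollary \ref{standards}) and the decategorification of $\hat{A}_{i,j}^k$ to $A_{i,j}^k$. Your more detailed bookkeeping of the grading shifts and the use of the completed Grothendieck group of \cite{AcharStroppel} fills in exactly the steps the paper leaves implicit.
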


\subsection{In terms of a complex categorifying the triangle counting}
Note that under the equivalence of Corollary \ref{HGr}, the standard module $\hat{\Delta}(t|n)$ is mapped to the projective (free) module $H^\bullet(\op{Gr}(t,n))$. To describe a categorification of the triangle counting from Theorem \ref{cat3j} it is enough to
describe the complex $\hat{A}_{i,j}^k \hat\Delta(r,i|s,j)$ of graded  $H^\bullet(\op{Gr}(t,n))$-modules and then take morphisms from $H^\bullet(\op{Gr}(t,n))$ to it. The result is a complex $K_\bullet=K_\bullet(i,j,k,r,s,t)$ of graded vector spaces and we are interested in its graded Euler characteristics.

\begin{theorem}[Categorified triangle counting]\hfill\\
\label{trianglecat}
Each graded vector space $K_i$ in $K_\bullet(i,j,k,r,s,t)$ comes along with a distinguished homogeneous basis $\cB_i$ labeled by certain weighted signed triangle arrangements such that the weight agrees with the homogeneous degree and the sign agrees with the parity of the homological degree and such that the elements from $\bigcup_i\cB_i$ are in bijection to all possible signed weighted $(i,j,k,r,s,t)$-arrangements.
In particular, its graded Euler characteristic agrees with the alternating graded dimension formula in Theorem \ref{cat3j}.
\end{theorem}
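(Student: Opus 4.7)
The plan is to unravel the definition $\hat{A}_{i,j}^k = \hat\pi_k \circ \hat\Phi_{i,j}^k \circ \mathbb{L}\hat\iota_{i,j}$ applied to $\hat\Delta(r,i|s,j)$ and track a Verma-type filtration through each factor. By Proposition~\ref{acyclic}, $\mathbb{L}\hat\iota_{i,j}\hat\Delta(r,i|s,j)$ is concentrated in homological degree zero, and by Proposition~\ref{prop50} it carries a graded Verma flag whose subquotients are exactly the $\hat M({\bf d})\langle b({\bf d})\rangle$, with ${\bf d}$ running over $\{\up,\down\}$-sequences of length $i+j$ having $r$ ups in the first $i$ entries and $s$ ups in the last $j$ entries. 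I interpret each such ${\bf d}$ as a choice of $\up/\down$-labels on the bottom $i+j$ endpoints of the diagram \eqref{intertwiner}.

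Next, $\hat\Phi^k_{i,j}$ is a composition of $z=(i+j-k)/2$ nested cap functors, which by Lemma~\ref{ZonVermas} are exact on Vermas up to explicit shifts; I can therefore apply them subquotient by subquotient. A cap kills a Verma whose two capped entries agree (no consistent orientation exists) and otherwise turns it into a shifted Verma, with grading shift $-1$ and an additional homological shift of $-1$ precisely in the $\up\down$ case (matching the $-q^{-1}$ in \eqref{defcupcap}). Identifying a consistent $\up\down$ or $\down\up$ labeling at the endpoints of a cap with an orientation of that cap, iteration through all $z$ caps produces exactly the oriented cap-diagrams at the base of the triangle together with the induced orientations of the $x$- and $y$-side strands. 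The resulting module in $A_{\star,k}$-gmod again carries a graded Verma flag, now indexed by pairs (bottom sequence ${\bf d}$, orientations of the $z$ caps) that pass the consistency check; the cumulative grading shift is $b({\bf d})$ decreased by the number of clockwise caps, and the homological degree is the number of clockwise caps modulo $2$.

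Applying $\hat\pi_k$ with its normalizing shift $\langle -t(k-t)\rangle$ then sends each surviving $\hat M({\bf d}')$ (where ${\bf d}'$ is the length-$k$ top sequence, with $t$ ups) to the unique simple object $\hat\blacktriangle(t|k)=\hat L(t|k)\in A_{t,(k)}$-gmod, with a further grading shift dictated by \eqref{pn}. The combinatorial datum carried by a surviving subquotient -- labels on all three sides plus orientations of the caps -- is exactly the information defining an oriented $(i,j,k,r,s,t)$-triangle arrangement. Finally, Corollary~\ref{HGr} identifies $A_{t,(k)}$-gmod with $H^\bullet(\op{Gr}(t,k))$-gmod, under which $\hat\Delta(t|k)$ becomes the rank-one free module; consequently $\Hom(\hat\Delta(t|k),{-})$ is a graded forgetful functor on the complex, and each subquotient contributes a single one-dimensional graded piece to exactly one $K_i$, sitting in the internal and homological degrees recorded above. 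This identifies the desired basis $\bigcup_i\cB_i$ of $K_\bullet$ with the set of oriented triangle arrangements.

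The principal obstacle is the shift bookkeeping: one must check that the accumulation of $b({\bf d})$ from Proposition~\ref{prop50}, the per-cap $-1$'s from Lemma~\ref{ZonVermas}, the projector normalization $\langle -t(k-t)\rangle$, and the weight in \eqref{pn} combines on each arrangement to the exponent $\gamma_a-a-t(k-t)$ predicted by Theorem~\ref{ClebschGordon}(ii), and that the induced sign matches $(-1)^a$, i.e. the homological parity. This calculation mirrors the proof of Theorem~\ref{ClebschGordon}(ii): each arc contributes an identifiable piece of $\gamma_a$ that can be traced to either a $b$-count of the initial Verma, a cap-induced shift, or the $\pi$-normalization. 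Once this match is verified, comparison with Theorem~\ref{cat3j} confirms that the graded Euler characteristic of $K_\bullet$ equals $C_{i,j}^k(r,s,t)$, and the bijection of $\bigcup_i\cB_i$ with weighted signed $(i,j,k,r,s,t)$-arrangements is the one produced above.
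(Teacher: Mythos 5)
Your proposal is correct and follows essentially the same route as the paper: apply $\mathbb{L}\hat\iota$ to the standard module and use its Verma flag (Propositions \ref{acyclic} and \ref{prop50}), push each Verma subquotient through the cap functors via Lemma \ref{ZonVermas} so that survivors correspond to consistently oriented arrangements, project to the simple/one-dimensional module over $H^\bullet(\op{Gr}(t,k))$, and match the accumulated grading and homological shifts to the weight and sign of the arrangement. The paper's proof is in fact terser than yours on the shift bookkeeping, which it simply asserts; your outline of how that verification mirrors the proof of Theorem \ref{ClebschGordon}(ii) is the right way to fill it in.
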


\begin{proof}
We verify the theorem by showing that a systematic counting of the triangles from Section \ref{comb} coincides with a step by step construction of the complex:
\begin{itemize}
\item
Start with a standard module  $\hat{\Delta}(r,i|s,j)$ and apply the functor $\mathbb{L}\hat{\iota}_{(i,j)}$. The result will be a module (Proposition \ref{acyclic}) with a graded Verma flag given by the combinatorics in \eqref{in}.
\item The usual passage from short exact sequences to long exact homology sequences allows us to restrict ourselves to understanding the functors on each graded Verma module appearing in the filtration.
\item Fix such a graded Verma module. This corresponds then to fixing the arrows at the two bottom sides of the triangle in Figure~\ref{fig:example3j}. Applying the cap functors maps such a Verma module either to zero or to a Verma module shifted by one in the homological degree, as described in Lemma~\ref{ZonVermas}.
\item In our combinatorial picture it survives precisely when the cap creates an oriented arc. The highest weight of the result can be read of the arrow configuration at the top of the triangle. Applying then the projector $\hat{\pi}_{(i,j)}$ maps each Verma module to a simple module which we identify with the unique simple 1-dimensional module for the corresponding ring $H^\bullet(\op{Gr}(t,k))$. Hence if the result is non-trivial, it corresponds to precisely one signed oriented line arrangement from Theorem \ref{ClebschGordon} and each signed oriented line arrangement is obtained precisely once.
\item The signs match the homological degree shifts, whereas the $q$-weights match the grading degree shifts.
\end{itemize}
Hence, our diagram counting corresponds precisely to the basis vectors of a complex of graded vector spaces whose graded Euler characteristic agrees with the alternating graded dimension formula in Theorem \ref{cat3j}.
\end{proof}

\subsection{In terms of a positive categorification using the basis of projectives}
Instead of working in the basis of standard modules, we could work in the basis of projectives. In this case the categorification is easier and explains the positive formulas from  Theorem \ref{positivity}.
Let $\hat{P}(r,i|s,j)$ be the projective cover of  $\hat{\Delta}(r,i|s,j)$.
The binomial coefficients arising in Theorem \ref{positivity} gets categorified in terms of cohomologies of Grassmannians:

\begin{theorem}[Integrality]
Let  ${D}_{i,j}^k(r,s,t)$ be as in Theorem \ref{positivity}. Then
\begin{eqnarray}
&&{D}_{i,j}^k(r,s,t)\nonumber\\
&=&\sum_{\alpha} \dim \Hom_{H^*(\op{Gr}(t,k))-\gmod}(H^*(\op{Gr}(t,k)),\hat{A_{i,j}^k} \hat{P}(r|i,s|j) \langle \alpha \rangle) q^{\alpha}\nonumber\\
&=&\sum_{\alpha} \dim \hat{A_{i,j}^k} \hat{P}(r,s) \langle -\alpha \rangle q^{\alpha}.
\label{Dformula}
\end{eqnarray}
Hence ${D}_{i,j}^k(r,s,t)$ is the graded dimension of the vector space $\hat{A_{i,j}^k} \hat{P}(r,s)$.
\end{theorem}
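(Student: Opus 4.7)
The plan is to lift Definition~\ref{3j} of the $3j$-symbol $D_{i,j}^k(r,s,t)=\langle v_t, A_{i,j}^k(v_r\spadesuit v_s)\rangle$ to the categorical level and exhibit the right hand side of \eqref{Dformula} as a categorical incarnation of this pairing evaluated on projective modules. The first step is to identify $\hat P(r|i,s|j)\in A_{r+s,(i,j)}\text{-}\gmod$ as a categorical lift of the twisted canonical basis vector $v_r\spadesuit v_s$. This identification was observed in the small case $V_1\otimes V_1$ just after Example~\ref{example11}; in general it is forced by combining Proposition~\ref{transfmatrix} (expressing the class of an indecomposable projective as a unitriangular sum over proper standard classes) with Theorem~\ref{cattensor} (which identifies proper standards with the dual standard basis). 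The upper-unitriangular transition matrix matches the Definition~\ref{twistedcanbasis} formula for $v_r\spadesuit v_s$ in the standard basis, so the two bases agree on the nose up to this identification.

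The second step is to check that applying $\hat A_{i,j}^k=\hat\pi_k\circ\hat\Phi_{i,j}^k\circ\mathbb L\hat\iota_{(i,j)}$ to $\hat P(r|i,s|j)$ produces an honest (non-derived) object of $A_{k,(k)}\text{-}\gmod$. Every indecomposable projective has a standard filtration, and by Proposition~\ref{acyclic} the inclusion functor $\mathbb L\hat\iota_{(i,j)}$ is acyclic on objects with such filtrations, so it reduces to $\hat\iota_{(i,j)}$. After that, the cap functors composing $\hat\Phi_{i,j}^k$ (described by Lemma~\ref{ZonVermas}) act on a Verma-filtered module by either killing a Verma module or sending it to a Verma module shifted in grading and homological degree; the exact quotient $\hat\pi_k$ is then harmless. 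One needs to verify that the surviving summands all sit in the same homological degree (up to a uniform sign $(-1)^{(i+j-k)/2}$ coming from the $(i+j-k)/2$ nested caps of \eqref{intertwiner}), so that the resulting complex collapses to a single graded module.

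With that in hand, under Corollary~\ref{HGr} we have $A_{k,(k)}\text{-}\gmod\cong H^\bullet(\op{Gr}(t,k))\text{-}\gmod$, the algebra acts on itself as the free rank-one module, and $\hat\Delta(t|k)$ corresponds to $H^\bullet(\op{Gr}(t,k))$. Hence Lemma~\ref{bilform}, applied with the projective module $\hat A_{i,j}^k\hat P(r|i,s|j)$ on one side, degenerates: all higher Ext-groups vanish and $\Hom(H^\bullet(\op{Gr}(t,k)),-)$ simply extracts the underlying graded vector space. This gives both equalities of \eqref{Dformula}, and positivity is automatic. The main obstacle will be the third step: matching the resulting graded dimension with the closed expression of Theorem~\ref{positivity}. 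This amounts to bookkeeping the $q$-shifts produced by the comultiplication conventions in Theorem~\ref{projfunc}, the normalizing shifts $\langle\mp k(n-k)\rangle$ in the definitions of $\hat\iota$ and $\hat\pi$, the grading shifts in Lemma~\ref{ZonVermas}, and the multiplicities in the Verma flag of Proposition~\ref{prop50}. Summing these contributions must reproduce $q^{\eta_1}$ or $q^{\eta_2}$ (together with the binomial factors $\lsem\tfrac{k-t+s}{s}\rsem\lsem\tfrac{t-s+i-r}{t-s}\rsem$ etc.), which one expects by comparison with the proof of Theorem~\ref{positivity}, where those same binomials arise from the relations in Lemma~\ref{adjointnessform}.
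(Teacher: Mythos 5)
Your overall strategy coincides with the paper's: identify $\hat{P}(r,i|s,j)$ with the twisted canonical basis vector $v_r\spadesuit v_s$ (this is Proposition \ref{twistedcan}), show that $\hat{A}_{i,j}^k\hat{P}(r,i|s,j)$ is an honest module concentrated in a single homological degree, and then evaluate the pairing of Lemma \ref{bilform} with $\Ext$ replaced by $\Hom$ so that no signs appear. The gap is in the middle step. You propose to establish the concentration by filtering $\mathbb{L}\hat{\iota}_{(i,j)}\hat{P}$ by Vermas (fine, via Proposition \ref{acyclic}) and then applying Lemma \ref{ZonVermas} cap by cap, asserting that all surviving subquotients acquire a \emph{uniform} homological shift determined by the $\frac{i+j-k}{2}$ nested caps. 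That assertion is false: by Lemma \ref{ZonVermas} a cap produces a homological shift only when it meets the pattern $a_i=1$, $a_{i+1}=0$, i.e.\ only for a clockwise-oriented arc, and the number $a$ of clockwise arcs varies over the surviving Verma subquotients of one and the same projective. This variation is precisely the source of the alternating sign $(-1)^a$ in Theorem \ref{ClebschGordon} and the reason the standard-basis $3j$-symbol is categorified by a genuine complex spread over many homological degrees (Theorem \ref{trianglecat}). So the associated graded of your construction lives in several homological degrees, and no bookkeeping of the Verma flag alone can show that the total complex collapses to a module; the collapse is invisible at that level.

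The paper closes this gap structurally: $\hat{A}_{i,j}^k$ is by construction a composite of the (derived) Bernstein--Gelfand inclusion, derived Zuckerman functors, the Enright--Shelton equivalence, and a final exact quotient; each of the first three sends projective objects to projective objects (so the derived functors are underived on $\hat{P}$), and the last is exact, whence $\hat{A}_{i,j}^k\hat{P}$ is a module. With that in hand your final step is also heavier than necessary: once $[\hat{P}(r,i|s,j)]=v_r\spadesuit v_s$ and $\hat{A}_{i,j}^k\hat{P}$ is concentrated in one degree, the identity \eqref{Dformula} is just the decategorified pairing $\langle v_t, A_{i,j}^k(v_r\spadesuit v_s)\rangle$, which is the definition of $D_{i,j}^k(r,s,t)$; there is no need to re-derive the closed binomial expression of Theorem \ref{positivity}.
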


\begin{proof}
The second equation follows obviously from the first. By Proposition \ref{twistedcan} below projective modules correspond to the twisted canonical basis. Now $\hat{A_{i,j}^k} $ is by definition a composition of functors, where we first have a graded lift of the Bernstein-Gelfand functor from the Harish-Chandra category to category $\mathcal{O}$, taking projective objects to projective objects, then a graded Zuckerman functor which takes projective objects to projective objects in the parabolic
subcategory.  The next part of the functor is the Enright-Shelton equivalence of categories which again takes projective objects to projective objects. The last part of the functor is at least exact on the abelian category. Hence $\hat{A_{i,j}^k} \hat{P}(r,s)$ is a module (that means a complex concentrated in degree zero).
Using Lemma \ref{bilform}, we evaluate the bilinear form categorically by taking Ext. Since $\hat{P}(r,s)$ is a projective object and the functor $\hat{A_{i,j}^k}\hat{P}(r,s)$ a complexes concentrated in one homological degree we can replace $\Ext$ by $\Hom$ and the statement follows.
\end{proof}

The indecomposable projective objects correspond to the twisted canonical basis:
\begin{prop}
\label{twistedcan}
We have the following equality $$[\hat{P}(r,i|s,j)] = v_r \spadesuit v_s\quad\in V_i\otimes V_j.$$
Hence, the formula in Definition \ref{twistedcanbasis} determines the constituents of a standard flag.
\end{prop}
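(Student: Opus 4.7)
The plan is to prove the Grothendieck group identity $[\hat{P}(r,i|s,j)] = v_r \spadesuit v_s$ by propagating the identity from a simple base case via the categorified divided power functors, and then reading off the standard-flag constituents from Definition \ref{twistedcanbasis} via Corollary \ref{standards} (which identifies classes of standard modules $\hat{\Delta}$ with standard basis vectors in $V_i \otimes V_j$).

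First I would treat the base case $(r,s) = (i,0)$. The label $(i,0)$ is maximal with respect to the partial order from Section \ref{fatVermas} among all labels $(r',s')$ in its weight space (as $i$ is the largest possible value of the first coordinate when $r' + s' = i$), so the simple, standard, and projective cover all coincide: $\hat{P}(i,i|0,j) = \hat{\Delta}(i,i|0,j) = \hat{L}(i,i|0,j)$. By Corollary \ref{standards} its class is $v_i \otimes v_0$, matching $v_i \spadesuit v_0 = E^{(0)} F^{(0)} (v_i \otimes v_0)$ from Definition \ref{twistedcanbasis}. Next I would apply the categorified divided power functors $\hat{\mathcal{E}}^{(s)}$ and $\hat{\mathcal{F}}^{(i-r)}$ from Remark \ref{divpowers} to $\hat{P}(i,i|0,j)$. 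Since these functors are exact direct summands of tensoring with tensor powers of the natural representation, they send projective modules to direct sums of projective modules; combining Theorem \ref{projfunc} with Lemma \ref{action}, the composite $\hat{\mathcal{E}}^{(s)} \hat{\mathcal{F}}^{(i-r)} \hat{P}(i,i|0,j)$ has Grothendieck class $E^{(s)} F^{(i-r)} (v_i \otimes v_0) = v_r \spadesuit v_s$ when $r+s \leq j$, with the parallel composition $\hat{\mathcal{F}}^{(i-r)} \hat{\mathcal{E}}^{(s)}$ handling the case $r+s \geq j$.

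The main obstacle is to show the result is the single indecomposable projective $\hat{P}(r,i|s,j)$ rather than a direct sum including additional summands $\hat{P}(r',i|s',j)$ with $(r',s') > (r,s)$. Since the leading coefficient of $v_r \spadesuit v_s$ at $v_r \otimes v_s$ is $1$ (from $p'=0$ in Definition \ref{twistedcanbasis}) and no other $\hat{P}(r',i|s',j)$ with $(r',s') > (r,s)$ contains $\hat{\Delta}(r,i|s,j)$ in its standard filtration, the projective $\hat{P}(r,i|s,j)$ appears exactly once as a summand. The delicate point is to rule out extra summands by matching the remaining coefficients $q^{p(p-s+j)} {r+p \brack p}$ with the standard-filtration multiplicities $[\hat{P}(r,i|s,j) : \hat{\Delta}(r+p, i|s-p, j)]$. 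To close this gap I would use the properly stratified structure of $A_{k,(i,j)}$-gmod together with the parabolic induction description of Remark \ref{standardsinduced} and Soergel's endomorphism theorem (as in Theorem \ref{Endstandards}) to analyze directly how the divided power functors act on our specific base projective. An alternative and probably cleaner route, bypassing the decomposition analysis altogether, is to invoke the Koszulity of Proposition \ref{Koszul} and the equivalence with Harish-Chandra bimodules from Theorem \ref{BG} to identify indecomposable projectives in $A_{k,(i,j)}$-gmod with Lusztig's canonical basis on $V_i \otimes V_j$ via Kazhdan-Lusztig theory, which via Remark \ref{annoying} is precisely the twisted canonical basis. The final clause on the constituents of the standard flag then follows immediately from the matching of Grothendieck classes with the explicit expansion in Definition \ref{twistedcanbasis}.
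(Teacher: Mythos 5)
Your overall strategy---generate $\hat{P}(r,i|s,j)$ from the extreme projective $\hat{P}(i,i|0,j)$ by the categorified divided powers and read off the Grothendieck class---differs from the paper's proof, which simply quotes the ungraded ($q=1$) statement from \cite[p.223]{BFK} and then obtains the graded version by induction using Remark \ref{divpowers} and Theorem \ref{cattensor} (the only remaining issue there being that the graded lifts of the standard subquotients of $\hat{\mathcal{E}}P$ determine those of $\hat{\mathcal{E}}^{(k)}P$). Your base case is correct: $(i,0)$ is maximal for the partial order, so $\hat{P}(i,i|0,j)=\hat{\Delta}(i,i|0,j)$, whose class is $v_i\otimes v_0=v_i\spadesuit v_0$ by Corollary \ref{standards}; and the exact divided-power functors do send projectives to projectives with the predicted classes in the Grothendieck group, by Remark \ref{divpowers} and Lemma \ref{action}.

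However, the step you yourself flag as delicate is a genuine gap, and your first proposed repair does not close it. Writing $Q=\hat{\mathcal{E}}^{(s)}\hat{\mathcal{F}}^{(i-r)}\hat{P}(i,i|0,j)$, you correctly deduce that $[Q]=v_r\spadesuit v_s$ and that $\hat{P}(r,i|s,j)$ occurs exactly once as a direct summand of $Q$ (since it is the only indecomposable projective in that weight space whose standard flag contains $\hat{\Delta}(r,i|s,j)$, and the coefficient of $v_r\otimes v_s$ in $v_r\spadesuit v_s$ is $1$). But to conclude $[\hat{P}(r,i|s,j)]=v_r\spadesuit v_s$ you must show the complementary summand is zero, i.e.\ that $Q$ is indecomposable. ``Matching the remaining coefficients with the standard-filtration multiplicities of $\hat{P}(r,i|s,j)$'' is circular: those multiplicities are precisely what the proposition computes (this is its ``Hence'' clause), so they cannot be used as input. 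Your two fallback routes (the properly stratified structure together with Remark \ref{standardsinduced} and Soergel's endomorphism theorem; or Koszulity plus a Kazhdan--Lusztig-type identification of indecomposable projectives with the canonical basis, transported through Remark \ref{annoying}) are only named, not carried out; the second is essentially the content of the result in \cite{BFK} that the paper cites, so invoking it amounts to invoking the reference rather than proving the statement. As written, the argument is incomplete at exactly the one nontrivial point.
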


\begin{proof}
When ignoring the grading, that means setting $q=1$, this was shown in \cite[p.223]{BFK}. The graded version follows then by induction from Remark~\ref{divpowers} and Theorem~\ref{cattensor}, since the graded lifts of the standard objects appearing in $\hat{\mathcal{E}}P$ uniquely determine the graded lifts in $\hat{\mathcal{E}}^{(k)}P$ for any $k$ an graded projective module $P$.
\end{proof}

\section{Projective resolutions of standard modules}
In this section we compute projective resolutions of standard modules. To keep formulas simpler, we do this only in the non-graded version. It generalizes in a straight-forward (but tedious) way to the graded (or quantum) version.
\begin{theorem}
\label{projinGroth}
We have the following equality in the Grothendieck group:
\begin{eqnarray*}
[\Delta(r,i|s,j)]&=&
\begin{cases}
\sum_{\gamma \geq 0}
(-1)^{\gamma} \binom{r+\gamma}{\gamma}\left[P(r+\gamma,i|s-\gamma,j)\right]&\text{ if $r+s\leq j$}\\
\sum_{\gamma \geq 0}(-1)^{\gamma} \binom{s-j+\gamma}{\gamma}\left[P(r+\gamma,i|s-\gamma,j)\right]&
\text{ if $ r+s \geq j$.}
\end{cases}
\end{eqnarray*}
The above formula determines the Euler characteristic of
\begin{eqnarray}
\label{Extformula}
\Ext_{A_{(i,j)}-\Mod}\left(\Delta(r,i|s,j), L({r+\gamma,i|s-\gamma,j})\right).
\end{eqnarray}
\end{theorem}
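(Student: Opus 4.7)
The plan is to derive this essentially by decategorifying a formula already established in Part I. The two crucial identifications are in place: under the isomorphism $\Phi_{\bf d}$ of Theorem \ref{cattensor}, Corollary \ref{standards} tells us that $[\Delta(r,i|s,j)]$ corresponds to the standard basis vector $v_r \otimes v_s \in V_i \otimes V_j$, while Proposition \ref{twistedcan} identifies $[P(r,i|s,j)]$ with the twisted canonical basis vector $v_r \spadesuit v_s$. Therefore the desired identity in the Grothendieck group is nothing but the expansion of the standard basis in the twisted canonical basis, which is precisely the content of Proposition \ref{inversetwistedcanbasis}, specialized to $q=1$. Since $\Phi_{\bf d}$ is $\mathbb{Z}[q,q^{-1}]$-linear and both sides of the expansion are integral (the $q$-powers and quantum binomials specialize to ordinary binomials at $q=1$), the identity transfers to the ungraded Grothendieck group of $A_{(i,j)}-\Mod$ as stated.

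To deduce the second assertion about $\Ext$, I would invoke the standard principle that the alternating sum of composition multiplicities in a projective resolution computes $\Ext$-Euler characteristics. More precisely, let $P^\bullet \surj \Delta(r,i|s,j)$ be a minimal projective resolution (finite: standard modules in our properly stratified structure have finite projective dimension, cf.\ Remark \ref{standardsinduced} and the arguments of \cite{MS2}). Expanding each $P^k$ in the basis of indecomposable projectives and taking the alternating sum yields the left-hand coefficients just computed; on the other hand, for each indecomposable $P(r+\gamma,i|s-\gamma,j)$ with simple top $L(r+\gamma,i|s-\gamma,j)$, the multiplicity $[P^k : P(r+\gamma,i|s-\gamma,j)]$ equals $\dim \Hom(P^k, L(r+\gamma,i|s-\gamma,j))$, which by minimality of the resolution equals $\dim \Ext^k(\Delta(r,i|s,j), L(r+\gamma,i|s-\gamma,j))$. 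Taking the alternating sum in $k$ recovers the coefficient computed in the first part, which yields precisely \eqref{Extformula}.

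The only place where a mild obstruction arises is the finiteness of the projective resolution of $\Delta(r,i|s,j)$; without it, the Euler characteristic in \eqref{Extformula} would be an infinite alternating sum that may fail to agree naively with the Grothendieck class. This is however handled by the properly stratified structure: the standard objects in $A_{(i,j)}-\Mod$ are quotients of indecomposable projectives by submodules admitting filtrations with subquotients among \emph{larger} standards, so induction on the partial order (as in the proof of Proposition \ref{acyclic}) reduces everything to projectives themselves. The bulk of the work therefore lies not in the proof but in having previously set up the precise dictionary between the algebraic objects $\Delta, P$ on one side and the bases $v_r\otimes v_s$, $v_r\spadesuit v_s$ on the other. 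Once that is in hand, the theorem follows by pure decategorification.
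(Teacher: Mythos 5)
Your proposal is correct, and for the first assertion it is in fact the route the paper itself flags as the ``direct'' one (``follows directly from Corollary \ref{standards} and Proposition \ref{twistedcan}'') before deliberately writing out a different, \emph{alternative} proof. The paper's written proof instead constructs the explicit minimal projective resolution of $\Delta(r,i|s,j)$ (Theorem \ref{resolution}), computed inductively via the filtrations of $\mathcal{E}^{(s)}\Delta(r,i|0,j)$ in Proposition \ref{exactforstandards}, and then evaluates the alternating sums of the multiplicities $a_{n,m}^{r,s}$ using the combinatorial identities of Lemmas \ref{binomid} and \ref{idforas}. The trade-off is clear: your decategorification argument (Proposition \ref{inversetwistedcanbasis} at $q=1$, transported through $\Phi_{\bf d}$) is shorter and conceptually cleaner, but it only yields the Euler characteristic of \eqref{Extformula}; the paper's explicit minimal resolution additionally determines the individual $\Ext^k$-groups and is needed anyway for the later computations (colored unknot, comparison with \cite{W}), and it simultaneously furnishes an independent categorical verification of Proposition \ref{inversetwistedcanbasis}. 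Your treatment of the second assertion is sound: the finiteness of the projective dimension of standard objects, which you correctly identify as the one point needing care, follows exactly by the downward induction on the partial order via standard filtrations of projectives that the paper uses in Proposition \ref{acyclic} (and which Theorem \ref{resolution} makes explicit, with length $\min\{s, \dots\}$). One small remark: carried out literally, your derivation produces the binomial $\binom{j-s+\gamma}{\gamma}$ in the case $r+s\geq j$, as does the paper's own Lemma \ref{binomid}(2); the coefficient $\binom{s-j+\gamma}{\gamma}$ printed in the theorem statement appears to be a typo, so the discrepancy is with the statement, not with your argument.
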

The first statement of this theorem follows directly from Corollary \ref{standards} and Proposition \ref{twistedcan}, but we will give an alternative proof at the end of this section. To verify the second statement it is enough to construct a  projective resolution of the standard modules with the corresponding projectives as indicated in the theorem.
Abbreviate ${}_{r}\iota_{i+j}\Delta(r,i|0,j)\in A_{r,i+j}-\Mod$ by $\Delta(r,i|0,j)$, then the following holds:
\begin{prop}\hfill\\
\label{exactforstandards}
\begin{enumerate}[(i)]
\item Let $0\leq s\leq j$. The projective module $\mathcal{E}^{(s)}\Delta(r,i|0,j)$  has a filtration
$$M_{s+1}\supset M_{s}\supset \cdots\supset M_1\supset M_0=\{0\}$$ such that
$$ M_{u}/M_{u-1}\cong
\begin{cases}
0&\text{if $u<r+s+1-i$},\\
\Delta(r+s+1-u|i, u-1|j)^{\oplus \binom{r+s+1-u}{s+1-u}}&\text{otherwise.}
\end{cases}
$$
\item Let $r\leq i$. The projective module  $\mathcal{F}^{(i-r)}\Delta(i,i|s,j)$  has a filtration
$$M_{i-r+1}\supset M_{i-r}\supset \cdots\supset M_1\supset M_0=\{0\}$$
such that
$$ M_{u}/M_{u-1}\cong
\begin{cases}
0&\text{if $u<i-r-s+1$},\\
\Delta(i-u+1|i, r+s-1-i+u|j)^{\oplus \binom{i+j-r-s-u+1}{j-s}}&\text{otherwise.}
\end{cases}
$$
\end{enumerate}
\end{prop}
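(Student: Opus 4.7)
The plan is to derive both filtrations from a single idea: $\mathcal{E}^{(s)}$ and $\mathcal{F}^{(i-r)}$ interact with parabolic induction via a $\mathfrak{p}$-module filtration of the symmetric power of the (dual) natural representation, and the induced filtration on the standard module is precisely the one in the statement.

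For part (i), first I would use Remark \ref{standardsinduced} to write $\Delta(r,i|0,j)\cong U(\mathfrak{gl}_n)\otimes_{U(\mathfrak{p})}\bigl(P(r|i)\boxtimes P(0|j)\bigr)$, where $\mathfrak{p}\subset\mathfrak{gl}_n$ is the standard parabolic with Levi $\mathfrak{gl}_i\oplus\mathfrak{gl}_j$. Since $\mathcal{E}^{(s)}$ is (up to block projection) the functor of tensoring with the $s$-th symmetric power $S^sV$ of the natural representation $V=\mathbb{C}^n$, the standard projection formula gives
$$\mathcal{E}^{(s)}\bigl(U(\mathfrak{gl}_n)\otimes_{U(\mathfrak{p})} N\bigr)\cong U(\mathfrak{gl}_n)\otimes_{U(\mathfrak{p})}\bigl(S^sV\otimes N\bigr).$$
As a $\mathfrak{p}$-module, $V$ fits into a short exact sequence $0\to V_i\to V\to V_j\to 0$ (with $V_i$ spanned by the first $i$ standard basis vectors), and filtering $S^sV$ by the number of $V_j$-factors yields a $\mathfrak{p}$-module filtration $0=G^{-1}\subset G^0\subset\cdots\subset G^s=S^sV$ whose $a$-th subquotient $G^a/G^{a-1}\cong S^{s-a}V_i\otimes S^a V_j$ is a Levi module on which the nilpotent radical acts trivially. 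Since parabolic induction is exact, this produces a filtration $M_u:=U(\mathfrak{gl}_n)\otimes_{U(\mathfrak{p})}\bigl(G^{u-1}\otimes(P(r|i)\boxtimes P(0|j))\bigr)$ of $\mathcal{E}^{(s)}\Delta(r,i|0,j)$ with
$$M_u/M_{u-1}\cong U(\mathfrak{gl}_n)\otimes_{U(\mathfrak{p})}\bigl(\mathcal{E}^{(s+1-u)}_i P(r|i)\boxtimes\mathcal{E}^{(u-1)}_j P(0|j)\bigr).$$

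The crux is computing the two Levi-factor terms. By Corollary \ref{HGr} each block of the minimal categorification $\mathcal{C}_i$ contains a unique indecomposable projective, so $\mathcal{E}^{(a)}_i P(r|i)$ must be a direct sum of copies of $P(r+a|i)$; the multiplicity is read off the Grothendieck group using Corollary \ref{standards} and the identity $E^{(a)} v_r=\binom{r+a}{a}_q v_{r+a}$ (at $q=1$). One obtains
$$\mathcal{E}^{(a)}_i P(r|i)=P(r+a|i)^{\oplus\binom{r+a}{a}},\qquad \mathcal{E}^{(u-1)}_j P(0|j)=P(u-1|j).$$
Substituting back and collapsing the induced Levi-product into a standard module yields $M_u/M_{u-1}\cong\Delta(r+s+1-u, i|u-1, j)^{\oplus\binom{r+s+1-u}{s+1-u}}$. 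The vanishing clause is immediate: $P(r+s+1-u|i)=0$ exactly when $r+s+1-u>i$, i.e.\ $u<r+s+1-i$.

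Part (ii) is entirely analogous. I would write $\Delta(i,i|s,j)\cong U(\mathfrak{gl}_n)\otimes_{U(\mathfrak{p})}(P(i|i)\boxtimes P(s|j))$, identify $\mathcal{F}^{(i-r)}$ with tensoring by $S^{i-r}V^*$, and filter $V^*$ using the dual short exact sequence $0\to V_j^*\to V^*\to V_i^*\to 0$. The Levi calculations become $\mathcal{F}^{(a)}_i P(i|i)=P(i-a|i)$ (from $F^{(a)} v_i=v_{i-a}$) and $\mathcal{F}^{(b)}_j P(s|j)=P(s-b|j)^{\oplus\binom{j-s+b}{b}}$ (from $F^{(b)} v_s=\binom{j-s+b}{b}_q v_{s-b}$); setting $u=a+1$ (so $b=i-r-u+1$) and using the binomial symmetry $\binom{j-s+(i-r-u+1)}{i-r-u+1}=\binom{i+j-r-s-u+1}{j-s}$ yields the claimed formula, with vanishing for $u<i-r-s+1$ coming from $s-b<0$.

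The main obstacle is verifying that the $\mathfrak{p}$-module filtration of $S^sV$ (resp.\ $S^{i-r}V^*$) actually lifts to a filtration of the induced module whose subquotients are the indicated standard modules (rather than, say, isomorphic direct sums arising from a split filtration or a non-trivial self-extension of standard modules). This follows from the exactness of parabolic induction together with the unambiguous identification of the Levi-restricted building blocks $\mathcal{E}^{(a)}_i P(r|i)$ and $\mathcal{E}^{(b)}_j P(0|j)$ inside the minimal categorifications from Section \ref{motivation}, but the bookkeeping needs care because one must track that the correct standard module (with its precise Verma filtration) emerges at each stage.
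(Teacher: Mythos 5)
Your argument is essentially the paper's: both realize $\Delta(r,i|0,j)$ by parabolic induction from big projectives (Remark \ref{standardsinduced}), use the tensor identity to pull $\mathcal{E}^{(s)}$ inside the induction, filter according to how the tensor factor distributes over the two Levi blocks (the paper simply cites \cite[Lemma 3, Proposition 11]{BFK} for precisely the two steps you derive by hand), and finish by computing the divided powers on the Levi projectives via the Grothendieck-group identity $E^{(a)}v_r=\binom{r+a}{a}v_{r+a}$. The one quibble is your identification of $\mathcal{E}^{(s)}$ with tensoring by $S^sV$ — it is a distinguished summand of tensoring with $V^{\otimes s}$, conventionally realized via $\Lambda^sV$ — but since the projective functors obtained from $S^sV$ and $\Lambda^sV$ agree after projection onto the blocks $\mathcal{O}_k$ in question, this does not affect the argument.
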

\begin{proof}
We prove the first part, the second is completely analogous.
Recall the alternative construction of standard modules from Remark \ref{standardsinduced}. Since tensoring with finite dimensional modules behaves well with respect to parabolic induction (\cite[Lemma 3]{BFK}), we only have to compute the functors applied to the outer product of 'big projectives' and then induce. Moreover, by \cite[Proposition 11]{BFK} there is a filtration of the functor $F:={\mathcal{E}}^{(s)}$  by a sequence of exact functors
$$F=F_{s+1}\supset F_s\supset \cdots \supset F_1\supset F_0=0$$
such that the quotient $F^{u}:=F_u/F_{u-1}$ applied to $\Delta(r,i|s,j)$ can be computed easily. Concretely, $F^{u}(\Delta(r,i|s,j))$ is the parabolically induced outer tensor product of ${\mathcal{E}}^{(s+1-u)}P({\bf a})$, and ${\mathcal{E}}^{(u-1)}P({\bf b})$ where ${\bf a}$ is the sequence of $i-r$ zeroes followed by $i$ ones and, where ${\bf b}$ is the sequence of $j-s$ zeroes followed by $s$ ones.
Since the functors categorifying the divided are exact, hence send projective objects to projective objects, this is just a calculation in the Grothendieck group.  One easily verifies directly that
\begin{eqnarray*}
{\mathcal{E}}^{(s+1-k)} P({\bf a})&=&{\bigoplus}_{l=1}^{\binom{r+s+1-k}{s+1-k}}(P(\underbrace{0,\ldots,0,}_{i-r-s-1+k}\underbrace{1,\ldots, 1}_{r+s+1-k})), \\
{\mathcal{E}}^{(k-1)} P({\bf b})&=&P(\underbrace{0,\cdots,0,}_{j-k+1}\underbrace{1,\ldots, 1}_{k-1}))
\end{eqnarray*}
respectively. Parabolically inducing the external tensor product of these two projective objects gives just a direct sum of  $\binom{r+s+1-k}{s+1-k}$ many copies of the standard module $\Delta(r+s+1-k,i| k-1,j)$ which is one layer of the filtration we wanted to establish.
\end{proof}
Now we are prepared to construct a projective resolution of the standard object $
\Delta(r,i|s,j)$. (Similar constructions and arguments appear in \cite[Proposition 3.15]{W}.) Set $x=m+n-d_1- \cdots -
d_{m-1}$ and define
\begin{eqnarray}
a_{n,m}^{r,s}&=&\mathop{\sum_{(d_1, \ldots, d_m)}}_{d_1+\cdots+d_m=m+n}
\binom{r+m+n}{d_1} \binom{r+m+n-d_1}{d_2} \cdots \binom{r+x}{d_m}.\label{as}\\
 b_{n,m}^{r,s}&=&\mathop{\sum_{(d_1, \ldots, d_m)}}_{d_1+\cdots+d_m=m+n}
\binom{j-s+m+n}{d_1} \binom{j-s+m+n-d_1}{d_2} \cdots \binom{j-s+x}{d_m}\nonumber
\end{eqnarray}
\begin{theorem}
\label{resolution}
The standard module $\Delta(r,i|s,j)$ has a minimal projective resolution
$$0 \rightarrow Q_l \rightarrow \cdots \rightarrow Q_1 \rightarrow Q_0\rightarrow \Delta(r,i|s,j)\rightarrow 0.$$
where $Q_0 = P(r,i|s,j)$ and the other components are as follows:
\begin{enumerate}
\item In case $r+s\leq j$ we have $l=s$ and for $s\leq m<0$,
$$Q_m = \bigoplus_{n=0}^{s-m} P_{r+m+n,s-m-n}^{\oplus a_{n,m}^{r,s}}.$$
\item In case $r+s\geq j$ we have $l=r$ and for $r\geq m>0$
$$ Q_m=\bigoplus_{n=0}^{s-m} P_{r+m+n,s-m-n}^{\oplus b_{n,m}^{r,s}}.$$
\end{enumerate}
\end{theorem}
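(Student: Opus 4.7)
The plan is induction on $s$ in case (1), with case (2) handled symmetrically via part (ii) of Proposition \ref{exactforstandards}; I describe case (1) only. The base case $s = 0$ is immediate: the weight $(r, i | 0, j)$ is maximal in its weight class under the partial order on $A_{k, (i,j)}$, so every composition factor of $P(r, i | 0, j)$ lies in the subcategory defining $\Delta(r, i | 0, j)$, giving $\Delta(r, i | 0, j) = P(r, i | 0, j)$ and a trivial resolution consistent with the formula.

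For the inductive step, apply Proposition \ref{exactforstandards}(i) to the projective module $P(r, i | 0, j) = \Delta(r, i | 0, j)$. Since $\mathcal{E}^{(s)}$ is exact and preserves projectives, $\mathcal{E}^{(s)} P(r, i | 0, j)$ is projective with a standard filtration whose top quotient is $\Delta(r, i | s, j)$ (multiplicity one) and whose lower layers are $\Delta(r + \gamma, i | s - \gamma, j)^{\oplus \binom{r+\gamma}{\gamma}}$ for $\gamma = 1, \ldots, s$. Consequently $P(r, i | s, j)$ appears as a direct summand of $\mathcal{E}^{(s)} P(r, i | 0, j)$ exactly once; splitting off the complementary projective and restricting the surjection yields a short exact sequence
$$
0 \to K \to P(r, i | s, j) \to \Delta(r, i | s, j) \to 0
$$
in which $K$ has a standard filtration by layers $\Delta(r + \gamma, i | s - \gamma, j)$ for $\gamma \geq 1$. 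By the induction hypothesis each such layer has a known minimal projective resolution of length strictly less than $s$, so iterating the horseshoe lemma through the filtration of $K$ produces a projective resolution of $K$, and splicing yields the desired resolution of $\Delta(r, i | s, j)$ of length $s$, starting with $Q_0 = P(r, i | s, j)$.

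To identify the multiplicity $a^{r,s}_{n, m}$ of $P(r + m + n, i | s - m - n, j)$ in $Q_m$, I unwind the induction: each copy arises from a sequence of $m$ successive reductions through standard filtrations, where at step $k$ one picks a jump of size $d_k \geq 1$ contributing the binomial factor $\binom{r + m + n - (d_1 + \cdots + d_{k-1})}{d_k}$. Summing the products of these factors over all compositions $(d_1, \ldots, d_m)$ of $m + n$ into $m$ positive parts reproduces exactly the formula \eqref{as}. The analogous argument with $\mathcal{F}^{(i - r)}$ in place of $\mathcal{E}^{(s)}$ and base case $\Delta(i, i | s, j) = P(i, i | s, j)$ handles case (2) and yields the multiplicities $b^{r, s}_{n, m}$.

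The main obstacle is twofold: first, the multiplicity bookkeeping, which requires showing that the inductive pattern described above correctly reproduces \eqref{as} rather than a simpler product of binomials; and second, verifying that the construction yields a genuinely minimal resolution, i.e., that no split contractible subcomplex survives between consecutive terms. A clean strategy for both is to verify independently that the Euler characteristic of the claimed resolution in the Grothendieck group matches Theorem \ref{projinGroth} using the recursion $a^{r,s}_{n,m} = \sum_{\gamma = 1}^{n+1} \binom{r + m + n}{\gamma}\, a^{r, s}_{n+1-\gamma, m-1}$ satisfied by \eqref{as}, and then to invoke the fact that in a properly stratified category, a projective resolution of $\Delta$ of the correct length whose $Q_0$ equals the projective cover is automatically minimal.
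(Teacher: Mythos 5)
Your strategy coincides with the paper's: induction on $s$, Proposition \ref{exactforstandards}(i) applied to $\Delta(r,i|0,j)=P(r,i|0,j)$, a short exact sequence $0\to K\to P(r,i|s,j)\to\Delta(r,i|s,j)\to 0$ with $K$ filtered by the lower standard layers, and splicing of the inductively known resolutions (the paper's $M_s$ is your $K$). But two steps are not justified as written. First, you only obtain $P(r,i|s,j)$ as a direct summand of $\mathcal{E}^{(s)}P(r,i|0,j)$ and propose to "split off the complementary projective." If that complement were nonzero, some of the layers $\Delta(r+\gamma,i|s-\gamma,j)^{\oplus\binom{r+\gamma}{\gamma}}$, $\gamma\geq 1$, could sit inside the complement rather than in the kernel of the restricted surjection, and your description of $K$ would fail. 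What is needed, and what the paper invokes, is the identification $\mathcal{E}^{(s)}P(r,i|0,j)\cong P(r,i|s,j)$ on the nose; this comes from Proposition \ref{twistedcan} and Definition \ref{twistedcanbasis}, since $[\mathcal{E}^{(s)}P(r,i|0,j)]=E^{(s)}(v_r\spadesuit v_0)=v_r\spadesuit v_s=[P(r,i|s,j)]$ and projective modules are determined by their Grothendieck classes.

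Second, the multiplicity bookkeeping. Unwinding the induction, the copy of $P(r+m+n,i|s-m-n,j)$ indexed by a composition $(d_1,\dots,d_m)$ of $m+n$ into positive parts acquires at step $k$ the factor $\binom{r+d_1+\cdots+d_k}{d_k}$ (the layer $\Delta(r+\gamma,\cdot)$ occurs with multiplicity $\binom{r+\gamma}{\gamma}$ relative to the \emph{current} first parameter), not the factor $\binom{r+m+n-(d_1+\cdots+d_{k-1})}{d_k}$ you wrote; the two products agree only after reversing the composition, and that reversal bijection is precisely the (unstated) observation needed to match the total against \eqref{as}. Correspondingly, the recursion the construction actually satisfies is $a^{r,s}_{n,m}=\sum_{\gamma=0}^{n}\binom{r+\gamma+1}{\gamma+1}\,a^{r+\gamma+1,s-\gamma-1}_{n-\gamma,m-1}$ --- the nontrivial identity of Lemma \ref{idforas} --- not the recursion $\sum_{\gamma}\binom{r+m+n}{\gamma}a^{r,s}_{n+1-\gamma,m-1}$ you propose to check against, which is just \eqref{as} with its first part peeled off and does not reflect the filtration. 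Finally, the principle "a resolution of the correct length whose $Q_0$ is the projective cover is automatically minimal" is false (one may pad two consecutive terms beyond $Q_0$ by a common projective without changing the length), so minimality still has to be argued separately, e.g.\ by degree considerations in the graded lift.
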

\begin{proof}
Assume that $ r+s \leq j$.  The other case is similar.
We construct a resolution by induction on $s$.  For the base case $ s =
0$, the object $\Delta(r,i|s,j)$ is projective and there is nothing
to prove.
Assume by induction that the proposition is true for all $\Delta(p,i|q,j)$ where $q < s$.
By Proposition ~\ref{exactforstandards}, and the induction hypothesis, we
have a projective resolution for the standard subquotients of $
M_{s+1}=\mathcal{E}^{(s)}\Delta(r,i|0,j)$, except for
the top quotient $M_{s+1}/M_{s}=\Delta(r,i|s,j)$. Explicitly, the induction hypothesis gives the following resolution of $M_s$:
$$ 0 \rightarrow V_s \rightarrow \cdots \rightarrow V_m \rightarrow \cdots
\rightarrow V_1\rightarrow M_s\rightarrow 0
$$
where $V_m = \bigoplus_{n=0}^{s-m} P_{r+m+n,s-m-n}^{\oplus c_{n,m}^{r,s}}$ with
\begin{eqnarray*}
c_{n,m}^{r,s}&=&\binom{r+1}{1}a_{n,m-1}^{r+1,s-1}+\binom{r+2}{2}a_{n-1,m-1}^{r+2,s-2}
+\cdots+ \binom{r+n+1}{n+1} a_{0,m-1}^{r+n+1,s-n-1}.
\end{eqnarray*}
By Proposition \ref{twistedcan} and Definition \ref{twistedcanbasis} $ \mathcal{E}^{(s)}_r
\Delta(r,i|0,j) \cong P(r,i|s,j)$. Finally, take the cone of the morphism
$M_s\rightarrow P(r,i|s,j)$. Then  Lemma ~\ref{idforas} below gives the asserted projective resolution of $
\Delta(r,i|s,j)$.
\end{proof}
\begin{lemma}
\label{binomid}
The following holds
\begin{enumerate}
\item $ a_{n,1}^{r,s}-a_{n-1,2}^{r,s}+a_{n-2,3}^{r,s} - \cdots + (-1)^n a_{0,n+1}^{r,s} = (-1)^n \binom{r+n+1}{n+1} $ \item $
    b_{n,1}^{r,s}-b_{n-1,2}^{r,s}+b_{n-2,3}^{r,s} - \cdots + (-1)^n b_{0,n+1}^{r,s} = (-1)^n \binom{j-s+n+1}{n+1} $.
\end{enumerate}
\end{lemma}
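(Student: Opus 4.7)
The plan is to split the proof into two reductions followed by a short combinatorial core. First, I will observe that $a_{n,m}^{r,s}$ does not actually depend on $s$, and that $b_{n,m}^{r,s}$ coincides with $a_{n,m}^{r,s}$ under the substitution $r\leftrightarrow j-s$. Hence identity (2) is identity (1) under that substitution, and it suffices to prove (1). Write $a_{n,m}^{r}$ for $a_{n,m}^{r,s}$.

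The next step is to find a closed form for $a_{n,m}^{r}$. Using $d_1+\cdots+d_m=m+n$, so that $x=d_m$, the product of binomials telescopes into a multinomial coefficient
\begin{equation*}
\binom{r+m+n}{d_1}\binom{r+m+n-d_1}{d_2}\cdots\binom{r+d_m}{d_m} \;=\; \binom{r+m+n}{d_1,d_2,\ldots,d_m,r}.
\end{equation*}
The recursion
$a_{n,m}^{r}=\sum_{k=1}^{n+1}\binom{r+k}{k}\,a_{n+1-k,m-1}^{r+k}$
implicit in the proof of Theorem~\ref{resolution} forces $(d_1,\ldots,d_m)$ to range over compositions of $m+n$ (i.e.\ $d_i\geq 1$). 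Summing then gives
\begin{equation*}
a_{n,m}^{r} \;=\; \binom{r+m+n}{r}\cdot \mathrm{Surj}(m+n,m),
\end{equation*}
where $\mathrm{Surj}(N,k)$ is the number of surjections $[N]\twoheadrightarrow[k]$: the binomial picks the $r$ positions of the extra part, and distributing the remaining $m+n$ positions into $m$ non-empty blocks contributes $\mathrm{Surj}(m+n,m)$.

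Substituting into the left-hand side of identity (1) and noting that $m+(n+1-m)=n+1$ is independent of $m$, the prefactor $\binom{r+n+1}{r}=\binom{r+n+1}{n+1}$ factors out of the alternating sum, reducing the lemma to the classical surjection identity
\begin{equation*}
\sum_{m=1}^{N}(-1)^{m-1}\mathrm{Surj}(N,m) \;=\; (-1)^{N-1}, \qquad N=n+1.
\end{equation*}
This I will prove via the exponential generating function calculation
\begin{equation*}
\sum_{m\geq 1}(-1)^{m-1}(e^x-1)^m \;=\; \frac{e^x-1}{1+(e^x-1)} \;=\; 1-e^{-x} \;=\; \sum_{N\geq 1}\frac{(-1)^{N-1}}{N!}\,x^N,
\end{equation*}
using that $(e^x-1)^m$ is the EGF of $N\mapsto\mathrm{Surj}(N,m)$ and extracting $[x^N/N!]$.

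The main obstacle is not analytical but notational: one must pin down whether the $d_i$ range over weak or strict compositions, with the strict (positive) convention being the one dictated by consistency with the recursion used in Theorem~\ref{resolution}. Once this is settled, the combinatorial core is the standard surjection identity above.
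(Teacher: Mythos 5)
Your proof is correct and follows essentially the same route as the paper's: telescope the product of binomials into $\binom{r+m+n}{r}$ times a multinomial coefficient, note the prefactor is constant across the alternating sum, and reduce to the signed surjection identity $\sum_{m=1}^{N}(-1)^{m-1}\mathrm{Surj}(N,m)=(-1)^{N-1}$. The only difference is cosmetic: you prove that identity by an exponential generating function computation, whereas the paper cites exercises in Bona; your reduction of (2) to (1) via $r\leftrightarrow j-s$ and your handling of the positivity convention on the $d_i$ also match the paper's (implicit) choices.
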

\begin{proof}
Assume that no entry in a tuple $ {\bf d}=(d_1, \ldots, d_m) $ is zero. Consider $ a_{n+1-m,m}^{r,s} = $
$$ \mathop{\sum_{{{\bf d}=(d_1, \ldots, d_m)}}}_{d_1+\cdots+d_m=n+1} \binom{r+n+1}{d_1} \binom{r+n+1-d_1}{d_2} \cdots
\binom{r+n+1-d_1-\cdots-d_{m-1}}{d_m}. $$
Expanding these binomial coefficients gives
\begin{eqnarray}
a_{n+1-m,m}^{r,s} = \binom{r+n+1}{n+1} \mathop{\sum_{{{\bf d}=(d_1, \ldots, d_m)}}}_{d_1+\cdots+d_m=n+1} \binom{n+1}{d_1, \ldots, d_m}.
\label{aid}
\end{eqnarray}
Then
$$ \sum_{m=1}^{n+1} (-1)^{m+1} a_{n+1-m,m}^{r,s} = \binom{r+n+1}{n+1}  \sum_{m=1}^{n+1} (-1)^{m+1} \mathop{\sum_{{{\bf d}=(d_1, \ldots,
d_m)}}}_{d_1+\cdots+d_m=n+1} \binom{n+1}{d_1, \ldots, d_m}. $$
This sum is $ (-1)^n \binom{r+n+1}{n+1}$ since
$$ \sum_{m=1}^{n+1} (-1)^{m+1} \mathop{\sum_{{{\bf d}=(d_1, \ldots, d_m)}}}_{d_1+\cdots+d_m=n+1} \binom{n+1}{d_1, \ldots, d_m} $$
counts the number of permutations of the set $ \lbrace 1, \ldots, n+1 \rbrace $ whose descent set is $ \lbrace 1, \ldots, n+1 \rbrace $
times $ (-1)^n$.  See exercises 4.7.4 and 4.7.6 of \cite{Bo} for more details.
The proof of the second identity is similar to that of the first.
\end{proof}
The following are recursion formulas for the numbers defined in \eqref{as}:
\begin{lemma}
\label{idforas}
\begin{align*}
a_{n,m}^{r,s} &= \binom{r+1}{1} a_{n,m-1}^{r+1,s-1} + \binom{r+2}{2}
a_{n-1,m-1}^{r+2,s-2} + \cdots + \binom{r+n+1}{n+1}
a_{0,m-1}^{r+n+1,s-n-1}.\\
b_{n,m}^{r,s} &= \binom{j-s+1}{1} b_{n,m-1}^{r+1,s-1} + \binom{j-s+2}{2}
b_{n-1,m-1}^{r+2,s-2} + \cdots + \binom{j-s+n+1}{n+1}
b_{0,m-1}^{r+n+1,s-n-1}.
\end{align*}
\end{lemma}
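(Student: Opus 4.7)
The plan is to reduce the recursion to a simple trinomial identity by first telescoping the product of binomial coefficients in the definition \eqref{as}. For any tuple $(d_1, \ldots, d_m)$ of positive integers with $d_1+\cdots+d_m = m+n$ (this is the implicit convention, as already used in the proof of Lemma~\ref{binomid}), the factorials cancel to give
$$\binom{r+m+n}{d_1}\binom{r+m+n-d_1}{d_2}\cdots\binom{r+d_m}{d_m} \;=\; \frac{(r+m+n)!}{r!\,d_1!\cdots d_m!}.$$
Summing over all such tuples and writing $T(a,b)$ for the number of surjections from an $a$-set onto a $b$-set, one obtains the closed form
$$a_{n,m}^{r,s} \;=\; \binom{r+m+n}{r}\, T(m+n,m).$$

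Next, I would split the defining sum of $a_{n,m}^{r,s}$ according to the value of $d_1$. Writing $d_1 = k+1$ for $k \in \{0,1,\ldots,n\}$ (the upper bound coming from the requirement that the remaining $m-1$ entries be positive and sum to $m+n-k-1$), the factor $\binom{r+m+n}{k+1}$ pulls out, and the telescoping formula above identifies the inner sum as $a_{n-k,m-1}^{r,\cdot}$. This produces the intermediate recursion
$$a_{n,m}^{r,s} \;=\; \sum_{k=0}^{n}\binom{r+m+n}{k+1}\, a_{n-k,m-1}^{r,\,s-k-1}.$$
To match with the right-hand side of the lemma it then suffices to verify, term by term, that
$$\binom{r+m+n}{k+1}\, a_{n-k,m-1}^{r,\,s-k-1} \;=\; \binom{r+k+1}{k+1}\, a_{n-k,m-1}^{r+k+1,\,s-k-1}.$$
Via the closed form this boils down to the trinomial identity
$$\binom{r+m+n}{k+1}\binom{r+m+n-k-1}{r} \;=\; \binom{r+k+1}{k+1}\binom{r+m+n}{r+k+1},$$
both sides being equal to $\frac{(r+m+n)!}{(k+1)!\,r!\,(m+n-k-1)!}$ by inspection.

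The second formula for $b_{n,m}^{r,s}$ follows by the identical argument with $r$ replaced by $j-s$ throughout: telescoping yields the closed form $b_{n,m}^{r,s} = \binom{j-s+m+n}{j-s}\,T(m+n,m)$, and the rest of the computation is unchanged. There is no substantive obstacle beyond keeping track of the positivity convention on the $d_i$, which is fixed exactly as in the proof of Lemma~\ref{binomid}.
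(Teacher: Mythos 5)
Your proof is correct and follows essentially the same route as the paper's: both rest on the telescoped closed form $a_{n,m}^{r,s}=\binom{r+m+n}{m+n}\sum\binom{m+n}{d_1,\ldots,d_m}$ (equation \eqref{aid}) and then identify the two sides by splitting the sum over compositions of $m+n$ according to the first part $d_1=\gamma+1$, the matching of coefficients being exactly the trinomial identity $\binom{r+\gamma+1}{\gamma+1}\binom{r+m+n}{r+\gamma+1}=\binom{r+m+n}{\gamma+1}\binom{r+m+n-\gamma-1}{r}$. The only cosmetic difference is that you run the computation from left to right while the paper expands the right-hand side and reassembles the left.
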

\begin{proof}
By equation \eqref{aid},
$$ \displaystyle
a_{n,m}^{r,s}= \binom{r+n+m}{n+m} \sum_{(d_1, \ldots, d_m)}
\binom{m+n}{d_1, \ldots, d_m} $$
where the sum is over all tuples $ (d_1, \ldots, d_m) $ such that no $ d_p
$ is zero and $ d_1 + \cdots + d_m = m+n$.
The right hand side of the lemma is
$$ \displaystyle
\sum_{\gamma =0}^n \binom{r+\gamma+1}{\gamma+1}
a_{n-\gamma,m-1}^{r+\gamma+1,s-\gamma-1}
=
\sum_{\gamma =0}^n \binom{r+\gamma+1}{\gamma+1}  \sum_{(d_1, \ldots,
d_{m-1})} \binom{r+m+n}{m+n-\gamma-1}\frac{(m+n-\gamma-1)!}{d_1 ! \cdots
d_{m-1} !}
$$
where the second summation is over tuples $ (d_1, \ldots, d_{m-1}) $ such
that $ d_1 + \cdots + d_{m-1} = m+n-\gamma-1 $ and no $ d_p $ is zero.
This simplifies to:
$$ \displaystyle
\binom{r+m+n}{m+n} \sum_{\gamma=0}^n \sum_{(d_1, \ldots, d_{m-1})}
\frac{(m+n)!}{(\gamma +1)! d_1 ! \cdots d_{m-1} !}. $$
This is easily seen to be equal to $ a_{n,n}^{r,s}$.
The identity for $ b_{n,m}^{r,s} $ is similar.
\end{proof}
%We have also the following identity
%\begin{prop}
%\label{qis}
%There are the following equalities in the Grothendieck group.
%\begin{enumerate}
%\item If $ r+s \leq j $, then $ [\Delta(r,i|j,s)] = \sum_{\gamma \geq 0}
%(-1)^{\gamma} \binom{r+\gamma}{\gamma}[P(r+\gamma,i|s-\gamma,j)]. $
%\item If $ r+s \geq j $, then $ [\Delta(r,i|j,s)] = \sum_{\gamma \geq 0}
%(-1)^{\gamma} \binom{s-j+\gamma}{\gamma}[P(r+\gamma,i|s-\gamma,j)]. $
%\end{enumerate}
%\end{prop}
\begin{proof}[Proof of Theorem ~\ref{projinGroth}]
By Proposition ~\ref{resolution},
\begin{eqnarray*}
&[\Delta(r,i|j,s)]=[P(r,i|j,s)]&\\
&+ \displaystyle\sum_{n=0}^{s-1} \left((-a_{n,1}^{r,s} +
a_{n-1,2}^{r,s} + \cdots + (-1)^{n+1} a_{0,n+1}^{r,s})[P(r+n+1,i|s-n-1,j)]\right).&
\end{eqnarray*}
Then by Lemma ~\ref{binomid}, this is equal to:
$$ [P(r,i|j,s)]+\sum_{m=0}^{s-1} (-1)^{n+1}
\binom{r+n+1}{n+1}[P(r+n+1,i|s-n-1,j)] $$
which gives the proposition after reindexing.
The second equality follows similarly.
\end{proof}
Theorems ~\ref{projinGroth} and ~\ref{Dformula} give a categorical interpretation of Theorem ~\ref{positivity}.

\section{$3j$-symbols as generalized Kazhdan-Lusztig polynomials}
Recall from Theorems \ref{Grothgraded} and \ref{cattensor} that simple modules correspond to dual canonical basis elements. The graphical calculus from Section \ref{graphical} associates to each dual canonical basis element an oriented cup diagram. Forgetting the orientations we obtain a cup diagram which we can view as a functor via Theorem \ref{catjones}. The following theorem explains the precise connection between the different interpretations:

\begin{theorem}
\label{simplesfromfunctors}
Let ${\bf a}$ be a sequence of $n-k$ zeroes and $k$ ones. Let $\hat{L}({\bf a})$ be the corresponding simple object in $A_{k,n}-\gmod$, concentrated in degree zero. Forgetting the orientations in $\op{Cup}({\bf a})$ we obtain a cup diagram $\op{C}({\bf a})$. Assume that the cup diagram has the maximal possible number $\min\{k,n-k\}$ of cups. Let $F$ be the functor associated via Theorem \ref{catjones}. Then the source category for $F$ is equivalent to the derived category of graded vector spaces. Let $\mC$ be the $1$-dimensional vector space concentrated in degree zero. Then
$$F(\mC)\cong \hat{L}({\bf a}).$$
\end{theorem}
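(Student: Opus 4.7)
Proof plan. The proof naturally splits into (a) identifying the source category of $F$, and (b) computing $F(\mathbb{C})$ by induction on the number $c=\min\{k,n-k\}$ of cups in $\op{C}({\bf a})$.

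For (a), by the construction in Section \ref{catcupscaps}, $F$ is a composition of $c$ elementary cup functors $\hat{\cup}_{i,m}$ (with identity functors on the remaining strands). Each $\hat{\cup}_{i,m}$ sends $A_{k',m}$-gmod to $A_{k'+1,m+2}$-gmod, so in order to end at $A_{k,n}$-gmod the source must be $A_{k-c,n-2c}$-gmod. When $c=\min\{k,n-k\}$, this source is either $A_{0,n-2k}$-gmod (if $k\le n-k$) or $A_{2k-n,2k-n}$-gmod (if $k\ge n-k$). Both are extremal blocks whose dominant weight has full Weyl group stabilizer; as in Example \ref{easiest} they are equivalent to the category of finite dimensional complex vector spaces, so their graded lifts are equivalent to graded vector spaces and $D^b(\text{source})\simeq D^b(\mathbb{C}\text{-gmod})$.

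For (b), I would induct on $c$. When $c=0$, the sequence ${\bf a}$ is constant, $F$ is the identity, and $\mathbb{C}$ is (up to shift) the unique simple object in the source, which is $\hat L({\bf a})$. For the inductive step, choose an outermost cup of $\op{C}({\bf a})$ joining positions $i,i+1$ and let ${\bf a}'$ be the length $n-2$ sequence obtained from ${\bf a}$ by deleting $a_i,a_{i+1}$. Since the outermost cup can always be peeled off without destroying maximality, $\op{C}({\bf a}')$ still has the maximal possible number of cups ($c-1$) for its sequence. Writing $F=\hat{\cup}_{i,n-2}\circ F'$ with $F'$ the functor of $\op{C}({\bf a}')$, the induction hypothesis gives $F'(\mathbb{C})\cong \hat L({\bf a}')$, and the proof reduces to the key identity
\[
\hat{\cup}_{i,n-2}\bigl(\hat L({\bf a}')\bigr)\cong \hat L({\bf a}).
\]
I would check this in two stages. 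First, on the Grothendieck group: by Proposition \ref{Grothgraded} isomorphism classes of simples correspond to dual canonical basis vectors, and the Temperley-Lieb action of cup diagrams on these vectors is precisely the graphical rule recalled in Section \ref{graphical}; a direct comparison with the definition of $\op{Cup}({\bf a})$ gives $[\hat{\cup}_{i,n-2}(\hat L({\bf a}'))]=[\hat L({\bf a})]$. Second, I would upgrade this Grothendieck identity to an isomorphism of objects by invoking the filtration of Theorem \ref{filto}: since ${\bf a}$ has the maximal number of cups, $\hat L({\bf a})$ has the minimal possible Gelfand-Kirillov dimension by Proposition \ref{GK}, hence sits in the smallest stratum $\cS_r$. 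This stratum, being a minimal $\mathfrak{sl}_2$-categorification, is essentially semisimple at the level of simple objects, which forces $\hat{\cup}_{i,n-2}(\hat L({\bf a}'))$ to be a single simple module concentrated in degree zero, and combined with the Grothendieck class identity yields the claim.

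The main obstacle is the upgrading step, because applying $\mathbb{L}\hat Z_i$ to an arbitrary simple module can produce complexes with nonzero cohomology in several degrees and the expansion $\hat{\cup}_{i,n-2}=\hat{\epsilon}_i\mathbb{L}\hat Z_i\cdots \hat{\epsilon}_1\circ\zeta_{n-2}$ mixes many such applications. The maximality of the cup number is essential here: it guarantees that each intermediate Zuckerman step stays within the minimal stratum, where higher cohomology vanishes. A possibly cleaner alternative route would be to pass to the Koszul dual picture via \cite{MOS}: there the simples of minimal Gelfand-Kirillov dimension become the projective-tilting modules noted in the remark after Proposition \ref{GK}, and the Koszul dual of $\hat{\cup}_{i,n-2}$ acts on projective-tiltings by a transparent combinatorial rule, reducing the verification to bookkeeping of highest weights along Enright-Shelton equivalences and inclusions $\hat{\epsilon}_j$.
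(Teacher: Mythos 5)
Your identification of the source category and your Grothendieck-group computation are fine, but the inductive step has a genuine gap exactly where you flag ``the main obstacle''. You assert that maximality of the cup number forces each intermediate derived Zuckerman step to stay in the minimal stratum ``where higher cohomology vanishes'', and that this stratum being a minimal $\mathfrak{sl}_2$-categorification forces $\hat{\cup}_{i,n-2}(\hat L({\bf a}'))$ to be a single simple module concentrated in degree zero. Neither implication holds as stated. The relation $\hat{\cap}_{i,n+2}\hat{\cup}_{i,n}\cong\hat{\Id}\lsem1\rsem\langle 1\rangle\oplus\hat{\Id}\lsem-1\rsem\langle -1\rangle$ already shows that $\mathbb{L}\hat Z_j\circ\hat\epsilon_j$ spreads every object over at least two homological degrees, so membership in the bottom Gelfand--Kirillov stratum cannot by itself kill higher derived Zuckerman cohomology; moreover Theorem \ref{filto} only controls the strata under $\hat{\cE}$ and $\hat{\cF}$, not under the cup functors, which change $n$ and hence the ambient stratification. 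Finally, a Grothendieck class together with ``lives in a nice Serre subcategory'' does not determine an object of the derived category up to isomorphism, so the upgrade from $[\hat{\cup}_{i,n-2}\hat L({\bf a}')]=[\hat L({\bf a})]$ to an actual isomorphism is not established.

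The paper's proof sidesteps the problem by a structural observation rather than an induction: for a cup diagram with the maximal number of cups, the total functor $F$ is isomorphic to a composition of Enright--Shelton equivalences $\zeta$ and inclusions $\hat\epsilon_j$ only, with no derived Zuckerman quotients surviving; both kinds of functors are exact and send simple modules to simple modules, so $F(\mC)$ is simple on the nose and its highest weight is read off from \eqref{defcupcap}. Your closing suggestion to pass to the Koszul dual picture is essentially the paper's own parenthetical alternative via \cite{BS3}. If you want to keep the cup-by-cup induction, you must prove the key isomorphism $\hat{\cup}_{i,n-2}(\hat L({\bf a}'))\cong\hat L({\bf a})$ by one of these two routes (rewriting the composition, or Koszul duality); the stratification argument as written does not do it.
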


\begin{proof}
%\begin{proof}
%The cup functor is just a composition of exact Enright-Shelton equivalence
%functors and an exact inclusion, hence maps simple modules to simple
%modules. The source category is a semisimple category with a unique isomorphism class of simple objects
%($\mathbb{C}$). Hence $F(\mC)$ is a simple object. By Lemma \ref{ZonVermas} and Theorems \ref{Grothgraded} and %\ref{cattensor} it has to be isomorphic to  $\hat{L}({\bf a})$.
%(Alternatively, the theorem can be deduced from \cite[Lemma 4.9, Theorem
%1.2]{BS3} by Koszul duality).
%\end{proof}
The cup functors are just a composition of exact Enright-Shelton equivalences and an exact inclusions, hence map simple modules to simple modules. The highest weight can easily be computed using \eqref{defcupcap} and the source category is by the Enright-Shelton equivalence equivalent to a semisimple category with one simple object.  (Alternatively, the theorem can be deduced from \cite[Lemma 4.9, Theorem 1.2]{BS3} by Koszul duality).
\end{proof}

Theorem \ref{simplesfromfunctors} and Proposition \ref{simples} show that the diagrams from Figure \ref{fig:simples} symbolize uncategorified shifted dual canonical basis elements, and categorified simple objects.

\begin{figure}[htb]
\includegraphics{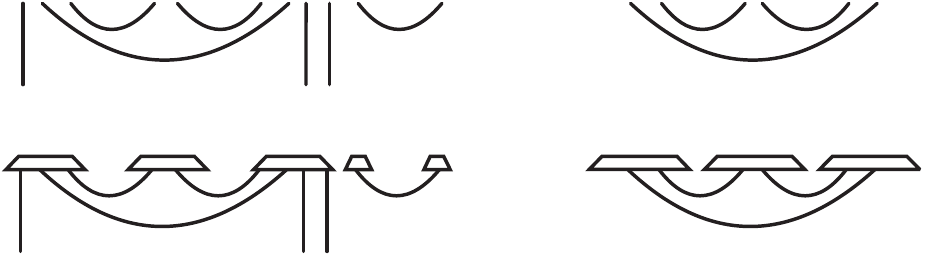}
\caption{Cup diagrams turn into simple objects}
\label{fig:simples}
\end{figure}

Recall the definition of $3j$-symbols from Definition \ref{3j}. By bending the picture \eqref{intertwiner} with the three Jones-Wenzl projectors attached we obtain the last diagram in Figure \ref{fig:simples} where the projectors are $\pi_i$, $\pi_j$, $\pi_k$ respectively. The number of cups is given by formula \eqref{xyz}. The $3j$-symbol can alternatively be defined by Figure \ref{fig:3j-alternative}:

\begin{figure}[htb]
\includegraphics{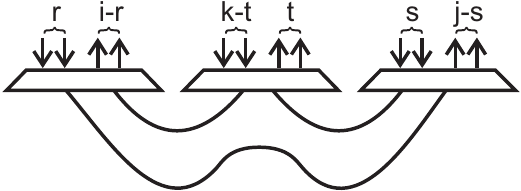}
\caption{The renormalized $3j$-symbol $\hat{C}_{i,j}^k(r,s,t)$.}
\label{fig:3j-alternative}
\end{figure}

\begin{prop}
\label{3jasKL}
The $3j$-symbol $C_{i,j}^k(r,s,t)$ equals the evaluation $\hat{C}_{i,j}^k(r,s,t)$ of the diagrams in Figure \ref{fig:3j-alternative} multiplied with
$c:=(-1)^{-r}(-q)^{s-j}q^{r(i-r)}q^{s(j-s)}.$
\end{prop}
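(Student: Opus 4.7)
The plan is to show that both sides evaluate the same underlying intertwiner in $\Hom_{\cU_q}(V_i\otimes V_j\otimes V_k,\mC(q))$ (obtained from $A_{i,j}^k$ by bending the output $V_k$-strand down) at basis vectors $v_r, v_s$ and (on the bent strand) some $v_{k-t}$-type vector, and then to account for the rescaling that arises from this bending. All ingredients are elementary: the explicit formulas \eqref{defcupcap} for $\cup$ and $\cap$, the inclusion/projection formulas \eqref{in} and \eqref{pn}, and the evaluation form \eqref{scalar}. The proposition is then purely a normalization computation.

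First I would realize the ``higher-weight'' cap $\cap_k\colon V_k\otimes V_k\to\mC(q)$ explicitly by nesting $k$ copies of the basic cap against the inclusion $\iota_k$ from \eqref{in}. A direct count using \eqref{defcupcap} shows that $\cap_k(v_a\otimes v_b)=0$ unless $a+b=k$, and that $\cap_k(v_a\otimes v_{k-a})$ is an explicit sign times an explicit power of $q$, built up from the contributions $1$ resp.\ $-q^{-1}$ of each of the $k$ nested caps and the $q^{b(\cdot)}$ factors in \eqref{in}. The same analysis applies to $\cap_i$ and $\cap_j$.

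Next, I would bend the diagram of Definition \ref{3j} into the configuration of Figure \ref{fig:3j-alternative} by isotoping the top $V_k$-edge around to join the picture at a third boundary. By the Temperley--Lieb functoriality of intertwiners (or equivalently by using compatibility of the Jones--Wenzl projectors with cups and caps, cf.\ Proposition \ref{charJW}), this gives an equality of intertwiners up to the insertion of a single $\cap_k$. Evaluating both pictures at $v_r, v_s$ and, on the bent side, at the basis vector making $\cap_k$ nonzero, the difference between $\langle v_t,\;\cdot\;\rangle$ (used in Definition \ref{3j}) and the boundary evaluation in Figure \ref{fig:3j-alternative} is exactly the normalization $\langle v_t,v^t\rangle=q^{t(k-t)}$ from \eqref{scalarprod} combined with $\cap_k(v_t\otimes v_{k-t})$. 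The analogous rescalings occur at the $V_i$- and $V_j$-boundaries and account, via the factors $\langle v_r,v^r\rangle=q^{r(i-r)}$ and $\langle v_s,v^s\rangle=q^{s(j-s)}$, for the appearance of $q^{r(i-r)}$ and $q^{s(j-s)}$ in $c$. The remaining sign/$q$-power $(-1)^{-r}(-q)^{s-j}$ is precisely the sign obtained by summing the contributions $-q^{-1}$ of the caps for which the strand is oriented $\down\up$ rather than $\up\down$; a direct tally on the nested cap patterns attached to $\iota_i(v_r)$ and $\iota_j(v_s)$ gives exactly this exponent (the $r$ copies of $\up$ in the $V_i$-strand contribute the $(-1)^{-r}$, and the $j-s$ copies of $\down$ in the $V_j$-strand contribute $(-q)^{s-j}$). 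Multiplying all the local contributions yields $c=(-1)^{-r}(-q)^{s-j}q^{r(i-r)}q^{s(j-s)}$, which proves the claim.

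The main obstacle, and essentially the only nontrivial step, is the combinatorial bookkeeping of signs under bending. The asymmetry $\cap(v_1\otimes v_0)=1$ vs.\ $\cap(v_0\otimes v_1)=-q^{-1}$ in \eqref{defcupcap} means that every bent strand contributes a sign and a $q$-power depending on the local orientation of the underlying $\{\up,\down\}$-sequence. Propagating these signs through the Jones--Wenzl projectors and across all three bending operations, and then checking that the apparent sum simplifies to the compact exponents in $c$, is the only place where care is needed; the rest is routine graphical calculus.
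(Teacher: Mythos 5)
Your overall strategy (treat the proposition as a normalization computation in the graphical calculus, using \eqref{defcupcap}, \eqref{in}, \eqref{pn} and \eqref{scalar}) is the right one, and your identification of the factors $q^{r(i-r)}$ and $q^{s(j-s)}$ with the pairings $\langle v_r,v^r\rangle$, $\langle v_s,v^s\rangle$ is consistent with what actually happens. But there are two concrete problems. First, you have the geometry of Figure \ref{fig:3j-alternative} backwards: the diagram is obtained by bending the \emph{input} legs $V_i$ and $V_j$ upward, so that the picture reads from $\mC(q)$ at the bottom, through nested \emph{cups}, to $V_1^{\otimes i}\otimes V_k\otimes V_1^{\otimes j}$ with the projectors $\pi_i,\pi_j$ on the outer groups; the $V_k$ output is not bent at all. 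Your version, which bends $V_k$ down and inserts a $\cap_k$, would contribute $t$- and $k$-dependent normalizations ($\langle v_t,v^t\rangle=q^{t(k-t)}$ and the value of $\cap_k(v_t\otimes v_{k-t})$) to the comparison constant — but $c=(-1)^{-r}(-q)^{s-j}q^{r(i-r)}q^{s(j-s)}$ contains no $t$ or $k$ dependence, precisely because the $V_k$ evaluation $\langle v_t,\;\cdot\;\rangle$ is literally the same in both $C$ and $\hat C$. Relatedly, the signs come from the cup coefficients $1$ and $-q$ in $\cup(1)=v_1\otimes v_0-qv_0\otimes v_1$, not from cap values $-q^{-1}$ as you assert.

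Second, and more importantly, the step you dismiss as "routine bookkeeping" is exactly where the content lies. The paper's proof compares, term by term, the coefficient of each basis vector $v_{\mathbf a}\otimes v_{\mathbf b}\otimes v_{\mathbf c}\otimes v_{\mathbf d}$ produced by the bottom cups with the corresponding coefficient in $\iota_i\otimes\iota_j(v_r\otimes v_s)$ (the key structural point being that the cups rigidly determine the outer entries $\mathbf a,\mathbf d$ from the inner ones, so the middle computation coincides with the one defining $C_{i,j}^k$). The resulting discrepancy, $(-q)^{n_0(\mathbf a)}(-q)^{n_1(\mathbf d)}q^{l(\mathbf a)}q^{l(\mathbf d)}$, is \emph{not} constant across terms; it only becomes the constant $c$ after multiplying by the projector normalizations $q^{r(i-r)}q^{-l(\mathbf a)}q^{s(j-s)}q^{-l(\mathbf d)}$ from \eqref{pn}, which cancel the $l$-dependence, while $n_0(\mathbf a)$ and $n_1(\mathbf d)$ are pinned to $r$ and $j-s$ by the surviving projection. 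Your proposal never addresses why the per-term factor is independent of the term, so the "direct tally" you invoke has no basis as stated; you would need to exhibit this cancellation explicitly to close the argument.
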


\begin{proof}
We first argue in the classical case $q=1$. Take the diagram on the left hand side of Figure \ref{fig:3j-alternative} without putting the two outer projectors on top, hence we get an intertwiner $f$ with values in $V_1^{\otimes i}\otimes V_k\otimes V_1^{\otimes j}$. Let $f(1)$ be its value written in our standard basis. Let $p$ be the projection of $f(1)$ to the space spanned by all basis vectors
where the number of $v_0$ factors appearing in  $V_1^{\otimes i}$ is $r$ and the number of $v_0$ factors appearing in  $V_1^{\otimes j}$ is $s$. Reading from bottom we first apply the nested cups which gives a vector $h$ in $V_1^{\otimes i}\otimes V_1^{\otimes i}\otimes V_1^{\otimes j}\otimes V_1^{\otimes j}$. It is a linear combination of basis vectors, where only those survive at the end which are not annihilated by $p$. The key point is now that the middle part $V_1^{\otimes i}\otimes V_1^{\otimes j}$ determines already the whole vector and agrees with the image of $\iota_i\otimes\iota_j(v_r\otimes v_s)$. Putting back the two Jones-Wenzl projectors, the claim follows for $q=1$ now directly from the definitions. For generic $q$ let $v=v_{\bf a}\otimes v_{\bf b}\otimes v_{\bf c}\otimes v_{\bf d}$ be a basis vector occurring in $h\in (V_1^{\otimes i})\otimes (V_1^{\otimes i})\otimes (V_1^{\otimes j})\otimes (V_1^{\otimes j})$. By a direct calculation is follows that the coefficient differs from the corresponding vector in  $\iota_i\otimes\iota_j(v_r\otimes v_s)$ by the scalar $$(-q)^{n_0({\bf a})}(-q)^{n_1({\bf d})}q^{l({\bf a})}q^{l({\bf d})}$$
where $n_0$ resp. $n_1$ counts the number of $0$'s respectively $1$'s. Putting the Jones-Wenzl projectors gives an extra factor $q^{r(i-r)}q^{-l({\bf a})}q^{s(j-s)}q^{-l({\bf d})}$ and the statement follows.
\end{proof}

\begin{ex}
{\rm The non-zero evaluations $\hat{C}_{i,j}^k(r,s,t)$ for $i=j=k=2$ are
$\hat{C}_{2,2}^{2}(1,1,1)=q^2-q^{-2}$,
$\hat{C}_{2,2}^2(1,0,0)=q$, $\hat{C}_{2,2}^2(2,0,1)=-q^{3}$,
$\hat{C}_{2,2}^2(0,1,0) = -q$,
$\hat{C}_{2,2}^2(2,1,2) = q$,
$\hat{C}_{2,2}^2(0,2,1) = q^{-1}$,
$\hat{C}_{2,2}^2(1,2,2) = -q$.
This should be compared with Example \ref{3jex}.
}
\end{ex}

 Let $L$ be the simple object associated to the last diagram \ref{fig:simples} where the projectors are $\pi_i$, $\pi_j$, $\pi_k$ respectively. This is an object in $A_{\bf d}-\gMOD$, where ${\bf d}=(i,j,k)$.
With this notation the categorification is then the following:

\begin{theorem}[$3j$-symbols as generalize Kazhdan-Lusztig polynomials]\hfill\\
\label{KL}
The $3j$-symbol $C_{i,j}^k(r,s,t)$ is the graded Euler characteristic of
\begin{eqnarray}
\label{genKL}
\op{Ext}^*_{A_{\bf d}-\gMOD}\left(\hat\Delta(r,i|s,j|t,k),L\right)\langle \gamma\rangle
\end{eqnarray}
with $\gamma=c-r(i-r)-j(s-j)-t(k-t)$ as in Lemma \ref{3jasKL}.

\end{theorem}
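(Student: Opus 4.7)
The plan is to combine Proposition \ref{3jasKL} (which reduces the $3j$-symbol to the evaluation $\hat{C}_{i,j}^k(r,s,t)$ of the diagram in Figure \ref{fig:3j-alternative} up to the explicit scalar $c$) with the categorification of the evaluation pairing by Ext-groups. More precisely, I would read $\hat{C}_{i,j}^k(r,s,t)$ as the value of the evaluation form \eqref{scalar} on $V_i\otimes V_j\otimes V_k$ applied to the pair $(v_r\otimes v_s\otimes v_t,\, w)$, where $w\in V_i\otimes V_j\otimes V_k$ is the vector obtained by applying the cup network at the top of Figure \ref{fig:3j-alternative} to $1\in\mC(q)$ and then projecting via $\pi_i\otimes\pi_j\otimes\pi_k$. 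By Theorem \ref{FKsimples} (and the fact that the cap part of the diagram produces, up to a $q$-power, a dual canonical basis element of $V_1^{\otimes(i+j+k)}$), the vector $w$ is, up to a scalar $q$-power, an element of the shifted dual canonical basis of $V_{\bf d}=V_i\otimes V_j\otimes V_k$.

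The next step is to categorify both factors of the pairing using the machinery developed in the paper. By Corollary \ref{standards} the standard basis vector $v_r\otimes v_s\otimes v_t$ corresponds, under the isomorphism $\Phi_{\bf d}$ from Theorem \ref{cattensor}, to the isomorphism class of $\hat\Delta(r,i|s,j|t,k)\in A_{\bf d}-\gmod$. For the vector $w$: its underlying $\{\up,\down\}$-sequence is exactly the one whose associated cup diagram $\op{Cup}(\bf a)$ is the forget-orientation version of the diagram at the top of Figure \ref{fig:3j-alternative}. By Theorem \ref{simplesfromfunctors}, applying the cup functors to the tensor unit yields (up to a grading shift) the simple module in $\bigoplus_k A_{k,i+j+k}-\gmod$ corresponding to this diagram; projecting by $\hat{\pi}_{i,j,k}$ and invoking Theorem \ref{cattensor}, \eqref{basissimples}, this is identified with the simple module $L\in A_{\bf d}-\gmod$ labelled by the same data.

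Now I would apply Proposition \ref{evaluationformcat} (together with Lemma \ref{bilform}), which categorifies the evaluation form by the bifunctor $\op{Ext}^*_{A_{\bf d}}(-,-)$. This gives that the graded Euler characteristic of $\op{Ext}^*_{A_{\bf d}}(\hat\Delta(r,i|s,j|t,k), L)$ equals $\hat{C}_{i,j}^k(r,s,t)$ up to a total $q$-shift coming from (a) the normalization in the definition of the evaluation form \eqref{scalar} (the shift $q^{r(i-r)+s(j-s)+t(k-t)}$ appearing when comparing $\langle v_{\bf a},v^{\bf b}\rangle$ on standard vs.\ dual standard basis), (b) the shift appearing in the passage between the diagram of Figure \ref{fig:3j-alternative} and the standard evaluation (the factor $c$ from Proposition \ref{3jasKL}), and (c) the internal shifts hidden in the construction of $\hat\Delta$ and of the simple $L$ (which are normalized with head in degree $0$, versus the shifts in the quotient and inclusion functors from \eqref{algebraic1}--\eqref{algebraic2}). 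Collecting these contributions gives precisely $\gamma=c-r(i-r)-s(j-s)-t(k-t)$ (with $c$ interpreted as in Proposition \ref{3jasKL}), so that inserting the shift $\langle\gamma\rangle$ produces $C_{i,j}^k(r,s,t)$ itself, as claimed.

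The main obstacle, as usual in this paper, will be the bookkeeping of grading shifts: the identifications through $\Phi_{\bf d}$, through $\Omega$ and $D$ of Remark \ref{annoying}, through the normalization of $\hat\Delta$ and $L$, and through the factor $c=(-1)^{-r}(-q)^{s-j}q^{r(i-r)}q^{s(j-s)}$ in Proposition \ref{3jasKL}, each contribute a shift, and the claim is that they assemble exactly into $\gamma$. The conceptual content is already in place: $\hat\Delta$ categorifies the standard basis, $L$ categorifies the (shifted) dual canonical basis vector produced by the cup network, and $\op{Ext}^*$ categorifies the evaluation form pairing them.
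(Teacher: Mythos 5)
Your proposal is correct and follows essentially the same route as the paper: reduce via Proposition \ref{3jasKL} to the diagram evaluation, identify the cup network output with the simple module $L$ and the standard basis vector with $\hat\Delta$, and categorify the evaluation pairing by $\op{Ext}^*$. The only difference is that where you invoke Proposition \ref{evaluationformcat} as a black box, the paper makes its underlying mechanism explicit — the Dlab orthogonality $\op{Ext}^{>0}(\hat\blacktriangle,\op{d}\hat\Delta)=0$, $\Hom(\hat\blacktriangle,\op{d}\hat\Delta)=\delta$ together with self-duality of $L$ to move $L$ into the second slot — and, like you, it handles the final shift bookkeeping ($\beta=c-\gamma$) by assertion rather than computation.
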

\begin{proof}
By Proposition \ref{3jasKL} it is enough to categorify the evaluation from Figure \ref{fig:3j-alternative}. Our desired categorification will then just differ by a shift in the grading by $-c$. We start from the bottom of the diagram and translate it into categorical objects. By the arguments above, the cup diagram with the Jones-Wenzl projectors attached presents a simple object $L$ in  $A_{\bf d}-\gMOD$ corresponding to the shifted dual canonical basis. Let $L'=L\langle \beta\rangle$ be concentrated in degree zero. Hence $\beta=c-\gamma$. Now putting the arrows on top of the diagram means (not categorified) that we apply the evaluation form \eqref{scalar} picking out the coefficient in front of the dual standard basis vector indicated at the top of the diagram.  By \eqref{basisproperstandards} the proper standard modules $\hat\blacktriangle(r,i|s,j|t,k)$ corresponds to the shifted dual standard basis vector. Their isomorphism classes form a basis of the Grothendieck group and so we need a categorical way to pick out a specific dual standard vector in $[L]$ or equivalently a shifted dual standard vector in the expression for $[L']$.
 The main tool here is a homological fact for properly stratified structures: the duals  $\op{d}\hat\Delta$ of standard modules $\hat\Delta$ are homological dual to the proper standard modules $\blacktriangle$. By this we mean $\Ext^i(\blacktriangle,\op{d}\hat\Delta)=0$ for any standard module $\hat\Delta$ and any proper standard $\blacktriangle$ for $i>0$, and $\Hom(\blacktriangle,\op{d}\hat\Delta)=0$ unless $\blacktriangle=\blacktriangle({\bf a})$ and $\Delta=\Delta({\bf a})$ for the same ${\bf a}$, \cite[Theorem 5]{Dlab}. The Exts can be taken here either in the category  $A_{\bf d}-\gMOD$ or in the ungraded category  $A_{\bf d}-\MOD$. Therefore, the evaluation of the diagram from Figure \ref{fig:3j-alternative} equals the Euler characteristic of
 $$\op{Ext}^*_{A_{\bf d}}-\gMOD\left(L', \op{d}\hat{\Delta}\right).$$
 Applying the contravariant duality the latter equals $\op{Ext}^*_{A_{\bf d}}-\gMOD\left(\hat\Delta,L'\right)=\op{Ext}^*_{A_{\bf d}}-\gMOD\left(\hat\Delta,L\right)\langle -\beta\rangle$.
Hence the graded Euler characteristic of $$\op{Ext}^*_{A_{\bf d}}-\gMOD\left(\hat\Delta,L\right)\langle -\beta+c\rangle$$ equals $C_{i,j}^k(r,s,t)$. Since $-\beta+c=\gamma$ the statement of the theorem follows.
\end{proof}

Note that the standard objects in $A_{k,n}-\gmod$ agree with the proper standard objects and are just the graded lifts of the Verma modules. In this case the Euler characteristic of the space  $\Ext^*_{A_{k,n}-\gmod}(\hat{\Delta},L)$ of extensions between a standard object and a simple object is given by certain parabolic {\it Kazhdan-Lusztig polynomials}.  Lascoux and Sch\"utzenberger, \cite{LS}, found explicit formulas for these polynomials, see also \cite[Section 5]{BS2}. Hence our results introduce new generalized  Kazhdan-Lusztig polynomials for the categories of Harish-Chandra modules. The formulas \ref{ClebschGordon} are as far as we know the first explicit formulas for dimensions of extensions between standard and simple Harish-Chandra modules.

\section{Example: Categorified $3j$-symbol}
We finish the study of $3j$-symbols by describing the generalized Kazhdan-Lusztig polynomials in case $i=2$, $j=k=1$.  Using Theorem  \ref{ClebschGordon} one easily calculates
the $3j$-symbols $C_{2,1}^1(2,0,1)=-q^{-1}$, $C_{2,1}^1(1,0,0)=-q^{-1}$, $C_{2,1}^1(1,1,1)=q$, $C_{2,1}^1(0,1,0)=1$
and the evaluations $\hat{C}_{2,1}^1(2,0,1)=q^2$, $\hat{C}_{2,1}^1(1,0,0)=-1$, $\hat{C}_{2,1}^1(1,1,1)=-q$, $\hat{C}_{2,1}^1(0,1,0)=1$.
Now the category $A_{4,2}-\gmod$ has the projective modules $$\hat{P}(1100),\hat{P}(1010), \hat{P}(0110), \hat{P}(1001), \hat{P}(0101), \hat{P}(0011).$$ The projective modules in $A_{(2,1,1),2}-\gmod$ are then $$\hat{P}(1100),\hat{P}(0110), \hat{P}(0101), \hat{P}(0011),$$ in terms of standard modules
\begin{eqnarray*}
\left[\hat{P}(1100)\right]&=&\left[\Delta(1100)\right],\\
\left[\hat{P}(0110)\right]&=&\left[\Delta(0110)\right]+q^2\left[\Delta(1100)\right],\\ \left[\hat{P}(0101)\right]&=&\left[\Delta(0101)\right]+q\left[\Delta(0110)\right]+(q+q^3)\left[\Delta(1100)\right],\\ \left[\hat{P}(0011)\right]&=&\left[\Delta(0011)\right]+q\left[\Delta(0101)\right]+q^2\left[\Delta(0110)\right]+q^4\left[\Delta(1100)\right].
\end{eqnarray*}
There are projective resolutions of the form
\begin{eqnarray*}
\hat{P}(1100)\rightarrow &&\hat{\Delta}(1100)\\
\hat{P}(1100)\langle 2\rangle\rightarrow \hat{P}(0110)\rightarrow &&\hat{\Delta}(0110)\\
\hat{P}(1100)\langle 1\rangle\oplus\hat{P}(0110)\langle 1\rangle \rightarrow \hat{P}(0101)\rightarrow &&\hat{\Delta}(0101)\\
\hat{P}(1100)\langle 2\rangle\oplus\hat{P}(0110)\langle 2\rangle\rightarrow \hat{P}(0101)\langle 1\rangle\rightarrow \hat{P}(0011)\rightarrow &&\Delta(0011)\\
\end{eqnarray*}
Setting $L=\hat{L}((1,1),2|0,1|0,1)$, the values of the Poincare polynomials of $\Ext(\hat\Delta,L)$ for $\hat{\Delta}$ running through the above list of standard objects are $1$, $-q^{2}$, $-q$ and $q^2$ respectively. On the other hand setting $\eta=r(i-r)s(j-s)t(k-t)$
we obtain $\hat{C}_{2,1}^1(0,1,0)q^{\eta}=1$, $\hat{C}_{2,1}^1(1,1,1)q^{\eta}=-q^2$, $\hat{C}_{2,1}^1(1,0,0)q^{\eta}=-q$,   $\hat{C}_{2,1}^1(2,0,1)q^{\eta}=q^2$. This confirms Theorem \ref{KL}.

\section{Categorification of $6j$-symbols or tetrahedron network}
Consider the $\Theta$-network displayed in Figure \ref{fig:theta}. Assume the three projectors from left to the right are $\pi_i$, $\pi_j$, $\pi_k$ with $i=2a$, $j=2b$, $k=2c$ and denote its value by $\Theta(i,j,k)$. The value of the network is explicitly given by the following formula (see \cite[3.1.7]{KaLi}):

\begin{lemma}
\label{theta}
The value of the $\Theta$-network is $$\Theta(i,j,k)=(-1)^{a+b+c}\frac{[a+b-c]![a-b+c]![-c+a+b]![a+b+c+1]!}{[2a]![2b]![2c]!}.$$
In case $i=j=n$, $k=0$ we get $(-1)^n[n+1]$, the value of the $n$-colored unknot.
\end{lemma}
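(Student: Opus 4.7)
The plan is to reduce the evaluation of $\Theta(i,j,k)$ to the explicit $3j$-formulas from Section \ref{comb}. I begin by disposing of the degenerate case $k=0$: admissibility then forces $i=j=n$, the middle strand contributes trivially, and the two projectors $\pi_n$ at the top and bottom vertices fuse into a single $\pi_n$ by idempotency (Proposition \ref{charJW}(i)). What remains is the $n$-colored unknot, whose value $(-1)^n[n+1]$ was recorded in Figure \ref{networks} and independently confirmed by the categorified computation in Section \ref{normalizedJW}. Substituting $a=b=n/2$, $c=0$ into the claimed right-hand side produces exactly $(-1)^n[n+1]$, so the formula holds in this case.

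For the general case I would cut the $\Theta$-network along its $k$-colored equator, writing $\Theta(i,j,k)$ as the closed evaluation (under the network pairing) of an endomorphism $\alpha\colon V_k\to V_k$ obtained by composing the intertwiner $A_{i,j}^k$ of Definition \ref{3j} with its adjoint $\widetilde{A}_{i,j}^k\colon V_k\to V_i\otimes V_j$, i.e.\ the trivalent vertex read upside down. Since admissibility forces $\dim\mathrm{Hom}_{\mathcal{U}_q}(V_k,V_i\otimes V_j)=1$, Schur's lemma yields $\alpha=\lambda\cdot\mathrm{id}_{V_k}$ for a unique scalar $\lambda\in\mathbb{C}(q)$, and hence
\[
\Theta(i,j,k)=\lambda\cdot(\text{value of the $k$-colored unknot})=(-1)^k\,\lambda\,[k+1].
\]
The scalar $\lambda$ can be computed by evaluating $\alpha$ on one suitable vector. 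Working in the twisted canonical basis of $V_i\otimes V_j$ is the key simplification, because Theorem \ref{positivity} expresses each matrix entry $D_{i,j}^k(r,s,t)$ of $A_{i,j}^k$ as a single product of $q$-binomials, with no alternating sum. Hence both $A_{i,j}^k$ and its adjoint have single-term expansions in this basis, and $\lambda$ becomes a closed product of quantum binomial coefficients rather than an alternating sum.

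The main obstacle is the purely algebraic final step: the product of $q$-binomial coefficients expressing $\lambda\cdot[k+1]$ must be reorganized into the manifestly $\mathbb{S}_3$-symmetric quotient of quantum factorials on the right-hand side of the lemma. Using ${n\brack k}=\tfrac{[n]!}{[k]![n-k]!}$ together with the substitutions $a+b-c=\tfrac{i+j-k}{2}=z$, $a-b+c=\tfrac{i-j+k}{2}=x$, and $-a+b+c=\tfrac{-i+j+k}{2}=y$ from Theorem \ref{ClebschGordon}, this amounts to a tedious but mechanical $q$-factorial simplification, telescoping ratios of quantum factorials against the $[k+1]=[2c+1]$ factor. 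The overall sign $(-1)^{a+b+c}$ is assembled from the $(-1)^k$ coming from the colored-unknot value, the $(-1)^{(i+j-k)/2}=(-1)^a$ factor appearing in Theorem \ref{positivity}, and a further sign $(-1)^b$ from the adjoint pairing, all reconciled modulo the parity constraint $k\equiv i+j\pmod 2$ enforced by admissibility.
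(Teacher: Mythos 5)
The paper itself does not prove this lemma: it is quoted directly from Kauffman--Lins \cite[3.1.7]{KaLi}, where it is obtained from the recursive (Wenzl) definition of the projector. So your argument is necessarily a different route, and its skeleton is the standard one and sound: the $k=0$ case is handled correctly, and in general cutting along the $k$-coloured edge and using $\dim\Hom_{\cU_q}(V_k,V_i\otimes V_j)=1$ gives $\alpha=\lambda\cdot\mathrm{id}_{V_k}$ and $\Theta(i,j,k)=(-1)^k[k+1]\lambda$, reducing everything to one matrix entry.

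Two genuine gaps remain, both at the point where the actual content of the formula lives. First, your claim that the upside-down vertex $\widetilde A_{i,j}^k$ ``has a single-term expansion'' in the twisted canonical basis is unjustified. Theorem \ref{positivity} gives single-term entries $\langle v_t, A_{i,j}^k(v_r\spadesuit v_s)\rangle$, but to exploit this you must expand $\widetilde A_{i,j}^k(v_t)$ in the basis $\{v_r\spadesuit v_s\}$; that basis is not orthogonal for the (semi-linear) evaluation form, so the expansion of the adjoint involves the inverse Gram matrix and is not a priori a single term --- and the paper computes $A_{i,j}^k$ but nowhere its adjoint. (The one vector for which everything is a single term is $v_i\otimes v_0=v_i\spadesuit v_0$, but that is not in the image of $\widetilde A_{i,j}^k$ for general $t$.) Second, the specific ratio of quantum factorials, the sign $(-1)^{a+b+c}$, and above all the absence of any residual power of $q$ --- the right-hand side is invariant under $q\mapsto q^{-1}$, while every ingredient you invoke ($q^{\eta_1}$ in Theorem \ref{positivity}, the $q$-powers in \eqref{scalarprod} and \eqref{scalar}) is not --- are exactly what the deferred ``tedious but mechanical'' step must produce. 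Until that cancellation is carried out for at least one chosen matrix entry, the argument could just as well yield a formula off by a power of $q$. A workable way to close it is to take $t=k$, compute $\widetilde A_{i,j}^k(v_k)$ directly from the diagram \eqref{intertwiner} read upside down (only the extreme cap orientations survive), pair with Theorem \ref{positivity}, and then do the $q$-factorial telescoping explicitly.
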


Categorically this network can be interpreted naturally as a complex of graded vector spaces (namely - analogously to Theorem \ref{simplesfromfunctors} - a functor $F$ applied to $\mC$, but now with values in the derived category of graded vector spaces). Even more, it comes along with a natural bimodule structure and after an appropriate shift in the grading, with a natural algebra structure:

\begin{figure}[htb]
\includegraphics{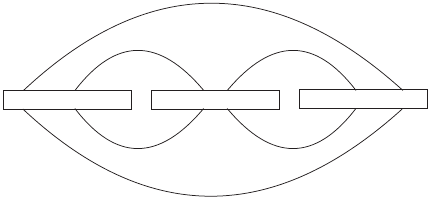}
\caption{Theta networks turn into Ext groups or even Ext algebras of simple objects.}
\label{fig:theta}
\end{figure}

The $\Theta$-network is the Euler characteristic of the Ext-algebra of a simple Harish-Chandra bimodule:

\begin{theorem}
\label{thetaext}
Let $L$ be the simple module symbolized by the last picture in Figure \ref{fig:simples}. Then the $\Theta$-network displayed in Figure \ref{fig:theta} is, up to an overall shift by $\ullcorner j\ulrcorner\langle j\rangle$ for some $j$, isomorphic as (possibly infinite) graded vector space to $\Ext^*_{A_{k,{\bf d}}}(L,L)$. Hence, incorporated this shift, it carries a natural algebra structure. In its natural occurrence without shift it is a bimodule over the above algebra.
\end{theorem}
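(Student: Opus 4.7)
The plan is to follow the same template as Theorem \ref{KL}, interpreting the $\Theta$-network as the evaluation of a pairing of a vector with itself, and then categorifying the pairing via $\Ext$-groups. The three ingredients are (a) the functorial interpretation of cup diagrams with Jones-Wenzl projectors attached from Theorem \ref{simplesfromfunctors} combined with ${}_k\hat\pi_{\bf d}$ from Theorem \ref{catJW}, (b) the categorification of the evaluation form via $\Ext$ from Proposition \ref{evaluationformcat} and Lemma \ref{bilform}, and (c) the classical value computation of Lemma \ref{theta}. The algebra and bimodule structures will come for free from the Yoneda product on $\Ext^*_{A_{k,{\bf d}}}(L,L)$.

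First I would cut the $\Theta$-network along a horizontal meridian separating the two trivalent vertices. Reading the lower half from bottom to top, it becomes a composition of nested cup morphisms followed by the three Jones-Wenzl projectors $\pi_i,\pi_j,\pi_k$, producing a vector $v \in V_i \otimes V_j \otimes V_k$; the upper half, read from top to bottom, produces the same vector $v$. The closed network therefore evaluates to $\langle v,v\rangle$ in the evaluation pairing \eqref{scalar}. By Theorem \ref{simplesfromfunctors} together with the passage to the Jones-Wenzl quotient category, the categorified lower half is a functor from (derived) graded vector spaces to $\Ddf(A_{k,{\bf d}}-\gmod)$ whose value on $\mC$ is the simple module $L$ depicted in the last picture of Figure \ref{fig:simples}, up to an overall grading and homological shift inherited from the factors $\hat\cup_{i,n}$ and ${}_k\hat\pi_{\bf d}$. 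Applying Lemma \ref{bilform} (or equivalently Proposition \ref{evaluationformcat}) to the pairing $\langle v,v\rangle$ now yields that the value of the $\Theta$-network equals the graded Euler characteristic of $\Ext^*_{A_{k,{\bf d}}}(L,L)$, up to the claimed overall shift $\ullcorner j\ulrcorner\langle j\rangle$; consistency with the classical evaluation of Lemma \ref{theta} is automatic once both sides are recognized as the same categorical pairing.

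For the algebraic structure, the (shifted) graded vector space $\Ext^*_{A_{k,{\bf d}}}(L,L)$ carries the Yoneda product, which is a graded associative multiplication; the uniform shift $\ullcorner j\ulrcorner\langle j\rangle$ only renormalizes degrees and preserves the algebra structure. Without the shift, the same object is a graded $\Ext^*_{A_{k,{\bf d}}}(L,L)$-bimodule via the natural left and right Yoneda actions. The main obstacle is the careful bookkeeping of the shift $j$: one must track the internal grading shifts and homological shifts contributed by each nested cup functor and each graded Zuckerman/Enright-Shelton equivalence entering the definition of $\hat\cup_{\bullet,\bullet}$, analogous to the computation of the factor $c$ in Proposition \ref{3jasKL} and the shift $\gamma$ in Theorem \ref{KL}. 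Since the statement only asserts existence of \emph{some} shift, this reduces to invoking the structure of those functors rather than carrying out the full numerical computation.
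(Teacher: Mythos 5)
Your proof follows the paper's argument essentially verbatim: the paper likewise cuts the network into a bottom half realizing $L\cong F(\mC)$ via Theorem \ref{simplesfromfunctors}, identifies the closed network with the evaluation of the pairing $\langle[L],[L]\rangle$, and invokes the categorified pairing to obtain $\Ext^*(L,L)$ up to shifts. Your added remarks on the Yoneda product and on shift bookkeeping are correct elaborations of what the paper leaves implicit.
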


\begin{corollary}
In particular, the $\Theta$ value from Lemma \ref{theta} is up to an overall power of $\mp q$ the graded dimension of an Ext-algebra (possibly infinite dimensional).
\end{corollary}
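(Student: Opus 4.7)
The plan is to decompose the $\Theta$-network into two mirror-image halves and identify each half with the functor producing the simple Harish-Chandra bimodule $L$, so that the total evaluation becomes the pairing of $L$ with itself via the categorified evaluation form from Proposition \ref{evaluationformcat}. First, cut the $\Theta$-network in Figure \ref{fig:theta} along a horizontal line through the middle so that each half contains exactly one trivalent vertex together with its three Jones--Wenzl projectors $\pi_i, \pi_j, \pi_k$ and $i+j+k$ cups (resp.\ caps) at the open boundary. The bottom half, read from bottom to top, is exactly a composition of cup functors followed by the three projection functors $\hat\pi_i\otimes\hat\pi_j\otimes\hat\pi_k$; by the same combinatorics underlying Theorem \ref{simplesfromfunctors} (now applied to the three--block situation ${\bf d}=(i,j,k)$ instead of a single block) this functor starts in the derived category of graded vector spaces and sends $\mC$ to the simple object $L\in A_{k,{\bf d}}\text{-gmod}$ attached to the corresponding cup diagram with projectors (up to an overall grading and homological shift coming from the cup functors' definition in Section \ref{catcupscaps}).

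Second, the top half is the mirror image of the bottom half: it is a composition of inclusion functors $\hat\epsilon_i$ (up to shifts) and adjoints of the cups, reading bottom to top. Up to the standard shift accounting for the adjunction $(\hat\cup_{i,n},\hat\cap_{i,n+2})$ from Section \ref{catcupscaps}, this is the functor right adjoint to the bottom half. Therefore closing up the diagram corresponds to applying $\Hom$ in the derived sense between $L$ and $L$; combined with the self--duality $\op{d} L\cong L$ (with an appropriate grading shift), Proposition \ref{evaluationformcat} and Lemma \ref{bilform} identify the resulting graded vector space with
\[
\Ext^{\ast}_{A_{k,{\bf d}}}(L,L)\langle j\rangle\ullcorner j\ulrcorner
\]
for a shift $j$ determined by the shifts in the cup/cap/Zuckerman functors and by the self--duality isomorphism. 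This already proves the statement at the level of graded vector spaces.

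Third, the algebra and bimodule structures arise by general principles once the identification with $\Ext^{\ast}(L,L)$ has been made: the Yoneda product endows $\Ext^{\ast}_{A_{k,{\bf d}}}(L,L)$ with a unital graded algebra structure (this is how $\Ext^\ast$ of any simple is upgraded to an algebra), and left/right Yoneda multiplication turns any $\Ext^{\ast}(L,L)\langle s\rangle\ullcorner s\ulrcorner$ into a graded bimodule over itself. The overall shift $\ullcorner j\ulrcorner\langle j\rangle$ is precisely the correction that must be removed to obtain a unit in degree zero, i.e.\ the canonical algebra structure; without the shift, what the $\Theta$-network naturally produces is this algebra viewed as a bimodule over itself (the ``rank one free module'' picture). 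Combining this with Lemma \ref{theta} and the fact that graded dimensions recover graded Euler characteristics for modules over finite--dimensional algebras gives the corollary about $\Theta(i,j,k)$ being (up to a sign and a power of $q$) the graded dimension of this Ext--algebra.

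The main obstacle will be to pin down the exact value of the shift $j$ and to verify rigorously that the two halves of the $\Theta$-network really are adjoint as functors between the relevant derived categories of Harish-Chandra bimodule categories. In the tensor-power setting this adjointness is built into the cup/cap functor formalism of Section \ref{catcupscaps} and Theorem \ref{catjones}, but to propagate it through the Jones-Wenzl projector functors ${}_k\hat\pi_{\bf d}$ and their derived inclusions $\mathbb{L}{}_k\hat\iota_{\bf d}$ one must use that ${}_k\hat\pi_{\bf d}$ is exact and left adjoint to $\mathbb{L}{}_k\hat\iota_{\bf d}$ (up to the grading shift $\langle k(n-k)\rangle$ introduced in \eqref{algebraic1}--\eqref{algebraic2}) together with the fact that $L$ is the image under the projector functor of the corresponding simple in the ungraded tensor-power category; keeping careful track of all the grading and homological shifts from cups, caps, Zuckerman functors, Enright--Shelton equivalences, and the inclusion/projection functors is the only genuinely delicate bookkeeping.
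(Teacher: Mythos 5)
Your argument is essentially the paper's: the corollary is an immediate consequence of Theorem \ref{thetaext}, whose proof in the paper is exactly your decomposition of the $\Theta$-network into a bottom half realizing $L\cong F(\mC)$ (via Theorem \ref{simplesfromfunctors}) and a top half adjoint to it up to shifts, so that the evaluation becomes $\Ext^*_{A_{k,{\bf d}}}(L,L)$ computed via the categorified pairing of Lemma \ref{bilform}. The only slip worth noting is that ${}_k\hat\pi_{\bf d}$ is the \emph{right} adjoint of $\mathbb{L}{}_k\hat\iota_{\bf d}$ (the inclusion is defined in Section \ref{sec:Serre} as the left adjoint of the quotient functor), not its left adjoint --- immaterial here since cups and caps are biadjoint up to shift and both arguments of the Ext are $L$.
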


\begin{corollary}
In case of the colored unknot with color $n$, the value $\lsem n\rsem$ is the graded dimension of an Ext-algebra (possibly infinite dimensional).
\end{corollary}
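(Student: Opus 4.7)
The plan is to specialize the preceding corollary (equivalently, Theorem \ref{thetaext} together with Lemma \ref{theta}) to the degenerate $\Theta$-network with $i=j=n$ and $k=0$. As recorded in Lemma \ref{theta}, this degenerate configuration collapses the diagram of Figure \ref{fig:theta} into a single loop carrying $\pi_n$, i.e.\ the $n$-colored unknot, and its value is $(-1)^n[n+1]$. Rewriting this using the renormalization $[m]=q^{-(m-1)}\lsem m\rsem$ expresses the unknot value as a unit monomial multiple of a renormalized quantum integer $\lsem\cdot\rsem$, which is the shape of the quantity appearing in the statement.

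First I would trace through Theorem \ref{thetaext} in this degenerate configuration to identify the simple Harish-Chandra bimodule $L$. When $k=0$ the third strand disappears and the cup diagram from Figure \ref{fig:simples} becomes the diagram consisting purely of $n$ nested cups joining the two $\pi_n$-bundles, with no through-strands. By Theorem \ref{simplesfromfunctors}, combined with the cup-functor identifications of Section \ref{catcupscaps} and the Enright-Shelton equivalence $\zeta_n$, this diagram labels, up to grading shift, the unique simple object of a category of the form $A_{k,(n)}-\gmod$ for the appropriate $k$.

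Second, I would apply Corollary \ref{HGr} to pass from $A_{k,(n)}-\gmod$ to $H^\bullet(\op{Gr}(k,n))-\gmod$, under which $L$ is identified with the trivial one-dimensional module $\mC$. The generalized version of Theorem \ref{Thmres} applied to the complete intersection ring $H^\bullet(\op{Gr}(k,n))$ described at the end of Section \ref{Euler} then produces an infinite resolution of $\mC$ whose graded Euler characteristic equals $\tfrac{1}{\left[n\brack k\right]}$; Theorem \ref{thetaext} combined with the preceding corollary interprets the value of the unknot itself, up to an overall factor $\pm q^{j}$, as the graded dimension of the (infinite dimensional) Ext-algebra $\Ext^*_{A_{k,(n)}}(L,L)$.

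The only genuinely computational step, and the main obstacle, is tracking the overall grading shift $j$ through the composition of the two nested cup functors, the two Jones-Wenzl projectors, and the pairing \eqref{evaluationform} between standard and dual proper standard modules. This is exactly the bookkeeping carried out by hand in the $V_2$-colored unknot example preceding the statement, whose output $\lsem 3 \rsem$ generalizes straightforwardly to arbitrary $n$ by replacing the explicit Verma-complex of Figure \ref{fig:unknot} with its general-$n$ analogue and using the standard resolution of proper standard modules from Corollary \ref{standardres}. Conceptually the result is already immediate from the previous corollary; the value of the present statement lies in pinning down the renormalized quantum integer as precisely $\lsem n\rsem$ (absorbing the sign $(-1)^n$ and the monomial $q^{-n}$ produced by the $[n+1]\mapsto\lsem n+1\rsem$ conversion into the allowed overall shift).
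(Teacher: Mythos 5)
Your opening move---specializing Theorem \ref{thetaext} (equivalently the preceding corollary) to the degenerate $\Theta$-network with $i=j=n$, $k=0$, which Lemma \ref{theta} identifies with the $n$-colored unknot---is exactly how the paper obtains this corollary, and for the bare existence statement that already suffices. The genuine problem is your identification of \emph{which} Ext-algebra appears. Applying the $n$ nested cup functors to $\mC$ produces the simple module $\hat{L}(1^n0^n)$ in $A_{n,2n}-\gmod$ (category $\cO$ for $\mathfrak{gl}_{2n}$), and the two Jones-Wenzl projectors then send it to the simple Harish-Chandra bimodule $\hat{L}(n,n|0,n)$ in $A_{n,(n,n)}-\gmod$, a graded category of Harish-Chandra bimodules for $\mathfrak{gl}_{2n}$ with $n+1$ simple objects---\emph{not} to the unique simple object of some $A_{k,(n)}-\gmod$. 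Corollary \ref{HGr} therefore does not apply, and $L$ is not the trivial module over a single Grassmannian cohomology ring. If it were, the graded Euler characteristic of $\Ext^*(L,L)$ would be the inverse binomial $\frac{1}{\left[{n\brack k}\right]}$ from Section \ref{Euler}, which is not a renormalized quantum integer for any $k$; your own third paragraph in effect asserts both values for the same space.

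The Grassmannian cohomology rings do enter, but only indirectly: they are the endomorphism rings of the standard objects $\hat\Delta(k_1,n|k_2,n)$ (Theorem \ref{Endstandards}), and Corollary \ref{standardres} converts the resolution of $L$ by proper standard modules (the image under the projector of the $2^n$-term Verma resolution of $\hat{L}(1^n0^n)$, cf.\ Figure \ref{fig:unknot}) into data that can be paired against the dualized complex via \eqref{evaluationform}. The fractional contributions $\frac{1}{\left[{n\brack k_1}\right]\left[{n\brack k_2}\right]}$ then recombine term by term into powers of $q$, summing to the quantum integer exactly as in the worked $V_2$ example (where the answer is $\lsem 3\rsem$, so the general color-$n$ answer is $\lsem n+1\rsem$ up to the overall shift; the displayed corollary's index should be read accordingly). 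So your final paragraph correctly names the bookkeeping that remains, but it must be carried out in $A_{n,(n,n)}-\gmod$, not in $H^\bullet(\op{Gr}(k,n))-\gmod$.
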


\begin{proof} [Proof of Theorem \ref{thetaext}]
The bottom half of the network represents by Lemma \ref{simplesfromfunctors} a simple object $L$ realized as $L\cong F(\mC)$ for some functor $F$. Hence the value of the network is either $GF(\mC)$, where $G$ is (up to shifts) the left adjoint functor of $F$, or alternatively we can use the above theorem and realize it as evaluation of the pairing $\langle[L],[L]\rangle$ which is the graded Euler characteristic of the algebra $\Ext^*(FL,FL)$ up to shifts.
\end{proof}

Figure \ref{spinnetwork} represents a tetrahedron network. The crosses indicate the four faces clued together at the six edges indicated by the boxes. As drawn, the network can be read from bottom to top as composition of intertwiners. Using the functors introduced above categorifying the intertwiners, this network turns into an infinite complex of graded vector spaces categorifying the $6j$-symbol. In general, its value is hard to compute, even its Euler characteristics.

\begin{figure}[htb]
  \centering
\includegraphics{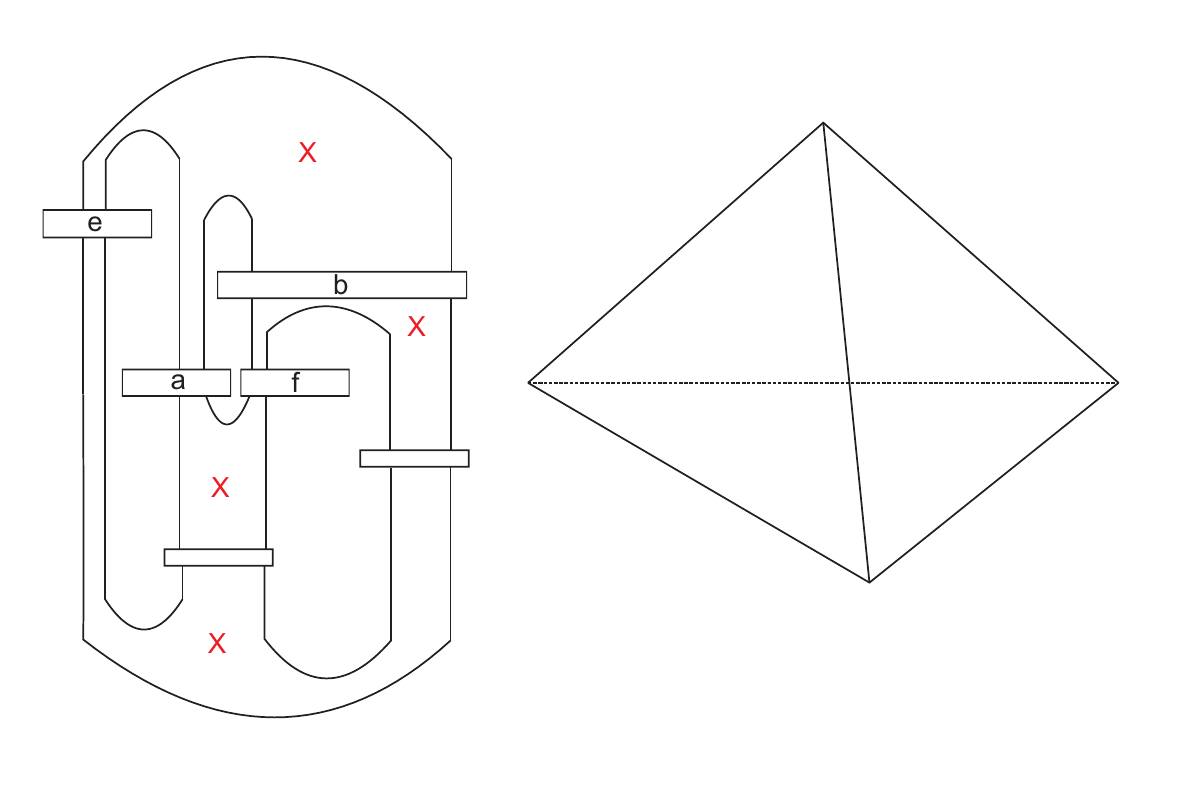}
  \caption{The tetrahedron network (the numbers indicate that there are that many copies of the strand). The Jones-Wenzl projectors correspond to the $6$ vertices of the tetrahedron.}
  \label{spinnetwork}
\end{figure}

\begin{theorem}
Assume we are given a tetrahedron network $T$ as in Figure~\ref{spinnetwork}. Depending on the labels $R_i$ we can find
some positive integer $n$ and a composition ${\bf d}$ of $n$ and graded $A_{n,{\bf d}}$-modules $M$, $N$ such that
the value of $T$ equals the graded Euler characteristics of $\Ext^*_{A_{n,{\bf d}}}(M,N)$.
\end{theorem}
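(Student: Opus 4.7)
The plan is to follow verbatim the strategy of Theorem \ref{thetaext} for $\Theta$-networks and of Theorem \ref{KL} for $3j$-symbols, except that a tetrahedron network has more structure: we cut the diagram horizontally, realize each half as a functor applied to $\mC$, and then use the categorified evaluation form from Proposition \ref{evaluationformcat} to identify the value of $T$ with the graded Euler characteristic of an $\Ext$-algebra.

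Concretely, I would slice $T$ horizontally through some ``equator'' that meets $r$ vertical strands carrying colors $R_{i_1}, \dots, R_{i_r}$, and set $\mathbf d = (R_{i_1}, \dots, R_{i_r})$ and $n = R_{i_1} + \cdots + R_{i_r}$, with $k$ determined by the number of upward orientations at the cut. The bottom half $T_{\mathrm b}$ is then an intertwiner $\mC \to V_{R_{i_1}} \otimes \cdots \otimes V_{R_{i_r}}$, built from nested cup intertwiners, braidings, and the Jones--Wenzl projectors attached at the lower two vertices (and along the strands crossing the equator). Composing the corresponding categorified functors, namely the cup functors $\hat{\cup}_{i,m}$ from Section \ref{catcupscaps}, the derived braid equivalences of Theorem \ref{catjones}, and the inclusion/projection functors ${}_{k}\hat{\iota}_{\mathbf d}$ and ${}_{k}\hat\pi_{\mathbf d}$ from Section \ref{sec:JW}, produces a functor $\hat T_{\mathrm b}$ and I set $M := \hat T_{\mathrm b}(\mC) \in \Ddf\bigl(A_{k,\mathbf d}\text{-gmod}\bigr)$. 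Doing the same for the top half $T_{\mathrm t}$, viewed as an intertwiner $V_{R_{i_1}} \otimes \cdots \otimes V_{R_{i_r}} \to \mC$ by dualizing cups into caps, yields a module $N$ in the same category, exactly as in the proof of Theorem \ref{thetaext}.

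Applying Proposition \ref{evaluationformcat} gives
\[
\sum_{i,j} (-1)^{j} q^{i}\,\dim \Ext^{j}_{A_{k,\mathbf d}}\bigl(M,\, N\langle i\rangle\bigr) \;=\; \langle [M],[N]\rangle,
\]
and the right hand side equals, by functoriality of the categorifications in Theorem \ref{catJW} and of the cup/cap/braid functors in Theorem \ref{catjones}, the tangle--intertwiner evaluation that defines the value of $T$ (up to one fixed overall shift $\ullcorner \ell\ulrcorner\langle m\rangle$ as in Theorem \ref{thetaext}, coming from the normalization of Reshetikhin--Turaev invariants in Theorem \ref{catjones} and from the grading conventions of Remark \ref{annoying} and of formulae \eqref{algebraic1}--\eqref{algebraic2}). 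Absorbing this fixed shift into the definition of $N$ (which is harmless since grading/homological shifts do not leave the category) then gives the asserted identity.

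The main obstacle will be the bookkeeping of gradings and homological shifts: each of the basic functors (cups, caps, Zuckerman, inclusion, projector, braiding) carries its own internal and homological degree shift, and one must check that these combine into exactly the shift that reproduces the Reshetikhin--Turaev normalization of $T$, in particular that the resulting object actually lives in $\Ddf$ so that its graded Euler characteristic makes sense as an element of $\mZ[[q]][q^{-1}]$ and matches the (rational) value of the network; the paradigm for this computation is the colored unknot calculation summarized in Figure \ref{fig:unknot}. A second, conceptual, point that I would address in a follow--up: since each face of the tetrahedron is a $\Theta$-network, Theorem \ref{thetaext} equips each face with an algebra $A_{\mathrm{face}}$, and the resulting $\Ext^{*}_{A_{k,\mathbf d}}(M,N)$ should carry a natural compatible action of $A_{\mathrm{face}_1}\otimes\cdots\otimes A_{\mathrm{face}_4}$, giving the module structure over the four $\Theta$-algebras predicted at the end of the introduction.
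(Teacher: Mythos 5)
Your proposal matches the paper's (essentially implicit) argument: the paper proves this exactly as in Theorem \ref{thetaext}, by cutting the tetrahedron along an equator, realizing each half as a composition of the categorified cup/projector functors applied to $\mC$, and identifying the network evaluation with the Ext-pairing of Proposition \ref{evaluationformcat}. One small point to tidy up: the theorem asserts $M,N$ are honest graded \emph{modules}, and since the tetrahedron is planar no braidings are needed, so each half is a composition of exact cup functors and projectors and yields a module (in fact a simple one, by Theorem \ref{simplesfromfunctors}) rather than a genuine complex in $\Ddf$.
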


Using the results from \cite{SS} it follows that, up to isomorphism in the derived category of complexes of graded vector spaces,
 the value of the tetrahedron does not depend on how we draw the network. In particular we can always start with the lower half of a tetrahedron network from the top as well as the bottom. In other words it is possible to choose $M=L$ or $N=L$ with $L$ as in Theorem \ref{thetaext} and deduce that the categorification of the tetrahedron is a module over the Ext-algebra categorifying the $\Theta$-network of either face of the tetrahedron.

The $6j$-symbol represented by the tetrahedron network is in general a $q$-rational number and by Theorem \ref{thetaext} can be viewed as a graded Euler characteristic. The parallel result for the $3j$-symbol is Theorem \ref{cat3j}. In the latter case the fractional graded Euler characteristic turned out to be a $q$-integer and had a purely combinatorial interpretation. This fact leads to the natural question of the existence of an integral version of $6j$-symbols and its combinatorial and categorial interpretation. A detailed study of these question will be presented in a subsequent paper.

I.F.\\ Department of Mathematics, Yale University, New Haven, (USA). \\ email: \email{frenkel@math.yale.edu}

C.S.\\ Department of Mathematics, University of Bonn  (Germany).\\ email: \email{stroppel@math.uni-bonn.de}

J.S.\\ Max-Planck-Institute of Mathematics Bonn (Germany). \\ email: \email{joshuasussan@gmail.com}
\end{document}